\newtheorem{theorem}{Theorem}[subsection]
\newtheorem{corollary}[theorem]{Corollary}
\newtheorem{proposition}[theorem]{Proposition}
\newtheorem{conjecture}[theorem]{Conjecture}
\newtheorem{lemma}[theorem]{Lemma}
\theoremstyle{definition}
\newtheorem{definition}[theorem]{Definition}
\newtheorem{statement}[theorem]{Statement}
\newtheorem*{notation*}{Notation}
\newtheorem{example}[theorem]{Example}
\newtheorem{remark}[theorem]{Remark}
\newtheorem{propdef}[theorem]{Proposition and definition}
\newtheorem*{question*}{Question I}
\newtheorem*{question'}{Question II}
\newtheorem*{question''}{Question III}
\newtheorem*{theorem*}{Theorem}
\newcommand{\QQ}{{\mathbb Q}}
\newcommand{\bbA}{\mathbb{A}}
\newcommand{\bbC}{\mathbb{C}}
\newcommand{\bbH}{\mathbb{H}}
\newcommand{\bbP}{\mathbb{P}}
\newcommand{\bbQ}{\mathbb{Q}}
\newcommand{\bbR}{\mathbb{R}}
\newcommand{\bbZ}{\mathbb{Z}}
\newcommand{\bbG}{\mathbb{G}}
\newcommand{\bbT}{\mathbb{T}}
\newcommand{\sA}{\mathcal{A}}
\newcommand{\sB}{\mathcal{B}}
\newcommand{\sD}{\mathcal{D}}
\newcommand{\sE}{\mathcal{E}}
\newcommand{\sG}{\mathcal{G}}
\newcommand{\sH}{\mathcal{H}}
\newcommand{\sL}{\mathcal{L}}
\newcommand{\sM}{\mathcal{M}}
\newcommand{\sO}{\mathcal{O}}
\newcommand{\sQ}{\mathcal{Q}}
\newcommand{\sS}{\mathcal{S}}
\newcommand{\sT}{\mathcal{T}}
\newcommand{\sU}{\mathcal{U}}
\newcommand{\sX}{\mathcal{X}}
\newcommand{\sY}{\mathcal{Y}}
\DeclareMathOperator{\Hom}{Hom}
\DeclareMathOperator{\Frac}{Frac}
\DeclareMathOperator{\Spec}{Spec}
\DeclareMathOperator{\Sym}{Sym}
\DeclareMathOperator{\Ext}{Ext}
\DeclareMathOperator{\sym}{sym}
\DeclareMathOperator{\GVF}{GVF}
\DeclareMathOperator{\an}{an}
\DeclareMathOperator{\cdiv}{div} 
\DeclareMathOperator{\Mdiv}{MDiv}
\DeclareMathOperator{\Ldiv}{LDiv}
\DeclareMathOperator{\LdivQ}{LDiv_{\QQ}}
\DeclareMathOperator{\Pic}{Pic}
\DeclareMathOperator{\Div}{Div} 
\DeclareMathOperator{\ord}{ord} 
\DeclareMathOperator{\height}{ht}
\DeclareMathOperator{\id}{id}
\DeclareMathOperator{\Supp}{Supp}
\DeclareMathOperator{\tr}{tr}
\DeclareMathOperator{\Stab}{Stab}
\DeclareMathOperator{\codim}{codim}
\DeclareMathOperator{\Alb}{Alb}
\newcommand{\T}{\bbT} 
\newcommand{\abmoduli}{\bbA}
\newcommand{\tautlb}{\mathfrak L}
\newcommand{\semiabmoduli}{\bbG}
\newcommand{\univabvar}{\mathfrak A}
\newcommand{\univsemiab}{\sG}
\newcommand{\univsubvarofab}{\bbH}
\newcommand{\Tbun}{\sT} 
\newcommand{\aTbun}{\widehat{\Tbun}}
\newcommand{\toricbun}{\sX} 
\newcommand{\toricvar}{X} 
\newcommand{\arho}{\widehat{\rho}}
\newcommand{\essmin}{\zeta_{\textrm{ess}}}
\newcommand{\absmin}{\zeta_{\textrm{abs}}}
\DeclareSymbolFont{yhlargesymbols}{OMX}{yhex}{m}{n} \DeclareMathAccent{\yhwidehat}{\mathord}{yhlargesymbols}{"62}
\newcommand{\Qbar}{\ov{\mathbb{Q}}}
\newcommand{\Set}{\mathbf{Set}}
\newcommand{\NTht}{\widehat{h}}
\newcommand{\Falht}{h_{\textrm{Fal}}}
\newcommand{\Hilbert}{\underline{\mathbf{Hilb}}}
\newcommand{\resHilbert}{\Hilbert^\circ}
\newcommand{\Field}{K}
\newcommand{\adiv}{\widehat{\cdiv}}
\newcommand{\aDivR}{\widehat{\Div}_\bbR}
\newcommand{\aDivQ}{\widehat{\Div}_\bbQ}
\newcommand{\aPic}{\widehat{\Pic}}
\newcommand{\adeg}{\widehat{\deg}}
\newcommand{\GVFan}[2][]{\ensuremath{#2_{\GVF\ifx&#1& \else ,#1 \fi}}}
\newcommand{\GVFansca}[2][]{\ensuremath{#2^{\geq 0}_{\GVF\ifx&#1& \else ,#1 \fi}}}
\newcommand{\ov}{\overline}
\newcommand{\blank}{{-}}
\newcommand{\ktbar}{\ov{k(t)}}
\title[New gap principle for semiabelian varieties]{New gap principle for semiabelian varieties using globally valued fields}
\author{Nuno Hultberg}
\address{Institute for Theoretical Sciences, Westlake University, No. 600 Dunyu Road, Sandun town, Xihu district, Hangzhou, Zhejiang Province, 310030 China}
\email{\url{nuno.hultberg@outlook.com}}
\subjclass[2020]{Primary 14G40 Secondary 14K12, 11G10, 03C66}
\keywords{Bogomolov conjecture, semiabelian varieties, new gap principle, Arakelov geometry, globally valued fields, adelic curves}
\date{}
\begin{document}

\maketitle

\begin{abstract}
    Hrushovski observed that the new gap principle of Gao-Ge-K\"uhne is essentially equivalent to the Bogomolov conjecture over arbitrary globally valued fields of characteristic $0$. Building on this observation, we prove a new gap principle for semiabelian varieties by reducing the Bogomolov conjecture for semiabelian varieties to the Bogomolov conjecture for abelian varieties over arbitrary GVFs. This reduction remains valid in positive characteristic; however, the corresponding Bogomolov conjecture for abelian varieties is not yet known in that setting. We prove an unconditional new gap principle in positive characteristic for semiabelian varieties whose abelian quotient is an elliptic curve.
\end{abstract}

\tableofcontents

\section{Introduction}

Methods to prove geometric and relative versions of the Bogomolov conjecture have been closely related to uniform versions of the Bogomolov conjecture such as the new gap principle, see, for example, \cite{dgh_relativeimpliesuniform},\cite{cantat_gao_habegger_geometric_bogomolov} and \cite{mavraki_schmidt_dyn_Bogomolov_families}. The theory of globally valued fields and their ultraproducts allows one to make this relationship precise. This idea was put forward by Hrushovski in his talk for the conference "The Mordell conjecture 100 years later" in July 2024. In summary, the new gap principle for abelian varieties over $\Qbar$ is essentially equivalent to the Bogomolov conjecture for all globally valued fields (GVFs) of characteristic $0$.

The new gap principle for abelian varieties was proven in \cite{gao_ge_kuehne_uniform_mordell_lang} and used to deduce uniform Mordell-Lang in characteristic $0$. The main result of this article is the extension of the new gap principle to semiabelian varieties. This result has also been obtained independently in work in progress by other authors using different methods who use it to deduce the uniform Mordell-Lang conjecture for semiabelian varieties in characteristic $0$. Our approach has the advantage that it uses results on abelian varieties as a black box. This allows us to obtain partial results in positive characteristic.

Globally valued fields, defined by Ben Yaacov and Hrushovski, are fields together with the data allowing for a theory of heights over them. As such, they are closely related to proper adelic curves, introduced by Chen and Moriwaki, fields with a measure space of absolute values satisfying the product formula. The Bogomolov conjecture has been proven for adelic curves with a positive measure of archimedean absolute values by equidistribution methods in \cite{Adelic_curves_4}. Our first result is an extension of this theorem to all globally valued fields of characteristic $0$.

Let $K$ be a globally valued field whose underlying field is algebraically closed and $A$ an abelian variety over $K$. Denote by $\NTht$ the canonical height on $A$. If $K$ is non-archimedean, we denote by $k$ the subfield of elements of height $0$. Let $(A^{K/k}, \tr)$ denote the $K/k$-Chow trace of $A$. The trace is the universal abelian variety defined over $k$, equipped with a morphism $A^{K/k}\otimes K \to A$.

A closed integral subvariety of $A$ is called special if it is of the form
\[
    \tr(X\otimes K) + V + a,
\]
where $X$ is a subvariety of $A^{K/k}$, $V$ is an abelian subvariety of $A$ and $a\in A(K)$ is a point satisfying $\widehat{h}(a) = 0$. 

\textbf{Bogomolov Conjecture for abelian varieties (BC).} \textit{Any subvariety $X \subset A$ such that, for every $\epsilon > 0$, the set $\{x\in X(K)\ |\ \NTht(x) < \epsilon\}$ is Zariski dense is special. This is moreover equivalent to $\NTht(X) = 0$.}

Here $\NTht(X) = 0$ denotes the normalized N\'eron-Tate height of the variety $X.$

\begin{restatable}{rthm}{bogomolovabvarcharzero}
\label{thm:bogomolov_characteristic_0}
    $\mathrm{(BC)}$ is true for every abelian variety over a GVF of characteristic $0$.
\end{restatable}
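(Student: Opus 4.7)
The plan is to deduce the theorem from the uniform new gap principle of Gao--Ge--K\"uhne for abelian varieties over $\Qbar$, promoted to all GVFs of characteristic $0$ by a transfer principle in the spirit of Hrushovski's observation recalled in the introduction.

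First, by the classical equivalence between the density of small points in $X$ and the vanishing of $\widehat h(X)$, I would reduce to showing $\widehat h(X) > 0$ whenever $X \subset A$ is non-special. Next I would spread out: the data $(A,X)$ are cut out by finitely many equations over $K$, so they arise as the generic fibre of a family $\mathcal{A} \to S$, $\mathcal{X} \subset \mathcal{A}$, defined over $\Qbar$, where $S$ is an integral $\Qbar$-variety whose function field embeds as a sub-GVF of $K$. After separating out the Chow-trace part of the family, non-speciality of $X$ over $K$ propagates to non-speciality of the classical specialization $X_s$ for $s$ in a Zariski-dense subset of $S(\Qbar)$.

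The uniform new gap principle of \cite{gao_ge_kuehne_uniform_mordell_lang} then yields a single constant $c = c(\mathcal{A},\mathcal{X}) > 0$ with $\widehat h(X_s) \geq c$ for every such non-special specialization $s$. The final step is to turn this pointwise $\Qbar$-bound into a lower bound on the GVF-theoretic height $\widehat h(X)$, using the representation of the latter as an integral of local contributions against the measure of places of $K$, together with positivity of the measure of places at which the relevant specialization remains non-special.

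The main obstacle will be this last transfer step. A general GVF of characteristic $0$ is not itself an adelic curve with a positive archimedean part---to which the equidistribution techniques of \cite{Adelic_curves_4} would apply directly---so the transfer must be effected through the model-theoretic and ultraproduct framework of GVFs, together with careful bookkeeping of how special subvarieties and the Chow trace behave under specialization. Once this bookkeeping is in place, the uniform Gao--Ge--K\"uhne bound delivers the desired positivity, and with it the Bogomolov conjecture over $K$.
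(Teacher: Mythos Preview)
Your plan is in the right spirit but has two concrete gaps, and it diverges from the paper's actual route.

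\textbf{First gap: the transfer step is ill-posed.} You describe $\widehat h(X)$ as ``an integral of local contributions against the measure of places of $K$'' and then speak of ``positivity of the measure of places at which the relevant specialization remains non-special.'' But places of $K$ and specialization points $s\in S(\Qbar)$ are different objects; there is no map from one to the other. What the spreading-out actually gives you is that $\widehat h(X) = \phi(\langle \ov{\sM},\dots,\ov{\sM}\rangle_{\sX/S})$, where $\phi$ is the GVF functional on $\Frac(S)$ induced by $K$ and the bracket is the Deligne pairing. The quantities $\widehat h(X_s)$ for $s\in S(\Qbar)$ are values of the height function attached to this Deligne pairing for the \emph{standard} $\Qbar$-GVF, which has nothing a priori to do with $\phi$ (indeed, if $K$ is non-archimedean, $\phi$ restricts trivially to $\Qbar$). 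A uniform pointwise lower bound $\widehat h(X_s)\ge c$ does not by itself force $\phi(D)>0$ for an arbitrary nontrivial GVF functional $\phi$; what you need is that the Deligne pairing is \emph{big} as a geometric line bundle, and then Lemma~\ref{lemm:big_line_bundle_triviality_of_GVF} finishes. That is exactly what the paper does, and it is the missing ingredient in your sketch.

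\textbf{Second gap: propagation of non-speciality.} You assert that ``non-speciality of $X$ over $K$ propagates to non-speciality of $X_s$ for $s$ in a Zariski-dense subset of $S(\Qbar)$,'' after ``separating out the Chow-trace part.'' But the Chow trace is relative to $K/k$ where $k$ is the constant field of $K$; over $\Qbar$ there is no such $k$, and ``special'' simply means torsion coset. Making this propagation precise is exactly the content of the reduction to the traceless case, and it is not free.

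\textbf{What the paper does instead.} The paper splits off the archimedean case (handled by Chen--Moriwaki equidistribution over countable adelic curves, after a reduction using stability of the Ueno locus under field extension). In the non-archimedean case it first treats \emph{traceless} $A$ directly (Lemma~\ref{lemm:traceless_abelian}): spread out to $\sX\subset\sA\to S$, reduce so that the moduli map is generically finite, and then invoke only the \emph{potential non-degeneracy} result \cite[Lemma~3.4]{gao_ge_kuehne_uniform_mordell_lang}---not the full new gap principle---to conclude that the Deligne pairing $\langle\sM,\dots,\sM\rangle_{\sX/S}$ is big; Lemma~\ref{lemm:big_line_bundle_triviality_of_GVF} then gives $\widehat h(X)>0$. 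The passage from traceless to general $A$ is done separately (Lemma~\ref{lemm:product_isotrivial_abelian}) via the canonical GVF on $\ov{K(A)}$ and the isotrivial Bogomolov statement, rather than by tracking speciality under specialization to $\Qbar$-points.
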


This result provides a partial answer to \cite[Remark 5.3]{Adelic_curves_4} as any adelic curve gives rise to a globally valued field and the statement depends only on the GVF structure one obtains. We use this to rederive the new gap principle of Gao-Ge-K\"uhne and a geometric version of the new gap principle in characteristic $0$. It is not an independent proof as our proof of the geometric Bogomolov conjecture for non-archimedean GVFs uses their results on potential non-degeneracy.

The perspective of working over globally valued fields is helpful for our work on semiabelian varieties. Inspired by the proof of the geometric Bogomolov conjecture for semiabelian varieties of Luo and Yu, \cite{luo2025geometricbogomolovconjecturesemiabelian}, we reduce the Bogomolov conjecture for semiabelian varieties to the Bogomolov conjecture for abelian varieties in the setting of globally valued fields.

Let us give a short introduction to the canonical height on a semiabelian variety. We refer to \S \ref{sec:bogomolov} for details. Let $G$ be a semiabelian variety over $K$ with abelian quotient $\pi: G \to A$. Fix a canonically metrized ample symmetric line bundle $\ov{\sM}$ on $A$ with underlying line bundle $M$ and an integral polytope $\Delta \subset \bbR^n$ containing the origin in its interior. We define the canonical height $\widehat{h} = h_{\pi^*\ov{\sM}} + h_{\arho(\Delta)}$ on $G$ as the sum of an abelian and a toric contribution. Here $\arho(\Delta)$ denotes an adelic Cartier divisor on a compactification of $G$. Its underlying Cartier divisor is denoted $\rho(\Delta)$. It is obtained by equivariantly extending a canonically metrized toric divisor on a compactification of the torus part and admits an isometry $[n]^*\arho(\Delta) \cong \arho(\Delta)^{\otimes n}$. Explicitly, we may take $\Delta = [-1,1]^t$. Then, the Cartier divisor $\rho(\Delta)$ is defined on the compactification $\ov{G} = (G \times (\bbP^1)^t)/(\bbG_m^t)$, where $x \in \bbG_m^t$ acts as $(x,x^{-1})$, as $\sum^{t-1}_{i=0} (G \times (\bbP^1)^i\times([0] + [\infty])\times (\bbP^1)^{t-i-1})/(\bbG_m^t)$. $\arho(\Delta)$ is the unique adelic Cartier divisor extending $\rho(\Delta)$ such that the identification $[n]^*\arho(\Delta) \cong \arho(\Delta)^{\otimes n}$ is an isometry.

Let $(G^{K/k}, \tr)$ denote the $K/k$-Chow trace of $G$. The trace is the universal semiabelian variety defined over $k$ with a map $G^{K/k}\otimes K \to G$. Its existence was recently established in \cite[Corollary 2.4.4 (2)]{liu2024chowtrace1motiveslangneron}.\footnote{A treatment avoiding the existence of the Chow trace is possible, cf.\ \cite[\S 3.2]{luo2025geometricbogomolovconjecturesemiabelian}, but its existence simplifies the statements.} A closed integral subvariety $X\subset G$ is called special if 
\[
    \widetilde X:=X/\mathrm{Stab}(X)=\tr(X_0\otimes K)  + g,
\]
where $X_0$ is a subvariety of $\widetilde G^{K/k}$ and $g\in \widetilde G(F)$ satisfying $\widehat{h}(g) = 0$, where $\widetilde G:=G/\mathrm{Stab}(X)$ and $F$ is a GVF extension of $K$. Let $\pi:G \to A$ denote the projection to its abelian quotient.

\textbf{Bogomolov Conjecture for semiabelian varieties (BCSA).} \textit{Suppose there exists a GVF extension $F$ such that for every $\epsilon > 0$ the set $\{x\in X(F)\ |\ \NTht(x) < \epsilon\}$ is Zariski dense. Then, $X$ is special.}

Our main result is a reduction to the case of abelian varieties.

\begin{restatable}{rthm}{bcsa}
\label{thm:bcsa}
    Let $G$ be a semiabelian variety with abelian quotient $A$. If $\mathrm{(BC)}$ holds for $A$, then $\mathrm{(BCSA)}$ holds for $G$.

    In particular, $\mathrm{(BCSA)}$ holds over all GVFs of characteristic $0$. Furthermore, $\mathrm{(BCSA)}$ holds over GVFs of any characteristic for semiabelian varieties $G$ whose abelian quotient is an elliptic curve.
\end{restatable}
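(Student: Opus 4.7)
The plan exploits the additive height decomposition $\NTht = h_{\pi^*\ov{\sM}} + h_{\widehat{\rho}(\Delta)}$, whose two summands are individually non-negative by the choice of polarization and because $\Delta$ contains the origin in its interior. Thus if $X(F)$ contains, for every $\epsilon>0$, a Zariski-dense set of points with $\NTht < \epsilon$, the same holds for each summand separately. In particular $\pi(X)\subset A$ has Zariski-dense small points for the abelian canonical height $h_{\ov{\sM}}$, so by the hypothesized $(\mathrm{BC})$ on $A$ we obtain $\pi(X)=\tr(Y_0\otimes K)+V+a$ with $V$ an abelian subvariety of $A$, $Y_0\subset A^{K/k}$, and $\NTht(a)=0$.

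I would first reduce to the case $\Stab(X)=0$ by replacing $G$ with $G/\Stab(X)^{\circ}$ and $X$ with its image: the induced extension $1\to \widetilde T\to\widetilde G\to\widetilde A\to 0$, the height decomposition, and the notion of special all descend compatibly. After enlarging $F$ so that $-a$ admits a height-zero lift $g_0\in \widetilde G(F)$ and translating by $g_0$, we may further assume that $\pi(\widetilde X)=\tr(Y_0\otimes K)+V$ passes through $0\in \widetilde A$. The goal is then to show that $\widetilde X=\tr(X_0\otimes K)+g$ for some $X_0\subset \widetilde G^{K/k}$ (whose existence is guaranteed by \cite{liu2024chowtrace1motiveslangneron}) and some $g\in\widetilde G(F)$ with $\NTht(g)=0$.

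The heart of the argument is the torus direction, which I would treat by transporting the geometric argument of Luo and Yu \cite{luo2025geometricbogomolovconjecturesemiabelian} into the GVF language. Because $\widehat\rho(\Delta)$ is explicit and $\widetilde T$-equivariant, the smallness of $h_{\widehat\rho(\Delta)}$ on a Zariski-dense subset of $\widetilde X$, once the stabilizer has been trivialized, forces the torus direction of $\widetilde X$ to be rigid. More precisely, one extracts a ``multiplicative Bogomolov statement relative to the trace'' for tori that holds over arbitrary GVFs, and combines it with the already-established special form of $\pi(\widetilde X)$ to produce the required presentation $\widetilde X=\tr(X_0\otimes K)+g$.

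The only place where $\operatorname{char} K = 0$ is used is the invocation of $(\mathrm{BC})$ for $A$, via Theorem \ref{thm:bogomolov_characteristic_0}. When the abelian quotient is an elliptic curve, $(\mathrm{BC})$ is classically available in any characteristic through equidistribution on elliptic curves (and a suitable GVF reformulation in the spirit of Theorem \ref{thm:bogomolov_characteristic_0}), yielding the unconditional second assertion. The main obstacle is the toric step: adapting Luo-Yu to the GVF setting requires careful control of the Chow trace in the torus direction and of how the torus and abelian contributions glue into a single height-zero point of $\widetilde G(F)$. Morally this step is easier than abelian Bogomolov, since tori split after a finite extension and $\widehat\rho(\Delta)$ admits an explicit presentation, but the combinatorial-arithmetic bookkeeping is where the real work lies.
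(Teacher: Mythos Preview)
Your outline agrees in broad strategy with the paper (reduce the stabilizer, push down to $A$, then handle the torus direction following Luo--Yu), but it is a plan rather than a proof: the entire technical content is hidden in the phrase ``transport the geometric argument of Luo and Yu into the GVF language,'' and this is precisely the part that is non-trivial. Two concrete points where your sketch diverges from what actually has to be done:

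\textbf{(1) The torus step is not a rigidity statement about $X$ but about $G$.} You write that smallness of $h_{\widehat\rho(\Delta)}$ ``forces the torus direction of $\widetilde X$ to be rigid.'' The paper does something structurally different: it uses the \emph{relative Faltings--Zhang map} $\alpha_n:G^n_{/A}\to\bbG_m^{t(n-1)}\times A$ on fibre powers of $X$ to produce a small subvariety whose projection to $\bbG_m^N\times A$ is generically finite. This data is then reinterpreted as a height-$0$ meromorphic multisection of the $\bbG_m^t$-bundle $G\to A$, and one concludes that $G$ itself is isogenous to a quasi-split semiabelian variety. Only afterwards is Bogomolov proved in the quasi-split case. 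Your proposal skips this mechanism entirely.

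\textbf{(2) The genuine new obstacle over Luo--Yu is not bookkeeping.} Over a polarized function field, height-$0$ points are torsion (dynamical Northcott), which is what makes Luo--Yu work. Over a general GVF this fails, and the paper has to compute the constant field of the canonical GVF on $\overline{K(A)}$, reduce to the case of an elliptic curve, and run an explicit argument with Green's functions on the skeleton (via the second Bernoulli polynomial) together with equidistribution to show the relevant numerically trivial line bundle is torsion. None of this appears in your sketch.

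Finally, on the elliptic-quotient case: $(\mathrm{BC})$ for an elliptic curve does not need equidistribution or any GVF reformulation. A proper subvariety of a curve is a finite set, and a height-$0$ point is tautologically special; so $(\mathrm{BC})$ is vacuous in dimension one.
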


As a consequence, we deduce a new gap principle for semiabelian varieties. Let $G$ be a semiabelian variety over a GVF $K$ together with an identification of its torus part $\bbT \cong \bbG_m^t$ and an ample symmetric line bundle $M$ on its abelian quotient $A$. Due to the Barsotti-Weil formula, the semiabelian variety $G$ gives rise to points $Q_1, \dots, Q_n \in A^\vee(K)$. We associate to $G$ the height $h = \Falht + \NTht_{\textrm{NT}}(Q_1) + \dots+\NTht_{\textrm{NT}}(Q_t)$, where $\NTht_{\textrm{NT}}$ denotes the N\'eron-Tate height on $A^\vee$. Fix an integral polytope $\Delta \subset \bbR^n$ containing the origin in its interior. The above data determine a choice of canonical height $\NTht = h_{\pi^*\ov{\sM} + \arho(\Delta)}$ on $G$.

\textbf{New gap principle for semiabelian varieties.} \textit{There exist positive constants $c_1 = c_1(\dim G, \deg_{\pi^*M + \rho(\Delta)} X)$ and $c_2 = c_2(\dim G, \deg_{\pi^*M + \rho(\Delta)} X)$ such that
    \[
        \left\{P \in X(K)\ |\ \NTht(P) < c_1 \max\{\height(2),h(G)\}\right\}
    \]
is contained in a proper Zariski closed subset $X' \subsetneq X$ with $\deg_{\pi^*M + \rho(\Delta)}(X') < c_2$ for all semiabelian varieties $G$ over a GVF $K$ and closed subvarieties $X \subset G$ with finite stabilizer such that $X - X$ generates $G$.}

Hrushovski's observation for abelian varieties is equally valid over semiabelian varieties.

\begin{restatable}{rprop}{bogimpliesngp}\label{prop:bogimpliesngp}
    The Bogomolov conjecture for semiabelian varieties over all GVFs implies the new gap principle.
\end{restatable}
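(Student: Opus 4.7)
The plan is to argue by contradiction via a compactness (ultraproduct) argument in the category of globally valued fields, paralleling Hrushovski's analogous reduction for abelian varieties cited in the introduction.

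Suppose the new gap principle fails: there exist $d, D$ such that no pair $(c_1, c_2)$ works. For each $n \in \bbN$, choose a counterexample $(K_n, G_n, X_n)$ at $(c_1, c_2) = (1/n, n)$: a GVF $K_n$, a semiabelian $G_n/K_n$ of dimension $d$, and $X_n \subset G_n$ of degree $\leq D$ with finite stabilizer and $X_n - X_n$ generating $G_n$, such that the small-point set
\[
S_n := \{P \in X_n(K_n) : \widehat{h}_n(P) < M_n/n\}, \qquad M_n := \max\{h_n(2), h_n(G_n)\},
\]
is not contained in any proper closed $X' \subsetneq X_n$ with $\deg X' < n$. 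Necessarily $M_n > 0$ (else $S_n$ is empty). Rescaling the GVF structure on $K_n$ by $1/M_n$ preserves the product formula and gives $\max\{h_n(2), h_n(G_n)\} = 1$ in the new scale, with $\widehat{h}_n(P) < 1/n$ for $P \in S_n$.

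Next, I form the ultraproduct $K^* := \prod_\cU K_n$ along a non-principal ultrafilter $\cU$ on $\bbN$. By the Ben Yaacov--Hrushovski theory, $K^*$ is a GVF, and the ultralimits of $G_n, X_n$ give a semiabelian $G^*/K^*$ of dimension $d$ and a closed $X^* \subset G^*$ of degree $\leq D$, with finite stabilizer and $X^* - X^*$ generating $G^*$ (all continuous-logic properties); the normalization gives $\max\{h^*(2), h^*(G^*)\} = 1$. For any proper closed $Y^* \subsetneq X^*$, I lift its defining equations to $Y_n \subsetneq X_n$ of degree bounded independently of $n$; for $n$ large, the counterexample property yields $P_n \in S_n \setminus Y_n$, whose ultraclass $P^* := [(P_n)] \in X^*(K^*) \setminus Y^*$ satisfies $\widehat{h}^*(P^*) = 0$. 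Hence the height-zero points of $X^*$ are Zariski dense.

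By the assumed (BCSA), $X^*$ is special: $\widetilde X^* = \tr(X_0^* \otimes K^*) + g$ with $g$ of height zero in some GVF extension of $K^*$. The generating hypothesis and finite stabilizer force $\widetilde X^* - \widetilde X^* \subset \tr(\widetilde G^{*, K^*/k^*} \otimes K^*)$ to generate $\widetilde G^*$, so $G^*$ is isotrivial, giving $h^*(G^*) = 0$ and, by the normalization, $h^*(2) = 1$. Since this structural decomposition is a definable property of bounded complexity in continuous logic, it descends to the $X_n$ for ultrafilter-many $n$, yielding uniformly bounded $X_{0,n}, g_n$ exhibiting an analogous decomposition with the ambient $G_n$ isotrivial up to controlled complexity. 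Applying the classical uniform Bogomolov conjecture for abelian varieties (itself a consequence of (BCSA) for abelian varieties, by the same compactness mechanism) to the abelian parts of the $X_{0,n}$ yields a proper closed $X' \subsetneq X_n$ of uniformly bounded degree containing $S_n$, contradicting the counterexample property for $n$ sufficiently large. The hard part of the argument is this final descent step: converting the qualitative specialness of $X^*$ in the ultraproduct into a uniform quantitative bound on the degree of an exceptional subvariety of each $X_n$. This is a standard feature of GVF-based compactness proofs but requires the Ben Yaacov--Hrushovski definability machinery, together with the uniform classical Bogomolov bound extracted from (BCSA) in the purely abelian case.
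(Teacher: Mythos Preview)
Your compactness strategy matches the paper's, but the endgame has a genuine gap. After obtaining $X^* \subset G^*$ with Zariski-dense height-zero points, the paper runs the contrapositive: the normalization ensures $X^*$ is \emph{not} special (it has finite stabilizer, generates $G^*$, and the limit point has height $1$ for an arithmetically ample line bundle on the parameter space, so the pair is not isotrivial), whence BCSA gives $\essmin > 0$, contradicting the construction via continuity of the parametrized minima (Lemma~\ref{lemm:parmin_ultraproducts}). You instead accept that $X^*$ \emph{is} special and attempt to descend this to the $X_n$. That descent is unjustified: specialness involves the Chow trace over the constant field, which is not a definable property in continuous logic, so the claimed transfer to ultrafilter-many $n$ does not follow; and your appeal to ``classical uniform Bogomolov'' is essentially the abelian case of the very statement you are proving. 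In fact your own chain ``$X^*$ special $\Rightarrow G^*$ isotrivial $\Rightarrow h^*(G^*)=0 \Rightarrow h^*(2)=1$'' already hides the contradiction, since the first implication presupposes $K^*$ non-archimedean (hence $h^*(2)=0$, not $1$); and if instead $K^*$ is archimedean, there is no trace, specialness forces $\widetilde X^*$ to be a single point, so $X^*$ is finite and $X^*-X^*$ cannot generate a positive-dimensional $G^*$. Either way you are done before any descent.

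A second, more structural omission is the moduli-space setup. Without embedding all $(G_n, X_n)$ as points of bounded height in a common finite-type Hilbert scheme over $\Spec\bbZ$, as the paper does via $\univsubvarofab_{g,t,l}$, the ultralimits $G^*, X^*$ are not a priori varieties with the claimed invariants, and properties like ``finite stabilizer'' or ``$X-X$ generates $G$'' need not pass to the ultraproduct (stabilizer orders or the number of summands needed could be unbounded across the $X_n$). The paper handles this by noetherian induction over an irreducible component $T$ of the parameter space and by arranging the counterexamples to be Zariski-dense in $T$, so that their ultralimit is the generic point of $T$ and the generic-fibre hypotheses apply directly.
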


Our partial results on the Bogomolov conjecture allow us to obtain several conclusions.

\begin{restatable}[New gap principle over $\Qbar$]{rthm}{ngpqbar}\label{thm:ngpqbar}
    There exist positive constants $c_1 = c_1(\dim G, \deg_{\pi^*M + \rho(\Delta)} X)$ and $c_2 = c_2(\dim G, \deg_{\pi^*M + \rho(\Delta)} X)$ with the property
    \[
        \left\{P \in X(\Qbar)\ |\ \NTht(P) \leq c_1 \max\{1,h(G)\}\right\}
    \]
    is contained in some Zariski closed $X' \subsetneq X$ with $\deg_{\pi^*M + \rho(\Delta)}(X') < c_2$ for all semiabelian varieties $G$ over $\Qbar$ and closed subvarieties $X \subset G$ with finite stabilizer such that $X - X$ generates $G$.
\end{restatable}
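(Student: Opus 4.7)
The plan is to obtain Theorem~\ref{thm:ngpqbar} as a direct specialization of the general new gap principle over arbitrary GVFs. The mathematical content is already in place: Theorem~\ref{thm:bcsa} establishes $\mathrm{(BCSA)}$ for every semiabelian variety over a GVF of characteristic $0$, and Proposition~\ref{prop:bogimpliesngp} promotes any such Bogomolov statement to the uniform new gap principle, with constants $c_1, c_2$ depending only on $\dim G$ and $\deg_{\pi^*M + \rho(\Delta)} X$. Combining these, applied to the GVF $\Qbar$ (together with any GVF extension used inside Proposition~\ref{prop:bogimpliesngp}), yields positive constants such that
\[
    \{P \in X(\Qbar) : \NTht(P) < c_1 \max\{\height(2), h(G)\}\}
\]
is contained in some $X' \subsetneq X$ with $\deg_{\pi^*M + \rho(\Delta)}(X') < c_2$, for all pairs $(G, X)$ satisfying the hypotheses.

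It then remains to pass from $\max\{\height(2), h(G)\}$ to $\max\{1, h(G)\}$ and from the strict to the non-strict inequality. Over $\Qbar$ one has $\height(2) = \log 2$, a fixed positive constant, so a short case analysis on whether $h(G) \geq 1$, $\log 2 \leq h(G) < 1$, or $h(G) < \log 2$ will give the uniform bound
\[
    \max\{1, h(G)\} \leq (\log 2)^{-1} \max\{\log 2, h(G)\}.
\]
Replacing $c_1$ by any constant strictly smaller than $c_1 \log 2$ then converts the inclusion supplied by the general principle into the form claimed in Theorem~\ref{thm:ngpqbar}, and the same shrinkage simultaneously absorbs the upgrade from the strict inequality $<$ to $\leq$.

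Since all substantive work is delegated to Theorem~\ref{thm:bcsa} and Proposition~\ref{prop:bogimpliesngp}, there is no genuine obstacle at this last step; the argument is essentially bookkeeping. The only point requiring attention is to confirm that the dependence of $c_1, c_2$ on $(\dim G, \deg_{\pi^*M + \rho(\Delta)} X)$ coming from Proposition~\ref{prop:bogimpliesngp} is genuinely uniform across the ambient GVF, so that no constant covertly depends on $\Qbar$ itself. This uniformity is inherent to the compactness-style passage from $\mathrm{(BCSA)}$ to the uniform new gap principle, but it is worth flagging explicitly at the moment of invocation.
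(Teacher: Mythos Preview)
Your proposal is correct and matches the paper's approach: the paper deduces Theorem~\ref{thm:ngpqbar} directly from Proposition~\ref{prop:bogimpliesngp} together with Theorem~\ref{thm:bcsa}, noting only that ultraproducts of the GVFs $\Qbar[\lambda]$ remain of characteristic $0$. Your explicit bookkeeping converting $\max\{\height(2),h(G)\}$ to $\max\{1,h(G)\}$ and the strict to non-strict inequality is sound and simply spells out what the paper leaves implicit.
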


For non-archimedean GVFs we can also formulate a new gap principle.

\begin{restatable}[New gap principle in big characteristic]{rthm}{ngpbigchar}\label{thm:ngpbigchar}
    Let $n$ and $d$ be natural numbers. Then there exist positive constants $c_1, c_2$ and $c_3$ depending only on $n$ and $d$ with the property
    \[
        \left\{P \in X(K)\ |\ \NTht(P) \leq c_1 h(G)\right\}
    \]
    is contained in some Zariski closed $X' \subsetneq X$ with $\deg_{\pi^*M + \rho(\Delta)}(X') < c_2$ for all semiabelian varieties $G$ of dimension $n$ over a GVF $K$ of characteristic $0$ or positive characteristic $>c_3$ and closed subvarieties $X \subset G$ of degree $\deg_{\pi^*M + \rho(\Delta)} X = d$ with finite stabilizer such that $X - X$ generates $G$.
\end{restatable}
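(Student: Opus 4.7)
The plan is to argue by compactness in the theory of globally valued fields, deducing the positive-characteristic statement from the characteristic $0$ case via an ultraproduct of GVFs whose residue characteristics tend to infinity.

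First, I would combine Theorem \ref{thm:bcsa} with Proposition \ref{prop:bogimpliesngp} to obtain the new gap principle over every GVF of characteristic $0$. This yields constants $c_1 = c_1(n,d)$ and $c_2 = c_2(n,d)$, depending only on $n = \dim G$ and $d = \deg_{\pi^*M + \rho(\Delta)} X$, such that for every GVF $K$ of characteristic $0$, every semiabelian variety $G/K$ of dimension $n$, and every subvariety $X \subset G$ of degree $d$ with finite stabilizer and $X - X$ generating $G$, the set $\{P \in X(K)\ |\ \NTht(P) \le c_1 h(G)\}$ is contained in a proper closed $X' \subsetneq X$ of degree $< c_2$.

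Next, suppose for contradiction that no choice of $c_3$ works with these $c_1(n,d)$ and $c_2(n,d)$. Then there is a sequence of primes $p_i \to \infty$ and counterexamples $(K_i, G_i, X_i)$ with $K_i$ of characteristic $p_i$, $\dim G_i = n$, and $\deg X_i = d$, violating the conclusion. A key reduction is that the failure can be witnessed finitely: the Hilbert scheme of proper closed subvarieties of $X_i$ of degree $< c_2$ is a bounded family (in terms of $n, d, c_2$), so there are only finitely many such subvarieties $X'_{i,1}, \dots, X'_{i,r_i}$ with $r_i$ bounded. Failure of the gap principle therefore produces witness points $P_{i,j} \in X_i(K_i) \setminus X'_{i,j}$ with $\NTht(P_{i,j}) \le c_1 h(G_i)$.

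Taking a non-principal ultraproduct $K := \prod_i K_i / \mathcal U$, we obtain a GVF of characteristic $0$ (since $p_i \to \infty$). The varieties $G_i, X_i$, their polarizations, stabilizers, and the generation condition, ultraproduce to a semiabelian variety $G$ of dimension $n$ over $K$ and a subvariety $X \subset G$ of degree $d$ with finite stabilizer and $X - X$ generating $G$. The canonical height $\NTht$ and the height $h(G)$, defined uniformly from $(\pi, \ov{\sM}, \arho(\Delta))$ and the Barsotti--Weil data, transfer to the ultraproduct, so by \L o\'s's theorem the ultralimits of the witness points yield points in $X(K)$ of $\NTht$-height at most $c_1 h(G)$ that collectively avoid every proper subvariety of $X$ of degree $< c_2$, contradicting the characteristic $0$ gap principle obtained in the first step.

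The hard part will be carrying out the ultraproduct rigorously in the GVF setting: one must verify that the GVF structure, the canonical height function, the degree, and the Hilbert-scheme parametrization of low-degree subvarieties all transfer through the ultraproduct in the appropriate continuous-logic sense, and that the reduction to a finite witness list is uniform in the sequence. These are the compactness phenomena that the Ben Yaacov--Hrushovski formalism for GVFs is designed to handle, and I would follow the ultraproduct framework already invoked in the proofs of Theorem \ref{thm:bogomolov_characteristic_0} and Proposition \ref{prop:bogimpliesngp}.
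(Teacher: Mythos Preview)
Your overall strategy—take counterexamples in characteristics $p_i \to \infty$, form an ultraproduct to land in characteristic $0$, and invoke the characteristic $0$ result—is the paper's approach. The paper carries this out inside the single proof of Proposition \ref{prop:bogimpliesngp}: one argues by contradiction on a stratum $T$ of the Hilbert scheme $\univsubvarofab$, rescales the GVF structures so that the moduli points have height $1$, and passes to an ultraproduct where the Bogomolov conjecture (Theorem \ref{thm:bcsa}) gives the contradiction. For Theorem \ref{thm:ngpbigchar} specifically one only needs the observation that an ultraproduct of GVFs whose characteristics are $0$ or tend to infinity has characteristic $0$. Your two-step variant (first establish the characteristic $0$ gap principle, then ultraproduct again) is a harmless reorganisation of the same argument.

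There is, however, a genuine error in your ``finite witness'' step. You assert that because the Hilbert scheme of subvarieties of $X_i$ of degree $< c_2$ is a bounded family, there are only boundedly many such subvarieties $X'_{i,1},\dots,X'_{i,r_i}$. This is false: a bounded (finite-type) family is typically positive-dimensional, so there are in general infinitely many closed subvarieties of $X_i$ of each admissible degree (think of translates of a fixed curve in an abelian variety). You therefore cannot produce a finite list of witness points $P_{i,j}$ avoiding each candidate and then ultraproduct them. The paper gets around this via the parametrized minimum $\zeta(s,\ov{\sL},d,N,C)$ and Lemma \ref{lemm:parmin_ultraproducts}: one restricts to candidate subvarieties of bounded degree \emph{and} bounded height, so that they correspond to a height-bounded region of the GVF analytification of the Hilbert scheme, and then the statement ``the small points are not contained in any such subvariety'' becomes a sup--inf of continuous height functions that passes through the ultraproduct. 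A second omission is rescaling: the quantities $h(G_i)$ need not be bounded, so neither the moduli points $[G_i,X_i]$ nor your witness points have bounded height and hence do not survive the GVF ultraproduct as written. The paper rescales the GVFs by $h_{\ov{\sL}}(t_i)^{-1}$ before taking the ultraproduct, forcing $h_{\ov{\sL}}(t)=1$ in the limit; without this normalisation your \L o\'s step does not go through.
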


In particular, the new gap principle for semiabelian varieties holds in characteristic $0$. It is also possible to obtain results in positive characteristic provided one can prove the corresponding Bogomolov conjecture.

\begin{restatable}{rthm}{ngpellipticquotient}\label{thm:ngpellipticquotient}
    The new gap principle holds in all characteristics for semiabelian varieties whose abelian quotient is an elliptic curve with constants independent of the characteristic.
\end{restatable}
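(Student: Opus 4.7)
The theorem combines two ingredients: Proposition \ref{prop:bogimpliesngp}, which converts (BCSA) for a given family of GVFs into a new gap principle for that family, and the second assertion of Theorem \ref{thm:bcsa}, which gives (BCSA) over \emph{all} GVFs for semiabelian varieties whose abelian quotient is an elliptic curve, irrespective of characteristic. My plan is to restrict the compactness argument behind Proposition \ref{prop:bogimpliesngp} to the class $\mathcal{C}$ of semiabelian varieties of dimension $n$ with elliptic abelian quotient, and invoke the unconditional (BCSA) for $\mathcal{C}$ to remove the characteristic threshold $c_3$ appearing in Theorem \ref{thm:ngpbigchar}.

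Concretely, I argue by contradiction. If no constants $c_1(n,d)$ and $c_2(n,d)$ independent of the characteristic worked, there would be sequences $c_1^{(j)} \to 0$ and $c_2^{(j)} \to \infty$ and counterexamples $(K_j, G_j, X_j)$, with $K_j$ a GVF of arbitrary characteristic, $G_j/K_j$ of dimension $n$ with elliptic abelian quotient, and $X_j \subset G_j$ of degree $d$ with finite stabilizer generating $G_j$, such that the $c_1^{(j)} h(G_j)$-small-height locus of $X_j$ is not contained in a proper Zariski closed subset of $X_j$ of degree below $c_2^{(j)}$. Forming the GVF ultraproduct, one obtains $(K^\ast, G^\ast, X^\ast)$ in which the small-height locus is Zariski dense in $X^\ast$, while $X^\ast$ is not special. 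Since having elliptic abelian quotient is a first-order condition, $G^\ast \in \mathcal{C}$, contradicting Theorem \ref{thm:bcsa} irrespective of the characteristic of $K^\ast$.

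The main obstacle is the bookkeeping for the ultraproduct: verifying that the compactifications of the $G_j$, the canonically metrized line bundles $\pi_j^\ast \ov{\sM}_j$, the toric contributions $\arho(\Delta)$, the height functions, and the failure of Zariski-closability of the small-height locus all transport coherently to $K^\ast$. This is precisely the verification already carried out in the proofs of Proposition \ref{prop:bogimpliesngp} and Theorem \ref{thm:ngpbigchar}; no new difficulty arises from restricting to elliptic abelian quotients, and the positive-characteristic case of the limit GVF is no longer an obstruction, since (BCSA) is now available for every $G^\ast \in \mathcal{C}$.
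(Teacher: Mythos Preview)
Your proposal is correct and follows essentially the same approach as the paper: both restrict the compactness/ultraproduct argument of Proposition \ref{prop:bogimpliesngp} to the class of semiabelian varieties with elliptic abelian quotient and invoke the unconditional (BCSA) for this class from Theorem \ref{thm:bcsa}. The only cosmetic difference is that the paper phrases the restriction as working inside the moduli space $\semiabmoduli_{1,t,d,N}$ of semiabelian varieties with elliptic quotient, whereas you phrase it as the first-order preservation of ``elliptic abelian quotient'' under ultraproducts; these amount to the same thing.
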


The proof of $\mathrm{BCSA}$ is inspired by the proof of the geometric Bogomolov conjecture for semiabelian varieties in \cite{luo2025geometricbogomolovconjecturesemiabelian}. As in their paper we first prove the Bogomolov conjecture for quasi-split semiabelian varieties, products of an abelian variety and an isotrivial variety, and then reduce to this case. However, the individual steps present several added difficulties the most remarkable being the existence of points of height $0$ that are not torsion. We overcome these difficulties by reducing to the case that the abelian quotient is an elliptic curve assuming the Bogomolov conjecture for the abelian quotient and then using properties of the equidistribution measure on the abelian quotient.

The reduction of the new gap principle to the Bogomolov conjecture relies on degenerations along ultrafilters or equivalently compactness in continuous model theory. This strategy was proposed by Hrushovski in the global setting. A similar approach of degenerating along ultrafilters has been used in the local setting by Yusheng Luo, cf.\ \cite{luo_trees_lengths-spectra} and \cite{luo_limits_of_rational_maps}. It was later formalized in the setting of Berkovich spaces by Favre-Gong, cf.\ \cite{favre_gong_dynamical_degenerations}. In recent work of Yap, \cite{yap2025numbersmallpointsrational}, this has been used to prove a uniformity result in arithmetic dynamics.

The non-archimedean new gap principle in characteristic $0$ for abelian varieties has been proven independently by Mavraki and Yap in \cite{mavraki2025quantitativedynamicalzhangfundamental}. It is possible to deduce such a result formally from the new gap principle over $\Qbar$ in \cite{gao_ge_kuehne_uniform_mordell_lang} using compactness arguments in continuous model theory. Both our works use only the results on potential non-degeneracy found in \cite{gao_ge_kuehne_uniform_mordell_lang}. Potential non-degeneracy may be rephrased as the statement that a subvariety $X$ of a traceless abelian variety over a geometric GVF of characteristic $0$ is special if and only if $\NTht(X) = 0$, cf.\ Lemma \ref{lemm:traceless_abelian}. In the present article the deduction of the non-archimedean new gap principle relies on the proof that this is furthermore equivalent to the existence of a Zariski dense set of small points. In \cite{mavraki2025quantitativedynamicalzhangfundamental}, the deduction relies on a uniform Zhang inequality for dynamical systems. 

The two approaches are closely related. Our results rely on Zhang inequalities over GVFs. For this, we introduce a better-behaved notion of successive minima. This both rectifies the failure of Zhang’s inequalities observed in \cite{guo2025nefconesuccessiveminima} and provides a conceptual explanation of that failure. For our alternative definition of successive minima the Zhang inequalities hold. The height of a variety $X$ is normalized as $h_{\sL}(X) = \frac{(\ov{\sL})^{\dim X + 1}}{(\dim X + 1) (\sL)^{\dim X}}$.

\begin{restatable}[Proposition \ref{prop:lower_zhang_ineq} and \ref{prop:zhang_inequality}]{rprop}{gvfzhangineq}
    Let $\sL$ be a geometrically ample semipositive adelic line bundle on a projective variety $X$ over a GVF $K$. Then,
    \[
        \frac{d}{d+1}\absmin(\sL)+ \frac{1}{d+1}\essmin(\sL) \leq h_{\sL}(X)  \leq \essmin(X).
    \]
\end{restatable}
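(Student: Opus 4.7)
The strategy is to handle the two inequalities separately, using arithmetic intersection theory on $X$ over the GVF $K$.

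For the upper bound $h_\sL(X) \leq \essmin(\sL)$, the plan is to combine the defining property of $\essmin$ with an arithmetic pseudoeffectivity criterion: if $\essmin(\ov\sL) < 0$, i.e. a Zariski dense set of points has negative height, then $(\ov\sL)^{d+1} \leq 0$. Applied to the shift $\ov\sL \otimes \mathcal O(-\lambda)$ for any $\lambda > \essmin(\sL)$, this yields
\[
(\ov\sL)^{d+1} - (d+1)\lambda\,(\sL)^d \leq 0,
\]
so $h_\sL(X) \leq \lambda$; letting $\lambda$ approach $\essmin(\sL)$ gives the bound. Verifying the pseudoeffectivity criterion in the GVF setting rests on an arithmetic Hilbert--Samuel / volume estimate, for which the geometric ampleness and semipositivity of $\sL$ is crucial.

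For the lower bound, the natural route is induction on $d = \dim X$. Translating the metric by $\absmin(\sL)$ — replacing $\ov\sL$ with $\ov\sL \otimes \mathcal O(-\absmin(\sL))$ — reduces to $\absmin(\sL)=0$, in which case the target simplifies to $h_\sL(X) \geq \essmin(\sL)/(d+1)$. The base case $d=0$ is tautological. For the inductive step, arithmetic Hilbert--Samuel over the GVF should produce, for large $N$, a global section $s$ of $\sL^{\otimes N}$ whose divisor $D$ meets $X$ properly, avoids the proper subvariety witnessing $\essmin(X)$, and satisfies $\log\|s\|_v \leq 0$ at every place $v$. The arithmetic projection formula
\[
(\ov\sL)^{d+1} \;=\; \tfrac{1}{N}(\ov\sL|_D)^d \;-\; \tfrac{1}{N}\int_X \log\|s\|\; c_1(\ov\sL)^d
\]
together with the bound on the sup-norm of $s$ then reduces the estimate on $h_\sL(X)$ to an estimate on $h_{\sL|_D}(D)$, to which the inductive hypothesis applies. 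Since $\absmin(D) \geq \absmin(X) = 0$ and the choice of $D$ forces $\essmin(D) \geq \essmin(X)$, one recovers the weighted mean $\tfrac{d}{d+1}\absmin(\sL) + \tfrac{1}{d+1}\essmin(\sL)$.

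The main obstacle will be producing the divisor $D$ with the stated simultaneous control on sup-norm and on $\essmin(D)$: this is a GVF-adapted Bertini/small-sections argument, and is precisely where the modified notion of successive minima does the work, since it is the failure of the naive definition to satisfy $\essmin(D) \geq \essmin(X)$ that explains the counterexamples in \cite{guo2025nefconesuccessiveminima}. A secondary difficulty is verifying that the arithmetic projection formula with its integral boundary term makes sense and carries the expected sign in the GVF setting, which again relies on semipositivity.
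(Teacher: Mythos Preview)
Your upper bound argument is close in spirit to the paper's, though the paper follows Chen--Moriwaki's slope formalism more explicitly: it reduces to showing $\adeg(V)\leq\essmin(\ov\sL)+\epsilon$ for each subspace $V\subset H^0(X,\ov\sL)$ via evaluation at a Zariski-dense set of small points, which may lie over a GVF extension $F$; one then passes to a countable subfield containing these points, represents it by an adelic curve, and runs the argument of \cite[Proposition 6.4.4]{Adelic_curves_1}. Your shifting formulation is a standard repackaging of the same content.

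For the lower bound, however, the paper takes an entirely different route and does \emph{not} redo Zhang's inductive small-sections argument over a general GVF. Instead it spreads the data $(X,\ov\sL)$ out to a family $\sX\to S$ over $\Qbar$ or $\ktbar$ with $\Frac(S)=K$, uses existential closedness of $\Qbar$ and $\ktbar$ to approximate the tautological point $s\in S(K)$ by classical points $s_i$, and then invokes the known Zhang inequality over $\Qbar$ or $\ktbar$ at each $s_i$. Passing to the ultraproduct limit, the height term converges (it is the height for the Deligne pairing, hence continuous on $\GVFan{S}$), while $\absmin$ converges and $\essmin$ is lower-semicontinuous by Lemma~\ref{lemm:parmin_ultraproducts}; this semicontinuity goes in exactly the direction needed for the lower bound. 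The whole proof is two lines once that lemma and existential closedness are in place.

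Your direct inductive approach might be salvageable, but the obstacle you flag is sharper than you indicate. You need $\essmin(\ov\sL|_D)\geq\essmin(\ov\sL|_X)$, and with the GVF definition this means that for \emph{every} extension $F'$, some witnessing subvariety $Z\subsetneq X_{F'}$ for $\essmin(X)$ avoids $D_{F'}$. Since the bad locus can genuinely depend on $F'$ and is not defined over $K$, choosing $D$ generically over $K$ does not obviously suffice; one would have to first pass to a well-chosen extension $F$, run arithmetic Hilbert--Samuel over an adelic curve representing a countable subfield of $F$, and construct the small section there. This is plausible but laborious. The paper's reduction via existential closedness sidesteps the issue completely, and indeed illustrates the philosophy advertised in the introduction: uniform statements over $\Qbar$ and statements over arbitrary GVFs are interchangeable via ultraproducts, so one should transfer from the classical case rather than reprove it.
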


We can deduce a uniform version of the Zhang inequality from the Zhang inequality for GVFs that differs substantially from \cite[Theorem 1.1]{mavraki2025quantitativedynamicalzhangfundamental}.

\begin{restatable}[Corollary \ref{cor:uniformzhang}]{rthm}{uniformzhangineq}
    Let $\sX \to S$ be a family of projective varieties over $\Qbar$ and let $\ov{\sL}$ be an adelic line bundle on $\sX$ that is semipositive on fibers. Assume furthermore that the geometric Deligne pairing $\langle \sL, \dots,\sL\rangle$ is big on $S$. Then, for any $\epsilon > 0$ there exist constants $N$ and $C$ such that the set 
    \[
        \{x\in \sX(s)(\Qbar)\ |\ h_{\sL}(x) < (1-\epsilon) h_{\sL}(\sX(s))\}
    \]
    is contained in a proper subvariety of degree $\leq N$ and height $< C$ for all $s \in S(\Qbar)$ outside a proper closed subset of $S$.
\end{restatable}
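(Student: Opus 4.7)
The plan is a compactness-and-contradiction argument in the continuous logic of globally valued fields, combining ultraproducts of the given family with the GVF Zhang inequality. Suppose the conclusion fails for some $\epsilon > 0$: for every proper Zariski closed subset $Z \subsetneq S$ and every pair of positive constants $(N, C)$, there exists $s = s_{Z,N,C} \in S(\Qbar) \setminus Z$ such that the set
\[
    \{x \in \sX(s)(\Qbar) \mid h_\sL(x) < (1-\epsilon)\, h_\sL(\sX(s))\}
\]
is not contained in any closed subvariety of $\sX(s)$ of degree $\leq N$ and height $< C$.

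I would take an ultraproduct along a filter refining the poset of proper closed subsets $Z$ and letting $N, C \to \infty$, working in the continuous logic of GVFs as in the proof of Proposition \ref{prop:bogimpliesngp}. This produces a GVF extension $K$ of $\Qbar$ and a limit point $s_\infty \in S(K)$. By \L o\'s's theorem, $s_\infty$ avoids every proper Zariski closed subset of $S$ defined over $\Qbar$ and hence maps to the generic point of $S$. Bigness of the geometric Deligne pairing $\langle \sL, \dots, \sL\rangle$ on $S$, combined with a suitable choice of ultrafilter (in the spirit of the bigness-to-heights translation underlying Proposition \ref{prop:bogimpliesngp}), guarantees that $h_{\langle \sL, \dots, \sL\rangle}(s_\infty) > 0$ in the induced GVF structure on $K$. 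By the projection formula for Deligne pairings the limit fiber $X := \sX(s_\infty)$ then inherits positive height: $h_\sL(X) > 0$.

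Applying \L o\'s's theorem once more to the failure condition, the set $\{x \in X(K) \mid h_\sL(x) < (1-\epsilon)\, h_\sL(X)\}$ cannot be covered by any closed subvariety of $X$ of bounded degree and height, and so must be Zariski dense in $X$. This forces $\essmin(X) \leq (1-\epsilon)\, h_\sL(X)$. The GVF Zhang inequality proved above, namely $h_\sL(X) \leq \essmin(X)$, then yields $h_\sL(X) \leq (1-\epsilon)\, h_\sL(X)$, contradicting $h_\sL(X) > 0$.

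The hard part is the ultraproduct step: one must arrange that (i) the limit $s_\infty$ is truly generic in $S$ and (ii) the Deligne pairing has strictly positive height at $s_\infty$ in the induced GVF structure on $K$. Point (i) follows from \L o\'s once the filter is chosen to refine all proper closed subsets defined over $\Qbar$, but (ii) relies essentially on the dictionary between geometric bigness of a line bundle on $S$ and the existence of GVF extensions placing positive height on the generic point. This is the same mechanism that drives the implication from the Bogomolov conjecture to the new gap principle in Proposition \ref{prop:bogimpliesngp}, and the present proof can be read as a quantitative family version of that implication.
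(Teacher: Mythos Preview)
Your overall strategy matches the paper's: negate, pass to an ultraproduct of counterexamples, and contradict the GVF Zhang inequality (Proposition \ref{prop:zhang_inequality}) on the limit fibre. However, the ultraproduct step, as you have written it, does not work. You take the counterexamples $s_{Z,N,C}$ in $S(\Qbar)$ with the \emph{standard} GVF structure and choose a filter forcing them to be Zariski dense in $S$; but by Northcott any Zariski-dense collection of $\Qbar$-points has unbounded Weil height, and the GVF ultraproduct $\prod_\sU \Qbar$ only contains classes of bounded-height sequences. So the limit point $s_\infty$ is not even defined, and the argument cannot get started.

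The paper repairs this by the rescaling you only gesture toward via the reference to Proposition \ref{prop:bogimpliesngp}: one views each counterexample $s_i$ as a point of $S(\Qbar[h_\sL(\sX(s_i))^{-1}])$, i.e.\ divides all heights by $h_\sL(\sX(s_i))$. Bigness of the geometric Deligne pairing $\langle \sL,\dots,\sL\rangle$ is used precisely here---not to force positive height at a generic limit as you suggest, but to ensure that, once one excludes the fixed proper closed subset appearing in the statement, the rescaled $s_i$ have bounded Weil height, so the ultraproduct limit $s$ exists. The rescaling also renders your positivity step (ii) automatic: by continuity of heights one has $h_\sL(\sX(s)) = 1$ at the limit, and the entire detour through genericity of $s_\infty$ becomes unnecessary. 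From there Lemma \ref{lemm:parmin_ultraproducts} transfers the Zariski density of small points and Proposition \ref{prop:zhang_inequality} supplies the contradiction, just as you describe.
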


It lacks the quantitative control of \cite[Theorem 1.1]{mavraki2025quantitativedynamicalzhangfundamental}, as its proof relies on a compactness argument, and we require the bigness of a certain Deligne pairing. We note that this bigness does not affect the application to uniform Bogomolov conjectures as this is used as input. On the other hand our statement applies outside the dynamical context to any sufficiently positive line bundles. Furthermore, our approach allows to choose $\epsilon$ arbitrarily. The fact that bigness of the Deligne pairing isn't required in \cite{mavraki2025quantitativedynamicalzhangfundamental} suggests that a version of Zhang's inequality also holds for higher rank analogues of globally valued fields.

Beyond the immediate results of the paper, we believe that the proofs provide a new perspective on uniformity statements in Arakelov geometry and the theory of adelic curves and GVFs. Indeed, most results for adelic curves of characteristic $0$ have a uniform statement for number fields as their consequence and vice versa. This is not only a theoretical observation. The theory of adelic curves by Chen and Moriwaki is developed sufficiently far to apply this observation in practice. This, in turn, motivates further study of adelic curves and GVFs. On the other hand the comparison of different perspectives on GVFs allows one to prove results on adelic curves by spreading out to a family and using the adelic line bundles of Yuan-Zhang. Moreover, the existential closedness of $\Qbar$ and $\ktbar$ makes it possible to extend some results in classical Arakelov geometry to arbitrary adelic curves/GVFs.

We emphasize that in our application it is not sufficient to only prove results for polarized adelic curves. The Bogomolov conjecture is needed for arbitrary adelic curve structures. In other applications, it may be necessary to prove results for a restricted class of adelic curves. For instance the methods of \cite{yap2025numbersmallpointsrational} allow to prove a Bogomolov property for all dynamical systems on $\bbP^1$ over an adelic curve with a place of bad reduction that has positive measure with a bound depending on how much this place contributes to the height of the minimal resultant. This can be used to deduce the uniform statement in loc.cit.\ following methods similar to the present paper.

\subsection{Organization of the paper}

Section \ref{sec:gvf} contains an introduction to globally valued fields. This is partially based on \cite{continuity_of_heights} and \cite{basics_of_gvfs}, but also contains new results like the Zhang inequalities in sections \S \ref{sec:zhang_inequalities}. In section \ref{sec:bogomolov}, we state the Bogomolov conjecture for semiabelian varieties over globally valued fields and discuss the equivalence of a statement using intersection numbers and another using the essential minimum. In section \ref{sec:bogomolov_gvf}, we relate this to the new gap principle and deduce the new gap principle from the Bogomolov conjecture for GVFs by proving Proposition \ref{prop:bogimpliesngp}. We specialize to deduce Theorem \ref{thm:ngpqbar}, \ref{thm:ngpbigchar} and \ref{thm:ngpellipticquotient} from the cases of the Bogomolov conjecture we will later prove. Section \ref{sec:abelian_varieties_and_tori} is the heart of the paper. We first deduce instances of the Bogomolov conjecture over GVFs for abelian varieties and tori that are easily deduced from known results. We finally proceed to prove the reduction of the Bogomolov conjecture for semiabelian varieties to the case of abelian varieties, Theorem \ref{thm:bcsa}, and the Bogomolov conjecture for abelian varieties in characteristic $0$, Theorem \ref{thm:bogomolov_characteristic_0}.

\addtocontents{toc}{\protect\setcounter{tocdepth}{0}}
\subsection*{Acknowledgements}

I thank Long Liu for providing me with a reference on the Chow trace. I thank Wenbin Luo, Michal Szachniewicz, Jit Wu Yap, Huayi Chen and Paolo Dolce for conversations related to this article.
\addtocontents{toc}{\protect\setcounter{tocdepth}{2}}

\section{\label{sec:gvf}Preliminaries on globally valued fields}

The analogy between function fields and number fields as exemplified by the product formula has been very fruitful in number theory. Globally valued fields are an algebraic structure introduced by Ben Yaacov and Hrushovski encoding the data necessary for a theory of heights. They are closely related to Gubler's notion of M-fields, see \cite{gubler_m-fields}, and Chen and Moriwaki's adelic curves, see \cite{Adelic_curves_1}. Their algebraic nature makes GVFs particularly suitable for classification purposes and the application of methods from continuous logic. Many of the results in this section can be found in \cite{basics_of_gvfs}.

\subsection{Definitions of globally valued fields}

GVFs admit several equivalent definitions. Depending on the context, it is convenient to adopt different perspectives. For instance, the approach using GVF functionals or heights is advantageous for taking ultraproducts while viewing globally valued fields as equivalence classes of adelic curves allows for local computations. We recall several equivalent definitions of GVFs. For proofs of the equivalence of these definitions, we refer to \cite{basics_of_gvfs}.

Arguably, the simplest definition of a GVF consists of the axiomatization of the basic properties of the logarithmic Weil height. In particular, it follows immediately that $\Qbar$ is endowed with the structure of a globally valued field. 

\begin{definition}
    A \emph{globally valued field} (abbreviated GVF) is a field $K$ together with a height function $h:\coprod_{n\geq 1} K^n \rightarrow \mathbb{R}\cup\{-\infty\}$, satisfying the following axioms, for some fixed \emph{Archimedean error} $e \geq 0$.
\[
    \begin{array}{lll}
        \textnormal{Height of zero:} & \forall x\in K^n, & h(x) = -\infty \Leftrightarrow x=0 \\
        \textnormal{Height of one:} & & h(1,1) = 0 \\\textnormal{Product formula:} & \forall x\in K^{\times}, & h(x) = 0 \\
        \textnormal{Invariance:} & \forall x\in K^n,\, \forall \sigma\in \Sym_n, & h(\sigma x) = h(x) \\
        \textnormal{Additivity:} & \forall x\in K^n,\, \forall y\in K^m, & h(x\otimes y) = h(x) + h(y) \\
        \textnormal{Monotonicity:} & \forall x\in K^n,\, \forall y\in K^m, & h(x) \leq h(x,y) \\
        \textnormal{Triangle inequality:} & \forall x,y\in K^n, & h(x+y) \leq h(x,y) + e
    \end{array}
\]
    Here $\otimes$ denotes the Segre product, defined by $(x_1, \dots, x_n) \otimes (y_1, \dots, y_m) = (x_i \cdot y_j : 1 \leq i \leq n, 1 \leq j \leq m)$. Such a height function factors through $h:\mathbb{P}^n(K)\rightarrow \mathbb{R}_{\geq 0}$ for each $n$. We set $\height(x) := h[x:1]$ for $x \in K$.
\end{definition}

The additivity property suggests passing to the Grothendieck group of $\coprod_{n \geq 0} K^n\setminus \{0\}$ with the Segre product and realizing the GVF structure as a linear functional subject to suitable axioms. To be precise, one can associate to a field $K$ the space of lattice $\bbQ$-divisors $\LdivQ(K)$ with a well-defined notion of principal divisors and effective cone, see \cite[Definition 5.1]{basics_of_gvfs}. It furthermore has the structure of a lattice group, i.e.\ it has a join operation $\vee$ that corresponds to taking the maximum of two elements and is compatible with the group structure. 

Then, a GVF functional is a linear map $\phi:\Ldiv(K) \to \bbR$ nonnegative on the effective cone and $0$ on principal divisors. By positivity, the functional extends uniquely to the completion with respect to the effective cone which we denote by $\Mdiv(K)$ to express its maximality, cf.\ \cite[Proposition 3.1.16]{szachniewicz2023}. It inherits the structure of a lattice group.

If $U$ is a quasi-projective variety over $\bbZ$ with function field $L \subset K$ then one may view the space of adelic divisors $\aDivR(U)$, defined in \cite{yuan_zhang_adeliclinebundlesquasiprojective} see also \S \ref{sec:intersection_theory}, as a subset of $\Mdiv(K)$. Conversely, for every $D \in \LdivQ(K)$ there is a quasi-projective variety $U$ with function field $L \subset K$ such that $D$ is defined by an element of $\aDivR(U)$. Let $L$ be a subfield of $F$. Then, one may define a GVF structures on $K$ are in one-to-one correspondence with positive functionals
\[ \aDivR(K/L) \coloneqq \varinjlim \aDivR(\sX) \to \bbR, \]
where the colimit goes over all projective morphisms of quasiprojective varieties $\sX \to \sS$ together with compatible embeddings $\Frac(\sX) \subseteq K$ and $\Frac(\sS)\subseteq K$.

The space $\aDivR(K/L)$ is closed under the lattice operation on $\Mdiv(K)$ by applying the arguments in \cite[\S 3.1]{szachniewicz2023} to the present situation.

\begin{lemma}\label{lemm:big_line_bundle_triviality_of_GVF}
    Let $K$ be a finite type GVF extension of the trivial GVF $k$ or of $\bbQ$ with arbitrary GVF structure. Denote the associated GVF functional by $\phi$. Let $\sL$ be an (arithmetically) big line bundle on a quasi-projective variety $U$ with function field $K$ and suppose that $\phi(\sL) = 0$. Then, the GVF structure on $K$ is trivial.
\end{lemma}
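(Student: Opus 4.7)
The plan is to use a standard consequence of arithmetic bigness — the decomposition of a big arithmetic divisor as a sum of an ample and an effective one — together with the cofinality of ample classes in the arithmetic effective cone, and then to exploit the positivity of $\phi$ on this cone.

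First I would exploit arithmetic bigness of $\sL$. In the Yuan–Zhang framework of adelic divisors on quasi-projective varieties, an arithmetically big class admits, after passing to a suitable positive multiple, a decomposition
\[
    n\sL = \mathscr{A} + \mathscr{E} \quad\text{in } \aDivR,
\]
where $\mathscr{A}$ is arithmetically ample on some projective compactification $\sX$ of $U$ (with compatible structural morphism $\sX \to \sS$) and $\mathscr{E}$ is arithmetically effective. Since $\phi$ is nonnegative on the arithmetic effective cone, and since arithmetically ample classes lie in the closure of its interior and hence also satisfy $\phi(\mathscr{A}) \geq 0$, the hypothesis $\phi(\sL) = 0$ forces $\phi(\mathscr{A}) = \phi(\mathscr{E}) = 0$.

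Next I would propagate this vanishing to an arbitrary class in $\aDivR(K/L)$. Any $D \in \aDivR(K/L)$ comes from some model $\sX'$ in the colimit defining $\aDivR(K/L)$; passing to a model $\sX''$ dominating both $\sX$ and $\sX'$ we may assume $\mathscr{A}$ and $D$ live on a common projective variety. Arithmetic ampleness of $\mathscr{A}$ then ensures that $N\mathscr{A} + D$ and $N\mathscr{A} - D$ are both arithmetically effective for all sufficiently large $N$, so
\[
    0 \leq \phi(N\mathscr{A} \pm D) = N\phi(\mathscr{A}) \pm \phi(D) = \pm\phi(D),
\]
whence $\phi(D) = 0$. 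Thus $\phi$ vanishes identically on $\aDivR(K/L)$, which under the correspondence between GVF extension structures on $K/L$ and positive functionals on $\aDivR(K/L)$ means that the GVF extension is trivial, settling both case (a) with $L = k$ trivial and case (b) with $L = \bbQ$.

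The main obstacle I anticipate is not conceptual but technical: justifying the two structural inputs — the big-equals-ample-plus-effective decomposition and the cofinality statement $N\mathscr{A} \pm D$ effective for large $N$ — directly in the adelic framework on quasi-projective varieties. Both are classical in the projective arithmetic setting; extending them here requires approximating the relevant adelic data on a common compactification in such a way that the arithmetic positivity and effectivity pass to the limit. This is standard in the Yuan–Zhang formalism but is the only step where one must be careful that $\mathscr{A}$ and $D$ can actually be compared on the same model without losing ampleness.
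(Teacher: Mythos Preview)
Your argument is correct and reaches the same conclusion, but the paper's route is shorter: it skips the decomposition $n\sL = \mathscr{A} + \mathscr{E}$ entirely and uses bigness of $\sL$ directly. The key point the paper exploits is that bigness already gives the cofinality you attribute to ampleness: for any adelic line bundle $\sL'$ on a model with function field $K$, one has $N\sL - \sL'$ effective (i.e.\ admits a small section) for $N \gg 0$, so $\phi(\sL') \leq N\phi(\sL) = 0$, and the same applied to $-\sL'$ gives $\phi(\sL') = 0$. Thus the passage through an ample piece is unnecessary, and with it the technical obstacle you flag at the end (justifying big $=$ ample $+$ effective in the quasi-projective adelic setting) simply does not arise. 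Your version is not wrong, just one step longer than needed; the simplification is worth internalizing since the direct cofinality of big classes is often the cleanest way to kill a functional.
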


\begin{proof}
    Let $\sL'$ be an arbitrary adelic line bundle on a variety $U'$ with function field $K$. Then, the inequality $\sL' \leq N\sL$ holds for sufficiently large $N$, i.e.\ the line bundle $\sL'^{-1} \otimes \sL^{\otimes N}$ defined over a variety $U''$ dominating $U$ and $U'$ admits a (small) section. Hence, $\phi(\sL') \leq N\phi(\sL) = 0$. The analogous inequality for $-\sL'$ implies that $\phi(\sL') = 0$.
\end{proof}

If $\Field$ is countable, GVF structures can also be described as equivalence classes of proper adelic curve structures on $\Field$ (originally defined in \cite{Adelic_curves_1}). 

\begin{definition}
    A \textit{proper adelic curve} is a field $\Field$ together with a measure space $(\Omega, \mathcal{A}, \nu)$ and with a map $(\omega \mapsto |\cdot|_{\omega}):\Omega \to M_{\Field}$ to the space of absolute values on $\Field$, such that for all $a \in \Field^{\times}$ the function 
    \[ \omega \mapsto \omega(a) := -\log|a|_{\omega} \]
    is in $L^1(\nu)$ with integral zero.
\end{definition}

If $\Field$ is equipped with a proper adelic curve structure, then one can define a GVF structure on it, by setting
\[ h(x_1, \dots, x_n) := \int_{\Omega} -\min_i(\omega(x_i)) d\nu(\omega). \]
Conversely, if $\Field$ is countable, any GVF structure on $\Field$ is represented by some proper adelic curve structure in this way (\cite[Corollary 1.3]{basics_of_gvfs}). If $\Field$ is uncountable, the GVF may not be representable by an adelic curve, but it is still represented by a topological adelic curve (defined in \cite{sedillot_topological_adelic_curves}).

Any field admits a trivial GVF structure by setting the height of every non-zero vector to be $0$. The logarithmic Weil height provides a GVF structure on any number field and therefore on $\Qbar$. By \cite[Lemma 10.2]{basics_of_gvfs}, this is essentially unique in the sense that for any $\lambda \geq 0$ there exists a unique GVF structure $\Qbar[\lambda]$ with $\height(2) = \lambda\log 2$. Similarly, for any field $k$ there is a unique GVF structure on $\ktbar$ denoted by $\ktbar[\lambda]$ whose restriction to $k$ is trivial and satisfies $\height(t) = \lambda$. Given a GVF $K$, we denote by $K[\lambda]$ the GVF structure obtained by scaling all heights by $\lambda$.

Given a GVF $K$ and an algebraic extension $E$ of $K$, there is a canonical way to endow $E$ with a GVF structure called the \emph{symmetric} extension which we denote by $E^{\sym}$, see \cite[Proposition 10.5]{basics_of_gvfs}. For Galois extensions, we obtain the unique Galois invariant height function.

A GVF is said to be \emph{non-archimedean} or \emph{geometric} if it satisfies the strong triangle inequality $h(x+y) \leq h(x,y)$. Otherwise it is called \emph{archimedean}. For any non-archimedean GVF $K$ the set $K_{\textrm{const}} = \{x\in K\ |\ \height(x) = 0 \}$ forms a field called the \emph{constant field} of $K$.

A GVF is archimedean if and only if, for a representing (topological) adelic curve, there is a positive measure of archimedean (pseudo-)absolute values.

\begin{lemma}\label{lemm:constant_field_symmetric_extension}
    Let $K$ be a non-archimedean GVF with constant field $k$. Then, its algebraic closure $\ov{K}$ endowed with the symmetric extension has constant field $\ov{k}$.
\end{lemma}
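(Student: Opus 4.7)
The plan is to prove both containments $\overline{k}\subseteq \constr(\overline{K}^{\sym})$ and $\constr(\overline{K}^{\sym})\subseteq \overline{k}$ via a Mahler-style comparison between the symmetric height of an algebraic element and the height of the coefficients of its minimal polynomial, after first verifying that $k$ is algebraically closed in $K$.

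First, I would show that $k$ is algebraically closed in $K$. Given $\alpha\in K$ satisfying a monic polynomial $P(T)=T^d+\sum_{i<d} a_iT^i \in k[T]$, the relation $\alpha^d=-\sum_{i<d}a_i\alpha^i$ together with the strong triangle inequality for the non-archimedean GVF $K$ and $\height(a_i)=0$ yields $d\,\height(\alpha)\leq (d-1)\height(\alpha)$, so $\height(\alpha)=0$, i.e.\ $\alpha\in k$. Consequently, for any $\alpha\in\overline{k}$ the minimal polynomial of $\alpha$ over $K$ has all its coefficients in $k$, because these coefficients are elementary symmetric functions of certain Galois conjugates of $\alpha$, hence lie in $\overline{k}\cap K=k$.

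The core of the argument is the following place-by-place Mahler-type inequalities, which I would establish on any non-archimedean adelic curve representing $K$ and its symmetric extension to $\overline{K}$. For $\beta\in\overline{K}$ with minimal polynomial $P(T)=T^d+a_{d-1}T^{d-1}+\dots+a_0\in K[T]$ and Galois conjugates $\beta_1,\dots,\beta_d$, the ultrametric inequality applied to $P(\beta_j)=0$ gives $|\beta_j|_\omega\leq \max(1,|a_0|_\omega,\dots,|a_{d-1}|_\omega)$ at each place, while expressing each $a_i$ as an elementary symmetric polynomial in the $\beta_j$ and using the ultrametric gives $|a_i|_\omega\leq \prod_j \max(1,|\beta_j|_\omega)$. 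Integrating and using Galois invariance of the symmetric extension yields
\[
    h(\beta)\leq h_K(1,a_0,\dots,a_{d-1})\leq d\cdot h(\beta).
\]

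From these inequalities both containments follow readily. For $\alpha\in \overline{k}$, the coefficients of its minimal polynomial lie in $k$, so $h_K(1,a_0,\dots,a_{d-1})=0$, and the first inequality forces $h(\alpha)=0$. Conversely, if $\beta\in\overline{K}$ has $h(\beta)=0$ in the symmetric extension, the second inequality gives $h_K(1,b_0,\dots,b_{d-1})=0$ for the coefficients of its minimal polynomial; by monotonicity each $\height(b_i)=0$, so each $b_i\in k$, hence $\beta$ is algebraic over $k$, i.e.\ $\beta\in\overline{k}$. The main obstacle I anticipate is carefully justifying the two Mahler-type inequalities directly in the GVF language (rather than via a choice of representing adelic curve), but since the lemma assumes non-archimedeanness and the symmetric extension is characterized by Galois invariance, a clean argument can also be given by checking both sides define GVF functionals on $\overline{K}$ agreeing on $K$ and satisfying the appropriate symmetry, and then invoking uniqueness.
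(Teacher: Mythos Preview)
Your proof is correct and rests on the same two ingredients as the paper's: Galois invariance of the symmetric extension, and the strong triangle inequality relating the height of an algebraic element to the heights of the coefficients of its minimal polynomial over $K$. The paper argues these points ad hoc, splitting the forward direction into a Galois step (conjugates of a height-zero element have height zero, hence so do their elementary symmetric functions) and a purely inseparable step ($\height(x^p)=p\,\height(x)$), whereas your Mahler-type sandwich
\[
h(\beta)\ \le\ h_K(1,a_0,\dots,a_{d-1})\ \le\ d\cdot h(\beta)
\]
handles both cases at once and makes the argument slightly cleaner. Your worry about needing a representing adelic curve is not a genuine obstacle: both inequalities follow directly from the GVF axioms (for the first, expand $d\,h(1,\beta)=h(1,\beta,\dots,\beta^d)$ and use the strong triangle inequality on $\beta^d=-\sum a_i\beta^i$ together with monotonicity and additivity; for the second, bound $(1,a_0,\dots,a_{d-1})$ by ultrametric sums of entries of $(1,\beta_1)\otimes\cdots\otimes(1,\beta_d)$ and use $h(1,\beta_j)=h(1,\beta)$), or alternatively one may pass to the countable sub-GVF generated by the finitely many elements in play.
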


\begin{proof}
    It suffices to prove the statement for finite subextensions $L \subset \ov{K}$ and we may further restrict to the case that $L$ is a Galois extension or an inseparable extensions of degree $p$.

    If $L$ is a finite Galois extension, and $x \in L$ belongs to the constant field, then also all Galois conjugates $\sigma x$ belong to the constant field. The symmetric polynomials in the conjugates also belong to the constant field and thus to $k$. It follows that $x$ is algebraic over $k$. If $L$ is an inseparable extension with $L^p = K$ and $x \in L$ is in the constant field then $x^p$ is in the constant field and $x$ is algebraic over $k$.
    
    Conversely, any element in $L$ which is algebraic over $k$ has to belong to the constant field. Let $\alpha \in L$ be finite over $k$. Then, the finite extension is of the form $k[\alpha] = \bigoplus^{d-1}_{i=0} k \alpha^i$ as a $k$-vector space. By the strong triangle inequality it follows that the height of elements in $k[\alpha]$ is bounded. However, if $\alpha$ has height $> 0$ the height of $\alpha^n$ is unbounded.
\end{proof}

\subsection{\label{sec:intersection_theory}Intersection numbers over GVFs}

Basic definitions and results concerning intersections of line bundles over globally valued fields were obtained in \cite{continuity_of_heights}. The definitions are compatible both with intersection numbers on adelic curves, see \cite{Adelic_curves_2}, as well as the Deligne pairing defined in \cite{yuan_zhang_adeliclinebundlesquasiprojective}. We will base our discussion on the theory of Yuan and Zhang which we only recall briefly. We refer to \cite{yuan_zhang_adeliclinebundlesquasiprojective} for details.

An arithmetic $\bbQ$-divisor $\sD$ on an integral projective scheme $X$ over $\Spec \bbZ$ consists of a $\bbQ$-Cartier divisor $D$ on $X$ and if $X$ is flat over $\Spec \bbZ$ a continuous Green's function $g_{\sD}$ for $D$ on $X(\bbC)$. Denote the space of arithmetic $\bbQ$-divisors on $X$ by $\aDivQ(X)$. Let $U$ be a quasi-projective scheme over $\Spec \bbZ$. Then, the space of model divisors on $U$ is defined as the cofiltered limit
\[
    \aDivQ(U)_{\textrm{mod}} = \varinjlim \aDivQ(\ov{U}),
\]
where $\ov{U}$ ranges over all compactifications of $U$. Given a compactification $\ov{U}$ a boundary divisor $\sE$ is a model divisor whose support is $\ov{U}\setminus U$ and satisfies that $g_{\sE} > 0$ everywhere. This allows us to define a boundary norm
\[
    \|\blank\|_{\sE}:\aDivQ(U)_{\textrm{mod}} \to [0, \infty]
\]
by
\[
    \sD \mapsto \inf \{\lambda \in \bbQ_{>0}\ |\ \lambda \sE - \sD \textrm{ and } \lambda \sE + \sD \textrm{ are effective}\}.
\]

The space of adelic divisors on $U$ is defined to be the completion of the space of model divisors with respect to the boundary norm. It is denoted by $\aDivQ(U)$.

An adelic divisor is called \emph{strongly nef} if it can be approximated by nef model divisors. An adelic divisor is called \emph{nef} if it is in the closure of the strongly nef cone. An adelic divisor is called \emph{integrable} if it is the difference of two strongly nef divisors. We can specify the cone of strongly nef/nef/integrable divisors by a subscript snef/nef/int. Equivalence classes of adelic divisors induce a notion of adelic line bundles which we denote by $\aPic$ with suitable adornment. We are now in the position to define adelic line bundles on quasi-projective varieties over a GVF $K$. Let $K$ be a GVF and $X$ a quasi-projective variety over $K$. We define a model of $X$ to consist of a finite type normal quasi-projective scheme $\sY$ over $\Spec \bbZ$, a flat projective morphism $\sX \to \sY$, a morphism $\Spec K \to \sY$ mapping to the generic point of $\sY$ and an identification $X \cong \sX_K$. Any two models of $X$ are dominated by a third. An \emph{adelic line bundle} on $X$ consists of an adelic line bundle on a model $\sX$. We identify adelic line bundles if they agree after pullback to a model dominating both models in question. We call an adelic line bundle on $X$ \emph{strongly nef/integrable} if the line bundle on the model corresponding to it is. We call an adelic line bundle on $X$ \emph{semipositive} if for a model $(\sX \to \sY, \ov{\sL})$ and any $y \in \sY^{\an}_{\textrm{Berk}}$ the restriction of $\ov{\sL}$ to any fibre $\sX^{\an}_y$ is semipositive.

We recall the Deligne pairing of adelic line bundles in the theory of Yuan and Zhang in order to define arithmetic intersection numbers over GVFs. Let $f:\sX \to \sY$ be a flat projective morphism of quasi-projective schemes over $\Spec \bbZ$ and assume that $\sX$ is integral and $\sY$ is normal. The Deligne pairing is introduced in \cite[Theorem 4.1.3]{yuan_zhang_adeliclinebundlesquasiprojective} as a pairing
\[
    \aPic(\sX)_{\textrm {int}}^{n+1} \to \aPic(\sY)_{\textrm {int}}.
\]
For integrable adelic line bundles $\ov{\sL}_1,\dots,\ov{\sL}_{n+1}$ the Deligne pairing is denoted by \[\langle\ov{\sL}_1,\dots,\ov{\sL}_{n+1}\rangle_{\sX/\sY}.\]

We define intersection numbers of integrable adelic line bundles over a GVF by computing the Deligne pairing on a suitable model and evaluating the height of the resulting line bundle.

\begin{definition}\label{def:arithmetic_intersection}
    Let $\ov{\sL}_1,\dots,\ov{\sL}_{n+1}$ be integrable adelic line bundles on a projective variety $X$ over a GVF $K$. Let $\sX \to \sY$ be a model of $X$ over which all the line bundles are defined. Let $\phi$ denote the GVF functional of $K$ restricted to $\Frac(\sY)$. Then we define the intersection number as
    \[
        \ov{\sL}_1\cdot\ldots\cdot\ov{\sL}_{n+1} = \phi(\langle\ov{\sL}_1,\dots,\ov{\sL}_{n+1}\rangle_{\sX/\sY}).
    \]
    The result does not depend on the choice of model by the compatibility of the Deligne pairing with base change in \cite[Theorem 4.1.3]{yuan_zhang_adeliclinebundlesquasiprojective}.
\end{definition}

An adelic line bundle $\ov{\sL}$ on a quasi-projective variety $U$ over a field $K$ has a geometric part $\sL$ that is a limit of line bundles on compactifications of $U$ over $K$. We refer to $\sL$ as a \emph{geometric adelic line bundle}. They are treated on equal footing with the arithmetic case in \cite{yuan_zhang_adeliclinebundlesquasiprojective}.

If $X$ is a projective variety over a GVF $K$ and $\ov{\sL}$ is an adelic line bundle on $X$ whose underlying line bundle is ample. Then, the normalized height of $Z$ is defined as
\[
    h_{\ov{\sL}}(Z) = \frac{(\ov{\sL}|_Z)^{\dim Z + 1}}{(\dim Z + 1) (\sL|_Z)^{\dim Z }}.
\]
This definition is used in order to make the intersection number comparable to the height of points on the variety. In particular, it is comparable to the essential and absolute minimum. If $\ov{\sL}$ is a canonically metrized ample symmetric line bundle on an abelian variety we will use the notation $\NTht(Z)$ for $h_{\ov{\sL}}(Z)$.

An adelic line bundle on a projective variety $X$ over a GVF $K$ induces an adelic line bundle on an adelic curve representing $K$ in the sense of Chen-Moriwaki. We will refer to adelic line bundles as defined by Chen and Moriwaki as \emph{C-M line bundles} and reserve the word {adelic line bundles} to the line bundles discussed above. Concretely, the analytification functor of Yuan and Zhang, see \cite[\S 3]{yuan_zhang_adeliclinebundlesquasiprojective}, associates to an adelic line bundle $\ov{\sL}$ on a quasi-projective variety $\sX$ over $\Spec \bbZ$ a continuously metrized line bundle $\ov{\sL}^{\an}$ on the Berkovich analytification $\sX^{\an}$. If $\ov{\sL}$ is an adelic line bundle on a projective variety $X$ over a GVF $K$, one may obtain an adelic line bundle in the sense of Chen and Moriwaki by choosing a model $(\pi:\sX \to \sY, \ov{\sL})$ and determining the local metrics by the metrics on the fibres of $\pi^{\an}:\sX^{\an}\to \sY^{\an}$. We denote the induced C-M line bundle by $\ov{\sL}^{\textrm{C-M}}$.

The paper \cite{Adelic_curves_2} contains a definition of arithmetic intersection numbers for C-M line bundles. Let $\ov{\sL}_1,\dots,\ov{\sL}_{n+1}$ be integrable adelic line bundles on a projective variety $X$ of dimension $n$ over a GVF $K$. By Proposition \ref{prop:C-M_YZ_identity}, one may compute their intersection number by choosing a representing adelic curve of a countable GVF over which the data is defined and evaluating the intersection number $\ov{\sL}_1^{\textrm{C-M}}\cdot\ldots\cdot\ov{\sL}_{n+1}^{\textrm{C-M}}$. This allows us to freely shift our perspective between the two approaches.

\begin{remark}
    A drawback of the present definition of the intersection number is the small class of line bundles for which it is defined. It imposes a positivity condition on the line bundle on $\sX$ while one only needs a relative condition on its positivity, as found in \cite{continuity_of_heights}. Moreover, our definition of adelic line bundle does not depend on the GVF structure on $K$. It is natural to complete the space of line bundles further after fixing a GVF structure. For our intended applications and many others this does not play a role. Both mentioned issues are remedied by C-M line bundles.
\end{remark}

In order to check that all line bundles of interest on semiabelian varieties are integrable we study the interaction of the introduced positive cones with the lattice structure on the space of line bundles.

\begin{lemma}\label{lemm:maxima_of_divisors_and_positivity}
    Let $\ov{\sD}_1, \ov{\sD}_2$ be nef/semipositive/integrable adelic divisors on a quasi-projective arithmetic variety $U$. Then, the divisor $\ov{\sD}_1 \vee \ov{\sD}_2$ is also nef/\allowbreak semipositive/\allowbreak integrable.
\end{lemma}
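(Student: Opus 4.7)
My plan is to reduce all three cases to a single assertion: the join of two strongly nef adelic divisors is strongly nef. The nef case then follows because nef adelic divisors are by definition boundary-norm limits of strongly nef ones, and lattice operations are continuous in the boundary norm via the elementary estimate $\|(a \vee c) - (b \vee d)\|_{\sE} \leq \|a - b\|_{\sE} + \|c - d\|_{\sE}$ on a fixed boundary $\sE$. The integrable case follows by writing $\ov{\sD}_i = \ov{\sA}_i - \ov{\sB}_i$ with $\ov{\sA}_i, \ov{\sB}_i$ strongly nef and using the standard lattice-group identity
\[
  \ov{\sD}_1 \vee \ov{\sD}_2 = \bigl[(\ov{\sA}_1 + \ov{\sB}_2) \vee (\ov{\sA}_2 + \ov{\sB}_1)\bigr] - (\ov{\sB}_1 + \ov{\sB}_2),
\]
whose subtrahend is strongly nef as a sum of strongly nef divisors, so that integrability of the join reduces to strong nefness of the join of two strongly nef divisors.

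For the reduced claim I would approximate each $\ov{\sE}_i$ by nef model divisors $\sE_{i,n}$ on a common compactification and verify that $\sE_{1,n} \vee \sE_{2,n}$ is again a nef model divisor. The underlying lattice $\bbQ$-divisor sits between each $\sE_{i,n}$ and the nef sum $\sE_{1,n} + \sE_{2,n}$, hence is nef; its Green's function is the pointwise maximum $\max(g_{1,n}, g_{2,n})$. The crucial analytic input is that the pointwise maximum of two continuous semipositive metrics on the Berkovich analytification of a projective variety is again semipositive, which is the standard pluripotential-theoretic fact that maxima of continuous subharmonic functions are subharmonic, valid uniformly on archimedean and non-archimedean analytic spaces. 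Passing to the limit using continuity of $\vee$ under the boundary norm recorded above yields a strongly nef representative of $\ov{\sE}_1 \vee \ov{\sE}_2$.

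The semipositive case is handled by the same pointwise-maximum argument applied fibrewise on the Berkovich analytification $\sY^{\an}_{\textrm{Berk}}$, since semipositivity is defined as a condition on each analytic fibre $\sX^{\an}_y$. The main obstacle throughout is the compatibility of the lattice operation $\vee$ with the boundary-norm completion defining $\aDivR(U)$ and with base change between models, i.e.\ adapting the formalism of \cite[\S 3.1]{szachniewicz2023} from lattice divisors of fields to the Yuan--Zhang framework of adelic divisors on quasi-projective arithmetic schemes. Once this lattice-algebraic bookkeeping is in place, every remaining step is either formal manipulation or a direct invocation of subharmonicity of maxima.
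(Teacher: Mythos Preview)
Your overall reduction strategy is reasonable, but there is a genuine gap at the key step. When you assert that ``the underlying lattice $\bbQ$-divisor sits between each $\sE_{i,n}$ and the nef sum $\sE_{1,n} + \sE_{2,n}$, hence is nef'', this inference is invalid: being sandwiched in the effective order between nef divisors does not force nefness. On $\mathrm{Bl}_p\bbP^2$ with exceptional curve $E$ and hyperplane class $H$, the effective divisor $E + H$ satisfies $0 \leq E+H \leq 3H$ with $0$ and $3H$ nef, yet $(E+H)\cdot E = -1$. The conclusion that $\sE_{1,n}\vee \sE_{2,n}$ is nef happens to be correct, but it requires a genuine argument---for instance, writing $\sE_1\vee \sE_2 = \sE_i + (\sE_{3-i}-\sE_i)^+$ and observing that for any curve $C$ at least one choice of $i\in\{1,2\}$ has $C\not\subset\Supp((\sE_{3-i}-\sE_i)^+)$, whence $(\sE_1\vee \sE_2)\cdot C\geq \sE_i\cdot C\geq 0$. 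Your separation into ``underlying divisor'' and ``Green's function'' also obscures that the positivity condition at finite places is encoded entirely in the divisor on the integral model, so the pluripotential input you invoke for the archimedean Green's function does not by itself handle that part.

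The paper avoids this difficulty by not verifying nefness of a join directly. It instead invokes the characterization from \cite[Theorem 3.24]{continuity_of_heights} that nef model divisors are exactly the uniform limits of divisors of the form $\tfrac{1}{N}\bigvee_i\adiv(x_i)$; the join of two such expressions is trivially again of that form, and one passes to limits. The semipositive case is handled by the parallel characterization in \cite[\S 3.3]{Chen_Moriwaki_semipositive_extensions}. The paper's integrable reduction is also a touch simpler than yours: rather than writing each $\ov{\sD}_i$ as a difference of strongly nef classes, it chooses a single nef $\ov{\sE}$ making both $\ov{\sD}_i + \ov{\sE}$ nef and uses the translation identity $(\ov{\sD}_1+\ov{\sE})\vee(\ov{\sD}_2+\ov{\sE}) = (\ov{\sD}_1\vee\ov{\sD}_2)+\ov{\sE}$.
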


\begin{proof}
    The integrable case follows from the nef case. If $\ov{\sD}_1, \ov{\sD}_2$ are integrable we may find a nef divisor $\ov{\sE}$ such that both $\ov{\sD}_1 + \ov{\sE}$ and $\ov{\sD}_2 + \ov{\sE}$ are nef. Then, $(\ov{\sD}_1 + \ov{\sE}) \vee (\ov{\sD}_2 + \ov{\sE}) = (\ov{\sD}_1 \vee \ov{\sD}_2) + \ov{\sE}$ is nef. For the semipositive case one may assume that $\ov{\sD}_1, \ov{\sD}_2$ are semiample.

    It follows from the characterization of semipositive metrics on semiample line bundles in \cite[\S 3.3]{Chen_Moriwaki_semipositive_extensions} that the $\ov{\sD}_i$ are semipositive over a place precisely if as metrized divisors they can be uniformly approximated by divisors of the form $\frac{1}{N}(\bigvee \adiv(x_i) + (0,C))$, where $(0,C)$ is the trivial divisor with constant Green's function for some $C \in \bbR$. The maximum of two such divisors is again semipositive. Nef model divisors are precisely those that can be approximated by divisors of the form $\frac{1}{N}\bigvee \adiv(x_i)$, this follows for instance from \cite[Theorem 3.24]{continuity_of_heights}. The assertion follows analogously to the semipositive case.
\end{proof}

For our applications, the line bundles of importance will be canonically metrized line bundles on families of semiabelian varieties. All occurring line bundles are obtained by pullback and maximization from canonically metrized ample symmetric and numerically trivial line bundles on abelian varieties. As such it suffices to prove that these are integrable to prove that all  occurring line bundles are integrable.

A canonically metrized ample symmetric line bundle on an abelian variety is nef and thus integrable by \cite[Theorem 6.1.1]{yuan_zhang_adeliclinebundlesquasiprojective}. Canonically metrized numerically trivial line bundles on families of abelian varieties are also integrable.

\begin{lemma}\label{lemm:numerically_trivial_are_differences}
    Let $\pi:\sA \to S$ be a family of abelian varieties and let $\ov{\sQ}$ be a numerically trivial canonically metrized line bundle on $\sA$. Then, the adelic line bundle $\ov{\sQ}$ is integrable.
\end{lemma}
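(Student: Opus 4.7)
The plan is to exhibit $\ov{\sQ}$ as a difference of two nef adelic line bundles, using the Mumford construction together with the nef-ness of canonical metrics on relatively ample symmetric line bundles just cited from \cite{yuan_zhang_adeliclinebundlesquasiprojective}.

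After possibly replacing $S$ by a finite flat cover over which $\sA/S$ acquires a relatively ample symmetric line bundle, I first choose such an $\sL$ and endow it with its canonical metric $\ov{\sL}$, which is nef by the cited result. On the fibre product $\sA \times_S \sA$, with addition morphism $m$ and projections $p_1,p_2$, I form the Mumford bundle
\[
\ov{\sM} \coloneqq m^*\ov{\sL} - p_1^*\ov{\sL} - p_2^*\ov{\sL}.
\]
Its underlying line bundle $\sM$ is symmetric on the abelian scheme $\sA \times_S \sA$, and $\ov{\sM}$ satisfies the Tate equation $[n]^*\ov{\sM} = n^2 \ov{\sM}$, so it coincides with the canonical metrization of $\sM$. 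Being literally the difference of the nef adelic line bundles $m^*\ov{\sL}$ and $p_1^*\ov{\sL} + p_2^*\ov{\sL}$, it is integrable.

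Next, the numerically trivial $\sQ$ determines a section $b\colon S \to \sA^\vee$ of the dual abelian scheme. After a further finite flat base change of $S$ and, if necessary, replacing $\sQ$ by a positive multiple, I lift $b$ through the polarization isogeny $\phi_\sL\colon\sA\to\sA^\vee$ to a section $a\colon S \to \sA$; then $\sQ$ is identified, up to a pullback from $S$, with $(\id,a)^*\sM$, and the canonical metrizations match under this identification. Hence $\ov{\sQ}$ differs from $(\id,a)^*\ov{\sM}$ by the pullback to $\sA$ of the adelic line bundle $a^*\ov{\sL}$ on $S$, which is itself nef and in particular integrable. Since pullback preserves integrability, $\ov{\sQ}$ is integrable on the base-changed family, and integrability then descends along the finite flat cover.

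The main subtlety is to verify that the Mumford construction, pullback along the graph morphism $(\id,a)$, and the finite flat descent all behave coherently with the model-theoretic definition of integrability from \cite{yuan_zhang_adeliclinebundlesquasiprojective}. This is arranged by choosing a common compactified integral model over $\Spec \bbZ$ on which $\sL$, the section $a$, and all the relevant morphisms are simultaneously defined, and by using that the nef cone is preserved under finite pullbacks.
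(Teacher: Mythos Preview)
Your approach is essentially the same as the paper's. The paper writes the identity directly as $\ov{\sQ} = \tr_\sigma^* \ov{\sM} - \ov{\sM} - \pi^*\sigma^* \ov{\sM}$ for a section $\sigma$ (citing the family version of \cite[Corollaire~2.2]{chambert-loir_semiabelian_small_points}), which is exactly what you obtain by pulling back your Mumford bundle along $(\id,a)$; your extra care about lifting through the polarization isogeny via a finite flat base change and replacing $\sQ$ by a multiple addresses a point the paper's proof leaves implicit.
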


\begin{proof}
    Let $\ov{\sM}$ be an ample symmetric line bundle on $\sA$. Since it defines a polarization of $\sA$ this gives rise to a section $\sigma: S \to \sA$ and an isomorphism $\tr_\sigma^* \sM - \sM \cong \sQ$ of line bundles on $\sA$. We claim that $\ov{\sQ} = \tr_\sigma^* \ov{\sM} - \ov{\sM} - \pi^*\sigma^* \ov{\sM}$ as adelic line bundles. This is a family version of \cite[Corollaire 2.2]{chambert-loir_semiabelian_small_points} and follows using the same arguments replacing the theorem of the cube over a field by a relative version. It follows that $\ov{\sQ}$ is integrable since $\ov{\sM}$ is nef.
\end{proof}

\subsection{\label{sec:polarized_GVF}Polarized GVF structures}

An important class of GVF structures is provided by polarized GVF structures. They are a common generalization of the examples 3.2.4 and 3.2.6 in \cite{Adelic_curves_1}.

\begin{definition}\label{def:polarized_gvf}
    Let $K$ be a GVF and let $F$ be a field extension of finite type of transcendence degree $d$. A polarization of $F$ consists of a normal projective variety $X$ whose function field is $F$ and nef adelic line bundles $\ov{\sH}_1, \dots, \ov{\sH}_d$ on $X$. A polarization $(X,\ov{\sH}_1, \dots, \ov{\sH}_d)$ induces a GVF structure on $F$ by the functional
    \[
    \underset{\begin{subarray}{c}
  \pi:X' \to X\\ \textrm{birational}
  \end{subarray}}{\varinjlim} \aDivR(X) \to \bbR,
    \]
    defined by $l(\ov{D}) = \pi^*\ov{\sH}_1\cdot \ldots\cdot \pi^*\ov{\sH}_d\cdot \ov{D}$. We denote it by $K(X,\ov{\sH}_1, \dots, \ov{\sH}_d)$.
\end{definition}


This GVF structure admits an explicit description in terms of adelic curves. Let $(K,(\Omega,\sA,\nu),\phi)$ be a countable adelic curve. Let $X$ be an integral projective variety of dimension $d$ over $K$.

\begin{definition}[Definition 3.1 \cite{Adelic_curves_4}]
    Let $\Omega' \subset \Omega$ be a measurable subset. The \emph{global adelic space} of $X$ over $\Omega'$ is defined as $X_{\Omega'}^{\an} = \coprod_{\omega \in \Omega'} X_\omega^{\an}$. Denote by $\pi:X_{\Omega'}^{\an} \to \Omega'$ the map sending elements of $X_\omega^{\an}$ to $\omega$.

    This space is endowed with the smallest $\sigma$-algebra $\sB_{X,\Omega'}$ such that
    \begin{enumerate}
        \item the map $\pi:X_{\Omega'}^{\an} \to \Omega'$ is measurable,
        \item for any Zariski open subset $U \subseteq X$, the set $U_{\Omega'}^{\an}$ belongs to $\sB_{X,\Omega'}$
        \item for any Zariski open subset $U \subseteq X$ and any measurable family\footnote{This is defined in \cite[Definition 2.2]{Adelic_curves_4}.} $f$ of continuous functions over $U$ the induced function $f_{\Omega'}: U_{\Omega'}^{\an} \to \bbR$ is $\sB_{X,\Omega'}|_{U_{\Omega'}^{\an}}$-measurable.
    \end{enumerate}
    In particular, Green's functions for adelic divisors are $\sB_{X,\Omega}$-measurable.
\end{definition}

Let $\ov{\sH}_1, \dots, \ov{\sH}_d$ be arithmetically nef C-M line bundles. These give rise to a Borel measure family $\omega \mapsto c_1(\ov{\sH}_1,\omega)\dots c_1(\ov{\sH}_d,\omega)$, see \cite[Definition 2.17 and Example 2.20]{Adelic_curves_4}. This defines a measure $\mu_\Omega$ on $X_\Omega^{\an}$ by \cite[Definition 3.6]{Adelic_curves_4}. Concretely, it is determined by the property that if $f:X_\Omega^{\an} \to [-\infty,\infty]$ is the collection of Green's function of a C-M divisor, then $f$ is integrable and the integral satisfies
\[
    \int_{X_\Omega^{\an}} f(x) \mu_\Omega(dx) = \int_\Omega \left( \int_{X_\omega^{\an}} f_\omega(x)c_1(\ov{\sH}_1,\omega)\dots c_1(\ov{\sH}_d,\omega)\right)\nu(d \omega).
\]
One observes that $\mu_\Omega(Z_\Omega^{\an}) = 0$ for any closed subvariety $Z \subset X$. Since $K$ was assumed countable we have $\mu_\Omega(\bigcup_{Z \subset X \textrm{ cld}}Z_\Omega^{\an}) = 0$. Let $S = X_\Omega^{\an}\setminus \bigcup_{Z \subset X \textrm{ cld}}Z_\Omega^{\an}$.

Let $\operatorname{PDiv}(X)$ denote the set of prime divisors
\[
    \{ V \subset X\ |\ \textrm{closed, irreducible, }\codim(V)=1\}.
\]
Let $\sB_{\operatorname{PDiv}}$ be the discrete $\sigma$-algebra on $\operatorname{PDiv}(X)$. We define a measure $\mu_0$ on $\operatorname{PDiv}(X)$ by $\mu_0(\{D\}) = \adeg(\ov{\sH}_1\cdot \ldots\cdot \ov{\sH}_d | D)$.  We define a measure space $(\Omega_F, \sA_F, \mu) = (S \sqcup \operatorname{PDiv}(X), \sB_{X,\Omega}|_S \sqcup \sB_{\operatorname{PDiv}}, \mu_\Omega|_S \sqcup \mu_0)$. Points in $S$ naturally give rise to absolute values on $F$. The order of vanishing at $D \in \operatorname{PDiv}(X)$ defines a valuation. This allows us to define a map $\psi:\Omega_F \to M_F$ to the absolute values of $F$. The data $(F, (\Omega_F, \sA_F, \mu), \psi)$ defines a proper adelic curve. It follows from \cite[Theorem 4.2.11]{Adelic_curves_2} that it induces the polarized GVF structure as in Definition \ref{def:polarized_gvf} provided that $\ov{\sH}_1, \ldots, \ov{\sH}_d$ are adelic line bundles.

\begin{remark}
    The restriction of the polarized GVF structure on $F$ to $K$ is precisely $K[\sH_1\cdot \ldots\cdot \sH_d]$, where $\sH_1\cdot \ldots\cdot \sH_d$ denotes the geometric intersection number of the underlying line bundles. If $X$ is $0$-dimensional, the polarized GVF structure is $F^{\sym}[[F:K]]$.
\end{remark}

\begin{example}
    Let $k$ be an algebraically closed field and $X$ a variety of dimension $d$ over $k$ and big nef line bundles $L_1, \dots, L_{d-1}$. Then, this defines a GVF structure on $k(X)$ with constant field $k$. A different choice of big nef line bundles is comparable in the sense that the height functions $h$ and $h'$ satisfy $\frac{1}{C}h \leq h' \leq C h$ for some $C > 1$. We will refer to such a GVF structure on $k(X)$ as an \emph{ample GVF structure}.
\end{example}

\begin{propdef}
    Let $(X,L,f)$ be a polarized abelian variety over an algebraically closed GVF $K$, where $X$ is normal. Then, we define the canonical GVF structure on $K(X)$ to be $K(X, \ov{L}, \dots,\ov{L})[((d+1)L^d)^{-1}]$ where $\ov{L}$ is endowed with the canonical metrics. It is a GVF extension of $K$. We also refer to the symmetric extension of this GVF structure to $K(X)$ as the canonical GVF structure. This is independent of the choice of polarization.
\end{propdef}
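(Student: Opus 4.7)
The proposition bundles three statements: the recipe gives a well-defined GVF structure on $K(X)$; this structure is a GVF extension of $K$ (after the universal normalization by $((d+1)L^d)^{-1}$); and, after passage to the symmetric algebraic closure, the resulting structure on $\overline{K(X)}$ depends only on the polarized abelian variety $(X,L,f)$ through the polarization, not on the specific representative $L$.

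For well-definedness, the canonical metric on the ample symmetric line bundle $L$ is semipositive by Zhang's theorem, and hence $\ov{L}$ is a nef adelic line bundle on $X$ in the sense of Yuan-Zhang (\cite[Theorem 6.1.1]{yuan_zhang_adeliclinebundlesquasiprojective}). Feeding $d$ copies of $\ov{L}$ into Definition \ref{def:polarized_gvf} produces a polarized GVF structure on $K(X)$, and rescaling by the positive constant $((d+1)L^d)^{-1}$ preserves all GVF axioms. For the GVF-extension claim, I would apply the adelic-curve description given just above the Remark in \S\ref{sec:polarized_GVF}: restriction along the inclusion $K \hookrightarrow K(X)$ corresponds to pulling back divisors from $\Spec K$ via $\pi : X \to \Spec K$, and the projection formula for the Deligne pairing multiplies heights by the top intersection $L^d$. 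The normalization $((d+1)L^d)^{-1}$ then reproduces the original GVF on $K$ up to the universal scalar $(d+1)^{-1}$.

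For polarization-independence of the symmetric extension, the crucial input is that at every place $v$ of $K$ the Chambert-Loir measure $c_1(\ov{L})^{\wedge d}$ on the Berkovich fibre $X^{\an}_v$ equals $L^d$ times the unique translation-invariant probability measure on $X^{\an}_v$. This is a classical consequence of the characterizing isomorphism $[n]^* \ov{L} \simeq \ov{L}^{\otimes n^2}$ and the uniqueness of Haar measure. Dividing by $L^d$, the Chambert-Loir contribution to the GVF functional becomes integration against Haar measure, manifestly independent of $L$. The remaining discrete contribution from prime divisors of $X$, carrying the a priori $L$-dependent weights $(\ov{L}|_Z)^d = d\cdot\deg_L(Z)\cdot\NTht(Z)$, can be compared across different polarizations via the $\operatorname{End}(X)\otimes\bbQ$-action on the N\'eron-Severi group: after the universal scaling these weights collapse to an $L$-independent functional, so two polarizations give the same GVF structure on $\overline{K(X)}$.

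The main obstacle will be this last divisorial step. The Berkovich-side independence is transparent via the Haar-measure identification, but the discrete prime-divisor weights depend on $L$ through both $\deg_L(Z)$ and $\NTht(Z)$, and showing that they assemble into an $L$-independent measure on $\operatorname{PDiv}(X)$ requires careful manipulation of canonical heights of subvarieties of abelian varieties. The reason polarization-independence is asserted only for the symmetric extension to $\overline{K(X)}$ is that there the ambiguity in choosing a representative $L$ within a fixed polarization class dissolves into Galois symmetry.
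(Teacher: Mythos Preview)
Your approach to polarization-independence is genuinely different from the paper's, and the divisorial step you flag as ``the main obstacle'' is a real gap that your sketch does not close.

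The paper never attempts to compare the adelic-curve representations place by place. Instead it argues at the level of the GVF functional via equidistribution (Theorem~\ref{thm:equidistribution}, stated immediately after the Proposition/Definition). The key observation is that torsion points of $X$ form a Zariski-dense set of height $0$ for \emph{every} choice of polarization. Taking a generic net $(x_i)$ of such points, equidistribution gives for any adelic line bundle $\ov{M}$ on $X$
\[
\lim_i h_{\ov{M}}(x_i) \;=\; \frac{\ov{L}^d\cdot \ov{M}}{(d+1)\deg_L(X)}.
\]
The left side is manifestly independent of $L$, hence so is the right, which is exactly the value of the canonical GVF functional at $\ov{M}$. The passage from ``agrees on all adelic line bundles defined on $X$'' to ``equal as GVF structures on $K(X)$'' is then Lemma~\ref{lemm:big_line_bundle_triviality_of_GVF}: the difference of two such functionals restricts trivially to $K$ and vanishes on an ample (hence big) line bundle, so it is identically zero.

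Your Chambert-Loir half is fine: the normalized canonical measure on each Berkovich fibre is the Haar measure and is visibly $L$-independent (this is Gubler's result, cited later in the paper). But the divisorial weights $\mu_0(\{D\}) = \adeg(\ov{L}^d\,|\,D) = d\cdot \deg_L(D)\cdot h_{\ov{L}}(D)$ depend on $L$ through both factors, and there is no reason for $\mu_0(\{D\})/L^d$ to be $L$-independent divisor by divisor. Your gesture toward the $\operatorname{End}(X)\otimes\bbQ$-action on N\'eron--Severi does not produce such a cancellation, and the explanation involving Galois symmetry in the symmetric extension is a red herring: the paper proves independence already for the GVF structure on $K(X)$ itself, not merely after passing to $\overline{K(X)}$. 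In effect, proving the divisorial weights assemble $L$-independently seems to require precisely the equidistribution input the paper uses, so your route does not avoid it.
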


\begin{proof}
    There exists a Zariski dense set of preperiodic points which are of height $0$ for any choice of polarization. Let $(x_i)_{i\in I}\in X(K)$ be a generic net of preperiodic points and any polarization $L$ one has 
    \[\lim_i h_{\ov{M}}(x_i) = \frac{\ov{L}^d\cdot \ov{M}}{(d+1)\deg_L(X)}\]
    by Theorem \ref{thm:equidistribution}. In particular, the GVF functional must agree on every $\ov{M}$ that can be defined over $X$. Suppose that $l$ is a polarized GVF functional and $l'$ be any GVF functional that agrees on line bundles $\ov{M}$ on $X$. Then, we notice that $l' - l$ is a GVF that restricts to the trivial GVF structure on $K$. Moreover, given an adelic line bundle $\ov{M}$ on $X$ whose underlying line bundle is ample we have $(l' - l)(\ov{M}) = 0$ and hence $l' = l$ by Lemma \ref{lemm:big_line_bundle_triviality_of_GVF}.
\end{proof}

\begin{theorem}[Proposition 5.5 \cite{A_Sedillot_diff_of_relative_volume}]\label{thm:equidistribution}
    Let $X$ be a projective variety defined over an adelic curve $(K,(\Omega, \sA,\nu), (|\blank |_{\omega}))$. Let $\ov{L}$ be a semipositive C-M line bundle whose underlying line bundle is ample and satisfies $h_{\ov{L}}(X) = 0$. Let $(x_i)_{i \in I} \in X(K)$ be a net of points satisfying
    \[
    \lim_{i \in I} h_{\ov{L}}(x_i) = 0.
    \]
    Then, for any C-M line bundle $\ov{M}$ on $X$ one has
    \[
        \lim_i h_{\ov{M}}(x_i) = \frac{\ov{L}^d\cdot \ov{M}}{(d+1)\deg_L(X)}.
    \]
\end{theorem}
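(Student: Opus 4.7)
My plan is to follow the Szpiro--Ullmo--Zhang perturbation strategy, adapted to the adelic-curve framework. Implicit in the statement is that the net $(x_i)$ eventually leaves every proper Zariski closed subvariety of $X$, i.e.\ is \emph{generic}; if not, one passes to such a subnet, which is possible because $h_{\ov L}(X)=0$ forces small points of $\ov L$ to be Zariski dense via the upper bound $\essmin(\ov L)\ge h_{\ov L}(X)$ in Zhang's inequality.

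First I would reduce to the case that $\ov M$ is itself semipositive with ample underlying line bundle. Both sides of the claimed identity are linear in $\ov M$, so after writing $\ov M=\ov M_1-\ov M_2$ with each $\ov M_i$ of this form---achievable by adding a sufficiently large multiple of an arithmetically ample line bundle to both summands---it suffices to treat the positive case.

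The core step is a perturbation. For $|t|<\epsilon_0$ the line bundle $\ov L_t:=\ov L + t\ov M$ remains semipositive with ample underlying line bundle, so the upper Zhang inequality gives $h_{\ov L_t}(X)\le \essmin(\ov L_t)$; combined with genericity of the net this yields $\liminf_i h_{\ov L_t}(x_i)\ge h_{\ov L_t}(X)$. Since heights of points are additive in the line bundle, $h_{\ov L_t}(x_i)=h_{\ov L}(x_i)+t\,h_{\ov M}(x_i)$, and using $\lim_i h_{\ov L}(x_i)=0$ one obtains, for $t>0$,
\[
t\cdot \liminf_i h_{\ov M}(x_i) \;\ge\; h_{\ov L_t}(X),
\]
together with the reversed inequality for $\limsup_i h_{\ov M}(x_i)$ when $t<0$.

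A Taylor expansion of the rational function $t\mapsto h_{\ov L_t}(X)=\frac{(\ov L+t\ov M)^{d+1}}{(d+1)(L+tM)^d}$ around $t=0$, simplified by $h_{\ov L}(X)=0$ (equivalently $\ov L^{d+1}=0$), identifies $\lim_{t\to 0}h_{\ov L_t}(X)/t$ with the claimed intersection-theoretic expression in $\ov L^d\cdot \ov M$ and $\deg_L(X)$. Letting $t\to 0^\pm$ pinches $\liminf_i h_{\ov M}(x_i)$ and $\limsup_i h_{\ov M}(x_i)$ to this common value, yielding the stated limit. I expect the main technical obstacle to be verifying that Zhang's inequality can be applied uniformly along the perturbation, i.e.\ that the cone of semipositive C-M line bundles with ample underlying line bundle is open at $\ov L$ in a sense strong enough to include $\ov L_t$ for all small $t$; once this stability is in place, the rest of the argument is bookkeeping on the Taylor expansion.
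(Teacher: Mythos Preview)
The paper does not give its own proof of this statement; it is quoted from S\'edillot's work on the differentiability of the relative $\chi$-volume over adelic curves and used as a black box. Your outline is the classical Szpiro--Ullmo--Zhang/Yuan perturbation argument and is correct in spirit, but the obstacle you isolate at the end is not resolvable in the way you suggest, and this is exactly where the content of the cited reference lies.

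The cone of semipositive metrized line bundles is convex but \emph{not} open: even after your reduction to $\ov M$ semipositive with ample underlying bundle, there is no reason for $\ov L+t\ov M$ to remain semipositive when $t<0$. Hence the Zhang inequality $\essmin(\ov L_t)\ge h_{\ov L_t}(X)$ of Proposition~\ref{prop:zhang_inequality} is simply unavailable for negative $t$. With only $t>0$ you obtain the one-sided bound on $\liminf_i h_{\ov M}(x_i)$, and there is no formal trick with decompositions $\ov M=\ov M_1-\ov M_2$ that converts a collection of lower bounds into the matching upper bound; the $t<0$ direction is genuinely needed.

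The way around this, going back to Yuan over number fields and carried to adelic curves by S\'edillot, is to replace $h_{\ov L_t}(X)$ in the lower bound for $\essmin(\ov L_t)$ by (a normalization of) the arithmetic volume $\avol_\chi(\ov L_t)$. The inequality $\essmin(\ov L_t)\ge \avol_\chi(\ov L_t)/((d{+}1)\vol(L_t))$ requires only that $\ov L_t$ be big, and bigness \emph{is} an open condition, so it holds for all small $t$ of either sign. The substantive theorem is then that $t\mapsto\avol_\chi(\ov L+t\ov M)$ is differentiable at $t=0$ with derivative $(d{+}1)\,\ov L^d\cdot\ov M$; once this is in hand, your Taylor-expansion bookkeeping goes through verbatim with $\avol_\chi$ in place of the self-intersection. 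Proving that differentiability requires an arithmetic Siu-type inequality or Fujita approximation over adelic curves, which is the actual work in the cited paper. So your plan is the right skeleton, but the hinge you need is ``differentiability of the volume on the big cone'', not ``openness of the semipositive cone''.
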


\subsection{Ultraproducts of GVFs}

We introduce ultraproducts of globally valued fields and discuss their basic properties based on \cite{basics_of_gvfs}. This construction fits naturally into the framework of (continuous) model theory, cf.\ \cite[Section 11]{basics_of_gvfs}. 

\begin{definition}
    A \emph{filter} on a set $I$ is a family $\sU \subseteq 2^I$ satisfying the following conditions.
    \begin{enumerate}
        \item Non-degeneracy: $\emptyset \notin \sU$.
        \item Upward closedness: If $A \in \sU$ and $A\subseteq B \subseteq I$, then $B \in \sU$.
        \item Closedness under intersections: If $A, B \in \sU$, then $A\cap B \in \sU$.
    \end{enumerate}
    If $\sU$ is maximal in the sense that for any $A \subseteq I$ either $A \in \sU$ or $I\setminus A \in \sU$, then we call $\sU$ an \emph{ultrafilter}. Equivalently, an ultrafilter may be defined as a finitely additive measure $\mu:2^I \to \{0,1\}$ with $\mu(I) =1$. We call a subset $J\subseteq I$ big if $J \in \sU$ or equivalently $\mu(J) = 1$.
\end{definition}

\begin{lemma}[Ultrafilter lemma]
    Assuming the axiom of choice, every filter on $I$ is contained in an ultrafilter on $I$.
\end{lemma}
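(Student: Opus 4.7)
The plan is to invoke Zorn's lemma on the poset of filters on $I$ extending the given filter $\sF$, ordered by inclusion, and then to verify that a maximal element of this poset is automatically an ultrafilter.

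First I would fix the filter $\sF$ on $I$ and consider the collection
\[
    \mathcal{P} = \{\sG \subseteq 2^I \ |\ \sG \text{ is a filter on } I \text{ with } \sF \subseteq \sG\},
\]
partially ordered by inclusion. To apply Zorn's lemma, I need to check that every chain $(\sG_\alpha)_{\alpha}$ in $\mathcal{P}$ has an upper bound, for which I would propose the union $\sG := \bigcup_\alpha \sG_\alpha$ and verify the three filter axioms. Non-degeneracy and upward closedness follow immediately. For closedness under intersections, given $A, B \in \sG$ one has $A \in \sG_\alpha$ and $B \in \sG_\beta$ for some indices; because the family is a chain, one of $\sG_\alpha, \sG_\beta$ contains the other, so $A \cap B$ lies in the larger of the two.

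Zorn's lemma then yields a maximal element $\sU \in \mathcal{P}$. The key step is showing that $\sU$ is an ultrafilter, i.e.\ that for every $A \subseteq I$ either $A \in \sU$ or $I \setminus A \in \sU$. Suppose neither holds. I would consider the family
\[
    \sU_A := \{B \subseteq I \ |\ \exists\, C \in \sU \text{ with } A \cap C \subseteq B\}
\]
and check that $\sU_A$ is a filter strictly containing $\sU$. The nontrivial point is non-degeneracy: if $\emptyset \in \sU_A$ then $A \cap C = \emptyset$ for some $C \in \sU$, i.e.\ $C \subseteq I \setminus A$, and upward closedness of $\sU$ would give $I \setminus A \in \sU$, contrary to assumption. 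Since $\sU_A \in \mathcal{P}$ strictly extends $\sU$, this contradicts maximality, and hence $\sU$ is an ultrafilter containing $\sF$.

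The main obstacle is essentially notational; the only genuinely substantive step is the construction of the filter $\sU_A$ witnessing that maximal filters are ultrafilters, and in particular the verification that $\emptyset \notin \sU_A$ uses precisely the hypothesis that $I \setminus A \notin \sU$. Everything else is a direct check of the filter axioms together with a standard application of Zorn's lemma (which is equivalent to the axiom of choice invoked in the statement).
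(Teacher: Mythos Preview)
Your proof is correct and is the standard Zorn's lemma argument. The paper itself does not give a proof of this lemma; it is stated without proof as a well-known result, with only the remark afterward that the ultrafilter lemma is strictly weaker than the axiom of choice.
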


We will freely use the ultrafilter lemma to construct ultrafilters. We remark that the ultrafilter lemma is strictly weaker than the axiom of choice.

Let $\beta I$ denote the set of ultrafilters on $I$. We endow it with the topology generated by the sets $U_E = \{\sU\ |\ E \in \sU\}$ for all $E\subseteq I$. The resulting topological space is compact and Hausdorff. It satisfies the universal property that for any compact Hausdorff space $K$ with underlying set $|K|$ one has
\[
    \Hom_{\mathsf{Top}}(\beta I,K) = \Hom_{\mathsf{Set}}(I,|K|).
\]
Under this identity a map $f:I \to |K|$ corresponds to the map $\beta f:\beta I \to K$ which satisfies $\beta f(\sU)$ is given by the unique point $x$ such that for every open neighborhood $U$ of $x$ the preimage $f^{-1}(U)$ belongs to $\sU$. Writing the function $f$ as $(x_i)_{i\in I}$, we denote the point $x$ by $\lim_{\sU} x_i$ and refer to it as the limit of $(x_i)_{i\in I}$ along $\sU$.

\begin{definition}
    Let $I$ be a set, let $\sU$ be an ultrafilter on $I$, and let $(M_i)_{i \in I}$ be globally valued fields. We define the ultraproduct $M = \underset{\sU}{\prod}M_i$ as $M_0/\sim$ where
    \[
        M_0 = \{(x_i)\in \prod_I M_i\ |\ \height(x_i) \textrm{ is bounded}\}
    \]
    and two elements $(x_i)$ and $(x_i')$ of $M_0$ are called equivalent if $\mu(\{i\in I\ |\ x_i = x_i'\} ) = 1$. $M$ is endowed with the structure of a GVF. The algebraic operations are defined factor-wise on $M_0$ and descend to operations on $M$. The heights are defined as the limit of the factor-wise heights along the ultrafilter. 
\end{definition}

Let $X$ be a finite type scheme over $K$ and cover $X$ by finitely many affine opens which we each view as a closed subvariety of $\bbA^n$. We say that a collection of GVF valued points has bounded height if the height of their coordinates is bounded on each of the affine opens. A collection $x_i \in X(M_i)$ of GVF valued points of bounded height defines a point $x \in X(M)$ valued in the ultraproduct $M= \prod_\sU M_i$ of the $M_i$. If $X$ is embedded in a projective space the Weil height of $x$ agrees with the limit of the Weil height $\lim_\sU h(x_i)$ of the $x_i$. If $X$ is quasiprojective and $\ov{\sL}$ is a geometrically ample adelic line bundle on $X$ a family of GVF valued points $x_i \in X(M_i)$ is of bounded height if and only if $h_{\ov{\sL}}(x_i)$ is bounded.

Let $\sX$ be a finite type quasi-projective scheme over $\Spec \bbZ$ and let $\ov{\sL}$ be an adelic line bundle in the sense of \cite[\S 2.5]{yuan_zhang_adeliclinebundlesquasiprojective}. We define the height of a GVF-valued point as the limit of the heights of approximating model line bundles which are well defined since the Weil height of a projective tuple is defined over any GVF. Let $x_i \in \sX(F_i)$ be GVF-valued points of bounded height. Let $F = \prod_\sU F_i$ be the ultraproduct of the $F_i$ and let $x \in \sX(F)$ be the limit point. 

\begin{lemma}
    The height function is continuous in the sense that 
    \[
        \lim_\sU h_{\ov{\sL}}(x_i) = h_{\ov{\sL}}(x).
    \]
\end{lemma}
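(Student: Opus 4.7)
The plan is to reduce the assertion to the case of model line bundles, for which the height of a GVF-valued point is by construction an explicit expression in Weil heights of projective coordinate tuples, and then to transfer this to general adelic line bundles via the boundary-norm approximation that defines $\aDivQ(\sX)$ in the Yuan-Zhang theory.

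In the model case, after choosing a rational section of a suitable power of $\ov{\sL}$ that does not vanish at $x$ or at $\sU$-almost all $x_i$ (discarding a $\sU$-null set of indices if needed), the height $h_{\ov{\sL}}(y)$ becomes a finite combination of GVF Weil heights of tuples of polynomial expressions in the coordinates of $y$, together with a continuously varying contribution from the model metric. For the Weil-height part, the identity $h(y_0,\ldots,y_N) = \lim_\sU h(y_{0,i},\ldots,y_{N,i})$ for bounded-height tuples is literally how the GVF structure on the ultraproduct $F = \prod_\sU F_i$ is defined. For the metric contribution, one passes through the Yuan-Zhang analytification $\sX^{\an}$, realising the height as an integral of a continuous family of local terms, and uses that the $x_i$ specialise to $x$ along $\sU$ in each local fibre. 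Hence the statement holds for model line bundles.

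To pass from the model case to a general adelic $\ov{\sL}$, fix a boundary divisor $\sE$ and model approximations $\ov{\sL}_j$ with $\|\ov{\sL}-\ov{\sL}_j\|_\sE < \epsilon_j \to 0$. The definition of the boundary norm yields the pointwise estimate $|h_{\ov{\sL}}(y) - h_{\ov{\sL}_j}(y)| \leq \epsilon_j\, h_\sE(y)$ for any $y$ off the support of $\sE$. Since the $x_i$ have bounded height, $h_\sE(x_i)$ is uniformly bounded by some $M$, and the model case applied to $\sE$ gives the same bound for $h_\sE(x)$. Therefore $h_{\ov{\sL}_j} \to h_{\ov{\sL}}$ uniformly on $\{x\}\cup\{x_i\}_i$, which legitimates exchanging the $j\to\infty$ limit with $\lim_\sU$ and reduces to the already-established model case. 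The main obstacle is the model case itself, specifically the continuity of the local metric contribution under ultrafilter limits of bounded-height points; the cleanest route is to pass to the C-M line bundle $\ov{\sL}^{\textrm{C-M}}$ via the Yuan-Zhang analytification functor, where the height becomes an integral of a measurable Green family that is continuous in the point at each place.
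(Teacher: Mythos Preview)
Your overall architecture and the approximation step are correct and match the paper exactly: approximate $\ov{\sL}$ in boundary norm by model line bundles $\ov{\sL}_\epsilon$, bound $|h_{\ov{\sL}}-h_{\ov{\sL}_\epsilon}|$ by $\epsilon\, h_{\ov{\sE}}$, and use that $h_{\ov{\sE}}$ is uniformly bounded on the $x_i$ (and hence on $x$). This is precisely the paper's argument.

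Where you diverge is the model case, and here you make it harder than it is while also introducing a genuine problem. The paper dispatches the model case in one line: heights with respect to model line bundles are, by definition in this setup, Weil heights of projective tuples, and continuity of Weil heights under ultraproducts is literally the definition of the GVF structure on $F=\prod_\sU F_i$. If one worries about an arbitrary continuous archimedean Green function, the fix is still elementary: such a function is uniformly approximated by $\bbQ$-linear combinations of functions $\log\max_i|f_i|$, whose associated heights are exactly Weil heights of the tuples $(f_i)$, and a uniform approximation of the Green function gives a uniform bound on the height difference (independent of the point). No further machinery is needed.

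Your proposed route through the C-M picture and the Yuan--Zhang analytification is not just unnecessary but does not work as stated. The ultraproduct $F$ is typically uncountable and need not be represented by an adelic curve; even when it is, there is no canonical place space and no sense in which ``the $x_i$ specialise to $x$ along $\sU$ in each local fibre'', since the $x_i$ live over entirely different GVFs $F_i$ with unrelated place spaces. The height of a point in $X(F)$ is defined purely through the global height function on $F$, not as an integral over places, so arguments that attempt to pass through local metric contributions place by place are not available here. Drop that paragraph, replace it with the one-line observation that model heights are Weil heights by construction, and your proof is complete and identical to the paper's.
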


\begin{proof}
    When $\ov{\sL}$ is a model line bundle on a projective compactification the result follows from the continuity of the Weil height for points in projective space. By definition, for a boundary divisor $\ov{\sE}$ on a compactification and $\epsilon > 0$ there exists a model line bundle $\ov{\sL}_\epsilon$ such that $|h_{\ov{\sL}} - h_{\ov{\sL}_\epsilon}| < \epsilon h_{\ov{\sE}}$. Since the $x_i$ have bounded height, $h_{\ov{\sE}}$ is bounded on the collection of the $x_i$. Hence, the result follows by passing to the limit.
\end{proof}

\subsection{\label{sec:existential_closedness}GVF analytification and existential closedness}

The GVF analytification of a finite type scheme over a GVF $K$ was introduced in \cite[Definition 2.6]{continuity_of_heights} as a global analogue of the Berkovich analytification. As in the Berkovich setting, it can be interpreted as a space of quantifier-free types in continuous logic. This perspective underlies the equivalent definition in \cite[Construction 11.15]{basics_of_gvfs}. At the end of the section we give a down to earth treatment of existential closedness of GVFs. For a treatment in the context of continuous logic we refer to \cite[\S 12]{basics_of_gvfs}.

\begin{definition}
    Let $K$ be a GVF and let $X$ be a finite type scheme over $K$. We define the \emph{GVF analytification} of $X$ over $K$, denoted $\GVFan[K]{X}$ (or $\GVFan{X}$ if the base GVF is implied), to be the set of pairs $x = (\pi(x),h_x)$ where $\pi(x)$ is a point of $X$, and $h_x$ is a height function on $\kappa(\pi(x))$ extending the given height on $K$. If $x\in \GVFan[K]{X}$ and $U\subset X$ is an open containing $\pi(x)$, we also denote by $h_x$ the induced map
\[
    h_x : \begin{array}{ccc}
            \mathcal{O}_X(U)^n & \rightarrow & \mathbb{R}\cup\{-\infty\} \\
            (f_1,\ldots,f_n) & \mapsto & h_x(f_1(\pi(x)),\ldots,f_n(\pi(x)))
          \end{array}.
\]

We equip $\GVFan[K]{X}$ with the weakest topology for which
\begin{enumerate}
    \item The map $\pi : \GVFan[K]{X} \rightarrow X$ is continuous, where $X$ is endowed with the constructible topology.
    \item For every Zariski open $U\subset X$, and every tuple $(f_1,\ldots,f_n)\in \mathcal{O}_X(U)^n$, the map $x \mapsto h_x(f_1,\ldots,f_n)$ is continuous.
\end{enumerate}
\end{definition}

The resulting topological space is locally compact. If $X$ is quasi-projective one obtains a compact subset by imposing a bound on the height with respect to an ample line bundle on a compactification. Given a GVF extension $F/K$, there is an analytification map $X(F) \to \GVFan{X}$ by restricting the GVF structure to the residue field of the scheme theoretic image.

Given a set of points $(x_i \in X(F_i))_{i \in I}$ valued in GVF extensions $F_i$ of $K$ of bounded height each ultrafilter $\sU$ on $I$ induces a point $x_\sU \in X(\prod_\sU F_i)$. The induced map $\beta I \to \GVFan{X}$ is the unique continuous extension of the map $I \to \GVFan{X}$. An adelic line bundle $\ov{\sL}$ on $X$ induces a continuous function $h_\sL: \GVFan{X} \to \bbR$.

\begin{definition}
    A GVF $K$ is called existentially closed if for every finite type scheme $X$ over $K$ the analytification map $X(K) \to \GVFan{X}$ has dense image.
\end{definition}

\begin{theorem}[\cite{szachniewicz2023},\cite{benyaacov_hrushovski_existential_closure}]
    The globally valued fields $\Qbar[1]$ and $\ktbar[1]$ are existentially closed. 
\end{theorem}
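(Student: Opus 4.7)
The plan is to unwind the definition of existential closedness, reduce to realizing a prescribed polarized GVF structure on a residue field, and then apply the arithmetic equidistribution theorem of \S \ref{sec:polarized_GVF}.

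First, fix $K \in \{\Qbar[1], \ktbar[1]\}$, a finite type scheme $X$ over $K$, and a point $x = (\pi(x), h_x) \in \GVFan[K]{X}$. A basic neighborhood of $x$ in the GVF analytification is specified by finitely many regular functions $f_1, \dots, f_n$ on some Zariski open $U$ together with a tolerance $\epsilon > 0$. Replacing $X$ by a projective compactification of the Zariski closure of $\pi(x)$, I may assume $X$ is projective integral and $\pi(x)$ is its generic point, so that $h_x$ equips $K(X)$ with a GVF structure extending the one on $K$. It then suffices to find $y \in X(K)$ lying in the Zariski open where the $f_i$ are regular and with $|h_y(f_i) - h_x(f_i)| < \epsilon$ for every $i$, since any such $y$ lies in the prescribed neighborhood of $x$.

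Second, I would approximate $h_x$ by a polarized GVF structure. The functional $\phi_x$ on $\aDivR(X)$ induced by $h_x$ is a positive $\bbR$-linear functional vanishing on principal divisors. Using density of the polarized subcone inside the cone of GVF functionals on $K(X)$ (a weak-$*$ density in the nef cone, formalized via \S \ref{sec:polarized_GVF}), I would find nef adelic line bundles $\ov{\sH}_1, \dots, \ov{\sH}_d$ on $X$ such that the polarized functional $\phi' = \ov{\sH}_1 \cdot \ldots \cdot \ov{\sH}_d \cdot (-)$ agrees with $\phi_x$ up to error $\epsilon/2$ on the finite-dimensional subspace of $\aDivR(X)$ determined by the $f_i$. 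This reduces the problem to realizing polarized GVF structures by $K$-points.

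Third, I would invoke equidistribution. Since $K$ is algebraically closed and the $\ov{\sH}_j$ are nef, one may construct a Zariski-generic net $(y_i) \subset X(K)$ whose $\ov{\sH}_j$-heights converge to the geometric essential minima $h_{\ov{\sH}_j}(X)$, either by intersecting with generic sections of sufficiently positive tensor powers (controlled by arithmetic Minkowski/Siu estimates) or, when $X$ admits a suitable dynamical structure, by taking preperiodic points. Theorem \ref{thm:equidistribution} then gives $h_{\ov{\sM}}(y_i) \to \phi'(\ov{\sM}) / ((d+1)\deg X)$ for every adelic line bundle $\ov{\sM}$. Translating the heights of the $f_i$ into pairings with adelic divisors yields $h_{y_i}(f_i) \to \phi'(f_i)$, which approximates $h_x(f_i)$ within $\epsilon$, so a sufficiently large index $i$ produces the desired $K$-point. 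The main obstacle is the combination of the two approximation steps: weak-$*$ density of polarized functionals inside the cone of GVF functionals on an arbitrary integral projective variety, and the construction of enough small-height $K$-points to drive equidistribution. The first rests on a Hodge-index / continuity-of-heights argument, the second on the algebraic closedness of $K$ together with arithmetic Bertini. Once these classical but delicate ingredients are in place, existential closedness follows formally from the above unwinding.
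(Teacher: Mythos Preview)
The paper does not prove this theorem; it is quoted from the external references \cite{szachniewicz2023} and \cite{benyaacov_hrushovski_existential_closure}, so there is no in-paper argument to compare against. What remains is to assess your sketch on its own merits.

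Your outline correctly identifies the two essential ingredients, but then treats them as ``classical but delicate'' side lemmas when in fact they are the entire content of the cited theorems. The weak-$*$ density of polarized functionals among all GVF functionals on $K(X)$ is not a consequence of a Hodge-index argument alone; establishing it is one of the principal results of \cite{szachniewicz2023}, and in the number field case it requires the full strength of arithmetic intersection theory (arithmetic Hilbert--Samuel, differentiability of arithmetic volume, and related machinery). Calling it an approximation step that ``rests on'' standard tools is circular: it is the theorem.

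The second step is more problematic still. To invoke Theorem~\ref{thm:equidistribution} you must already possess a Zariski-generic net $(y_i)\subset X(K)$ with $h_{\ov{\sH}}(y_i)\to h_{\ov{\sH}}(X)$. Producing such a net for an \emph{arbitrary} nef adelic line bundle on an \emph{arbitrary} projective variety over $\Qbar$ is precisely the hard part of existential closedness; neither ``intersecting with generic sections'' nor ``arithmetic Bertini'' yields this without substantial additional input (one needs control on the heights of the intersection points, which again comes back to arithmetic Hilbert--Samuel and careful induction on dimension). The dynamical alternative you mention applies only to very special $X$. In short, your third paragraph assumes what is to be proved.

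So the sketch is not wrong as a table of contents for the cited papers, but it is not a proof: both obstacles you flag are the theorems themselves, and the tools you name to dispatch them are either insufficient or tautologous.
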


These results allow us to reduce statements for general GVFs by appealing to the corresponding results for $\Qbar[1]$ or $\ktbar[1]$. To illustrate this strategy, we prove the following lemma which is a special case of \cite[Lemma 12.2]{basics_of_gvfs}. 

\begin{lemma}
    Any GVF extension $F/K$ of an existentially closed field $K$ can be embedded into an ultrapower $\prod_\sU K$.
\end{lemma}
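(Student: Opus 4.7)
The plan is a standard ultrapower construction powered by existential closedness: approximate every finite tuple from $F$ by tuples from $K$ via density in the GVF analytification, then glue these approximations together with a suitable ultrafilter.

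First, for any finite tuple $\bar{a} = (a_1,\dots,a_n)$ of elements of $F$, let $X_{\bar{a}} \subset \mathbb{A}^n_K$ be its scheme-theoretic locus over $K$, namely the vanishing scheme of the polynomials in $K[x_1,\dots,x_n]$ killed by $\bar{a}$. The tuple $\bar{a}$, together with the height function on $K(\bar{a}) \subset F$ restricted from the GVF structure on $F$, determines a canonical point $\bar{a}^{\an} \in \GVFan[K]{X_{\bar a}}$. Existential closedness of $K$ says that the analytification map $X_{\bar{a}}(K) \to \GVFan[K]{X_{\bar{a}}}$ has dense image, so we may pick $K$-points of $X_{\bar a}$ arbitrarily close to $\bar a^{\an}$.

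Next, take the index set $I$ consisting of pairs $i = (\bar{a}, N)$ where $\bar{a}$ is a finite tuple from $F$ and $N$ is a basic open neighborhood of $\bar{a}^{\an}$ in $\GVFan[K]{X_{\bar{a}}}$. For each such $i$, choose once and for all some $\bar{b}_i \in X_{\bar{a}}(K)$ whose image in $\GVFan[K]{X_{\bar{a}}}$ lies in $N$. The projection $X_{\bar{a}'} \to X_{\bar{a}}$ (forgetting extra coordinates) induces a continuous map $\GVFan[K]{X_{\bar{a}'}} \to \GVFan[K]{X_{\bar{a}}}$ whenever $\bar{a} \subseteq \bar{a}'$; the preimage of $N$ is then an open neighborhood of $\bar{a}'^{\an}$. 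Consequently the family of sets
\[
    E_{\bar{a},N} \;=\; \{(\bar{a}', N')\in I : \bar{a}\subseteq \bar{a}', \ N' \subseteq \pi^{-1}(N)\}
\]
has the finite intersection property and generates a filter on $I$, which we extend to an ultrafilter $\sU$ by the ultrafilter lemma.

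Finally, define $\phi \colon F \to \prod_\sU K$ by sending $a \in F$ to the class of the sequence $(c^a_i)_{i\in I}$, where $c^a_i$ is the coordinate of $\bar{b}_i$ indexed by $a$ when $a$ appears in the tuple of $i$, and $c^a_i = 0$ otherwise. For any polynomial relation $p(a_1,\dots,a_n) = 0$ over $K$ that holds in $F$, the polynomial $p$ belongs to the ideal defining $X_{\bar{a}}$ whenever $\{a_1,\dots,a_n\}\subseteq \bar{a}$, so $p(\bar{b}_i) = 0$ for all $i \in E_{(a_1,\dots,a_n),\GVFan[K]{X_{\bar a}}} \in \sU$; thus $\phi$ is a ring homomorphism, hence a field embedding. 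For heights, note that for any finite tuple $\bar{a}$ the height $h_F(\bar{a})$ is the value at $\bar{a}^{\an}$ of the continuous function on $\GVFan[K]{X_{\bar{a}}}$ attached by the definition of analytification. By construction of $\sU$, the net $\bar{b}_i$ converges to $\bar{a}^{\an}$ along $\sU$, so $h_K(\bar{b}_i) \to h_F(\bar{a})$ along $\sU$, which is exactly the height of $\phi(\bar{a})$ in the ultrapower. The main obstacle is purely bookkeeping: tracking the compatibility between neighborhoods in $\GVFan[K]{X_{\bar{a}}}$ as $\bar{a}$ grows so that the filter really converges to each $\bar{a}^{\an}$; once this is set up, existential closedness feeds the needed approximations and the ultrafilter packages them into a height-preserving embedding.
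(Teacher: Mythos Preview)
Your argument follows the same core idea as the paper's proof—approximate in the GVF analytification using existential closedness, then package the approximations via an ultrafilter—but there is one genuine gap you need to address.

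\textbf{The gap.} For $\phi(a)$ to lie in the ultrapower $\prod_{\sU} K$ as defined in the paper, the sequence $(c^a_i)_{i\in I}$ must have \emph{bounded} height (the ultrapower is $M_0/\!\sim$ with $M_0$ the bounded sequences). Your construction does not guarantee this: for an index $i=(\bar a',N')$ with $a\in\bar a'$ but $N'$ imposing no height constraint on the $a$-coordinate, $h_K(c^a_i)$ can be arbitrarily large, and there are infinitely many such indices as $\bar a'$ varies. The fix is painless: restrict $I$ to pairs $(\bar a,N)$ with $N\subseteq\bigcap_j\{x:\lvert h_x(a_j)-h_F(a_j)\rvert<1\}$. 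These sets still form a neighbourhood basis of $\bar a^{\an}$, so density applies unchanged, and now $h_K(c^a_i)<h_F(a)+1$ whenever $a$ appears in $\bar a'_i$, giving the required bound.

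\textbf{Comparison with the paper.} Once this is patched, your proof is correct. The paper organizes the same idea differently: it first treats the case where $F/K$ is of finite type (a single model $X$ with function field $F$, a single net of $K$-points converging to the generic point $\eta\in\GVFan{X}$, where boundedness comes for free from local compactness), and then reduces the general case by embedding $F$ into an ultraproduct of its finite-type sub-GVFs, using that an ultraproduct of ultrapowers of $K$ embeds into a single ultrapower. Your direct construction over all finite tuples simultaneously avoids this two-step reduction at the cost of the extra bookkeeping you flagged; both routes are standard.
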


\begin{proof}
    We first assume that $F$ is of finite type over $K$. Let $X$ be a model of $F$ over $K$. Then, the GVF structure on $F$ determines a point $\eta \in \GVFan{X}$. By the density of $K$-points in the analytification we may find a net of $K$-points converging to $\eta$. We extend this net to an ultrafilter $\sU$. Then, the $K$-points define a limit point in $X(\prod_\sU K)$ that maps to $\eta$ under analytification.

    If $F$ is not of finite type it can be embedded into an ultraproduct of its finite type subfields. Hence $F$ can be embedded into an ultrapower of $K$.
\end{proof}

Since every non-trivial GVF contains a copy of $\ktbar[\lambda]$ or $\Qbar[\lambda]$ for some $\lambda > 0$ we can embed every GVF into an ultrapower of these basic GVFs. Furthermore, $\ov{\bbQ(t)}[\lambda]$ can be embedded into an ultraproduct of $\Qbar[\lambda_i]$ with varying scaling. It follows that any GVF of characteristic $0$ can be embedded into an ultraproduct of $\Qbar[\lambda]$. Similarly, any GVF of characteristic $p$ can be embedded into an ultraproduct of $\ktbar[1]$ for $k$ of characteristic $p$.

We are now in the position to prove that the two alternative arithmetic intersection numbers defined following Yuan-Zhang and Chen-Moriwaki respectively agree. We extend this from the case of $\Qbar$ and $\ktbar$. It is more natural to prove this directly, but given the existing results in the literature this is the most efficient.

\begin{proposition}\label{prop:C-M_YZ_identity}
    Let $\ov{\sL}_1,\dots,\ov{\sL}_{n+1}$ be integrable adelic line bundles on a projective variety $X$ of dimension $n$ over a GVF $K$. Let $K' \subset K$ be a countable subfield over which the data is defined. Then, the arithmetic intersection number $\ov{\sL}_1\cdot\ldots\cdot\ov{\sL}_{n+1}$ as defined in Definition \ref{def:arithmetic_intersection} agrees with the intersection number $\ov{\sL}_1^{\textrm{C-M}}\cdot\ldots\cdot\ov{\sL}_{n+1}^{\textrm{C-M}}$ defined over an adelic curve representing $K'$.
\end{proposition}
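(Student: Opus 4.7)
The plan is to reduce to the cases of the basic GVFs $\Qbar$ and $\ktbar$, where both intersection numbers are known to coincide with classical Arakelov-theoretic intersection numbers. Since every object in the statement is defined over $K'$ and both sides depend only on the GVF functional restricted to $K'$, we may replace $K$ by $K'$ and assume $K$ is countable, and we fix a proper adelic curve $(K,(\Omega,\sA,\nu),\phi_\Omega)$ representing this GVF structure. Both sides define continuous linear functionals on the cone of integrable adelic line bundles on $X$ in the boundary topology, so by approximation it suffices to check the equality for nef model data defined on a single flat projective model $\sX \to \sY$ with a fixed compactification.

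By Section~\ref{sec:existential_closedness}, there is a GVF embedding $K \hookrightarrow \prod_\sU K_0$ for some ultrafilter $\sU$, with $K_0 = \Qbar[\lambda]$ or $\ktbar[\lambda]$. I claim both sides are continuous along such an embedding. For the Yuan--Zhang side, the Deligne pairing is stable under base change of $\sY$, and the GVF functional on the ultraproduct is by definition the $\sU$-limit of the functionals on the factors, so
\[
\phi_K\bigl(\langle\ov{\sL}_1,\dots,\ov{\sL}_{n+1}\rangle_{\sX/\sY}\bigr) = \lim_\sU \phi_{K_0}\bigl(\langle\ov{\sL}_1,\dots,\ov{\sL}_{n+1}\rangle_{\sX/\sY}\bigr).
\]
For the Chen--Moriwaki side, one chooses adelic curve structures on the $K_0$ whose induced ultraproduct adelic curve restricts to the fixed adelic curve on $K$; the C-M intersection number is then also an $\sU$-limit by a dominated convergence argument on the local intersection numbers in a family of fibers of bounded complexity.

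For $K_0 = \Qbar$ and $\ktbar$ with their natural adelic curve structures, the Yuan--Zhang and Chen--Moriwaki intersection numbers both reduce to the classical Gillet--Soul\'e arithmetic intersection number, respectively its function field analogue, so the identity is known in the basic cases. Combined with the continuity along the embedding $K \hookrightarrow \prod_\sU K_0$, this gives the identity over $K$.

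The main obstacle is the continuity of the C-M intersection along the ultraproduct. Unlike the Yuan--Zhang number, the C-M intersection depends on a specific adelic curve structure, and one must verify both that compatible choices can be made on the ultraproduct factors and that the local intersection numbers at each place---which involve integrating Green's functions against Monge--Amp\`ere measures on the fibers---pass to the $\sU$-limit. Reducing to model line bundles of bounded complexity and exploiting the boundedness of heights along the ultraproduct should render this tractable via dominated convergence, but the measure-theoretic bookkeeping with the family of Berkovich fibers over $\Omega$ is the delicate point of the argument.
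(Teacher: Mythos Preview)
Your overall strategy coincides with the paper's: reduce to $\Qbar$ and $\ktbar$ via existential closedness, where both intersection theories agree with the classical one. The paper phrases this using the GVF analytification rather than an explicit ultraproduct embedding: it picks a model $\sX \to \sS$ with $\Frac(\sS)=K'$, regards the two intersection numbers as functions $h_{\textrm{YZ}}, h_{\textrm{C-M}}:\GVFan{\sS}\to\bbR$, shows both are continuous, and concludes since they agree on the dense set of $\Qbar$- or $\ktbar$-points. This is formally equivalent to your ultraproduct picture, but the analytification framing makes the logic cleaner.

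The substantive difference is in how the difficulty you flag is handled. You correctly isolate the continuity of the Chen--Moriwaki number as the crux, and you propose to attack it by choosing compatible adelic curve structures on the factors $K_0$ and invoking dominated convergence on the local Monge--Amp\`ere integrals. This is where your argument is genuinely incomplete. First, an ultraproduct of adelic curves is not in any evident way an adelic curve, so ``choosing adelic curve structures on the $K_0$ whose induced ultraproduct adelic curve restricts to the fixed one on $K$'' does not parse without further work; you would in effect need to know beforehand that the C-M number depends only on the underlying GVF. Second, the dominated convergence sketch for the local intersection numbers over a varying family of Berkovich fibres is precisely the content of a non-trivial theorem, not something that falls out of bounded complexity alone.

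The paper does not attempt to prove this continuity from scratch. Instead it invokes \cite[Theorem 1.4]{continuity_of_heights}, which asserts both that the C-M intersection number is independent of the representing adelic curve and that the resulting function on $\GVFan{\sS}$ is continuous. With that citation in hand the proof is two lines; without it, the ``measure-theoretic bookkeeping'' you allude to is the entire problem. So your proposal is on the right track and identifies the correct obstacle, but to complete it you should replace the dominated-convergence heuristic with a reference to (or a proof of) this continuity result.
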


\begin{proof}
    Both of the intersection numbers can be computed over $K'$ which was assumed to be of finite type over its prime field since intersection numbers are unaffected by extensions of the base field. After potential rescaling, we may assume the GVF $K'$ to contain either $\ktbar$ or $\Qbar$ as a sub-GVF.

    Let $\sX \to \sS$ be a model over which $\ov{\sL}_1,\dots,\ov{\sL}_{n+1}$ are defined and satisfying $\Frac(\sS) = K'$. Then, we consider two functions on $\GVFan{\sS} \to \bbR$. The first is $h_{\textrm{YZ}} = h_{\langle\ov{\sL}_1,\dots,\ov{\sL}_{n+1}\rangle_{\sX/\sS}}$ which defines a continuous function because it is the height with respect to an adelic line bundle. On the other hand, for any point $s \in \GVFan{\sS}$ one may consider the C-M line bundles $\ov{\sL}_i(s)^{\textrm{C-M}}$ on the fibre $\sX_s$ and define a map
    \[
        h_{\textrm{C-M}}: s \mapsto \adeg(\ov{\sL}_1(s)^{\textrm{C-M}}\cdot\ldots\cdot\ov{\sL}_{n+1}(s)^{\textrm{C-M}}\ |\ \sX_s).
    \]
    By \cite[Theorem 1.4]{continuity_of_heights}, this is independent of the choice of representing adelic curve and also a continuous function.
    
    It follows from \cite[\S 4.5]{Adelic_curves_2} that YZ intersection numbers and the C-M intersection numbers agree over $\Qbar$ and $\ktbar$. Since $\Qbar$- or $\ktbar$-points are dense in the GVF analytification it follows that the two functions agree. This concludes the result.
\end{proof}

An important question which often has deep model theoretic consequences is to axiomatize the existentially closed models of a logic theory. Such an axiomatization is known as the model companion of an (inductive) theory. The existence of a model companion for valued fields as a continuous logic theory was shown in \cite{benyaacov_continuous_model_theory_valued_fields}. The theory of globally valued fields is also conjectured to have a model companion.

\begin{conjecture}[Conjecture 12.7 \cite{basics_of_gvfs}]
    The theory of GVFs has a model companion. Concretely, the ultraproduct of existentially closed GVFs is existentially closed.
\end{conjecture}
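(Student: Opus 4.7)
The plan is to establish the second (concrete) half of the conjecture, since by standard continuous model theory the class of existentially closed GVFs admits a model companion precisely when it is closed under ultraproducts and elementary substructures; closure under chains of existentially closed GVFs is essentially built into the density characterization of existential closedness, so everything reduces to the ultraproduct assertion. Fix existentially closed GVFs $(K_i)_{i \in I}$, an ultrafilter $\sU$ on $I$, and their ultraproduct $K = \prod_\sU K_i$. By the density characterization of existentially closed GVFs, it suffices to show that for every finite type scheme $X$ over $K$ the image of $X(K) \to \GVFan[K]{X}$ is dense in each bounded-height compact subset.

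First I would spread the data out along $\sU$. Any such $X$ is cut out by finitely many polynomials whose coefficients have bounded height, hence arise as ultraproducts of coefficients in $K_i$; for $\sU$-almost all $i$ this yields finite type schemes $X_i/K_i$ whose base change to $K$ ultraproducts to $X$. A point $\eta \in \GVFan[K]{X}$ of bounded height corresponds to a GVF extension $F/K$ together with a point $\tilde\eta \in X(F)$, and a basic open neighborhood of $\eta$ is specified by finitely many conditions $|h_\eta(f_j) - r_j| < \epsilon$ for regular functions $f_j$ on an affine open containing $\pi(\eta)$.

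Next I would transfer $\eta$ through $\sU$. By the embedding lemma in Section 2.5, the extension $F/K$ embeds into some ultrapower $\prod_\sV K$; writing $\tilde\eta$ as an ultralimit of $K$-points and then re-indexing via $K = \prod_\sU K_i$ would produce, for $\sU$-almost all $i$, GVF extensions $F_i/K_i$ and points $\tilde\eta_i \in X_i(F_i)$ inducing analytic points $\eta_i \in \GVFan[K_i]{X_i}$ with $h_\eta(f_j) = \lim_\sU h_{\eta_i}(f_j)$ for each $f_j$, by the continuity-of-heights lemma preceding Section 2.5. A prescribed basic neighborhood of $\eta$ would then pull back via Łoś to open neighborhoods of $\eta_i$ for $\sU$-many $i$. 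Existential closedness of each $K_i$ supplies a $K_i$-rational point $x_i$ inside such a neighborhood, and the ultraproduct $x = (x_i)_\sU \in X(K)$ would approximate $\eta$ to the prescribed precision.

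The hardest step is the interchange of ultraproduct and analytification implicit in writing $\eta$ as $\lim_\sU \eta_i$: while pointwise continuity of heights handles individual regular functions, upgrading this to uniform control of the full topology of $\GVFan[K]{X}$ requires a Łoś theorem for GVF analytifications. The underlying structure of a GVF is not first order, the Archimedean error can a priori vary with $i$, and $\GVFan[K]{X}$ is only locally compact, so one must genuinely work in the continuous-logic formalism of \cite[Construction 11.15]{basics_of_gvfs}, viewing $\GVFan[K]{X}$ as a space of quantifier-free types and invoking the continuous Łoś theorem. Controlling bounded-height cutoffs uniformly in $i$ and handling the possibility that the chosen adelic line bundle yielding the compact neighborhood is itself only an ultralimit of model line bundles on the $X_i$ are where the real technical difficulty resides, and it is conceivable that new approximation results — for instance a uniform version of the continuity of heights for integrable adelic line bundles over varying GVFs — are required before the scheme above can be carried through.
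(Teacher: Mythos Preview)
The statement you are attempting to prove is presented in the paper as an open \emph{conjecture} (Conjecture 12.7 of \cite{basics_of_gvfs}); the paper does not prove it, and immediately after stating it only explains how its truth would \emph{simplify} the proof of Lemma~\ref{lemm:semiab_intersection_small_implies_small_points}. So there is no ``paper's own proof'' to compare against.

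Your outline follows the natural strategy, but it contains a genuine circularity. You invoke the embedding lemma of \S\ref{sec:existential_closedness} to embed the GVF extension $F/K$ into an ultrapower $\prod_\sV K$. That lemma, however, has as hypothesis that $K$ is existentially closed --- its proof uses density of $K$-points in $\GVFan[K]{X}$ to approximate the generic point of a model of $F$. Here $K=\prod_\sU K_i$ is the ultraproduct whose existential closedness you are trying to establish, so you cannot appeal to that lemma. Without it you have no mechanism to produce the points $\tilde\eta_i\in X_i(F_i)$ over the individual factors, and the rest of the argument does not get started.

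You correctly flag the second obstruction yourself: even if one could spread $\eta$ out to $\eta_i$, one would need a \L o\'s-type statement identifying $\GVFan[K]{X}$ with a suitable ultralimit of the $\GVFan[K_i]{X_i}$, uniformly on bounded-height compacta. This is precisely the content of the conjecture and is not available; the height-continuity lemma you cite handles a single adelic line bundle, not the full quantifier-free type space. Your closing paragraph is an honest assessment: the missing ingredients are not technicalities but the substance of why the conjecture remains open.
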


Let us give a concrete application of the existence of a model companion to simplify the proof of Lemma \ref{lemm:semiab_intersection_small_implies_small_points}. One can embed $K$ into an ultrapower $F$ of an existentially closed field. Under the existence of the model companion, the GVF $F$ itself is existentially closed. The polarization $\pi^*\ov{\sM}^{\dim \pi(X)}\cdot \arho(\Delta)^{\dim X - \dim \pi(X)}$ induces a GVF structure on the function field $F(X)$. By the existential closedness of $F$, it follows that this point can be approximated by $F$-points, yielding the claim.

\subsection{\label{sec:zhang_inequalities}Families of subvarieties and Zhang inequalities}

We start out by recalling some foundational results on relative Hilbert schemes from \cite{grothendieck_hilbert-schemes}. This will be useful in order to study the essential minimum and its behavior in families.

\begin{theorem}
    Let $\sX \to S$ be a flat family of projective varieties over a noetherian scheme $S$. Let $\sL$ be a relatively very ample line bundle on $\sX$. Then there is a finite set of polynomials $\Xi$ such that for any reduced closed subvariety $Y \subset \sX_{\ov{s}}$ of some geometric fibre of $\sX$ of pure dimension $d$ and degree $r$ the Hilbert polynomial of $Y$ belongs to $\Xi$.
\end{theorem}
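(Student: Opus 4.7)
The plan is to reduce the assertion to a finiteness statement about Hilbert polynomials of reduced subvarieties of a fixed projective space, and then to deduce the latter from a classical bound on Castelnuovo--Mumford regularity.

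First I would bound the embedding dimension uniformly over $S$. Since $\sX \to S$ is flat and projective with relatively very ample $\sL$, the Hilbert polynomial of the fibres $\sX_s$ with respect to $\sL$ is locally constant on $S$. Noetherianity of $S$ provides only finitely many connected components, so these Hilbert polynomials take only finitely many values; after replacing $\sL$ by a sufficiently large power one can arrange that $R^i\pi_*\sL = 0$ for $i>0$ and that $\pi_*\sL$ is locally free, giving a closed embedding $\sX \hookrightarrow \bbP(\pi_*\sL)$ over $S$. Restricting to a connected component of $S$, each geometric fibre $\sX_{\ov{s}}$ admits a closed embedding into a single fixed projective space $\bbP^N$ along which $\sL|_{\sX_{\ov{s}}}$ pulls back from $\mathcal{O}(1)$. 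It therefore suffices to show that reduced closed subvarieties $Y \subset \bbP^N$ of pure dimension $d$ and degree $r$ realize only finitely many Hilbert polynomials.

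Next I would invoke a regularity bound: by Mumford's theorem on the regularity of subschemes of projective space (or refinements due to Bayer--Mumford or Kleiman), there exists an integer $m_0 = m_0(N,d,r)$ such that for every reduced closed $Y \subset \bbP^N$ of pure dimension $d$ and degree $r$, the ideal sheaf $\mathcal{I}_Y$ is $m_0$-regular. In particular
\[
    h^0(Y, \mathcal{O}_Y(n)) = P_Y(n) \qquad \text{for all } n \geq m_0.
\]

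Finally, since $P_Y$ has degree $d$, it is determined by its values at the $d+1$ consecutive integers $m_0, m_0+1, \ldots, m_0+d$; each such value is bounded above by $h^0(\bbP^N, \mathcal{O}(n)) = \binom{N+n}{N}$, a quantity depending only on $N$ and $n$. Only finitely many tuples of values, and therefore only finitely many polynomials $P_Y$, can arise; take $\Xi$ to be this finite set, enlarged over all finitely many components of $S$. The main obstacle is the regularity bound itself, which is a nontrivial classical input that I would simply cite from the literature; the remaining steps are standard reductions.
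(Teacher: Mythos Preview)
Your argument is correct. The paper's proof simply cites Grothendieck's FGA results: the structure sheaves $\sO_Y$ form a \emph{limited} (bounded) family by \cite[Lemma~2.4]{grothendieck_hilbert-schemes}, and limited families realize only finitely many Hilbert polynomials by \cite[Th\'eor\`eme~2.1(b)]{grothendieck_hilbert-schemes}. Your proof unpacks exactly this: the reduction to a fixed $\bbP^N$ and the Castelnuovo--Mumford regularity bound together constitute the standard proof of Grothendieck's Lemma~2.4, and your final step (a degree-$d$ polynomial is determined by $d+1$ bounded values) is the content of Th\'eor\`eme~2.1(b). So the two approaches are the same in substance; yours is more explicit about the mechanism, while the paper treats the result as a black-box citation. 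One minor point to keep clean in a write-up: when you pass to a power $\sL^{\otimes k}$, the Hilbert polynomial and the degree of $Y$ change, but since a polynomial of degree $d$ is determined by its restriction to $k\bbZ$ and the new degree $k^d r$ is still fixed, finiteness for $\sL^{\otimes k}$ implies finiteness for $\sL$.
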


\begin{proof}
    Let $E_{d,r}$ be the the collection of coherent sheaves on geometric fibres $\sX_{\ov{s}}$ given by the structure sheaves $\sO_Y$ for reduced closed subvarieties $Y \subset \sX_{\ov{s}}$ of pure dimension $d$ and degree $r$. By Lemma 2.4 in \cite{grothendieck_hilbert-schemes} they form a limited family. By Theorem 2.1 b) loc.cit.\ the Hilbert polynomials associated to a limited family is finite.
\end{proof}

Given a polynomial $P$, the relative Hilbert scheme $\Hilbert_P(\sX/S)$ is a projective $S$-scheme  representing the functor
\begin{align*}
    \{\textrm{schemes over } S\} &\longrightarrow \Set \\
    T &\longmapsto
    \left\{
        \begin{array}{c}
        \textrm{closed subschemes } \\
        W \subset \sX \times_S T, \textrm{ flat over } T, \\
        \textrm{with Hilbert polynomial } P
        \end{array}
    \right\}.
\end{align*}

By definition there is a universal family $\mathfrak S \subset \sX\times_S \Hilbert_P(\sX/S)  \to \Hilbert_P(\sX/S)$. The restricted Hilbert scheme $\resHilbert_P(\sX/S)$ is defined as the locus where the fibres of $\mathfrak S \to \Hilbert_P(\sX/S)$ are geometrically integral. By \cite[IV Thm 12.2.4(vii)]{EGA}, the locus where the fibres of a flat, proper map are geometrically integral is open. In particular, $\resHilbert_P(\sX/S) \subset \Hilbert_P(\sX/S)$ is an open subscheme.

\begin{definition}
    Let $\ov{\sL}$ be an adelic line bundle on a projective geometrically integral variety $X$ over a GVF $K$. We define the absolute minimum of $X$ as
    \[
        \absmin(\ov{\sL}) = \underset{\begin{subarray}{c}
  F/K \\ \textrm{GVF extension}
  \end{subarray}}{\inf}\inf_{x \in X(F)} h_{\ov{\sL}}(x).
    \]
    We define the essential minimum to be
    \[
        \essmin(\ov{\sL}) = \underset{\begin{subarray}{c}
  F/K \\ \textrm{GVF extension}
  \end{subarray}}{\inf}\underset{\begin{subarray}{c}
  Z \subset X_F\\ \textrm{proper closed subset}
  \end{subarray}}{\sup}\inf_{x \in X(F) \setminus Z(F)} h_{\ov{\sL}}(x).
    \]
\end{definition}

\begin{remark}\label{rk:successive_minima_attained}
    If $K$ is existentially closed, the absolute and essential minimum can be computed without the need of passing to a GVF extension.

    There always exists a GVF extension $F/K$ that attains the infimum. Let $F_i$ be field extensions of $K$ such that for $i \to \infty$ the quantities $\inf_{x \in X(F)} h_{\ov{\sL}}(x)$ and
\[
    \underset{\begin{subarray}{c}
  Z \subset X_{F_i}\\ \textrm{proper closed subset}
  \end{subarray}}{\sup}\inf_{x \in X(F_i) \setminus Z(F_i)} h_{\ov{\sL}}(x).
    \]
    tend toward the absolute resp. the essential minimum. Then, for a non-principal ultrafilter $\sU$ the ultraproduct $\prod_{\sU} F_i$ attains the infimum. In fact, by taking ultraproducts one can find a GVF extension $F$ such that $\{x \in X(F)\ | h_{\ov{\sL}}(x) = \essmin(\ov{\sL}) \}$ is Zariski dense.
\end{remark}

\begin{example}
     In the definition of essential minimum, it is important to allow for subvarieties $Z$ defined over $F$ and not just over $K$. Consider the variety $ Z = \ov{\{(x,x+a)\}} \subset \bbP^1 \times \bbP^1$ defined over $\bbQ(a)$. Endow $\bbQ(a) \subset \bbQ(x,x+a)$ with a GVF structure such that $\height(x) = \height(x+a) = 0$ extending the natural GVF structure on $\bbQ(x,x+a)$. This can for instance be obtained by viewing $\bbQ(x,x+a)$ as the function field of $\bbP^1 \times \bbP^1$ with the polarized GVF structure by $\sO(1,1)^{\textrm{can}}$. Let $\ov{\sL} = \sO(1,1)^{\textrm{can}}|_Z$. Then, by the Bogomolov conjecture for tori in Proposition \ref{prop:archimedean_torus} it follows that $\essmin(\ov{\sL}) > 0$. However, the GVF structure on $\bbQ(x,x+a)$ describes a GVF valued point of height $0$ with respect to $\ov{\sL}$ that is transcendental over $\bbQ(a)$. This implies that $\ov{\bbQ(a)}^{\sym}$ is not existentially closed.
\end{example}

We introduce some helpful invariants that behave better in families. They will be defined in such a way that they are formulas in continuous logic, where the supremum and infimum over sets of bounded heights is allowed as a quantifier. By \L o\'s's theorem in continuous logic, see \cite{basics_of_gvfs}, the formula for the ultraproduct evaluates to the limit of the evaluation of the formula for each factor. Rather than appealing to continuous logic we prove that the invariants commute with taking ultraproducts in an ad hoc manner.

Let $\sX \to S$ be a family of projective varieties. We denote the fibre at a field-valued point $s \in S(F)$ by $\sX(s)$. Subvarieties of $\sX(s)$ of degree at most $N$ are parametrized by a finite union $\bbH$ of relative Hilbert schemes. Fix a compactification of $\bbH$ and an arithmetically ample line bundle $\sM$ on it. Let $\ov{\sL}$ be an adelic line bundle on $\sX$ that is geometrically relatively ample.

\begin{definition}
    Let $d,N,C > 0$ be constants. For any GVF-valued point $s \in S(F)$ define the set of closed subvarieties
    \[
        I(s,\ov{\sL},d,N,C) = \{Z \subset \sX\ |\ \dim(Z) < d-1, \deg_{\sL}(Z) \leq N, h_{\sM}([Z])\leq C\}.
    \]
    We define the parametrized minimum
    \[
        \zeta(s,\ov{\sL},d,N,C) = \underset{Z \in I(s,\ov{\sL},d,N,C)}{\sup}\inf_{x \in \sX(s)(F) \setminus Z(F)} h_{\ov{\sL}}(x).
    \]
\end{definition}

\begin{lemma}\label{lemm:parmin_ultraproducts}
    Let $\sX \to S$ be a family of varieties and let $\ov{\sL}$ be an adelic line bundle on $\sX$ that is geometrically relatively ample. Let $s_i \in S(F_i)$ be GVF-valued points of bounded height. Let $F = \prod_\sU F_i$ be an ultraproduct and $s\in S(F)$ the limit point of the $s_i$.
    Then, the parametrized minimum of $\ov{\sL}$ restricted to the fibres satisfies
    \[
        \lim_\sU \zeta(\ov{\sL}|_{\sX(s_i)},d,N,C) = \zeta(\ov{\sL}|_{\sX(s)},d,N,C).
    \]
    Consequently, the essential and absolute minimum satisfy \[\lim_\sU \absmin(\ov{\sL}|_{\sX(s_i)}) = \absmin(\ov{\sL}|_{\sX(s)})\] and \[\lim_\sU \essmin(\ov{\sL}|_{\sX(s_i)}) \geq \essmin(\ov{\sL}|_{\sX(s)}).\]
\end{lemma}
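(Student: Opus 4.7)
The plan is to recast $\zeta$ as a sup-inf over a parameter space coming from a relative Hilbert scheme. Parametrize closed subschemes of $\sX \to S$ of dimension $< d-1$ and degree $\leq N$ by a finite union $\bbH$ of relative Hilbert schemes, with universal subscheme $Z_{\mathrm{univ}} \subset \sX \times_S \bbH$; let $V := (\sX \times_S \bbH) \setminus Z_{\mathrm{univ}}$ be its open complement. The height bound $h_{\sM}([Z]) \leq C$ defines a compact locus $\bbH^{\leq C}$ in the GVF analytification of $\bbH$, and
\[
    \zeta(s, \ov{\sL}, d, N, C) = \sup_{z \in \bbH_s^{\leq C}(F)} \inf_{x \in V_z(F)} h_{\ov{\sL}}(x).
\]

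For the inequality $\lim_\sU \zeta(s_i) \geq \zeta(s)$, I would fix $\epsilon > 0$ and pick $z \in \bbH_s^{\leq C}(F)$ nearly realizing the sup. Write $z = \lim_\sU z_i$ with $z_i \in \bbH_{s_i}(F_i)$; continuity of $h_{\sM}$ gives $h_{\sM}(z_i) \leq C$ for $\sU$-most $i$, possibly after an arbitrarily small loosening of $C$. Supposing for contradiction that $\zeta(s_i) < \zeta(s) - 2\epsilon$ for $\sU$-most $i$, pick $x_i \in V_{z_i}(F_i)$ with $h(x_i) < \zeta(s) - 2\epsilon$; these points have bounded height, so the ultralimit $(x, z) = \lim_\sU (x_i, z_i)$ is a well-defined $F$-point, and Zariski-openness of $V$ places it in $V(F)$, i.e.\ $x \in V_z(F)$. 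But $h(x) = \lim_\sU h(x_i) < \zeta(s) - 2\epsilon$ contradicts the choice of $z$.

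The reverse direction $\lim_\sU \zeta(s_i) \leq \zeta(s)$ is the main technical step. For each $i$, pick $z_i \in \bbH_{s_i}^{\leq C}(F_i)$ with $\inf_{x \in V_{z_i}(F_i)} h(x) \geq \zeta(s_i) - \epsilon$, set $z = \lim_\sU z_i \in \bbH_s^{\leq C}(F)$, and take any $x \in V_z(F)$. The crux is that since $z$ is represented in $F = \prod_\sU F_i$ by the pre-chosen sequence $(z_i)$, representatives of the coordinates of $x$ yield $x_i \in \sX(s_i)(F_i)$ (adjusting on an $\sU$-small set to enforce $\pi(x_i) = s_i$) such that $(x_i, z_i)$ represents $(x, z) \in (\sX \times_S \bbH)(F)$ for $\sU$-most $i$, by \L{}o\'s's theorem. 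The Zariski-open condition $(x, z) \in V$ forces $(x_i, z_i) \in V(F_i)$ for $\sU$-most $i$, so $x_i \in V_{z_i}(F_i)$ and $h(x) = \lim_\sU h(x_i) \geq \lim_\sU (\zeta(s_i) - \epsilon)$. Taking the infimum over $x$ gives $\zeta(s) \geq \lim_\sU \zeta(s_i) - \epsilon$.

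The $\absmin$ and $\essmin$ consequences follow by applying $\zeta$-equality after replacing the base fields by suitable GVF extensions. For $\absmin$, the case $d = 0$ of $\zeta$ gives $\lim_\sU \inf_{x \in \sX(s_i)(F_i')} h(x) = \inf_{x \in \sX(s)(F')} h(x)$ whenever $F' = \prod_\sU F_i'$; picking $F_i'/F_i$ realizing $\absmin(\sX(s_i)) + \epsilon$ yields $\absmin(\sX(s)) \leq \lim_\sU \absmin(\sX(s_i))$, while the reverse uses that any GVF extension of $\prod_\sU F_i$ embeds into an ultraproduct of extensions of the $F_i$'s (in the spirit of \S\ref{sec:existential_closedness}). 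For $\essmin$, pick $F_i'/F_i$ with the no-extension essmin of $\sX(s_i)$ over $F_i'$ within $\epsilon$ of $\essmin(\sX(s_i))$; then $\zeta(s_i, d, N, C)$ computed over $F_i'$ is $\leq \essmin(\sX(s_i)) + \epsilon$ for every $(d, N, C)$, and $\zeta$-equality together with $\sup_{d, N, C}$ on the left yields $\essmin(\sX(s)) \leq \lim_\sU \essmin(\sX(s_i)) + \epsilon$. The main obstacle throughout is the \L{}o\'s-type representative matching in the third paragraph: ensuring that the lifted pair lies in $V(F_i)$ for the \emph{specific} pre-chosen $z_i$, rather than merely some sequence with ultralimit $z$.
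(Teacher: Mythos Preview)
Your proof of the $\zeta$-equality is essentially the same as the paper's: both pass through the relative Hilbert scheme $\bbH$, transfer a near-optimal subvariety $z$ (resp.\ $z_i$) across the ultraproduct, and use a \L o\'s-type argument to identify points of $\sX(s)(F)\setminus Z(F)$ with $\sU$-classes of points in $\sX(s_i)(F_i)\setminus Z_i(F_i)$. Your formulation via the open complement $V = (\sX\times_S\bbH)\setminus Z_{\mathrm{univ}}$ makes the \L o\'s step cleaner than the paper's one-line assertion, and you correctly flag the representative-matching as the crux. The minor ``loosening of $C$'' issue you raise is real but harmless: $\zeta$ is left-continuous in $C$.

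Where you go beyond the paper is in deriving the $\absmin$ and $\essmin$ consequences, which the paper leaves entirely implicit. Your $\essmin$ argument is fine. For $\absmin$, the direction $\absmin(\sX(s))\le \lim_\sU\absmin(\sX(s_i))$ (which is all that Proposition~\ref{prop:lower_zhang_ineq} actually uses) follows as you say by choosing near-minimizing extensions $F_i'$ and passing to $\prod_\sU F_i'$. The reverse direction rests on your claim that any GVF extension of $\prod_\sU F_i$ embeds into an ultraproduct of extensions of the $F_i$; this is true by compactness of quantifier-free type spaces in continuous logic (equivalently, every point of $\GVFan[F]{X}$ is an ultralimit of points of $\GVFan[F_i]{X_i}$), but it is a genuine step beyond what \S\ref{sec:existential_closedness} proves, which only treats extensions of an existentially closed GVF. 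You should either cite the relevant continuous-logic compactness statement or give the short direct argument.
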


\begin{proof}
    Let $r < \zeta(\ov{\sL}|_{\sX(s)},d,N,C)$ be a real number. Then, there exists a subvariety $Z \subset \sX(s)$ of dimension $< d-1$, degree $\leq N$ and height $< C$ containing $\{x\in \sX(s)(F)\ |\ h_{\ov{\sL}}(x) < r\}$. By taking representatives, the point $P \in \bbH(F)$ given by $Z$ in the relative Hilbert scheme defines points $P_i \in\bbH(F_i)$ defining subvarieties $Z_i \subset \sX(s_i)$ of height $<C$ for a big subset of $i$. We remark that 
    \[
        r \leq \inf_{x \in X(s)(F) \setminus Z(F)} h_{\ov{\sL}}(x) = \lim_\sU \inf_{x \in X(s_i)(F_i) \setminus Z(F_i)} h_{\ov{\sL}}(x)
    \]
    since a point in $X(s)(F) \setminus Z(F)$ precisely corresponds to an equivalence class of points of bounded height $x_i \in X(s_i)(F_i) \setminus Z(F_i)$. In particular it follows that $\lim_\sU \zeta(\ov{\sL}|_{\sX(s_i)},d,N,C) \geq \zeta(\ov{\sL}|_{\sX(s)},d,N,C)$.

    Let conversely $r < \lim_\sU \zeta(\ov{\sL}|_{\sX(s_i)},d,N,C)$ be a real number. Then, there exist $r_i \in \bbR$ with $\lim_\sU r_i < r$ such that there exist subvarieties $Z_i \subset \sX(s)$ of dimension $< d-1$, degree $\leq N$ and height $< C$ containing 
    \[\{x_i \in \sX(s_i)(F_i)\ |\ h_{\ov{\sL}}(x_i) < r_i\}.\]
    Each $Z_i$ defines a point $P_i \in\bbH(F_i)$. Since they have bounded height they define a limit point $P \in \bbH(F)$ defining a subvariety $Z \subset \sX(s)$ and
    \[
        \inf_{x \in X(s)(F) \setminus Z(F)} h_{\ov{\sL}}(x) = \lim_\sU \inf_{x \in X(s_i)(F_i) \setminus Z(F_i)} h_{\ov{\sL}}(x) > r.
    \]
    This provides the reverse inequality.
\end{proof}

\begin{remark}
    It follows from Proposition \ref{prop:isotrivial_bogomolov}, which we will prove later, that the height on $\bbH$ associating to $[Z] \in \bbH(F)$ the height $h_{\ov{\sL}}(Z)$ is associated to a line bundle which is relatively ample over $S$. In particular, by defining the set of removable subvarieties
    \[
        I(\ov{\sL},d,N,C) = \{Z \subset X\ |\ \dim(Z) < d-1, \deg_{\sL}(Z) \leq N, h_{\ov{\sL}}(Z)\leq C\}.
    \]
    one may obtain a quantity that behaves the same way in families, but only relies on the fibre for its definition.
\end{remark}

\begin{proposition}\label{prop:lower_zhang_ineq}
    Let $\ov{\sL}$ be a geometrically ample semipositive adelic line bundle on a projective variety $X$ over a GVF $K$. Then,
    \[
        \frac{d}{d+1}\absmin(\ov{\sL})+ \frac{1}{d+1}\essmin(\ov{\sL}) \leq h_{\ov{\sL}}(X).
    \]
\end{proposition}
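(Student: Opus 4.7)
My plan is to reduce to Chen--Moriwaki's Zhang inequality for adelic curves, using the compatibility of heights established in Proposition \ref{prop:C-M_YZ_identity}. The strategy has four steps: countable descent, adelic curve representation, invoking the classical successive-minima inequality, and comparing the two notions of minima.

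First I would pick a countable sub-GVF $K_0 \subseteq K$ over which $X$, $\ov{\sL}$, and a suitable finite-type model are defined. The intersection number $h_{\ov{\sL}}(X)$ is computed on the model and is therefore unaffected by restricting to $K_0$. Moreover $\absmin$ and $\essmin$ are unchanged: every GVF extension of $K$ is automatically a GVF extension of $K_0$, and conversely any GVF extension of $K_0$ can be amalgamated with $K$ over $K_0$ to yield a GVF extension of $K$, so the defining infima coincide. By \cite[Corollary 1.3]{basics_of_gvfs}, the countable GVF $K_0$ is represented by some proper adelic curve, converting $\ov{\sL}$ into a C-M line bundle $\ov{\sL}^{\textrm{C-M}}$ on $X$ with $h_{\ov{\sL}^{\textrm{C-M}}}(X) = h_{\ov{\sL}}(X)$ by Proposition \ref{prop:C-M_YZ_identity}. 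Chen--Moriwaki's arithmetic Hilbert--Samuel theory (\cite{Adelic_curves_2}) then yields the classical successive-minima inequality
\[
    \frac{1}{d+1}\sum_{i=1}^{d+1} \lambda_i \;\leq\; h_{\ov{\sL}^{\textrm{C-M}}}(X),
\]
where $\lambda_1 \geq \ldots \geq \lambda_{d+1}$ are the successive minima of $\ov{\sL}^{\textrm{C-M}}$ on the adelic curve. Since $\lambda_1$ equals the adelic-curve essential minimum and every $\lambda_i$ dominates the adelic-curve absolute minimum, this collapses to the two-term inequality $\frac{d\,\absmin^{\textrm{ac}}(\ov{\sL}^{\textrm{C-M}}) + \essmin^{\textrm{ac}}(\ov{\sL}^{\textrm{C-M}})}{d+1} \leq h_{\ov{\sL}^{\textrm{C-M}}}(X)$.

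To finish I would compare the two notions of minima: taking the algebraic closure $\ov{K}_0$ with its symmetric GVF extension as a single instance of a GVF extension in the defining infima shows $\absmin(\ov{\sL}) \leq \absmin^{\textrm{ac}}(\ov{\sL}^{\textrm{C-M}})$ and likewise for the essential minimum, since allowing all GVF extensions of $K$ can only decrease the infimum. Substituting these inequalities into the adelic-curve inequality and using the height identification yields the claim. The main obstacle is Step~3: pinning down the precise form of Chen--Moriwaki's arithmetic Hilbert--Samuel theorem that produces the classical successive-minima inequality under exactly the semipositivity and geometric-ampleness hypotheses assumed here. A purely GVF-internal proof via an analogous Hilbert--Samuel result in the Yuan--Zhang formalism, perhaps obtained by embedding $K$ into an ultrapower of $\Qbar[\lambda]$ or $\ktbar[\lambda]$ and invoking Lemma \ref{lemm:parmin_ultraproducts} together with the classical Zhang inequality on each factor, would be an attractive alternative but requires appreciably more setup.
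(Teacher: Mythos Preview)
Your primary route has a genuine gap at Step~3. The lower Zhang inequality with the \emph{adelic-curve} successive minima (i.e.\ minima computed over the fixed algebraic closure $\ov{K}_0^{\sym}$) is not a theorem of Chen--Moriwaki, and the remark the paper places immediately after this proposition is precisely about this: Guo's counterexample in \cite{guo2025nefconesuccessiveminima} shows that the naive lower Zhang inequality \emph{fails} for some adelic curves. Chen--Moriwaki's arithmetic Hilbert--Samuel package gives the upper bound $\essmin \geq h_{\ov{\sL}}(X)$ (this is exactly how the paper proves Proposition~\ref{prop:zhang_inequality}), but not the lower bound you need. So it is not a matter of ``pinning down the precise form''; the statement you want to invoke is false in general, and the comparison $\absmin \leq \absmin^{\textrm{ac}}$, $\essmin \leq \essmin^{\textrm{ac}}$ that you make in Step~4 is precisely the mechanism by which the GVF-extended minima rescue the inequality, while the adelic-curve minima on their own do not satisfy it.

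Ironically, the ``alternative'' you mention in your last sentence and dismiss as requiring more setup is essentially the paper's actual proof. The paper spreads out $X$ and $\ov{\sL}$ to a family $\sX \to S$ over $\Qbar$ or $\ktbar$ with $K = \Frac(S)$, uses existential closedness of $\Qbar$ (resp.\ $\ktbar$) to approximate the point $s \in S(K)$ corresponding to $X$ by $\Qbar$- (resp.\ $\ktbar$-) points $s_i$, applies Zhang's classical inequality \cite[Theorem~5.2]{Zhang_thesis_inequality} to each fibre $\sX(s_i)$, and then passes to the ultralimit via Lemma~\ref{lemm:parmin_ultraproducts}. The key point is that Lemma~\ref{lemm:parmin_ultraproducts} gives $\lim_{\sU} \absmin = \absmin$ and $\lim_{\sU} \essmin \geq \essmin$, both in the right direction for the inequality to pass to the limit, and the height $h_{\ov{\sL}}(\sX(s))$ is continuous in $s$. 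This approach works exactly because over $\Qbar$ and $\ktbar$ the two notions of minima coincide (existential closedness) and the classical inequality is available there.
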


\begin{proof}
    Spread out the data to a family $\sX \to S$ over $\Qbar$ or $\ktbar$ such that wlog.\ $S$ has function field $K$. Let $s \in S(K)$ be such that $X = \sX(s)$. By existential closedness one may approximate $s$ by $\Qbar$- or $\ktbar$-points. By the properties of the height, the absolute and the essential minimum under taking ultraproducts the inequality follows from the case of $\Qbar$ or $\ktbar$ which is known from \cite[Theorem 5.2]{Zhang_thesis_inequality}.
\end{proof}

\begin{remark}
    This does not contradict the counterexample in \cite[Theorem 1.2]{guo2025nefconesuccessiveminima}. This is because we allow passing to GVF extensions in our definition of successive minima. From our perspective, it is expected that the naive version of Zhang's inequalities fails at least for some GVFs.
    
    Let $K$ be the function field of the Jacobian of a curve of genus $g > 1$ endowed with the structure of a GVF by the $\theta$-polarization and let $\ov{K}$ be its algebraic closure with the symmetric extension GVF structure. Then, it follows from the counterexample that $\ov{K}$ is not an existentially closed GVF.
\end{remark}

\begin{proposition}[cf.\ Proposition 6.4.4 \cite{Adelic_curves_1}]\label{prop:zhang_inequality}Let $X$ be an integral projective scheme over $K$ and $\ov{\sL}$ a geometrically ample semipositive adelic line bundle on $X$. Then, the following Zhang inequality holds
    \[
        \essmin(\ov{\sL}) \geq h_{\ov{\sL}}(X).
    \]
\end{proposition}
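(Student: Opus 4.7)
The plan is to reduce to the classical Chen-Moriwaki Zhang inequality \cite[Proposition 6.4.4]{Adelic_curves_1} via a countable subfield representation. Since $\essmin(\ov{\sL}) = \inf_{F/K}\sup_{Z \subsetneq X_F} \inf_{x \in X(F)\setminus Z(F)} h_{\ov{\sL}}(x)$ by definition, it suffices to show that for every GVF extension $F/K$ the inner sup-inf is at least $h_{\ov{\sL}}(X)$. I would argue by contradiction: suppose that for some $F$ there exists $c < h_{\ov{\sL}}(X)$ such that the set $S = \{x \in X(F) : h_{\ov{\sL}}(x) < c\}$ is Zariski dense in $X_F$.

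Fix a countable subfield $K_0 \subseteq K$ over which $X$ and $\ov{\sL}$ are defined (including the choice of model), extract a countable Zariski dense subset $S_0 \subseteq S$, and let $F_0 \subseteq F$ be the (countable) subfield generated by $K_0$ together with the coordinates of the points in $S_0$. The GVF structure on $F$ restricts to one on $F_0$, and a standard base-change argument shows that $S_0$ remains Zariski dense in $X_{F_0}$; taking finite unions over Galois orbits of closed subschemes then propagates this density to $X_{\ov{F_0}}$. Since $F_0$ is countable, its GVF structure is represented by a proper adelic curve, and the classical Zhang inequality \cite[Proposition 6.4.4]{Adelic_curves_1} applies to the induced C-M line bundle $\ov{\sL}^{\textrm{C-M}}$ (which is geometrically ample and semipositive since these properties are intrinsic and unchanged by base change).

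Chen-Moriwaki's inequality yields that the C-M essential minimum of $\ov{\sL}^{\textrm{C-M}}$ over $X(\ov{F_0})$ is at least $h_{\ov{\sL}^{\textrm{C-M}}}(X)$, which by Proposition \ref{prop:C-M_YZ_identity} coincides with the Yuan-Zhang normalized height $h_{\ov{\sL}}(X)$. On the other hand, the Zariski density of $S_0$ in $X_{\ov{F_0}}$ together with the uniform bound $h_{\ov{\sL}}(x) < c$ for $x \in S_0$ forces the C-M essential minimum to be at most $c < h_{\ov{\sL}}(X)$, producing the desired contradiction. The main technical obstacle is the compatibility between the Yuan-Zhang and Chen-Moriwaki frameworks for heights of higher-dimensional subvarieties, which is precisely what Proposition \ref{prop:C-M_YZ_identity} provides; the reduction to a countable subfield and the preservation of Zariski density under base change and Galois orbits are then routine.
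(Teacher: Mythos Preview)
Your proposal is essentially correct and follows the same route as the paper: descend to a countable sub-GVF, represent it by a proper adelic curve, and invoke the Chen--Moriwaki theory. One point to tighten: \cite[Proposition 6.4.4]{Adelic_curves_1} by itself is not the inequality $\essmin \geq h_{\ov{\sL}}(X)$; it bounds the essential minimum below by the asymptotic maximal slope $\maxasyslope(\ov{\sL})$, and you need arithmetic Hilbert--Samuel over adelic curves (\cite[Theorem B]{chenpositivity}) to relate $\maxasyslope(\ov{\sL})$ to $h_{\ov{\sL}}(X)$. The paper makes this two-step structure explicit and, rather than citing the combination as a black box, rewalks the evaluation-map argument of \cite[Proposition 6.4.4]{Adelic_curves_1} after noting that the finitely many points $P_1,\dots,P_{\dim V}$ needed live in a countable subfield. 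Your contradiction packaging is equivalent once the missing Hilbert--Samuel input is supplied.
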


\begin{proof}
    Assume without loss of generality that $F = K$ is algebraically closed. Fix an adelic curve structure on a finitely generated subfield $L$ over which $X$ and $\ov{\sL}$ are defined. By arithmetic Hilbert-Samuel over adelic curves, see \cite[Theorem B]{chenpositivity}, it follows that the maximal asymptotic slope is bigger than $h_{\ov{\sL}}(X)$. This puts us in the setting of Proposition 6.4.4 in \cite{Adelic_curves_1}. We give a partial account of the proof in order to verify that the statement extends to the setting of globally valued fields.
    
    One first reduces to proving that for every subspace $V \subset H^0(X,\ov{\sL})$ over $L$, one has $\adeg(V) \leq \essmin(\ov{\sL}) + \epsilon$. Let $\Lambda = \{x \in X(F)\ |\ h_{\ov{\sL}}(x) \leq \essmin(\ov{\sL}) + \epsilon\}$. Then, one may choose points $P_1,\dots,P_{\dim V}$ such that the evaluation map $f:V \otimes_{L} K \to \bigoplus_{i=1}^{\dim V} \kappa(P_i)$ is a bijection. We observe that the points $P_1,\dots,P_{\dim V}$ can be defined over a countable subfield $L'\supset L$ of $K$. One may hence choose a representing adelic curve $L'$ and proceed with the proof in \cite{Adelic_curves_1} verbatim.
\end{proof}

This can be used to prove a uniform Zhang inequality differing substantially from \cite[Theorem 1.1]{mavraki2025quantitativedynamicalzhangfundamental}.

\begin{corollary}\label{cor:uniformzhang}
    Let $\sX \to S$ be a family of projective varieties over $\Qbar$ and let $\ov{\sL}$ be an adelic line bundle on $\sX$ that is semipositive on fibers. Assume furthermore that the geometric Deligne pairing $\langle \sL, \dots,\sL\rangle$ is big on $S$. Then, for any $\epsilon > 0$ there exist constants $N$ and $C$ such that the set 
    \[
        \{x\in \sX(s)(\Qbar)\ |\ h_{\sL}(x) < (1-\epsilon) h_{\sL}(\sX(s))\}
    \]
    is contained in a proper subvariety of degree $\leq N$ and height $< C$ for all $s \in S(\Qbar)$ outside a proper closed subset of $S$.
\end{corollary}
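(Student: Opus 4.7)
The plan is to argue by contradiction, using the GVF Zhang inequality (Proposition \ref{prop:zhang_inequality}) combined with the continuity of the parametrized minimum under ultralimits (Lemma \ref{lemm:parmin_ultraproducts}). Suppose the conclusion fails for some $\epsilon > 0$: then for every pair $(N, C)$ and every proper closed $V \subsetneq S$ there would exist $s \in S(\Qbar) \setminus V$ whose small points (those $x \in \sX(s)(\Qbar)$ with $h_{\sL}(x) < (1-\epsilon) h_{\sL}(\sX(s))$) cannot be covered by a proper subvariety of $\sX(s)$ of degree $\leq N$ and height $< C$. In terms of the parametrized minimum this reads $\zeta(s, \ov{\sL}, \cdot, N, C) \leq (1-\epsilon) h_{\sL}(\sX(s))$, so the bad locus $T_{N,C}$ is Zariski dense in $S$ for every $(N, C)$.

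The next step is to build a ``generic'' ultraproduct witness. Fix an ample adelic line bundle on a compactification of $S$ and, for each triple $(N, C, V)$ in the directed set where $V \subsetneq S$ is a proper closed subset, select a $\Qbar$-point $s_{N,C,V} \in T_{N,C} \setminus V$ of bounded height with respect to this fixed line bundle. Taking an ultrafilter $\sU$ refining the tail filter and setting $F = \prod_\sU \Qbar$, the point $s = \lim_\sU s_{N,C,V} \in S(F)$ has scheme-theoretic image equal to the generic point of $S$, since its approximants eventually escape every proper closed subset, and it induces a GVF embedding $K(S) \hookrightarrow F$.

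For each fixed $(N_0, C_0)$, monotonicity of $\zeta$ in its last two arguments combined with Lemma \ref{lemm:parmin_ultraproducts} gives
\[
\zeta(s, \ov{\sL}, \cdot, N_0, C_0) = \lim_\sU \zeta(s_{N,C,V}, \ov{\sL}, \cdot, N_0, C_0) \leq (1-\epsilon) \lim_\sU h_{\sL}(\sX(s_{N,C,V})) = (1-\epsilon) h_{\sL}(\sX(s)),
\]
where the last equality uses continuity of $h_{\langle \sL, \dots, \sL\rangle}$ under ultralimits. Taking the supremum over $(N_0, C_0)$ yields $\essmin(\ov{\sL}|_{\sX(s)}) \leq (1-\epsilon) h_{\sL}(\sX(s))$, while Proposition \ref{prop:zhang_inequality} gives the reverse $\essmin(\ov{\sL}|_{\sX(s)}) \geq h_{\sL}(\sX(s))$, forcing $h_{\sL}(\sX(s)) \leq 0$. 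Bigness of $\langle \sL, \dots, \sL\rangle$ at the generic ultralimit $s$ then contradicts this.

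The main obstacle will be making precise the passage from bigness of the Deligne pairing on $S$ to strict positivity $h_{\sL}(\sX(s)) > 0$ at the generic ultralimit. A clean way to arrange this is to pick a nonzero small section of a sufficient tensor power of $\langle \sL, \dots, \sL\rangle$, with vanishing locus $W \subsetneq S$, and further require each $s_{N,C,V}$ to lie outside $W$. The section then bounds $h_{\langle \sL, \dots, \sL\rangle}(s_{N,C,V})$ uniformly below by a positive constant, which passes to the ultralimit and supplies the required positivity; the remainder of the argument is a bookkeeping of the inequalities above.
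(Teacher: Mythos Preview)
Your argument has a genuine gap at the step where you ``select a $\Qbar$-point $s_{N,C,V} \in T_{N,C} \setminus V$ of bounded height.'' Nothing in the setup guarantees that bad points of uniformly bounded height exist: the counterexamples $s_{N,C,V}$ may well have $h_{\text{ample}}(s_{N,C,V}) \to \infty$ as $N,C \to \infty$, in which case no ultralimit can be formed. The related positivity step is also unjustified: you invoke a ``small section'' of a power of $\langle \ov{\sL},\dots,\ov{\sL}\rangle$, but the hypothesis is only that the \emph{geometric} Deligne pairing is big, which yields a section of the underlying line bundle but not an arithmetically small one. A geometric section only gives $h_{\langle\ov{\sL},\dots,\ov{\sL}\rangle}(s) \geq -C$ outside its vanishing locus, not a strictly positive lower bound; so even if the ultralimit existed, you could land at $h_{\sL}(\sX(s)) \leq 0$ and get no contradiction.

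The paper's proof handles both issues simultaneously by a rescaling you did not use: one takes the ultraproduct of the $s_i$ viewed as points of $S(\Qbar[h_{\sL}(\sX(s_i))^{-1}])$, i.e.\ over copies of $\Qbar$ with the GVF structure scaled so that $h_{\sL}(\sX(s_i)) = 1$. Bigness of the geometric Deligne pairing then gives, outside a fixed closed subset, $h_{\text{ample}}(s_i) \leq n\, h_{\langle\ov{\sL},\dots,\ov{\sL}\rangle}(s_i) + O(1)$, so the \emph{rescaled} ample heights are bounded and the ultralimit $s \in S(F)$ exists. At the limit one has $h_{\sL}(\sX(s)) = 1$ by construction, and Lemma~\ref{lemm:parmin_ultraproducts} forces a Zariski-dense set of points of height $\leq 1-\epsilon$ in $\sX(s)(F)$, contradicting Proposition~\ref{prop:zhang_inequality}. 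The rescaling is the missing idea; once you insert it, the rest of your outline is essentially the paper's argument.
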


\begin{proof}
    Suppose, to the contrary, that there exists a sequence $s_i \in S(\Qbar)$ for which the claim fails. The set of points $s_i[h_{\sL}(\sX(s_i))^{-1}] \in S(\Qbar[h_{\sL}(\sX(s_i))^{-1}])$ is of bounded height by the assumption on geometric Deligne pairing. For a non-principal ultrafilter, one obtains a GVF extension $F$ of $\prod_{\sU} \Qbar[h_{\sL}(\sX(s_i))^{-1}]$ and a limit point $s \in S(F)$ such that 
    \[
    \{x\in \sX(s)(F)\ |\ h_{\sL}(x) \leq (1-\epsilon) h_{\sL}(\sX(s)) = (1-\epsilon)\cdot 1\}
    \]
    is Zariski dense by Lemma \ref{lemm:parmin_ultraproducts}. This contradicts the inequality stated in Proposition \ref{prop:zhang_inequality}.
\end{proof}

\section{\label{sec:bogomolov}Bogomolov conjecture}

The Bogomolov conjecture for higher dimensional subvarieties was first proved over number fields in \cite{zhang_bogomolov}. The geometric case turned out to be substantially more difficult since the equidistribution method cannot be applied as easily in the geometric case. The geometric Bogomolov conjecture was proven in characteristic $0$ in \cite{cantat_gao_habegger_geometric_bogomolov} and in general characteristic in \cite{xie_yuan_geometric_bogomolov}. Subsequently, the geometric Bogomolov conjecture for semiabelian varieties was reduced to the corresponding result for abelian varieties in \cite{luo2025geometricbogomolovconjecturesemiabelian}. The Bogomolov conjecture was proven for abelian varieties over adelic curves with a positive measure of archimedean absolute values in \cite{Adelic_curves_4}.

The Bogomolov conjecture admits a formulation over arbitrary globally valued fields and all of the above results are instances of this extension. Its formulation requires additional care to accommodate the more general nature of GVFs.

\subsection{Preliminaries on semiabelian varieties}

In this section, we define semiabelian varieties and recall their basic properties. We define compactifications and  canonical heights of semiabelian varieties and lastly introduce the Chow trace to deal with isotrivial parts of semiabelian varieties.

\begin{definition}
    A semiabelian variety $G$ over a field $K$ is a group variety over $K$ that is the extension of an abelian variety by a torus. That is, there exists a short exact sequence of fppf-sheaves of abelian groups
    \[
        \begin{tikzcd}
            0 \arrow[r] & \bbT  \arrow[r, "i"] & G \arrow[r, "\pi"] & A \arrow[r] & 0.
        \end{tikzcd}
    \]
\end{definition}

This short exact sequence is determined up to unique isomorphism. We refer to $\bbT$ as the torus part of $G$ and to $A$ as the abelian quotient. After possibly replacing $K$ by a finite field extension, there exists an isomorphism $\bbT \cong \bbG_m^t$, where $t$ is referred to as the torus rank of $G$. The Weil-Barsotti formula states that the set of extensions of $A$ by $\bbG_m$ can be identified with $A^\vee(K)$. This identification is given by the fact that any semiabelian variety defines a $\bbT$-torsor over $A$ which is automatically numerically trivial. All of the above definitions and results carry over to semiabelian schemes over a base scheme $S$.

We will restrict ourselves to the case that the torus is split, i.e.\ there exists an isomorphism $\bbT \cong \bbG_m^t$ or $\bbT_S \cong \bbG_{m,S}^t$ in the relative setting. Note that over algebraically closed fields the torus is automatically split. We describe compactifications and the canonical height on a semiabelian variety following \cite{hultberg2024arakelovgeometrytoricbundles}. We do not include background on toric varieties and refer to loc.\ cit.\ for additional details. The reader may opt to skip the discussion in favor of a more ad hoc approach.

Let $\bbT$ be a split torus and $\sT \to B$ be a $\bbT$-torsor. Then, every character $\sT \to \bbG_m$ gives rise to a line bundle on $B$. Conversely, knowing the resulting line bundles allows us to recover the $\bbT$-torsor. Since we know how to define line bundles in the arithmetic setting, this allows us to define $\bbT$-torsors in the arithmetic setting as well. Let $M$ denote the characters of $\bbT$ and $N$ the co-characters of $\bbT$.

\begin{definition}
    A $\bbT$-bundle can be identified with a collection of line bundles $(\Tbun(m))_{m\in M}$ on $B$ indexed by $M$ with compatible identifications $\Tbun(m_1 + m_2) \cong \Tbun(m_1) \otimes \Tbun(m_2)$. By changing the notion of line bundles to metrized line bundles and requiring the identifications to be isometries, we can define metrized $\bbT$-bundles. In the same way one can define adelic $\bbT$-bundles.
\end{definition}

An integral polytope $\Delta \subset M_\bbR$ defines a compactification $X_\Delta$ of $\bbT$ as a projective toric variety together with an ample torus invariant Cartier divisor $D(\Delta)$. The Cartier divisor $D(\Delta)$ can be described in terms of its global sections. We note that every element $m \in M$ defines a meromorphic function $m$ on $X_\Delta$ by extending the character $m:\bbT \to \bbG_m$. Let $m_1, \dots, m_n$ be the vertices of $\Delta$. Then, $D(\Delta) = \bigvee_{i=1}^n \textrm{div}(m_i)$. We compactify a toric bundle $\Tbun$ over a base $B$ by $\toricbun_\Delta = (\Tbun \times \toricvar_\Delta)/\T$, where $x \in \T$ acts by $(x,x^{-1})$. Denote the map to the base by $\pi: \toricbun_\Delta \to B$. Since $D(\Delta)$ is torus invariant it follows that $\rho(D(\Delta)) = (\Tbun \times D(\Delta))/\T$ is a well-defined Cartier divisor on $\toricbun_\Delta$. The construction satisfies
\[
    \rho(D(\Delta)) = \bigvee_{i=1}^n \rho(\textrm{div}(m_i)).
\]
The line bundle induced by $\rho(\textrm{div}(m_i))$ is precisely $\pi^*\Tbun(m_i)$. 

The above discussion extends to the metrized setting. Let $K$ be a valued field and endow $D(\Delta)$ with the canonical metric to obtain a metrized toric divisor $\ov{D}(\Delta)$ and let $\aTbun$ be a metrized toric bundle. Then, there is a metric analogue $\arho$ of the construction $\rho$. It satisfies
\[
    \arho(\ov{D}(\Delta)) = \bigvee_{i=1}^n \arho(\adiv(m_i)).
\]
The metrized line bundle induced by $\arho(\adiv(m_i))$ is precisely $\pi^*\aTbun(m_i)$. The construction globalizes since $\arho(\adiv(m_i))$ defines an adelic divisor if $\aTbun$ is an adelic torus bundle and taking maxima preserves adelic line bundles. To shorten notation we will denote the (adelic) divisor on the toric bundle constructed above by $\rho(\Delta)$ or $\arho(\Delta)$ respectively. If $0 \in \Delta$, it follows that $\arho(\Delta)$ is effective. Its geometric vanishing locus is contained in the boundary of the toric bundle.

\begin{example}
    An identification $M \cong \bbZ^n$ induces an isomorphism $\bbT \cong \bbG_m^n$. The polytope $[-1,1]^n$ defines the compactification $(\bbP^1)^n$. The toric Cartier divisor defined by the data is precisely $\sum^n_{i=1} (\bbP^1)^{i-1}\times[0]\times(\bbP^1)^{n-1} + (\bbP^1)^{i-1}\times[\infty]\times(\bbP^1)^{n-1}$.
\end{example}

We define canonical heights on semiabelian varieties by adding an abelian and a toric contribution. Let $G$ be a semiabelian variety over a GVF $K$ with torus part $\bbT$ and abelian quotient $A$. Fixing an integral polytope $\Delta\subset M_\bbR$ determines a compactification $\ov{G} =G_{\Delta}$ of $G$. Multiplication by a natural number $n$ extends to a morphism $[n]:\ov{G} \to \ov{G}$. We may endow $\rho(\Delta)$ with unique metrics such that $[n]^*\rho(\Delta) = n \rho(\Delta)$ extends to an equality $[n]^*\ov{\rho(\Delta)} = n \ov{\rho(\Delta)}$.

The natural $\bbT$-bundle structure of $G$ extends to an adelic $\bbT$-bundle by endowing the numerically trivial line bundles on $A$ with their canonical metric. One obtains $\arho(\Delta) = \ov{\rho(\Delta)}$ since also $[n]^*\arho(\Delta) = n \arho(\Delta)$. If $0 \in \Delta$, then $h_{\arho(\Delta)} \geq 0$ on $G$, but not necessarily on the boundary of $\ov{G}$. By Lemma \ref{lemm:maxima_of_divisors_and_positivity}, $\arho(\Delta)$ is integrable if $\aTbun(m)$ is integrable for all $m$ and semipositive if $\aTbun(m)$ is flat for all $m$. Hence, $\arho(\Delta)$ is integrable and semipositive.

Let $\ov{\sM}$ be an ample symmetric line bundle endowed with the canonical metric on $A$ and $\Delta$ a polytope containing $0$ in its interior. We define the canonical height on $G$ by $\NTht = h_{\pi^*\ov{\sM}} + h_{\arho(\Delta)}$. The dependence of $\NTht$ on $\sM$ and $\Delta$ is mild. For two canonical heights $\NTht_1,\NTht_2$ constructed as above there exists a constant $C$ such that $C^{-1}\NTht_1\leq\NTht_2\leq C\NTht_1$ on $G$. For the abelian contribution this inequality is standard. For the toric contribution it suffices to observe that for any polytope $\Delta$ one has $h_{\arho(N\Delta)} = N h_{\arho(\Delta)}$ and for polytopes $\Delta_1 \subset \Delta_2$ one has $h_{\arho(\Delta_1)}\leq h_{\arho(\Delta_2)}$.

In order to deal with questions of isotriviality we introduce the Chow trace.

\begin{definition}
    Let $K/k$ be an extension of algebraically closed fields and $G$ be a semiabelian variety over $K$. The Chow trace $(G^{K/k},\tr)$ of $G$ is defined as the final object in the category of pairs $(G_0,h)$ of a semiabelian variety $G_0$ defined over $k$ together with a map $G_0 \otimes_k K \to G$.
\end{definition}

It is well-defined by \cite[Corollary 2.4.4 (2)]{liu2024chowtrace1motiveslangneron} and $\tr$ has trivial kernel. For more details in the abelian setting, we refer to \cite[\S 6]{conrad_lang-neron}.

\subsection{Statements and equivalent formulations}

Let $A$ be an abelian variety over an algebraically closed GVF $K$. If $K$ is non-archimedean let $k$ be its constant field and denote by $(A^{K/k},\tr)$ its $K/k$-Chow trace. Let $\sM$ be an ample symmetric line bundle on $A$ and let $\widehat{h}$ denote the associated N\'eron-Tate height.

A closed integral subvariety $X$ of $A$ is called \emph{special} if it is of the form
\[
    \tr(Y\otimes K) + V + a,
\]
where $Y$ is a subvariety of $A^{K/k}$, $V$ is an abelian subvariety of $A$ and $a\in A(K)$ is a point satisfying $\widehat{h}(a) = 0$.

\begin{conjecture}[Bogomolov conjecture]
    Any subvariety $X \subset A$ such that for every $\epsilon > 0$ the set $\{x\in X(K)\ |\ \NTht(x) < \epsilon\}$ is Zariski dense is special. This condition is moreover equivalent to $\NTht(X) = 0$.
\end{conjecture}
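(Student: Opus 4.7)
My plan is to establish the conjecture in two stages: first the equivalence between Zariski density of small $K$-points and $\NTht(X)=0$, and then the geometric conclusion that these conditions force $X$ to be special. I expect the second stage to be the main obstacle, requiring case analysis on whether $K$ is archimedean or non-archimedean.

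For the equivalence, the Zhang inequalities in Propositions \ref{prop:zhang_inequality} and \ref{prop:lower_zhang_ineq} do almost all the work. If the set $\{x\in X(K) : \NTht(x)<\epsilon\}$ is Zariski dense for every $\epsilon>0$, then $\essmin(\ov{\sL}) \leq 0$, and the upper Zhang inequality $h_{\ov{\sL}}(X) \leq \essmin(\ov{\sL})$ combined with the non-negativity of the canonical height on an abelian variety yields $\NTht(X) = 0$. Conversely, if $\NTht(X) = 0$, the lower inequality $\tfrac{d}{d+1}\absmin(\ov{\sL}) + \tfrac{1}{d+1}\essmin(\ov{\sL}) \leq h_{\ov{\sL}}(X) = 0$ together with $\absmin(\ov{\sL}) \geq 0$ forces $\essmin(\ov{\sL}) = 0$; by Remark \ref{rk:successive_minima_attained} one finds a GVF extension $F/K$ over which small $F$-points are Zariski dense, and the density survives base change back to $K$ once $K$ is sufficiently saturated (e.g.\ replaced by an existential closure or an ultrapower into which it embeds).

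The easy direction special $\Rightarrow \NTht(X)=0$ is a formal calculation: $\tr(Y\otimes K)$ is defined over the constant field $k$ on which heights vanish, $V$ has zero canonical height because it is an abelian subvariety of $A$, and translation by the height-zero point $a$ preserves heights. Zariski-dense small points on a special $X$ are supplied by $k$-points of $Y$ and torsion points of $V$, translated by $a$.

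The hard direction, $\NTht(X)=0 \Rightarrow X$ special, is the real content of the Bogomolov conjecture and where I anticipate the main difficulty. I would split into cases. For archimedean GVFs, I would represent $K$ by a (topological) adelic curve with positive measure of archimedean absolute values and invoke the equidistribution-based result of \cite{Adelic_curves_4}, possibly after embedding $K$ into an ultraproduct of such adelic curves to reduce to that setting. For non-archimedean GVFs in characteristic $0$, I would quotient by $\tr(A^{K/k}\otimes K)$ to reduce to a traceless abelian variety, where specialness under $\NTht(X)=0$ is the potential non-degeneracy statement of \cite{gao_ge_kuehne_uniform_mordell_lang}, equivalently the geometric Bogomolov conjecture of \cite{cantat_gao_habegger_geometric_bogomolov}. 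The conceptual obstacle is bridging these classical Arakelov-geometric results, which live over $\Qbar$ or function fields of curves, to arbitrary GVFs; the bridge is the existential closedness of $\Qbar[1]$ and $\ktbar[1]$ together with continuity of heights and successive minima under ultraproducts (Lemma \ref{lemm:parmin_ultraproducts}), which permits any GVF of characteristic $0$ to be realized inside an ultrapower of $\Qbar[\lambda]$ and the Zariski density of small points to be transferred back via compactness of the GVF analytification.
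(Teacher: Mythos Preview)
The statement is a \emph{conjecture}, and the paper does not prove it at this point. Immediately after stating it, the paper only justifies the equivalence clause (``this condition is moreover equivalent to $\NTht(X)=0$''), and even that only in the weak sense that the small-points and height formulations of the conjecture are equivalent: Zariski-dense small $K$-points give $\NTht(X)=0$ by Proposition~\ref{prop:zhang_inequality}, and conversely $\NTht(X)=0$ gives Zariski-dense small points over some GVF extension $F/K$ by Proposition~\ref{prop:lower_zhang_ineq} and Remark~\ref{rk:successive_minima_attained}. The Poincar\'e-reducibility remark is about descending specialness from $F$ to $K$, not about producing small $K$-points. Your treatment of the equivalence is essentially the same as the paper's, though your claim that density ``survives base change back to $K$'' after replacing $K$ by an existential closure is not what the paper asserts and is not needed.

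Your attempt at the hard direction goes far beyond what the paper does here; the full proof in characteristic $0$ is Theorem~\ref{thm:bogomolov_characteristic_0}, carried out in \S\ref{sec:abelian_varieties_and_tori}. Your outline is broadly correct for the archimedean case (the paper reduces to countable $K$ via the Ueno locus and Corollary~\ref{cor:stability_Ueno_locus}, then invokes \cite{Adelic_curves_4}), but has a genuine gap in the non-archimedean case. You propose to ``quotient by $\tr(A^{K/k}\otimes K)$ to reduce to a traceless abelian variety,'' but this does not work: knowing that the image of $X$ in $A/\tr(A^{K/k}\otimes K)$ is special does not let you recover the structure of $X$ itself. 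The paper instead uses Poincar\'e reducibility to pass by isogeny to a product $A_{0,K}\times A_1$ with $A_0$ isotrivial and $A_1$ traceless, and then proves directly (Lemma~\ref{lemm:product_isotrivial_abelian}) that a small $X\subset A_{0,K}\times A_1$ has the form $W_K\times(A'+a)$. This step is not formal: it requires the isotrivial Bogomolov conjecture (Proposition~\ref{prop:isotrivial_bogomolov}) and the computation that the canonical GVF structure on $\ov{K(A_1)}$ has constant field $k$ (Lemma~\ref{lemm:constant_field_canonical_GVF}), neither of which your outline anticipates.
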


It follows from Proposition \ref{prop:zhang_inequality} that any subvariety $X$ with a Zariski-dense set of small points satisfies $\NTht(X) = 0$. It follows conversely that if $\NTht(X) = 0$ there exists some GVF extension $F/K$ with a Zariski-dense set of small points or even height $0$ points by Proposition \ref{prop:lower_zhang_ineq} and Remark \ref{rk:successive_minima_attained}. Using Poincar\'e reducibility this will later show the equivalence of the height and the small points formulation of the Bogomolov conjecture.

It is necessary to consider height $0$ points that are neither torsion or defined over the field of constants.

\begin{example}
    Let $A$ be an abelian variety over $K$. Then, the diagonal in $A \times A$ defines a height $0$ point in $A_{K(A)}$ with $K(A)$ endowed with the canonical height.
\end{example}

\begin{example}
    Let $\sE \to C$ be a non-isotrivial elliptic surface over a GVF $K$ of characteristic $0$. Let $\ov{\sM}$ be a canonically metrized ample symmetric line bundle. Let $P\in \sE(K(C))$ be a non-torsion section. Then, by \cite{gao_generic_rank_betti}, one sees that the underlying geometric adelic line bundle $\sM|_{\ov{P}}$ is big. By \cite[Proposition 1.5]{demarco_bifurcation_intersections_heights} there exist infinitely many $t\in C(K)$ such that $P_t$ is torsion. Using ultraproducts, one may define a GVF structure on $K(C)$ extending the GVF structure on $K$ such that $\widehat{h}(P) = 0$. 
\end{example}

The Bogomolov conjecture can be formulated more generally for semiabelian varieties. Let $G$ be a semiabelian variety over an algebraically closed GVF $K$ with abelian quotient $\pi: G \to A$. If $K$ is non-archimedean let $k$ be its constant field and $(G^{K/k},\tr)$ its $K/k$-Chow trace.

A closed integral subvariety $X$ of $G$ is called \emph{special} if 
\[
    \widetilde X:=X/\mathrm{Stab}(X)=\tr(X_0\otimes K)  + g,
\]
where $X_0$ is a subvariety of $\widetilde G^{K/k}$ and $g\in \widetilde G(F)$ satisfying $\widehat{h}(g) = 0$, where $\widetilde G:=G/\mathrm{Stab}(X)$ and $F$ is a GVF extension of $K$.

\begin{remark}
    We are not aware of any example in which it is impossible to find $g\in \widetilde G(K)$ of height $0$ in the definition of special subvariety.
\end{remark}

Fix a canonically metrized ample symmetric line bundle $\ov{\sM}$ on the abelian quotient $A$ with underlying line bundle $M$ and an integral polytope $\Delta \subset \bbR^n$ containing the origin in its interior. The height and successive minima of a line bundle restricted to a subvariety $X \subset G$ will be understood to refer to the corresponding quantities for the closure $\ov{X}$ in the equivariant compactification $\ov{G}$ associated to $\Delta$.

\begin{conjecture}[Bogomolov conjecture]
    Any subvariety $X \subset G$ such that
    \[
    \adeg(\ov{\sM}^{\dim \pi(X) + 1}|\pi(X)) = 0
    \]
    and
    \[
    \adeg(\arho(\Delta)^{\dim X +1-\dim \pi(X)}\pi^*\ov{\sM}^{\dim \pi(X)}|X) = 0
    \]
    is special. This is moreover equivalent to the existence of a GVF extension $F$ such that for every $\epsilon > 0$ the set $\{x\in X(F)\ |\ \NTht(x) < \epsilon\}$ is Zariski dense.
\end{conjecture}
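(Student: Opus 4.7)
The plan is to split the statement into two parts: the equivalence of the intersection-number formulation with the Zariski-dense small-points formulation, and the conclusion that such an $X$ is special. For the equivalence, I would apply the upper Zhang inequality (Proposition \ref{prop:zhang_inequality}) separately to $\pi^*\ov{\sM}$ on $\pi(X)$ and to $\pi^*\ov{\sM} + \arho(\Delta)$ on the closure of $X$ in $\ov G$, both of which are geometrically ample and semipositive in view of the discussion of $\arho(\Delta)$ in \S\ref{sec:bogomolov}. A Zariski-dense set of small points in $X$ pushes forward to a Zariski-dense set of small points in $\pi(X)$, and hence forces both essential minima to vanish; since both intersection numbers are nonnegative (the abelian contribution by nefness of $\ov{\sM}$, the toric contribution by semipositivity of $\arho(\Delta)$), they must both equal zero. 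Conversely, the lower Zhang inequality (Proposition \ref{prop:lower_zhang_ineq}) together with Remark \ref{rk:successive_minima_attained} produce a GVF extension $F$ over which the essential minimum is attained and equal to zero, giving a Zariski-dense set of height-zero points.

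For the specialness conclusion I would follow the strategy of Luo--Yu adapted to the GVF setting. After replacing $G$ by $\widetilde G = G/\mathrm{Stab}(X)$ and $X$ by $\widetilde X$ one reduces to the case of finite stabilizer. Applying the Bogomolov conjecture for abelian varieties (Theorem \ref{thm:bogomolov_characteristic_0}, used here as a black box) to $\pi(X) \subset A$ shows that $\pi(X)$ is special, and after translation by a height-zero point of $G$ defined over a GVF extension one may assume $\pi(X)$ is the image of the Chow trace. Next one reduces to the quasi-split case $G = A_0 \times \bbT$ with $A_0$ defined over the constant field $k$ by using the Chow trace of the semiabelian variety $G$ itself, whose existence is supplied by \cite{liu2024chowtrace1motiveslangneron}. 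In the quasi-split case, vanishing of the mixed intersection number $\adeg(\arho(\Delta)^{\dim X + 1 - \dim \pi(X)}\pi^*\ov{\sM}^{\dim \pi(X)}|X)$ combined with semipositivity of $\arho(\Delta)$ forces the generic fibers of $X \to \pi(X)$ to be torsion translates of subtori; combined with equidistribution on the abelian quotient (Theorem \ref{thm:equidistribution}) and the fact that $\pi(X)$ is a subvariety of $A_0 \otimes K$, one concludes that $X$ itself has the claimed form $\widetilde X = \tr(X_0 \otimes K) + g$ in $\widetilde G(F)$ for a suitable GVF extension $F$.

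The main obstacle, explicitly flagged in the introduction, is the existence of height-zero points that are neither torsion nor defined over the constant field, as illustrated by the two examples in \S\ref{sec:bogomolov}; this is why the formulation must allow passing to a GVF extension $F$. To overcome it I would further reduce to the case in which the abelian quotient is an elliptic curve, where the structure of the canonical measure on $A$ and its tight interaction with equidistribution permit a precise description of the fibrewise behavior of $X \to \pi(X)$ and prevent height-zero points from spreading out into a continuous family of non-special subvarieties. This reduction is also the reason the Bogomolov conjecture for semiabelian varieties is unconditional in positive characteristic exactly when the abelian quotient is an elliptic curve, yielding the last assertion of Theorem \ref{thm:bcsa}.
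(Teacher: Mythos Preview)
Your plan for the equivalence contains a genuine gap. The second intersection number
\[
\adeg\bigl(\arho(\Delta)^{\dim X +1-\dim \pi(X)}\pi^*\ov{\sM}^{\dim \pi(X)}\mid X\bigr)
\]
is a \emph{mixed} intersection, not the self-intersection $(\ov{\sL})^{\dim X+1}$ of a single ample line bundle, so neither Zhang inequality applies to it directly. In the forward direction you assert that this quantity is nonnegative ``by semipositivity of $\arho(\Delta)$''; but $\arho(\Delta)$ is only semipositive, not nef, and its height is negative on the boundary of $\ov G$, so nonnegativity of this mixed term is not automatic. The paper instead applies the upper Zhang inequality to $\arho(\Delta)+\pi^*\ov{\sM}^{\otimes N}$, expands the binomial, and reads off the desired term as the coefficient of $N^{\dim\pi(X)}$; the lower bound then comes from the lower Zhang inequality together with $\absmin(\arho(\Delta)+\pi^*\ov{\sM}^{\otimes N})\geq \absmin(\arho(\Delta)+\pi^*\ov{\sM})$, which is bounded independently of $N$. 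In the reverse direction you invoke Proposition~\ref{prop:lower_zhang_ineq} directly, but vanishing of the two stated intersection numbers does \emph{not} imply $h_{\pi^*\ov{\sM}+\arho(\Delta)}(X)=0$: the binomial expansion has further terms $\arho(\Delta)^{j}\pi^*\ov{\sM}^{\dim X+1-j}$ with $j>\dim X+1-\dim\pi(X)$ which need not vanish. The paper circumvents this with a second scaling trick, this time by $[n]^*$: one shows $\lim_n \tfrac{1}{n}h_{[n]^*\ov{\sL}}(X)=0$, uses $\essmin([n]^*\ov{\sL}|_X)\geq n\,\essmin(\ov{\sL}|_X)\geq 0$ and $\absmin([n]^*\ov{\sL}|_X)\geq\absmin(\ov{\sL}|_{\ov G})>-\infty$, and then applies the lower Zhang inequality to $[n]^*\ov{\sL}$.

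Your outline of the specialness conclusion matches the paper's overall architecture (finite stabilizer, Bogomolov for $A$ on $\pi(X)$, reduction to quasi-split, then the quasi-split case), but it omits the main technical device for the reduction to quasi-split: the \emph{relative Faltings--Zhang map} $\alpha_n:G^n_{/A}\to\bbG_m^{t(n-1)}\times A$, which turns the small subvariety $X$ into a meromorphic multisection of the $\bbG_m^t$-bundle and allows one to show that the associated numerically trivial line bundles on $A$ are torsion (up to an isotrivial factor). Your description ``vanishing of the mixed intersection number\ldots forces the generic fibers of $X\to\pi(X)$ to be torsion translates of subtori'' is not how the argument runs; the fibrewise structure is obtained only after passing through $\alpha_n$, and the elliptic-curve reduction you mention is used not for the fibres of $X$ but to control the canonical Green's function of a numerically trivial divisor on $E$ via the explicit shape of the measure $c_1(\ov{\sM}_\omega)$ on the skeleton at places of bad reduction.
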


\begin{remark}
    When defining the canonical height on a semiabelian variety one encounters the problem that a semiabelian variety does not define a polarized dynamical system. Instead, the canonical height is defined as a sum $h_{\pi^*\ov{\sM}} + h_{\arho(\Delta)}$. With respect to pullback by $[n]$, the line bundle $\pi^*\ov{\sM}$ is in the eigenspace associated to $n^2$ and $\arho(\Delta)$ is in the eigenspace associated to $n$. The contribution of $\arho(\Delta)$ should therefore intuitively be infinitesimally small compared to $\pi^*\ov{\sM}$. The two intersection numbers in the conjecture are the first two terms in $(\pi^*\ov{\sM} + \arho(\Delta))^{\dim X}$. Fittingly, the remaining terms in the expansion are infinitesimally smaller and do not play a role.
\end{remark}

\begin{lemma}
    Any subvariety $X \subset G$ such that for every $\epsilon > 0$ the set $\{g\in G(K)\ |\ \NTht(x) < \epsilon\}$ is Zariski dense satisfies the following two equalities of intersection numbers
    \[
    \adeg(\ov{\sM}^{\dim \pi(X) + 1}|\pi(X)) = 0
    \]
    and
    \[
    \adeg(\arho(\Delta)^{\dim X +1-\dim \pi(X)}\pi^*\ov{\sM}^{\dim \pi(X)}|X) = 0.
    \]
\end{lemma}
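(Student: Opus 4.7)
The plan is to combine Zhang's inequality with the binomial expansion of $(\pi^*\ov{\sM} + \arho(\Delta))^{n+1} \cdot X$, where $n = \dim X$. Set $\ov{\sL} = \pi^*\ov{\sM} + \arho(\Delta)$; this is a geometrically ample, semipositive adelic line bundle on the equivariant compactification of $X$, since $\pi^*\ov{\sM}$ is nef and $\arho(\Delta)$ is nef via Lemma \ref{lemm:maxima_of_divisors_and_positivity} applied to the max presentation $\arho(\Delta) = \bigvee_i \arho(\adiv(m_i))$ together with the equivariance property $[N]^*\arho(\Delta) = N\arho(\Delta)$. The Zariski-density hypothesis forces $\essmin(\ov{\sL}|_X) \le 0$, and the Zhang inequality (Proposition \ref{prop:zhang_inequality}) therefore gives $h_{\ov{\sL}}(X) \le 0$, equivalently $(\pi^*\ov{\sM} + \arho(\Delta))^{n+1} \cdot X \le 0$.

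Expand this top self-intersection binomially. Each summand $(\pi^*\ov{\sM})^k (\arho(\Delta))^{n+1-k} \cdot X$ is a mixed intersection of nef adelic line bundles on the effective cycle $X$, hence nonnegative. Since the sum is $\le 0$, every summand must vanish. Writing $d = \dim \pi(X)$ and $t = n - d$, the $k = d$ summand is exactly $\adeg(\arho(\Delta)^{t+1}\,\pi^*\ov{\sM}^d \mid X) = 0$, which is the second claimed identity.

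For the first identity, use the $k = d+1$ summand $(\pi^*\ov{\sM})^{d+1}(\arho(\Delta))^t \cdot X = 0$ and apply the projection formula along the proper extension of $\pi|_X : X \to \pi(X)$, rewriting the intersection as $\ov{\sM}^{d+1} \cdot \pi_*\!\bigl((\arho(\Delta))^t \cdot X\bigr)$. Since the generic fibre $F$ of $\pi|_X$ is a $t$-dimensional subvariety of a toric variety on which $\rho(\Delta)$ is ample, the pushforward decomposes as $c\cdot \pi(X) + V$, where $c = \deg_{\rho(\Delta)^t}(F) > 0$ and $V$ is a purely vertical (Green-function) contribution that is nonnegative by semipositivity of $\arho(\Delta)$. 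Intersecting with $\ov{\sM}^{d+1}$ yields
\[
    0 = c\cdot \adeg(\ov{\sM}^{d+1}\mid \pi(X)) + \ov{\sM}^{d+1}\cdot V,
\]
with both summands nonnegative. Because $c > 0$, this forces $\adeg(\ov{\sM}^{d+1}\mid \pi(X)) = 0$, giving the first identity.

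The main technical obstacle is justifying the positivity statements: nonnegativity of each binomial summand, nonnegativity of $V$, and strict positivity of $c$. The first two are instances of the semipositivity formalism for Yuan--Zhang adelic line bundles, which in our setting is afforded by Lemma \ref{lemm:maxima_of_divisors_and_positivity} together with the description of $\arho(\Delta)$ as a max of canonically metrized divisors; $c > 0$ is a direct toric intersection calculation on a generic fibre. Everything else reduces to bookkeeping with the binomial expansion and the projection formula.
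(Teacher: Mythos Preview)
Your argument has a genuine gap: $\arho(\Delta)$ is \emph{not} nef as an adelic divisor on the compactification $\ov{G}$, so the individual binomial summands $(\pi^*\ov{\sM})^k(\arho(\Delta))^{n+1-k}\cdot X$ need not be nonnegative, and the ``all terms vanish'' conclusion does not follow. The inputs to your application of Lemma~\ref{lemm:maxima_of_divisors_and_positivity} are the divisors $\arho(\adiv(m_i))=\pi^*\aTbun(m_i)$, which are canonically metrized numerically trivial line bundles on $A$. By Lemma~\ref{lemm:numerically_trivial_are_differences} these are integrable, but they are not nef; hence the maximum $\arho(\Delta)$ is only integrable and semipositive, as stated in the text just before the definition of $\NTht$. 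The paper explicitly notes that $h_{\arho(\Delta)}\geq 0$ on $G$ but \emph{not necessarily on the boundary of $\ov{G}$}, so nefness genuinely fails. Semipositivity alone does not force arithmetic intersection numbers with an effective cycle to be nonnegative.

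The paper's proof works around exactly this obstacle. The first identity is obtained directly and cheaply: small points in $X$ push forward to small points in $\pi(X)$, so Zhang's inequality for the \emph{nef} line bundle $\ov{\sM}$ on $\pi(X)$ gives $\adeg(\ov{\sM}^{d+1}\mid\pi(X))\leq 0$, and nefness of $\ov{\sM}$ gives the reverse inequality. For the second identity the paper uses a scaling trick: replacing $\ov{\sM}$ by $\ov{\sM}^{\otimes N}$ preserves the small-points hypothesis, and expanding $(\arho(\Delta)+\pi^*\ov{\sM}^{\otimes N})^{n+1}$ shows that $\adeg(\arho(\Delta)^{t+1}\pi^*\ov{\sM}^{d}\mid X)$ is the coefficient of the leading power $N^{d}$. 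The upper Zhang inequality then gives $\leq 0$. The reverse inequality $\geq 0$ is \emph{not} automatic from positivity; it comes from the lower Zhang inequality (Proposition~\ref{prop:lower_zhang_ineq}) together with the observation that $\absmin(\arho(\Delta)+\pi^*\ov{\sM}^{\otimes N})\geq \absmin(\arho(\Delta)+\pi^*\ov{\sM})>-\infty$, since $\pi^*\ov{\sM}$ is nef. This two-sided asymptotic argument is the substantive content that your nonnegativity shortcut was meant to replace.
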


\begin{proof}
    Let $X$ be a subvariety $X \subset G$ such that for every $\epsilon > 0$ the set $\{g\in G(K)\ |\ \NTht(x) < \epsilon\}$ is Zariski dense.
    
    We first observe that a Zariski dense set of small points in $X$ under $\pi$ maps to a Zariski dense set of small points in $\pi(X)$. Hence, the first equality follows from \ref{prop:zhang_inequality}. By the projection formula we obtain 
    \[
        (\arho(\Delta) + \pi^*\ov{\sM})^{\dim X + 1} = \sum_{i=0}^{\dim \pi(X)+1} \binom{\dim(X)+1}{i} \pi^*\ov{\sM}^i \arho(\Delta)^{\dim X +1-i}.
    \]
    The summand $\pi^*\ov{\sM}^{\dim \pi(X) + 1} \arho(\Delta)^{\dim X - \dim \pi(X)}$ vanishes as it has to equal
    \[
        \adeg(\ov{\sM}^{\dim \pi(X) + 1}|\pi(X))\cdot\deg((\rho(\Delta))^t)=0
    \]
    by the projection formula. The property of having a Zariski dense subset of small points is preserved under scaling of $\arho(\Delta)$ and $\ov{\sM}$. We use this to obtain the following series of inequalities
    \begin{align*}
        &\essmin(\arho(\Delta) + \pi^*\ov{\sM}^{\otimes N})\\ &\geq (\arho(\Delta) + \pi^*\ov{\sM}^{\otimes N})^{\dim X + 1}\\
        &=  N^{\dim \pi(X)}\binom{\dim(X)+1}{\dim\pi(X)} \pi^*\ov{\sM}^{\dim \pi(X)}\arho(\Delta)^{\dim X +1-\dim \pi(X)} + O(N^{\dim \pi(X)-1}).
    \end{align*}
    It follows that
    \[
        \adeg(\arho(\Delta)^{\dim X +1-\dim \pi(X)}\pi^*\ov{\sM}^{\dim \pi(X)}|X) \leq 0.
    \]
    It remains to prove the reverse inequality. For this we note that both $\arho(\Delta)$ and $\pi^*\ov{\sM}$ are semipositive and that $\pi^*\ov{\sM}$ is nef. In particular, we have $\absmin(\arho(\Delta) + \pi^*\ov{\sM}^{\otimes N}) \geq \absmin(\arho(\Delta) + \pi^*\ov{\sM})$. By the lower Zhang inequality for GVFs, see Proposition \ref{prop:lower_zhang_ineq}, we obtain that
    \begin{align*}
        &(\arho(\Delta) + \pi^*\ov{\sM}^{\otimes N})^{\dim X + 1}\\
        &=  N^{\dim \pi(X)}\binom{\dim(X)+1}{\dim\pi(X)} \pi^*\ov{\sM}^{\dim \pi(X)}\arho(\Delta)^{\dim X +1-\dim \pi(X)} + O(N^{\dim \pi(X)-1})\\
        &\geq \absmin(\arho(\Delta) + \pi^*\ov{\sM}^{\otimes N})\\
        &\geq \absmin(\arho(\Delta) + \pi^*\ov{\sM}).
    \end{align*}
    This concludes the proof that
    \[
        \adeg(\arho(\Delta)^{\dim X +1-\dim \pi(X)}\pi^*\ov{\sM}^{\dim \pi(X)}|X) = 0.
    \]
\end{proof}
\begin{lemma}\label{lemm:semiab_intersection_small_implies_small_points}
    Let $X$ be a subvariety of a semiabelian variety $G$ satisfying 
    \[
    \adeg(\ov{\sM}^{\dim \pi(X) + 1}|\pi(X)) = 0
    \]
    and
    \[
    \adeg(\arho(\Delta)^{\dim X +1-\dim \pi(X)}\pi^*\ov{\sM}^{\dim \pi(X)}|X) = 0.
    \]
    Then, there exists a GVF extension $F$ such that 
    \[
        \{x\in X(F)\ |\ \NTht(x) < \epsilon\}
    \]
    is Zariski dense for every $\epsilon > 0$. In fact, there exists a GVF extension $F$ such that
    \[
        \{x\in X(F)\ |\ \NTht(x) = 0\}
    \]
    is Zariski-dense.
\end{lemma}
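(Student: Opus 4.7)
The plan is to exhibit the GVF extension $F$ explicitly via a polarized GVF structure on $K(X)$ and use the generic point of $X$ itself as the Zariski-dense witness, rather than approximating by closed points via existential closedness or a model companion.

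Set $d=\dim X$, $d'=\dim\pi(X)$, $t=d-d'$, and let $\ov{X}$ be the normalization of the closure of $X$ in $\ov{G}$. The adelic line bundles $\pi^*\ov{\sM}|_{\ov{X}}$ and $\arho(\Delta)|_{\ov{X}}$ are both integrable and semipositive (the first as the pullback of a canonically metrized ample symmetric line bundle; the second by Lemma~\ref{lemm:maxima_of_divisors_and_positivity} combined with Lemma~\ref{lemm:numerically_trivial_are_differences}), so I endow $K(X)$ with the polarized GVF structure of \S\ref{sec:polarized_GVF} using $d'$ copies of $\pi^*\ov{\sM}|_{\ov{X}}$ together with $t$ copies of $\arho(\Delta)|_{\ov{X}}$ (replacing $\arho(\Delta)|_{\ov{X}}$ by the nef combination $\pi^*\ov{\sM}|_{\ov{X}}+\arho(\Delta)|_{\ov{X}}$ if full nefness is required; the same computations go through). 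Its restriction to $K$ is $K[C]$ with
\[
C=(\pi^*M)^{d'}\cdot\rho(\Delta)^{t}|_{\ov{X}}=t!\,\vol(\Delta)\cdot\deg_{M}(\pi(X))>0,
\]
so rescaling the resulting functional by $C^{-1}$ produces a genuine GVF extension $F/K$. The case $d'=0$ is handled analogously using only copies of $\arho(\Delta)|_{\ov{X}}$, and $d=0$ is trivial.

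Now consider the generic point $\eta\in X(F)$ arising from the identity map $\Spec K(X)\to\ov{X}_{K(X)}$. Under the polarized-GVF description, the canonical height of $\eta$ is directly an intersection number on $\ov{X}$:
\[
\NTht(\eta)=C^{-1}\Bigl[(\pi^*\ov{\sM})^{d'+1}(\arho(\Delta))^{t}|_{\ov{X}}+(\pi^*\ov{\sM})^{d'}(\arho(\Delta))^{t+1}|_{\ov{X}}\Bigr].
\]
By the projection formula for Deligne pairings along $\pi:\ov{X}\to\overline{\pi(X)}$, the first summand equals $t!\,\vol(\Delta)\cdot\adeg(\ov{\sM}^{d'+1}|\overline{\pi(X)})$, which vanishes by the first hypothesis; the second summand is precisely the second hypothesized intersection number, which also vanishes. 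Hence $\NTht(\eta)=0$. Since $\eta$ is the generic point of $X$, whose Zariski closure is $X$ itself, the singleton $\{\eta\}\subset X(F)$ is already a Zariski-dense set of height-zero points in $X(F)$, a fortiori a Zariski-dense set of $\epsilon$-small points for every $\epsilon>0$.

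The main subtlety I anticipate is the projection formula step: one has to confirm that $\pi_{*}(\arho(\Delta)^{t})$ on $\overline{\pi(X)}$ is a scalar multiple of the fundamental class with coefficient $t!\,\vol(\Delta)$ and no surviving independent Green's function contribution. This should follow because arithmetic codimension-zero classes on an irreducible arithmetic variety are scalar multiples of the fundamental class, but the verification will require care in the adelic framework of Yuan--Zhang. A secondary point, already flagged, is that the polarized GVF of \S\ref{sec:polarized_GVF} is defined for nef polarizations, whereas $\arho(\Delta)$ is a priori only semipositive; this is cleanly circumvented by the substitution $\arho(\Delta)\rightsquigarrow\pi^*\ov{\sM}+\arho(\Delta)$, since the binomial expansion then involves only the same two nonvanishing terms (all powers $(\pi^*\ov{\sM})^{j}$ with $j\ge d'+2$ vanishing for dimension reasons on $\overline{\pi(X)}$).
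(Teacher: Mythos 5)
Your construction of a polarized GVF structure on $K(X)$ with polarization $(\pi^*\ov{\sM})^{d'}\cdot\arho(\Delta)^{t}$ and the observation that the diagonal point $\eta$ has $\NTht(\eta)=0$ is a recognized route — the paper itself discusses exactly this idea at the end of \S\ref{sec:existential_closedness} as a potential \emph{simplification} of the proof of Lemma~\ref{lemm:semiab_intersection_small_implies_small_points}. However, your final step contains a genuine gap. The element $\eta\in X(K(X))$ corresponds to the \emph{diagonal} closed point of $X_{K(X)}=X\times_K\Spec K(X)$, and a single closed point of a positive-dimensional variety is not Zariski dense in $X_F$. The Zariski density demanded by the lemma — as is evident from the definition of the essential minimum, where one removes arbitrary proper closed $Z\subset X_F$, and from the contrapositive use of the Bogomolov conjecture in the proof of Proposition~\ref{prop:bogimpliesngp} — is density in the base-changed variety $X_F$, not merely the statement that the scheme-theoretic image of $\eta$ in the $K$-scheme $X$ is dense. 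So the singleton $\{\eta\}$ does not suffice.

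As the paper notes, to turn your single height-zero point into a Zariski-dense family of small points, one must approximate $\eta$ by a net of $F$-points in the GVF analytification. That requires the ambient GVF to be existentially closed. Existential closedness is only proven for $\Qbar[1]$ and $\ktbar[1]$; to run your argument over an arbitrary base $K$, one would first embed $K$ into an ultrapower of an existentially closed GVF and then need that ultrapower itself to be existentially closed — and that is precisely the (still open) model companion conjecture in \S\ref{sec:existential_closedness}. The paper's actual proof sidesteps this by proving $\essmin(\ov{\sL}|_X)=0$ directly, via the normalization $\lim_n\frac{1}{n}h_{[n]^*\ov\sL}(X)=0$, the lower bound $\absmin([n]^*\ov\sL|_X)\ge\absmin(\ov\sL|_{\ov G})>-\infty$, and the lower Zhang inequality (Proposition~\ref{prop:lower_zhang_ineq}); once $\essmin=0$ is established unconditionally, Remark~\ref{rk:successive_minima_attained} supplies the required Zariski-dense height-zero set in a suitable ultraproduct extension. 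To repair your proof you would either need to invoke the model companion conjecture, or switch to the paper's $\essmin$-based computation.
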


\begin{proof}
    Let $\ov{\sL} = \ov{\sM} + \arho(\Delta)$. Let us study \[\lim_{n \to \infty} \frac{1}{n} h_{[n]^*\ov{\sL}}(X) = \lim_{n \to \infty} \frac{(n^2\ov{\sM} + n \arho(\Delta))^{\dim X + 1}}{n\cdot(\dim X +1)\cdot (n^2\sM + n \rho(\Delta))^{\dim X}}.\] By assumption on the intersection numbers the numerator is in $O(n^{\dim \pi(X) + \dim X})$. On the other hand the numerator has order of growth $n^{\dim \pi(X) + \dim X + 1}$. Hence $\lim_{n \to \infty} \frac{1}{n} h_{[n]^*\ov{\sL}}(X) = 0$.
    
    We observe that $\essmin([n]^*\ov{\sL}|_X) \geq n \essmin(\ov{\sL}|_X) \geq 0$ and that $\absmin([n]^*\ov{\sL}|_X) \geq \absmin(\ov{\sL}|_{\ov{G}}) > - \infty$. Hence, $\lim_{n \to \infty} \frac{1}{n} \absmin([n]^*\ov{\sL}|_X) = 0$. By the lower Zhang inequality in Proposition \ref{prop:lower_zhang_ineq} applied to $[n]^*\ov{\sL}|_X$ it follows that $\lim_{n \to \infty} \frac{1}{n} \essmin([n]^*\ov{\sL}|_X) = 0$ and thus $\essmin(\ov{\sL}|_X) = 0$.
\end{proof}

\begin{remark}
    The above proof replaces the reliance on an alternate version of the fundamental inequality in \cite[Theorem 2.4]{luo2025geometricbogomolovconjecturesemiabelian} requiring a new invariant called the sectional minimum by a simpler method at the expense of having to pass to a GVF extension. We believe that their argument applies also for general GVFs allowing to state the Bogomolov conjecture without the need of extending the GVF. We choose our approach to illustrate the new methods that are available by allowing GVF extensions.
\end{remark}
    
\section{\label{sec:bogomolov_gvf}New gap principle}

After introducing moduli of abelian and semiabelian varieties we proceed to prove the deduction of the new gap principle from the Bogomolov conjecture for GVFs in both the abelian and semiabelian setting.

\subsection{Moduli of semiabelian varieties}

In order to deduce uniform results for all subvarieties of all semiabelian varieties it is necessary to put them in finitely many families. Analogous arguments are present in \cite{gao_ge_kuehne_uniform_mordell_lang} in the case of abelian varieties. 

We introduce the spaces parametrizing abelian and later semiabelian varieties.

\begin{theorem}[Theorem 7.9 \cite{mumford_fogarty_kirwan_GIT}]
    For every $g,d$ and sufficiently big $N$, there exists a fine moduli space of abelian varieties of dimension $g$, with polarization of degree $d$ and level $N$ over $\Spec \bbZ[\frac{1}{N}]$. It is denoted by $\abmoduli_{g,d,N}$ and is quasi-projective over $\Spec \bbZ[\frac{1}{N}]$.\footnote{In loc.cit. it is claimed that this holds over $\Spec \bbZ$.}
\end{theorem}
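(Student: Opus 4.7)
The plan is to follow the standard strategy of representing polarized abelian varieties with level structure inside a Hilbert scheme and then forming a free quotient by the projective linear group. The key inputs are: $(i)$ a uniform projective embedding coming from the polarization, $(ii)$ rigidification of the moduli problem by the level-$N$ structure, and $(iii)$ the resulting freeness of the $\mathrm{PGL}$-action so that one avoids the subtler stacky aspects and obtains an actual scheme.

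First I would use the classical fact that if $L$ is a polarization of degree $d$ on an abelian variety $A$ of dimension $g$, then there is a fixed integer $n = n(g)$ (for instance $n = 3$ works, after passing to $L^{\otimes n}$) such that $L^{\otimes n}$ is very ample and has fixed Hilbert polynomial $P \in \QQ[t]$ depending only on $g, d, n$. This produces a closed embedding $A \hookrightarrow \bbP^M$ with $M = h^0(A, L^{\otimes n}) - 1$, uniformly in the family. Therefore polarized abelian varieties of dimension $g$ and degree $d$ correspond, after choice of basis of $H^0(L^{\otimes n})$, to points of the Hilbert scheme $\mathrm{Hilb}_P(\bbP^M)$, which is projective over $\Spec \ZZ$. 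The locus parametrizing smooth abelian subschemes polarized by the restriction of $\sO(1)$ is a locally closed subscheme $H \subset \mathrm{Hilb}_P(\bbP^M)$.

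Next I would impose level-$N$ structure. Over $\Spec \ZZ[\tfrac1N]$ the $N$-torsion forms a finite étale group scheme and a level-$N$ structure is an isomorphism $A[N] \cong (\ZZ/N)^{2g}$ (compatible with the Weil pairing, with some identification of roots of unity). Working over $\Spec \ZZ[\tfrac1N]$, we obtain a finite étale cover $H_N \to H$ parametrizing triples (embedded polarized variety, level structure). The crucial point is Serre's lemma: for $N \geq 3$, any automorphism of a polarized abelian variety fixing a level-$N$ structure is the identity. Combined with the choice of basis, this means the natural $\mathrm{PGL}_{M+1}$-action on $H_N$ is free.

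I would then form $\abmoduli_{g,d,N} := H_N / \mathrm{PGL}_{M+1}$. Existence as a scheme over $\Spec \ZZ[\tfrac1N]$ follows from the freeness of the action together with the fact that $H_N$ is quasi-projective and the action is linearizable. Quasi-projectivity of the quotient requires descending an ample line bundle; here one uses GIT to check that all points of $H_N$ are properly stable for a suitable choice of $\mathrm{PGL}_{M+1}$-linearization (for instance the linearization coming from the tautological embedding $A \hookrightarrow \bbP^M$ together with the polarization), and this stability check is where the main work lies. Finally, the representability of the moduli functor by $\abmoduli_{g,d,N}$ amounts to descending the universal family from $H_N$; this is automatic once we know the action is free and the quotient exists as a scheme. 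The main obstacle I anticipate is the stability analysis in the GIT step, and a careful bookkeeping of what additional compatibilities (e.g.\ with the Weil pairing and the choice of primitive $N$-th root of unity) one wants to require in the definition of level structure.
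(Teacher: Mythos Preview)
The paper does not give its own proof of this statement: it is quoted verbatim as Theorem 7.9 of Mumford--Fogarty--Kirwan and used as a black box. Your outline is essentially the standard GIT construction carried out in that reference (Hilbert scheme of projectively embedded abelian varieties, rigidification by level-$N$ structure via Serre's lemma, and a free $\mathrm{PGL}$-quotient), so there is nothing to compare; your sketch matches the cited source rather than anything the paper itself supplies.
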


Denote by $\univabvar_{g,d,N}$ the universal family of abelian varieties. We denote by $\abmoduli$ the disjoint union of $\abmoduli_{g,d,N}$ over varying dimension, polarization degree and level structure and by $\univabvar$ the universal abelian variety over $\abmoduli$. We write $\abmoduli_g$ and $\univabvar_g$ to denote the locus of abelian varieties of dimension $g$. We need to consider non-principal polarizations in order to work in positive characteristic.

Following \cite[\S 6.2]{mumford_fogarty_kirwan_GIT} one can define a symmetric relatively ample line on $\univabvar$ inducing twice the polarization. Using \cite[\S 6.1]{yuan_zhang_adeliclinebundlesquasiprojective}, we enhance it to an adelic line bundle $\tautlb$ inducing the fibrewise N\'eron-Tate height.

We denote the relative restricted Hilbert space parametrizing subvarieties of the universal abelian variety of dimension $k$ and degree $l$ by $\univsubvarofab_{k,l} = \coprod\resHilbert_P(\univabvar/\abmoduli)$ where $P$ goes over the possible Hilbert polynomials.

\begin{definition}
    We define the moduli space of semiabelian varieties of torus dimension $t$, abelian dimension $g$ and level $N$ structure and polarization of degree $d$ as the t-fold fibre product the of the dual of the universal abelian variety
    \[
        \semiabmoduli_{g,t,d,N} = (\univabvar^\vee_{g,d,N})^{\times t}.
    \]
\end{definition}

This parametrizes (split) semiabelian varieties with identification of the torus part $\bbT \cong \bbG^t_m$. This follows by the Barsotti-Tate formula that identifies $\Ext^1(A, \bbG_m) \cong A^\vee$ for abelian schemes $A$. 

The universal semiabelian variety $\univsemiab_{g,t,d,N}$ is the universal $\bbG^t_m$-bundle over $\univabvar_{g,d,N} \times(\univabvar^\vee_{g,d,N})^{\times t}$. By choosing a trivialization of the $\bbG^t_m$-bundle $\univsemiab_{g,t,d,N}$ over $0 \times (\univabvar^\vee_{g,d,N})^{\times t}$, where $0$ denotes the $0$-section of $\univabvar_{g,d,N}$, we endow $\univsemiab_{g,t,d,N}$ with a group structure.

Fix an integral polytope $\Delta \subset \bbR^n$ containing the origin in its interior. Then, this defines a relative compatification of $\univsemiab$ over $\univabvar$. The line bundle $\rho(\Delta) + \sM$, where $\sM$ is an ample symmetric line bundle on the $\univabvar$ inducing twice the polarization, is a relatively ample line bundle for $\univsemiab \to \semiabmoduli$.

We denote the relative restricted Hilbert space parametrizing subvarieties of degree $l$ in the universal semiabelian variety of abelian dimension $g$ and torus dimension $t$ by $\univsubvarofab_{g,t,l} = \coprod\resHilbert_P(\univabvar/\abmoduli)$ where $P$ goes over the possible Hilbert polynomials.

\subsection{Reduction to the Bogomolov conjecture}

Let $A$ be a polarized abelian variety over a GVF $K$ and let $X \subset A$ be a closed subvariety with finite stabilizer such that $X - X$ generates $A$. The stable Faltings height of an abelian variety $A$ will be denoted $\Falht(A)$. In this article, Faltings height will always refer to the stable Faltings height. It is induced by the Hodge line bundle on the moduli space of abelian varieties.

\begin{theorem}[New gap principle over $\Qbar$, Theorem 1.2 in \cite{gao_ge_kuehne_uniform_mordell_lang}]\label{thm:ngp_qbar}
    There exist positive constants $c_1 = c_1(\dim A, \deg_L X)$ and $c_2 = c_2(\dim A, \deg_L X)$ with the property
    \[
        \left\{P \in X(\Qbar)\ |\ \NTht(P) \leq c_1 \max\{1,\Falht(A)\}\right\}
    \]
    is contained in some Zariski closed $X' \subsetneq X$ with $\deg_L(X') < c_2$.
\end{theorem}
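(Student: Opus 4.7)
The plan is to deduce the theorem from the Bogomolov conjecture over GVFs (Theorem \ref{thm:bogomolov_characteristic_0}) via an ultraproduct argument, in direct analogy with Proposition \ref{prop:bogimpliesngp}. Fix $g := \dim A$ and $d := \deg_L X$ and suppose for contradiction that no such constants $c_1, c_2$ exist. Then for each $j \in \NN$ I may find a polarized abelian variety $A_j/\Qbar$ of dimension $g$ and a subvariety $X_j \subset A_j$ of degree $d$, with finite stabilizer and with $X_j - X_j$ generating $A_j$, such that the set
\[
S_j := \{P \in X_j(\Qbar) : \NTht(P) \leq j^{-1} \max\{1, \Falht(A_j)\}\}
\]
is not contained in any proper closed $X_j' \subsetneq X_j$ with $\deg_L X_j' < j$. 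Passing to a subsequence along a non-principal ultrafilter $\sU$ on $\NN$, I fix $e := \dim X_j$, the polarization degree, and a level-$N$ structure, so that each $([A_j],[X_j])$ defines a $\Qbar$-point of a component of $\abmoduli \times \univsubvarofab_{e,d}$.

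Next I rescale the GVF structure on $\Qbar$ by $\lambda_j := 1/\max\{1,\Falht(A_j)\}$; in the GVF $\Qbar[\lambda_j]$ the point $[A_j]$ has height in $[0,1]$ and the Hilbert point $[X_j]$ has bounded height, so the ultraproduct $K := \prod_\sU \Qbar[\lambda_j]$ yields a well-defined limit pair $(A,X)$. Any proper closed $Z \subsetneq X$ over a GVF extension $F/K$ is cut out by finitely many equations defined over a finitely generated intermediate subfield and hence spreads back to subvarieties $Z_j \subset X_j$ of bounded degree, say $\deg Z_j \leq N_0$; for $j > N_0$ the hypothesis on $S_j$ forces the existence of points in $X_j(\Qbar) \setminus Z_j(\Qbar)$ of rescaled height $\leq 1/j$, whose ultrafilter limit lies in $X(F) \setminus Z(F)$ with height $0$. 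Combining this with Lemma \ref{lemm:parmin_ultraproducts} and the lower Zhang inequality (Proposition \ref{prop:lower_zhang_ineq}, giving $\absmin \geq 0$), I conclude $\essmin(\NTht|_X) = 0$, and Remark \ref{rk:successive_minima_attained} then supplies a GVF extension $F/K$ with a Zariski-dense set of height-zero points in $X(F)$.

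Applying Theorem \ref{thm:bogomolov_characteristic_0} gives $X_F = \tr(Y \otimes F) + V + a$ with $\NTht(a) = 0$. The finite-stabilizer condition on $X_j$ and the condition that $X_j - X_j$ generate $A_j$ are closed conditions in the appropriate moduli that survive the ultraproduct; they force $V = 0$ and $\tr(A^{F/k} \otimes F) = A_F$, so $A$ is isotrivial. Consequently $[A]$ descends to $\abmoduli_g(k)$ and $\Falht(A) = 0$. On the $\sU$-big subset where $\Falht(A_j) > 1$ the normalization $\lambda_j \Falht(A_j) = 1$ forces $\Falht(A) = 1$, a contradiction; on the complementary big subset (where $\Falht(A_j) \leq 1$ and $\lambda_j = 1$), one has $V = 0$ and $X = \{a\}$ zero-dimensional, contradicting $\dim X = e \geq 1$ (the case $e = 0$ is trivial, since $X - X$ generating $A$ with $X$ finite forces $A$ to be a point).

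The main obstacle will be the last step: translating \emph{$A$ is isotrivial} into vanishing of the Faltings height. For non-archimedean $F$ this follows from the definition of the constant field as the subfield of height-zero elements, but the limits $F$ arising here may be archimedean or mixed, and one must use the characterization of $\Falht$ as a canonical height on $\abmoduli_g$ whose value vanishes on the image of $\abmoduli_g(k) \to \abmoduli_g(K)$. A secondary subtlety, handled above, is the spreading of an arbitrary proper closed $Z \subsetneq X_F$ back to bounded-degree subvarieties $Z_j \subset X_j$; this is the standard moduli-of-subschemes limit underpinning Lemma \ref{lemm:parmin_ultraproducts}.
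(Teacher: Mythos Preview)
The paper does not prove this statement directly; it is quoted from Gao--Ge--K\"uhne as Theorem~1.2 of \cite{gao_ge_kuehne_uniform_mordell_lang}. The paper does, however, rederive it as the abelian special case of Theorem~\ref{thm:ngpqbar} via Proposition~\ref{prop:bogimpliesngp}, and your argument is essentially a specialization of that proof. So the overall strategy (ultraproduct $+$ Bogomolov over the limit GVF) matches.

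There is one substantive difference, and it is where your argument has a gap. The paper does \emph{not} normalize by $\max\{1,\Falht(A_j)\}$. Instead it first replaces the Faltings height by the height $h_{\ov{\sL}}$ for an arithmetically ample $\ov{\sL}$ on (a component of) the moduli space, using $\max\{\height(2),\Falht(A)\}\leq C\,h_{\ov{\sL}}(t)$, and then normalizes by $h_{\ov{\sL}}(t_i)^{-1}$. After noetherian induction on the parameter space, the limit point then satisfies $h_{\ov{\sL}}(t)=1$, which in one stroke forces the limit GVF to be nontrivial and the pair $(A,X)$ to be non-isotrivial; the contradiction with the Bogomolov conjecture is immediate.

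Your case split ``$\Falht(A_j)>1$'' versus ``$\Falht(A_j)\leq 1$'' does not track the archimedean/non-archimedean dichotomy for the limit $K$. In the sub-case where $\Falht(A_j)$ stays bounded in $(1,C]$ along $\sU$, one has $\height(2)=\lim_{\sU}\lambda_j\log 2>0$, so $K$ is archimedean; there is no constant field $k$, the notion of isotriviality is vacuous, and the line ``$[A]$ descends to $\abmoduli_g(k)$, hence $\Falht(A)=0$'' does not apply. (There is no ``mixed'' option: a GVF is archimedean or not.) The repair is easy --- in any archimedean limit the Bogomolov conclusion is $X=V+a$, and finite stabilizer plus generation force $\dim X=0$, contradicting $e\geq 1$ --- but this means the correct split is ``$\Falht(A_j)\to\infty$ along $\sU$'' versus ``bounded along $\sU$'', not the one you wrote. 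The paper's choice of an arithmetically ample normalizing height is precisely what collapses these cases into a single clean step.

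Two minor remarks: the conditions ``finite stabilizer'' and ``$X-X$ generates $A$'' are open (constructible), not closed, on the relevant Hilbert scheme; they still pass to the ultraproduct, but for the reason that constructible conditions do. And rather than spreading an arbitrary $Z\subsetneq X_F$ back by hand, the paper packages this step entirely inside Lemma~\ref{lemm:parmin_ultraproducts} via the parametrized minimum $\zeta(s,\ov{\sL},d,N,C)$.
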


The quantity $\max\{1,\Falht(A)\}$ is comparable with the height with respect to any arithmetically ample line bundle on the moduli space of abelian varieties. The above formulation does not hold verbatim over $\ktbar$ as $\max\{1,\Falht(A)\}$ is bigger than the height with respect to an ample line bundle. 

\begin{theorem}[\cite{moret-bailly_pinceaux_varietes_abeliennes} IX Theor\`eme 3.2]
    The Hodge line bundle on the compactification of $\abmoduli_{g,d,N}$ is ample.
\end{theorem}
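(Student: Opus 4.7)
The plan is to identify the compactification in question with the Satake--Baily--Borel minimal compactification $\abmoduli^*_{g,d,N}$, construct a canonical extension of the Hodge bundle across its boundary, and verify ampleness of that extension.

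First, I would extend the Hodge bundle $\lambda = \det e^*\Omega^1_{\univabvar/\abmoduli_{g,d,N}}$ to $\abmoduli^*_{g,d,N}$. Over a toroidal resolution the universal abelian scheme degenerates to a semiabelian scheme $\sG$, and the cotangent space at the identity of $\sG$ provides a canonical extension of $\lambda$; by the projection formula this extension descends to the minimal compactification. The resulting line bundle agrees on the open part with the tautological Hodge bundle and is independent of the choice of toroidal data.

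Second, I would invoke the theorem of Baily--Borel over $\bbC$: sections of $\lambda^{\otimes k}$ for $k$ sufficiently large (that is, Siegel modular forms of weight $k$ and level $N$) separate points and tangent directions on $\abmoduli^*_{g,d,N}(\bbC)$, yielding a closed embedding into projective space. By construction this embedding realises $\abmoduli^*_{g,d,N}(\bbC)$ as $\Proj \bigoplus_{k\geq 0} H^0(\abmoduli_{g,d,N}, \lambda^{\otimes k})$, so $\lambda$ is ample on the complex fibre. To upgrade to ampleness over $\Spec \bbZ[\tfrac{1}{N}]$, I would then appeal to the arithmetic compactification theory of Faltings--Chai: they construct the minimal and toroidal compactifications over $\bbZ[\tfrac{1}{N}]$, canonically extend $\lambda$, and produce enough sections of its powers in every geometric fibre via the Fourier--Jacobi expansion and the Koecher principle. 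Fibrewise ampleness together with cohomological semicontinuity on a proper scheme then yields that $\lambda$ is ample over $\Spec \bbZ[\tfrac{1}{N}]$.

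The main obstacle is the behaviour of $\lambda$ at the deep boundary strata in the arithmetic setting: these strata parametrise semiabelian degenerations of positive torus rank, and one must show that sections of high powers of $\lambda$ constructed locally via theta series patch together and remain globally generating across boundary components of all codimensions. This is precisely the content of the Koecher principle combined with the arithmetic compactification theorems of Faltings--Chai, and it is what powers Moret-Bailly's original argument.
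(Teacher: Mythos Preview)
The paper does not give a proof of this statement: it is quoted verbatim as a theorem of Moret-Bailly with a citation and used as a black box (to deduce that $\Falht(A)\geq 0$ for abelian varieties over non-archimedean GVFs, with equality exactly in the isotrivial case). There is therefore no ``paper's own proof'' to compare your proposal against.

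Your outline is a reasonable sketch of the standard route to the result: extend $\lambda$ across the boundary via the semiabelian degeneration on a toroidal compactification, descend to the minimal compactification, invoke Baily--Borel over $\bbC$, and then use the Faltings--Chai arithmetic compactification together with the Koecher principle to obtain ampleness over $\Spec\bbZ[1/N]$. This is broadly how the literature establishes the statement, though Moret-Bailly's original argument in \emph{Pinceaux de vari\'et\'es ab\'eliennes} is somewhat differently organized. For the purposes of this paper, however, no proof is expected here; the correct response is simply to cite the reference.
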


Let $K$ be a non-archimedean globally valued field with constant field $k$. Then, the above theorem implies that for an abelian variety $A$ over $K$ one has $\Falht(A) \geq 0$ with equality if and only if $A$ can be defined over $k$. In this case we call $A$ \emph{isotrivial}.

Let $G$ be a semiabelian variety over a GVF $K$ together with an identification of its torus part $\bbT \cong \bbG_m^t$ and an ample symmetric line bundle $M$ on its abelian quotient $A$. Due to the Barsotti-Weil formula, the semiabelian variety $G$ gives rise to points $Q_1, \dots, Q_n \in A^\vee(K)$. We associate to $G$ the height $h = \Falht + \NTht_{\textrm{NT}}(Q_1) + \dots+\NTht_{\textrm{NT}}(Q_t)$, where $\NTht_{\textrm{NT}}$ denotes the N\'eron-Tate height on $A^\vee$. We observe that $G$ is isotrivial if and only if $\max\{\height(2),h(G)\} =0$. Fix an integral polytope $\Delta \subset \bbR^n$ containing the origin in its interior. The above data determine a choice of canonical height $\NTht = h_{\pi^*\ov{\sM} + \arho(\Delta)}$ on $G$. The most general form of the new gap principle is the following.

\begin{conjecture}[New gap principle for semiabelian varieties]
    There exist positive constants $c_1 = c_1(\dim G, \deg_{\pi^*M + \rho(\Delta)} X)$ and $c_2 = c_2(\dim G, \deg_{\pi^*M + \rho(\Delta)} X)$ such that
    \[
        \left\{P \in X(K)\ |\ \NTht(P) < c_1 \max\{\height(2),h(G)\}\right\}
    \]
    is contained in a proper Zariski closed subset $X' \subsetneq X$ with $\deg_{\pi^*M + \rho(\Delta)}(X') < c_2$ for all semiabelian varieties $G$ over a GVF $K$ and closed subvarieties $X \subset G$ with finite stabilizer such that $X - X$ generates $G$.
\end{conjecture}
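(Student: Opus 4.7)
The plan is to prove this principle as a formal consequence of $\mathrm{(BCSA)}$ (Theorem \ref{thm:bcsa}) via the ultraproduct degeneration scheme the paper attributes to Hrushovski. Suppose for contradiction that no such $c_1, c_2$ exist for some fixed $n$ and $d$. Then for each integer $j \geq 1$ I can choose a triple $(K_j, G_j, X_j)$, with $K_j$ a GVF, $G_j$ a semiabelian variety of dimension $n$ over $K_j$, and $X_j \subset G_j$ a subvariety of degree $d$ with finite stabilizer and $X_j - X_j$ generating $G_j$, such that
\[
S_j \;:=\; \{P \in X_j(K_j) \mid \NTht(P) < (1/j)\, \max\{\height(2), h(G_j)\}\}
\]
is not contained in any proper Zariski closed subvariety of $X_j$ of degree $< j$. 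Since $\dim G_j = g + t = n$ and $\deg X_j = d$ are fixed, the moduli spaces $\semiabmoduli_{g,t,d',N}$ together with the relative restricted Hilbert schemes $\univsubvarofab_{g,t,l}$ parametrize the pairs $(G_j, X_j)$ by a finite disjoint union of irreducible quasiprojective schemes; passing to a subsequence I may assume all $(G_j, X_j)$ belong to a single component $T$, classified by points $t_j \in T(K_j)$.

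Next I normalize and degenerate. Consider first the generic case $\mu_j := \max\{\height(2), h(G_j)\} > 0$. Rescaling the GVF to $K_j' := K_j[\mu_j^{-1}]$ gives $\max\{\height(2), h(G_j)\} = 1$ in the new structure. Because $h(G_j) = \Falht(A_j) + \sum_i \NTht_{\textrm{NT}}(Q_i)$ is controlled by the Hodge line bundle on a compactification of $\abmoduli_{g,d',N}$ (ample by \cite{moret-bailly_pinceaux_varietes_abeliennes}) together with the fiberwise canonical heights of the Barsotti--Weil classes defining the torus extension, the classifying points $t_j \in T(K_j')$ have uniformly bounded height with respect to an arithmetically ample line bundle on a compactification of $T$. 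Fix a non-principal ultrafilter $\sU$ on $\mathbb{N}$. The ultraproduct $F := \prod_\sU K_j'$ is a GVF and the $t_j$ converge to a limit point $t \in T(F)$, encoding a pair $(G, X)$ over $F$ with $\dim G = n$, $\deg X = d$, finite stabilizer, and $X - X$ generating $G$ (the latter two being open conditions on $T$).

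The choice of counterexamples transfers to the limit through the parametrized minimum. For any fixed $N, C$ and any $j > N$, no proper degree-$\leq N$ subvariety of $X_j$ can contain $S_j$, so $\zeta(t_j, \ov{\sL}|_{X_j}, \dim X, N, C) < 1/j$ where $\ov{\sL} = \pi^*\ov{\sM} + \arho(\Delta)$. Lemma \ref{lemm:parmin_ultraproducts} yields $\zeta(t, \ov{\sL}|_X, \dim X, N, C) \leq 0$, hence $=0$ by semipositivity; iterating the ultraproduct construction over varying $N, C$ produces a further GVF extension $F'/F$ over which $X$ carries a Zariski-dense set of canonical-height-zero points (Remark \ref{rk:successive_minima_attained}). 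Applying $\mathrm{(BCSA)}$ gives that $X$ is special: either $F'$ is non-archimedean with constant field $k$ and $X = \tr(X_0 \otimes F') + g$ with $\NTht(g) = 0$, or $F'$ is archimedean and $X$ is a height-zero translate of an abelian subvariety. In both cases $\Stab(X) = 0$ together with $X - X$ generating $G$ forces $G$ to equal the image of the trace (archimedean: $X = \{g\}$ cannot generate a positive-dimensional $G$, already a contradiction), hence $G$ isotrivial and $F'$ non-archimedean, whence $h_F(G) = 0$ and $\height_F(2) = 0$, contradicting the normalization $\max\{\height_F(2), h_F(G)\} = \lim_\sU 1 = 1$. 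The degenerate case in which $\mu_j = 0$ for a $\sU$-large set of $j$ (each $K_j$ non-archimedean with $G_j$ isotrivial) is handled by an identical argument without rescaling: the ultraproduct limit is again isotrivial with dense small points in $X$, and $\mathrm{(BCSA)}$ places the height-zero locus in a bounded-degree subvariety, contradicting the counterexample hypothesis.

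The main obstacle is the normalization step, namely justifying that $\mu_j$ controls the height of the classifying point $t_j$ on a compactification of $T$ up to a uniform constant. This decomposes into two independent facts: the ampleness of the Hodge bundle on a compactification of $\abmoduli_{g,d',N}$ (controlling the abelian contribution via $\Falht$) and the identification of the fiberwise Barsotti--Weil classes $Q_i \in A^\vee$ with coordinates in an arithmetically ample line bundle over the abelian fiber (controlling the torus contribution via $\NTht_{\textrm{NT}}(Q_i)$). A secondary technicality is that the parametrized-minimum vanishing produces small points only in an iterated ultraproduct $F'$; applying $\mathrm{(BCSA)}$ requires keeping track of the scheme-theoretic limit data through the relative Hilbert scheme and its GVF analytification, but this is already packaged in the machinery of \S \ref{sec:gvf}.
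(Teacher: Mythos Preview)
Your overall strategy matches the paper's: assume failure of the gap principle, parametrize the counterexamples via the Hilbert scheme over $\semiabmoduli$, pass to an ultraproduct, and contradict $\mathrm{(BCSA)}$ via Lemma \ref{lemm:parmin_ultraproducts}. The difference lies entirely in the normalization step, and there you have a genuine gap.

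You rescale by $\mu_j = \max\{\height(2), h(G_j)\}$ and then assert that the classifying points $t_j \in T(K_j')$ have bounded height with respect to an arithmetically ample bundle on a compactification of $T$. But $T$ is a component of the \emph{Hilbert scheme} $\univsubvarofab_{g,t,l}$, which parametrizes the pair $(G_j, X_j)$, whereas $\mu_j$ depends only on $G_j$ and the GVF structure. Your justification (ampleness of the Hodge bundle, N\'eron--Tate heights of the Barsotti--Weil points) at best bounds the image $s_j$ of $t_j$ in $\semiabmoduli$; it says nothing about the relative height of $X_j$ in the Hilbert scheme fibre $\bbH_{s_j}$. Concretely, one can have $\mu_j$ constant while $[X_j] \in \bbH_{s_j}$ has Weil height tending to infinity, and then no representative of $(t_j)$ lies in the ultraproduct $F$, so there is no limit subvariety $X \subset G$ over $F$ to which $\mathrm{(BCSA)}$ can be applied. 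Points of a projective variety do not automatically acquire ultraproduct limits without a height bound.

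The paper circumvents this by working in the opposite direction: it first \emph{strengthens} the statement, replacing $\max\{\height(2), h(G_t)\}$ by an arithmetically ample height $h_{\ov{\sL}}(t)$ on $T$ itself, using only the easy inequality $\max\{\height(2), h(G_t)\} \leq C\, h_{\ov{\sL}}(t)$. Normalizing by $h_{\ov{\sL}}(t_j)$ then makes the $t_j$ bounded in $T$ by construction. This comes at the cost that the strengthened statement is only proved on a Zariski open of each irreducible $T$, and the ultrafilter must be chosen so that subsets which are not Zariski dense in $T$ are small; noetherian induction on $T$ then recovers the full conjecture. Your argument would be repaired by adopting this reformulation rather than trying to control $t_j$ through $\mu_j$.
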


Our goal for the section is to prove the following Proposition.

\bogimpliesngp*

Theorem \ref{thm:ngpqbar}, \ref{thm:ngpbigchar} and \ref{thm:ngpellipticquotient} will follow immediately from the instances of the Bogomolov conjecture proved in Theorem \ref{thm:bcsa}. More precisely, Theorem \ref{thm:ngpqbar} requires the Bogomolov conjecture for all characteristic $0$ GVFs, Theorem \ref{thm:ngpbigchar} only requires the Bogomolov conjecture for all non-archimedean characteristic $0$ GVFs and Theorem \ref{thm:ngpellipticquotient} only requires the Bogomolov conjecture for semiabelian varieties whose quotient is an elliptic curve.

\begin{proof}
The general strategy is to first parametrize subvarieties of semiabelian varieties. Then, we take for any $c_1$ and $c_2$ a counterexample to the statement of the new gap principle. By taking a limit point of these counterexamples valued in an ultraproduct of their fields of definition. For this limit point the Bogomolov conjecture would have to fail. The application to Theorem \ref{thm:ngpqbar} uses that the ultraproduct of GVFs of the form $\Qbar[\lambda]$ is of characteristic $0$ and the application to Theorem \ref{thm:ngpbigchar} uses that the ultraproduct of GVFs whose characteristic is $0$ or tends to $\infty$ is a GVF of characteristic $0$. Theorem \ref{thm:ngpellipticquotient} is obtained by using the moduli space of semiabelian varieties with elliptic quotient instead of the moduli space of all semiabelian varieties.

Any pair $X \subset G$ consisting of a subvariety in a semiabelian variety over a GVF $K$ together with an identification of its torus part $\bbT \cong \bbG_m^t$ and an ample symmetric line bundle $M$ on its abelian quotient $A$ defines a $K$-valued point of the appropriate moduli space $\univsubvarofab_{g,t,l}$ after fixing some level structure coprime to the characteristic of $K$ on the abelian quotient. Refer to a semiabelian variety with the mentioned extra data as an \emph{adorned semiabelian variety}.

In order to reduce the new gap principle to finitely many families the polarization degree of the abelian quotient, or in other words $\deg_M(A)$, needs to be bounded in terms of the degree of the subvariety $X$. We note that by the nefness of the involved line bundles we have the following inequalities of intersection numbers on $X$, where $\eta$ denotes the generic point of $\pi(X)$.
    \begin{align*}
        (\rho(\Delta) + M)^{\dim(X)}_X &\geq \rho(\Delta)^{\dim (X) - \dim \pi (X)} M^{\dim \pi (X)}\\ &= \deg_{\rho(\Delta)}(X_\eta) \cdot \deg_M(\pi(X)) \geq \deg_M(\pi(X))
    \end{align*}
The bound on the polarization degree follows from \cite[Lemma 2.5]{gao_ge_kuehne_uniform_mordell_lang}.

We replace the Faltings height by a more positive height to strengthen the statement and obtain a uniform statement of the new gap principle for $\Qbar$ and non-archimedean GVFs.

Let $T \subset \univsubvarofab$ be a closed irreducible subvariety of the Hilbert scheme. Let $\ov{\sL}$ be an arithmetically ample adelic line bundle on a compactification of $T$. By arithmetic ampleness, there exists a constant $C$ such that
    \[
        \max\{\height(2),h(\univsemiab_t)\} \leq C h_{\ov{\sL}}(t).
    \]

By noetherian induction we are restricted to proving the following statement. 

\begin{statement}
    Let $\sX \subset \sG$ be a horizontal subvariety of an adorned semiabelian scheme defined by a closed irreducible subvariety $T \subset \univsubvarofab$ such that the generic fibre has finite stabilizer and generates $\sG_\eta$. There exist numbers $\epsilon>0$ and $N$ such that
    \[
        \left\{x \in \sG_t(F)\ |\ \NTht(x) \leq \epsilon h_{\ov{\sL}}(t)\right\}
    \]
    is contained in a proper Zariski closed subset $Z_t$ of degree $\leq N$ for all GVF valued points $t \in T(F)$ in a Zariski dense open subset.
\end{statement}

We now prove the statement assuming the Bogomolov conjecture. Suppose by contradiction that for all reals $\epsilon, N >0$ that there is a Zariski dense set of GVF valued points $t\in T(K)$ such that
    \[
        \{x \in \sX_t(K)\ |\ \widehat{h}(x) \leq \epsilon h_{\ov{\sL}}(t) \}
    \]
is not contained in any proper Zariski closed subset of degree and height $\leq C$. Denote the set of such $t \in T(K)$ for fixed $\epsilon$ and $C$ by $T_{\epsilon, C}$. Let $I = \bigsqcup^\infty_{k=1} T_{\frac{1}{k},k}$ and let $\sU$ be an ultrafilter such that a subset $J \subset I$ is small if it is not Zariski dense or if $J = \bigsqcup^N_{k=1} T_{\frac{1}{k},k}$ for some $N$. Denote the ultraproduct by $F = \prod_{\mathcal U} K[h_{\ov{\sL}}(t_i)^{-1}]$. By assumption, $\sX_F$ has finite stabilizer. The limit of the $t_i$ defines a point $t \in T(F)$ satisfying $h_{\ov{\sL}}(t)=1$. Hence, we obtain that $\sX_F \subset \sG_F$ is not isotrivial and since $\sX_F$ generates $\sG_F$ it is not special. By the Bogomolov conjecture this implies that there exists an $\epsilon$ and a closed subvariety $Z \subset \sX_F$ such that all $x \in \sX_F(F)$ with $\NTht(x) \leq \epsilon$ are contained in $Z$. Applying Lemma \ref{lemm:parmin_ultraproducts} gives rise to a contradiction.
\end{proof}

\section{\label{sec:abelian_varieties_and_tori}Proof of the Bogomolov conjecture for semiabelian varieties}

This section is devoted to prove the Bogomolov conjecture over globally valued fields in characteristic $0$. The basic strategy to prove the Bogomolov conjecture for a semiabelian variety by reducing it to the Bogomolov conjecture for its abelian quotient is due to \cite{luo2025geometricbogomolovconjecturesemiabelian} in the case of polarized function fields. We prove such a reduction step over GVFs of any characteristic. The Bogomolov conjecture for abelian varieties over positive characteristic GVFs, however, remains open. The main additional difficulty of the reduction is the failure of the dynamical Northcott property, i.e.\ for a traceless abelian variety $A$ over a GVF $K$ there may exist non-torsion points of height $0$.

We first prove the Bogomolov conjecture for quasi-split semiabelian varieties in section \ref{sec:quasi-split} by reducing it to the case of (traceless) abelian varieties. 

\begin{definition}
    A semiabelian variety $G$ over a GVF $K$ is called \emph{quasi-split}\footnote{This follows the terminology of \cite{luo2025geometricbogomolovconjecturesemiabelian}. There is no relationship to the notion of quasi-split reductive group or to the torus being split.} if there exists a semiabelian variety $G_0$ over $k$ and a traceless abelian variety $A$ such that $G = G_{0,K} \times A$. If $K$ is archimedean we take this to mean that $G = \bbG_m^t \times A$.
\end{definition}
 
This relies on the Bogomolov conjecture for (traceless) abelian varieties and tori that are not difficult to deduce from results in the literature. We discuss the Bogomolov conjecture on abelian varieties and tori over archimedean GVFs in section \ref{sec:archimedean} after giving a reminder on the Ueno locus in section \ref{sec:ueno}. In section \ref{sec:geometric} we treat the case of geometric GVFs. Lastly, we reduce the Bogomolov conjecture for arbitrary semiabelian varieties to the quasi-split case in section \ref{sec:reduction_to_quasisplit}.

We start out by proving some standard reductions that easily extend to the setting of GVFs.

\begin{lemma}\label{lemm:translation_preserves_smallness}
    The translate $g+X$ of a small subvariety $X$ by a point $g \in G(K)$ of height $0$ is again small.
\end{lemma}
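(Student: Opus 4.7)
I will use the characterization of smallness by a Zariski-dense net of small points, made available by Lemma \ref{lemm:semiab_intersection_small_implies_small_points}. Pick a GVF extension $F/K$ and a Zariski-dense net $(x_\alpha)\subset X(F)$ with $\widehat{h}(x_\alpha)\to 0$. Since $g\in G(K)\subset G(F)$, the translated net $(g+x_\alpha)$ is Zariski dense in $(g+X)(F)$, so it suffices to show $\widehat{h}(g+x_\alpha)\to 0$; in other words, to prove that translation by a height-$0$ point preserves smallness of heights point-by-point along a small net.

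Decompose $\widehat{h}=h_{\pi^*\ov{\sM}}+h_{\arho(\Delta)}$. Both summands are non-negative on $G$, so $\widehat{h}(g)=0$ forces $h_{\pi^*\ov{\sM}}(g)=0$ and $h_{\arho(\Delta)}(g)=0$. The first identity says that $\pi(g)\in A(K)$ has vanishing N\'eron--Tate height. Since $h_{\ov{\sM}}$ is a positive semi-definite quadratic form on $A(F)$, Cauchy--Schwarz for its associated bilinear pairing yields $\langle\pi(g),\pi(x_\alpha)\rangle=0$ and hence $h_{\pi^*\ov{\sM}}(g+x_\alpha)=h_{\pi^*\ov{\sM}}(x_\alpha)\to 0$. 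For the toric summand, the defining identity $[n]^*\arho(\Delta)=n\arho(\Delta)$ together with the adelic cube-type argument of Lemma \ref{lemm:numerically_trivial_are_differences} shows that $\tr_g^*\arho(\Delta)-\arho(\Delta)=\pi^*\widehat{N_g}$ for a canonically metrized flat line bundle $\widehat{N_g}\in\widehat{\Pic}^0(A)$. Consequently,
\[
    h_{\arho(\Delta)}(g+x_\alpha)=h_{\arho(\Delta)}(x_\alpha)+h_{\widehat{N_g}}(\pi(x_\alpha)).
\]
The first term tends to $0$. For the second, the Cauchy--Schwarz inequality for the canonical pairing $A(F)\times A^\vee(F)\to\bbR$ gives $|h_{\widehat{N_g}}(\pi(x_\alpha))|^2\leq C\, h_{\ov{\sM}}(\pi(x_\alpha))\to 0$, where $C$ depends only on the class of $\widehat{N_g}$ in $A^\vee(F)$. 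Summing the two estimates, $\widehat{h}(g+x_\alpha)\to 0$, so $g+X$ is small.

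\textbf{Main obstacle.} The only non-formal input is the adelic cube-type identity $\tr_g^*\arho(\Delta)=\arho(\Delta)+\pi^*\widehat{N_g}$ with $\widehat{N_g}$ canonically metrized and flat on $A$. One proves it by noting that, geometrically, $\tr_g^*\rho(\Delta)-\rho(\Delta)$ is pulled back from a numerically trivial line bundle on $A$, and then using the eigenvalue $n$ of $\arho(\Delta)$ under $[n]^*$ to pin down the metric: any two adelic extensions of the same flat line bundle differ by a constant, and this constant is forced to be $0$ by iterating the translation formula along $g,2g,3g,\dots$ and comparing with $h_{\arho(\Delta)}([n]g)=nh_{\arho(\Delta)}(g)=0$. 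Once this identity is in hand, the remainder of the argument reduces to standard positivity and Cauchy--Schwarz manipulations of canonical heights.
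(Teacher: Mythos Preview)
Your treatment of the abelian summand via Cauchy--Schwarz is correct and even gives the sharper identity $h_{\pi^*\ov{\sM}}(g+x)=h_{\pi^*\ov{\sM}}(x)$. The gap is in the toric summand: the asserted identity $\tr_g^*\arho(\Delta)-\arho(\Delta)=\pi^*\widehat{N_g}$ is false. Already for $G=\bbG_m$, $A=\Spec K$, $\Delta=[-1,1]$, the local Green function of $\arho(\Delta)$ at a place $v$ is $x\mapsto\bigl|\log|x|_v\bigr|$, and the difference $\bigl|\log|gx|_v\bigr|-\bigl|\log|x|_v\bigr|$ is not a constant function of $x$ unless $|g|_v=1$; hence it is not pulled back from the point $A$. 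The problem is structural: by construction $\arho(\Delta)=\bigvee_i\arho(\adiv(m_i))$ is a lattice join, and pullback by translation does not interact linearly with $\bigvee$. The cube-type argument you invoke from Lemma~\ref{lemm:numerically_trivial_are_differences} concerns honest line bundles on an abelian variety, not such joins on $\ov G$, and the eigenvalue relation $[n]^*\arho(\Delta)=n\,\arho(\Delta)$ does not pin anything down either, since $[n]\circ\tr_g=\tr_{ng}\circ[n]$ rather than $\tr_g\circ[n]$, so iterating relates $\tr_g^*\arho(\Delta)$ to $\tr_{ng}^*\arho(\Delta)$ and not to itself.

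The paper's proof sidesteps any exact translation formula. It notes that the pullbacks of $\pi^*\ov{\sM}$ and of $\arho(\Delta)$ along the addition map $G\times G\to G$ are controlled by the corresponding box products $\ov{\sM}\boxtimes\ov{\sM}$ and $\arho(\Delta)\boxtimes\arho(\Delta)$, which yields a quasi-triangle inequality $\widehat h(x+y)\le C\bigl(\widehat h(x)+\widehat h(y)\bigr)$ for some $C>0$. Taking $y=g$ with $\widehat h(g)=0$ gives $\widehat h(g+x_\alpha)\le C\,\widehat h(x_\alpha)\to 0$ at once. This inequality (rather than an identity) is the correct replacement for your claimed translation formula, and it is all the lemma needs.
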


\begin{proof}
    The pullback of the line bundles $\ov{\sM}$ and $\arho(\Delta)$ on a semiabelian variety $G$ along the addition map $G \times G$ are controlled by the box products $\ov{\sM} \boxtimes\ov{\sM}$ and $\arho(\Delta) \boxtimes\arho(\Delta)$. It follows that there exists a real number $C$ such that $\NTht(x+y) \leq C(\NTht(x) + \NTht(y))$. In particular, the lemma follows.
\end{proof}

\begin{lemma}[Proposition 3.2 \cite{luo2025geometricbogomolovconjecturesemiabelian}]\label{lemm:special_under_isogeny}
    The image $Y \subset G'$ of a subvariety $X\subset G$ under isogeny $G \to G'$ is special if and only if $X$ is.
\end{lemma}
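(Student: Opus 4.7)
The proof naturally splits into a reduction to trivial stabilizers, followed by the two directions, both relying on the functoriality of the Chow trace under isogenies.

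\textit{Stabilizer reduction.} I first check $\phi(\Stab(X)) = \Stab(Y)$. The inclusion $\phi(\Stab(X)) \subseteq \Stab(Y)$ follows from $\phi(X + h) = Y + \phi(h)$. Conversely, for $h' \in \Stab(Y)$, choose a lift $h \in G$. Then $\phi(X+h) = Y = \phi(X)$, so the irreducible $X + h$ is a component of $\phi^{-1}(Y) = X + \ker\phi$, forcing $X + h = X + \gamma$ for some $\gamma \in \ker\phi$. Thus $h - \gamma \in \Stab(X)$ maps to $h'$. Hence $\phi$ descends to an isogeny $\widetilde\phi \colon \widetilde G \to \widetilde{G}'$ sending $\widetilde X$ to $\widetilde Y$, and I may assume $\Stab(X)$ and $\Stab(Y)$ are trivial.

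\textit{Forward direction.} If $X = \tr(X_0 \otimes K) + g$ with $\widehat h(g) = 0$ over a GVF extension $F$, functoriality of the Chow trace gives an isogeny $\phi^{K/k} \colon G^{K/k} \to (G')^{K/k}$ with $\phi \circ \tr = \tr' \circ (\phi^{K/k}\otimes K)$. Therefore
\[
    Y = \phi(X) = \tr'\!\bigl(\phi^{K/k}(X_0) \otimes K\bigr) + \phi(g).
\]
Pulling back the canonical metrics on the abelian quotient and the equivariant toric divisor along $\phi$ yields line bundles comparable to the canonical ones on $G$, so $\widehat h'(\phi(g)) = 0$, and $Y$ is special.

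\textit{Reverse direction.} Suppose $Y = \tr'(Y_0 \otimes K) + g'$ with $\widehat h(g') = 0$. Since $\phi$ is surjective, choose a lift $h \in G(F')$ over a further GVF extension; the same pullback comparison gives $\widehat h(h) = 0$. Set $H := \tr(G^{K/k} \otimes K) \subseteq G$ and $H' := \tr'((G')^{K/k} \otimes K) \subseteq G'$. The isogeny $\phi^{K/k}$ shows $\phi \colon H \to H'$ is surjective with finite kernel, so by dimension count $\phi^{-1}(H') = H + \ker\phi$. Restricting the identity $\phi \circ \tr = \tr' \circ (\phi^{K/k}\otimes K)$ gives
\[
    \phi^{-1}\bigl(\tr'(Y_0 \otimes K)\bigr) \;=\; \tr\bigl((\phi^{K/k})^{-1}(Y_0) \otimes K\bigr) + \ker\phi.
\]
Translating by $-h$ and applying Lemma \ref{lemm:translation_preserves_smallness}, the irreducible $X$ maps to $Y - g' = \tr'(Y_0 \otimes K)$, so $X + h$ is an irreducible component of the above preimage, hence of the form $\tr(X_0' \otimes K) + n$ for some irreducible $X_0' \subseteq (\phi^{K/k})^{-1}(Y_0)$ and torsion $n \in \ker\phi$. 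Thus $X = \tr(X_0' \otimes K) + (n - h)$ with $\widehat h(n - h) = 0$, so $X$ is special.

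\textit{Main obstacle.} The subtle step is the reverse direction: identifying the components of $\phi^{-1}(\tr'(Y_0 \otimes K))$ as Chow-trace subvarieties plus torsion. This depends on the dimension-count identity $\phi^{-1}(H') = H + \ker\phi$ together with the functoriality of $\phi^{K/k}$, which is where the recent existence theorem \cite[Corollary 2.4.4 (2)]{liu2024chowtrace1motiveslangneron} for the semiabelian Chow trace enters. The GVF-specific wrinkle is the need to accommodate non-torsion height-$0$ points $g$, $g'$ and to lift $g'$ to an equally small point $h$ possibly over a further extension; the translation invariance furnished by Lemma \ref{lemm:translation_preserves_smallness} handles this uniformly.
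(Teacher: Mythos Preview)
Your argument is essentially correct, but the reverse direction takes a different route from the paper. The paper simply picks an isogeny $\psi\colon G'\to G$ with $\psi\circ\phi=[n]$; then the already-proved forward direction applied to $\psi$ shows that $Y$ special forces $nX=\psi(Y)$ special, and one is left with the transparent case $\phi=[n]$. Your approach instead computes $\phi^{-1}(\tr'(Y_0\otimes K))$ directly via Chow-trace functoriality and the identity $\phi^{-1}(H')=H+\ker\phi$, which is more explicit and avoids the dual-isogeny trick. Both are fine; the paper's version is shorter, yours makes the role of the Chow trace more visible.

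Two small points to clean up. First, you assert that $\phi^{K/k}$ is an isogeny, but functoriality of the trace only gives a homomorphism with finite kernel; surjectivity needs an argument (for instance the same dual-isogeny trick: $\phi^{K/k}\psi^{K/k}=[n]$ on $(G')^{K/k}$, which is surjective). Without this your dimension count $\dim H=\dim H'$ is unjustified. Second, there is a sign slip: since $\phi(h)=g'$ you have $\phi(X-h)=\tr'(Y_0\otimes K)$, so it is $X-h$ (not $X+h$) that is a component of the preimage, and the conclusion should read $X=\tr(X_0'\otimes K)+(n+h)$. The appeal to Lemma~\ref{lemm:translation_preserves_smallness} is also slightly off---that lemma concerns smallness, not specialness---but what you actually need (that $n+h$ has height~$0$) follows from the height inequality used in its proof.
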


\begin{proof}
    Quotienting out by the respective stabilizers of $X$ and $Y$ reduces the statement to the case that the respective stabilizers are finite. Isogenies preserve points of height $0$ and isotrivial subvarieties, implying that if $X$ is special so is $Y$. One can on the other hand find an isogeny $G' \to G$ such that the composition equals multiplication by $n$ for suitable $n$. Now it suffices to see that if $n X$ is special so is $X$ which is immediate.
\end{proof}

\begin{lemma}[Lemma 3.3 \cite{luo2025geometricbogomolovconjecturesemiabelian}]\label{lemm:small_under_isogeny}
    The image $Y$ of a subvariety $X$ under isogeny is small if and only if $X$ is.
\end{lemma}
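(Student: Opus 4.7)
The plan is to mirror the structure of Lemma~\ref{lemm:special_under_isogeny}, replacing the ``special'' property by ``small'' and checking that canonical heights transform well enough under isogeny.

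First I would verify that the pullback of a canonical height is (comparable to) a canonical height. Let $\phi\colon G\to G'$ be the isogeny. Since $\phi$ restricts to an isogeny $\pi(G)\to \pi(G')$ of abelian quotients and to an isogeny $\bbT\to \bbT'$ of (split) tori of the same rank, the pullback of a canonically metrized symmetric ample line bundle $\ov{\sM}'$ on $\pi(G')$ is a canonically metrized symmetric ample line bundle on $\pi(G)$ (canonicity is preserved because $\phi$ commutes with $[n]$). Likewise $\phi^*\arho(\Delta')$ equals $\arho(\phi^*\Delta')$ for the pullback polytope $\phi^*\Delta'\subset M_\bbR$, which still contains the origin in its interior. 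Consequently $\phi^*\widehat h_{G'}$ is a canonical height on $G$, and by the mildness of the dependence on $\ov{\sM}$ and $\Delta$ recalled in \S\ref{sec:bogomolov}, there is a constant $C>1$ with $C^{-1}\widehat h_G\leq \phi^*\widehat h_{G'}\leq C\widehat h_G$ on $G(F)$ for every GVF extension $F/K$.

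For the forward direction, suppose $F$ is a GVF extension over which $X(F)$ has a Zariski dense set $S$ of points of height tending to $0$. Then $\phi(S)\subset Y(F)$ is Zariski dense because $\phi|_X\colon X\to Y$ is surjective (in fact finite), and $\widehat h_{G'}(\phi(x))\leq C\widehat h_G(x)$ for each $x\in S$ by the comparison above, so $Y$ is small.

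For the converse, I would use the trick from the proof of Lemma~\ref{lemm:special_under_isogeny}: pick an isogeny $\psi\colon G'\to G$ such that $\psi\circ\phi=[n]_G$, where $n$ is the exponent of $\ker\phi$. Applying the forward direction to $\psi$ shows that if $Y$ is small then $\psi(Y)=nX$ is small. It then remains to show that $nX$ small implies $X$ small. For this I use that on $G$ both contributions to $\widehat h=h_{\pi^*\ov{\sM}}+h_{\arho(\Delta)}$ are non-negative (this was recorded in \S\ref{sec:bogomolov} when $0$ lies in the interior of $\Delta$), so
\[
  \widehat h_G([n]x)=n^2 h_{\pi^*\ov{\sM}}(x)+n\, h_{\arho(\Delta)}(x)\geq n\,\widehat h_G(x)
\]
for $x\in G(F)$, whence points in $[n]^{-1}(T)\cap X$ are small whenever $T\subset nX$ is. Since $[n]|_X\colon X\to nX$ is finite and surjective, the preimage (taken over a possibly larger GVF extension $F'$ obtained by adjoining the finitely many missing fibre points, using that every GVF has GVF extensions realising any prescribed algebraic closure, e.g.\ the symmetric extension of \S\ref{sec:gvf}) of a Zariski dense set in $nX$ remains Zariski dense in $X$, giving the desired Zariski dense set of small points in $X(F')$.

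The main (but mild) obstacle will be step one: careful bookkeeping that $\phi^*$ genuinely intertwines the abelian and toric data used to define $\widehat h_{G'}$, so that mildness of the canonical height in $(\ov{\sM},\Delta)$ really applies; once this comparison is in place, both directions reduce to the standard image/preimage density argument for finite surjective morphisms together with the non-negativity of $\widehat h$ on $G$.
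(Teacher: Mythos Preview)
Your proof is correct and rests on the same key observation as the paper: the pullback of a canonical height along an isogeny is again a canonical height on $G$, so smallness (being independent of the choice of canonical height) transfers along $\phi$. The paper is terser---once $\ov{\phi}^*(\pi')^*\ov{\sM}'=\pi^*\ov{\sM}$ and $\ov{\phi}^*\arho(\Delta')=\arho(\Delta)$ are established, it simply invokes that smallness does not depend on the choice of $(\ov{\sM},\Delta)$ and concludes both directions at once, implicitly using that $\phi|_X:X\to Y$ is finite surjective so Zariski-dense sets of small points transfer in either direction. Your converse via the $\psi\circ\phi=[n]$ trick and the inequality $\widehat h([n]x)\geq n\,\widehat h(x)$ is a valid but unnecessary detour: once you know $\widehat h_G$ and $\phi^*\widehat h_{G'}$ are comparable, the preimage under the finite surjective map $\phi|_X$ of a Zariski-dense set of small points in $Y$ is already Zariski dense in $X$ (after a finite symmetric extension, exactly as you note), giving the converse directly.
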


\begin{proof}
    A homomorphism of semiabelian varieties $\phi:G \to G'$ induces a morphism of torus bundles. In particular, the data of an isogeny of abelian varieties consists of an isogeny of the abelian quotients $\phi_{ab}:A \to A'$ and the torus part $\phi_{tor}:\bbT \to \bbT'$. This induces a map $\widehat{\phi}:M' = \Hom(\bbT',\bbG_m) \to M = \Hom(\bbT,\bbG_m)$. Let $\Delta' \subset M'_\bbR$ be a polytope and $\Delta$ its image under $\widehat{\phi} \otimes \bbR$. Then, $\Delta$ and $\Delta'$ determine compactifications $\ov{G}$ and $\ov{G}'$ and $\phi$ extends to a morphism $\ov{\phi}:\ov{G} \to \ov{G}'$. Denote the projections to the abelian quotients by $\pi$ and $\pi'$ respectively. From an ample symmetric canonically metrized line bundle $\ov{\sM}'$ on $A'$ one can obtain an ample symmetric line bundle $\ov{\sM}=\phi_{ab}^*\ov{\sM}'$ on $A$. Then, $\ov{\phi}^* (\pi')^* \ov{\sM}' = \pi^* \sM$ and $\ov{\phi}^* \arho(\Delta') = \arho(\Delta)$. In other words, the pullback of a canonical height on $G'$ defines a canonical height on $G$. Since being small does not depend on the choice of canonical heights we conclude the result.
\end{proof}

\begin{lemma}\label{lemm:finite_stabilizer}
    The Bogomolov conjecture holds if and only if it holds for subvarieties with finite stabilizer.
\end{lemma}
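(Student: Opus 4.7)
The forward direction is immediate, since subvarieties with finite stabilizer form a subclass. For the converse, my plan is to reduce an arbitrary subvariety $X \subset G$ with a Zariski-dense set of small points in $X(F)$ to its image in $G/H$, where $H$ is the identity component of $\Stab(X)$, so that the resulting quotient subvariety has finite stabilizer and the definition of special on the quotient unravels directly to specialness of $X$.

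Concretely, I would let $H \subset G$ denote the identity component of $\Stab(X)$ (taken with its reduced structure in positive characteristic), which is a semiabelian subvariety of $G$. Setting $\widetilde{G} = G/H$ and $\widetilde{X} = X/H$, the stabilizer satisfies $\Stab(\widetilde{X}) = \Stab(X)/H$ and is in particular finite. The quotient morphism $\pi_H : G \to \widetilde{G}$ induces a surjective homomorphism $\phi: A \to \widetilde{A}$ between the abelian quotients together with a surjection $\bbT \twoheadrightarrow \widetilde{\bbT}$ on the torus parts, and correspondingly an injection $M_{\widetilde{\bbT}} \hookrightarrow M_{\bbT}$ on character lattices.

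Next, I would verify that small points descend under $\pi_H$. Writing a canonical height on $\widetilde{G}$ as $\NTht_{\widetilde{G}} = h_{\widetilde{\pi}^*\ov{\widetilde{\sM}} + \arho(\widetilde{\Delta})}$, its pullback along $\pi_H$ is the height associated to a semipositive integrable adelic line bundle on $G$. Choosing the data defining $\NTht_G$ so that $\ov{\sM}$ dominates $\phi^*\ov{\widetilde{\sM}}$ as canonically metrized symmetric line bundles and so that $\Delta$ contains the preimage of $\widetilde{\Delta}$ in its interior yields the inequality $\pi_H^*\NTht_{\widetilde{G}} \leq \NTht_G$, in complete analogy with the pullback computation appearing in Lemma~\ref{lemm:small_under_isogeny}. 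Since $\pi_H|_X$ is surjective onto $\widetilde{X}$, images of small points of $X$ therefore form a Zariski-dense set of small points of $\widetilde{X}$ in $\widetilde{X}(F)$.

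Finally, applying the hypothesis to $\widetilde{X}$, which has finite stabilizer, yields that $\widetilde{X}$ is special, i.e.\ $\widetilde{X}/\Stab(\widetilde{X}) = \tr(X_0 \otimes K) + g$ for some subvariety $X_0$ of the Chow trace of $\widetilde{G}/\Stab(\widetilde{X})$ and some $g$ of height zero over a GVF extension of $K$. Observing the identifications $\widetilde{X}/\Stab(\widetilde{X}) = X/\Stab(X)$ and $\widetilde{G}/\Stab(\widetilde{X}) = G/\Stab(X)$, this is precisely the statement that $X$ itself is special. The only mild subtlety in this plan is the height comparison $\pi_H^*\NTht_{\widetilde{G}} \leq \NTht_G$; once one makes the polytope and polarization choices compatibly on $G$ and $\widetilde{G}$, it reduces to the standard fact that pullbacks of nef/semipositive canonically metrized data along a homomorphism of semiabelian varieties are dominated by sufficiently ample canonical data upstairs.
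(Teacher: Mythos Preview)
Your approach is essentially the same as the paper's: quotient by the identity component of the stabilizer, observe the image is small, apply the finite-stabilizer hypothesis, and read off specialness of $X$ from the semiabelian definition via the tautology $X/\Stab(X) = \widetilde{X}/\Stab(\widetilde{X})$. Your height-comparison argument for showing smallness descends is more explicit than the paper's one-line claim, but the strategy is identical.

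One point you omit that the paper does address: the Bogomolov conjecture for \emph{abelian} varieties in this paper uses a different definition of special, namely $X = \tr(Y\otimes K) + V + a$ with $V$ an abelian subvariety, rather than a condition on $X/\Stab(X)$. For that formulation the passage from specialness of $\varpi(X)$ in $A/G(X)$ back to specialness of $X$ in $A$ is not tautological; one needs Poincar\'e reducibility to lift the abelian subvariety and the trace term through the quotient. The paper handles this by citing the last paragraph of the proof of \cite[Theorem~9.20]{moriwaki_arakelov_geometry}. Your argument as written covers only the semiabelian definition, so if you intend the lemma to apply to the abelian formulation as well you should add this step.
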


\begin{proof}
    Let $X$ be a small subvariety of a semiabelian variety $G$. Let $G(X)$ be the connected component of the stabilizer of $X$. The image $\varpi(X)$ under the morphism $\varpi:G \to G'=G/G(X)$ is also small. In the formulation of the Bogomolov conjecture for semiabelian varieties it is clear that $X$ is special if $\varpi(X)$ is. The deduction for the standard formulation of the Bogomolov conjecture for abelian varieties relies on Poincar\'e reducibility and can be found in the last paragraph of the proof of \cite[Theorem 9.20]{moriwaki_arakelov_geometry}.
\end{proof}

\begin{lemma}\label{lemm:generates_abelian_variety}
    It suffices to prove the Bogomolov conjecture for small subvarieties $X$ that generate the semiabelian variety $G$.
\end{lemma}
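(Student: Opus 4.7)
The plan is to reduce to the generating case by an appropriate translation. Given a small subvariety $X \subset G$ over a GVF $K$, let $H_0 \subset G$ be the subsemiabelian variety generated by the constructible set $X - X$; this is a closed connected subgroup of $G$, hence itself semiabelian. By construction, for any $x_0 \in X$ the translate $X - x_0$ lies in $H_0$, and its difference set still equals $X - X$, so it generates $H_0$.

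To apply the hypothesis of the lemma while preserving smallness, we need $x_0$ to have height zero, so that Lemma \ref{lemm:translation_preserves_smallness} guarantees $X - x_0$ remains small. Using the intersection-number characterization of smallness together with Lemma \ref{lemm:semiab_intersection_small_implies_small_points}, we produce a GVF extension $F/K$ on which $\{x \in X(F) \mid \widehat h(x) = 0\}$ is Zariski dense, and pick such an $x_0 \in X(F)$. Setting $X' := X_F - x_0 \subset H_{0,F}$, we have that $X'$ is small and generates $H_{0,F}$, so by hypothesis $X'$ is special in $H_{0,F}$ over $F$. Functoriality of the Chow trace along the closed immersion $H_0 \hookrightarrow G$ upgrades this to $X'$ special in $G_F$, and translating back, $X_F = X' + x_0$ is special in $G_F$: the stabilizer is unchanged, and the base point of the trace-decomposition is shifted by the image $\overline{x_0} \in \widetilde G_F$, which has height zero by the triangle inequality, the nonnegativity of canonical heights on semiabelian varieties, and the inequality $\widehat h_{\widetilde G}(\overline{x_0}) \leq C\, \widehat h_G(x_0)$ coming from the fact that the canonical height pushes down through quotients of semiabelian varieties up to a multiplicative constant.

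The main obstacle is the descent from $X_F$ special over $F$ back to $X$ special over $K$. The paper's definition of special involves the Chow trace $\widetilde G^{K/k}$ over the specific constant field $k$ of $K$, whereas the specialness produced over $F$ a priori involves $\widetilde G^{F/k_F}$, and the Chow trace can in principle grow under enlarging the base field. I would handle this by choosing $F$ with constant field equal to $k$, for instance as a sufficiently generic polarized GVF extension, and then exploiting that the equality $(\widetilde X)_F = \tr(Y \otimes F) + g$ takes place inside $\widetilde G_F$ with the left-hand side already defined over $K$, so that $Y$ must descend up to controlled ambiguity. This descent is the delicate step; the remainder of the argument is essentially formal manipulation of stabilizers, traces, and translations by height-zero points.
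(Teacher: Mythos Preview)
Your approach is essentially the paper's: pass to a GVF extension $F$ to find a height-zero point $g \in X(F)$, translate so that $X_F - g$ lands in the subsemiabelian variety it generates, and observe that specialness there is equivalent to specialness in $G$. The paper's proof is three sentences and simply asserts the last equivalence; your write-up is considerably more explicit about the intermediate steps (compatibility with the closed immersion $H_0 \hookrightarrow G$, behaviour of the height under the quotient $G \to \widetilde G$, etc.), all of which are handled correctly.

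The one place where you go beyond the paper is your worry about descent from $F$ to $K$, specifically that the Chow trace $\widetilde G^{F/k_F}$ might be strictly larger than $\widetilde G^{K/k} \otimes k_F$. This is a legitimate concern in principle, and the paper does not address it --- it treats the passage between $X$ over $K$ and $X_F - g$ over $F$ as a pure ``WLOG'' reduction. In the paper's logical structure the lemma is used only as part of proving BCSA simultaneously over \emph{all} GVFs (cf.\ the proof of Theorem~\ref{thm:bcsa}), so one is free to replace $(K,G,X)$ by $(F,H_{0,F},X_F-g)$ wholesale and prove specialness for the new data; the paper never literally descends the conclusion back to the original $K$. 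Your proposed fix of controlling $k_F$ is reasonable but, as you note, not fully worked out; fortunately it is also not needed once the lemma is read as a reduction of the universal statement rather than a fibre-by-fibre implication.
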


\begin{proof}
    If $X$ is a small subvariety, then up to passing to a GVF extension $F$ it contains a point $g \in X(F)$ whose canonical height is $0$. By Lemma \ref{lemm:translation_preserves_smallness}, $X-g$ is again small. It is a special subvariety in the semiabelian subvariety generated by $X-g$ if and only if it is a special subvariety of $G$.
\end{proof}

\subsection{\label{sec:ueno}Ueno locus and grouplessness}

Instead of rewriting the proof of the Bogomolov conjecture for abelian varieties over archimedean GVFs we reduce it to the case of adelic curves proved in \cite{Adelic_curves_4}. This requires us to understand the behaviour of the Ueno locus under extensions of algebraically closed fields.\footnote{I learned the results in this section from Aryan Javanpeykar's answer on https://mathoverflow.net/questions/356197/translates-of-abelian-subvarieties}

\begin{definition}
    The \emph{Ueno} or \emph{Kawamata locus} of a closed subvariety $X \subset G$ of a finite type connected group schemes $G$ is the union $Z(X)$ of all translates of subgroup schemes of $G$ or in other words
    \[
        Z(X) = \{ x \in X\ |\ \exists B\subset G \textrm{ subgroup scheme, } \dim(B) > 0 \textrm{ s.th. } x + B \subset X\}.
    \]
\end{definition}

Its basic properties were studied in \cite{abramovich_subvarieties_of_semiabelian}. Its main result can be summarized as follows.

\begin{theorem}[\cite{abramovich_subvarieties_of_semiabelian}]
    Let $G$ be a semiabelian variety and let $X \subset G$ be a closed subvariety. Then,  the Ueno locus $Z(X)$ is a closed subvariety of $X$. It satisfies $Z(X) = X$ if and only if $\dim(\Stab(X))>0$.
\end{theorem}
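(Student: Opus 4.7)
The plan is to treat both claims by parametrizing the connected subgroups $B$ that can occur in the definition of $Z(X)$. The trivial direction of the equivalence is immediate: if the identity component $B$ of $\Stab(X)$ has positive dimension, then $x + B \subset X$ for every $x \in X$ by definition of the stabilizer, so $x \in Z(X)$ and hence $Z(X) = X$.

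For the remaining assertions, the key observation is that only \emph{finitely many} connected algebraic subgroups of $G$ can contribute to $Z(X)$. Fix an equivariant projective compactification $\bar G$ (using a toric compactification on the torus part) and an ample line bundle $L$ on it. If a translate $x + B \subset X$ exists for some connected positive-dimensional subgroup $B$, then the closure of $B$ in $\bar G$ has degree at most $\deg_L(X)$. Connected subgroups of $G$ of bounded degree form a finite set: subtori correspond to saturated sublattices of the character lattice of $G$ with bounded index, of which there are only finitely many, and abelian subvarieties of the abelian quotient of bounded degree form a finite set since they are parametrized by a quasi-projective scheme of finite type over the base.

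For each such candidate subgroup $B$, set $X_B := \{x \in G : x + B \subset X\}$. Consider the translation morphism $\phi : G \times B \to G$, $(x,b) \mapsto x+b$. The preimage $Y := \phi^{-1}(X) \subset G \times B$ is closed, and $X_B$ is the complement of $\mathrm{pr}_1(Y^c)$ in $G$; since $\mathrm{pr}_1 : G \times B \to G$ is flat of finite type and hence open, $\mathrm{pr}_1(Y^c)$ is open and $X_B$ is closed in $G$. Combined with the finiteness above, $Z(X) = \bigcup_B X_B$ is a finite union of closed subsets, hence closed in $X$.

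Finally, suppose $Z(X) = X$; assuming $X$ is irreducible, the decomposition $X = \bigcup_B X_B$ as a finite union of closed subvarieties forces $X = X_B$ for some positive-dimensional $B$. This means $B + X \subset X$, which is to say $B \subset \Stab(X)$, giving $\dim \Stab(X) > 0$. The main obstacle is establishing the finiteness of bounded-degree connected subgroups of $G$; in characteristic zero this is standard, while in positive characteristic one must verify that restricting to reduced connected subgroups is sufficient, which is automatic since the Ueno locus is defined set-theoretically.
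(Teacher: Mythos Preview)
The paper does not prove this statement; it is quoted from Abramovich's paper and used as a black box. So there is no ``paper's own proof'' to compare against, and your attempt must be judged on its own.

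Your argument has a genuine gap at the key finiteness step. You claim that if $x + B \subset X$ then $\deg_L(\overline{B}) \leq \deg_L(\overline{X})$, but this is false: the degree of a subvariety is not bounded by the degree of an ambient variety when the dimensions differ. Concretely, take $G = E \times E$ for an elliptic curve $E$ and $X = G$ itself (or $X = E \times E \times \{0\}$ inside $E^3$ if you want a proper subvariety). The graph $\Gamma_n = \{(x,nx)\} \subset E \times E$ is a one-dimensional connected subgroup, and with respect to the natural product polarization its degree is $n^2 + 1$, which is unbounded even though $\deg_L(\overline{X})$ is fixed. The same phenomenon occurs for subtori: in $\mathbb{G}_m^2$ the subtori $\{(t^a,t^b)\}$ have closures of degree $|a|+|b|$ in $(\mathbb{P}^1)^2$, again unbounded. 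So neither your bound for subtori (``bounded index'') nor for abelian subvarieties follows from $x+B \subset X$.

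The parts of your argument that \emph{do} work are: the trivial direction of the equivalence; the closedness of each individual $X_B$ via openness of $\mathrm{pr}_1$; and the final deduction $X = X_B$ for some $B$ once finiteness is known. What is missing is precisely the finiteness, and Abramovich's (and, in the abelian case, Kawamata's) actual argument for it is substantially more delicate than a degree bound --- it proceeds by induction on $\dim X$ and an analysis of the fibers of quotient maps $G \to G/B$, not by bounding degrees of the subgroups a priori. If you want a self-contained proof you will need to supply that inductive structure; alternatively, cite Abramovich as the paper does.
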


We follow \cite[Section 3]{javanpeykar_xie_finiteness_properties_pseudo_hyperbolic} in introducing grouplessness which has been studied under extension of algebraically closed fields.

\begin{definition}[Pseudo-grouplessness]
    Let $X$ be a variety over an algebraically closed field $K$ and $\Delta \subset X$ a closed subvariety. We say that $X$ is \emph{groupless modulo $\Delta$} if for all finite type connected group schemes $G$ and every  Zariski dense open $U \subset G$ such that $\codim(G\setminus U) \geq 2$ every morphism $U \to X$ factors over $\Delta$.
\end{definition}

\begin{proposition}[Proposition 3.7 \cite{javanpeykar_xie_finiteness_properties_pseudo_hyperbolic}]
Let $K \subset F$ be an extension of algebraically closed field of characteristic $0$. Then, $X_F$ is groupless modulo $\Delta_F$ if and only if $X$ is groupless modulo $\Delta$.
\end{proposition}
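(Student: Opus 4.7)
The ``only if'' direction is immediate: if $f : U \to X$ is a counterexample to grouplessness over $K$, then its base change $f_F : U_F \to X_F$ is a counterexample over $F$, since base change along the faithfully flat extension $K \subset F$ preserves codimensions of closed subschemes in finite type schemes and, by faithful flatness, preserves the property of not factoring through $\Delta$. For the ``if'' direction, I argue by contradiction via spreading out and specialization. Suppose $X$ is groupless modulo $\Delta$ but $X_F$ is not, witnessed by a finite type connected group scheme $G'$ over $F$, a Zariski dense open $U' \subset G'$ with $\codim(G' \setminus U') \geq 2$, and a morphism $f' : U' \to X_F$ not factoring through $\Delta_F$.

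Since the data defining the counterexample involves only finitely many elements of $F$, I may replace $F$ by a finitely generated algebraically closed subfield containing all relevant coefficients; equivalently, by standard spreading out (EGA IV, \S 8) there is an integral $K$-variety $S$ with $K(S) \subseteq F$ and a model of the counterexample over $S$: a group scheme $G_S \to S$, an open $U_S \subset G_S$, and a morphism $f_S : U_S \to X_S = X \times_K S$ whose generic fiber recovers $(G', U', f')$ after further base change to $F$.

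After shrinking $S$ to a suitable nonempty open, I arrange the following open conditions simultaneously. First, $G_S \to S$ is smooth with geometrically connected fibers: smoothness is automatic in characteristic $0$ by Cartier's theorem on the generic fiber together with generic smoothness, and geometric connectedness of fibers is open on the base by EGA IV.15.5.1. Second, on every geometric fiber the complement $G_s \setminus U_s$ has codimension $\geq 2$, which is an open condition by the fiber dimension theorem. Third, for every $s \in S$, the fiber $f_s : U_s \to X$ does not factor through $\Delta$; here the non-factoring locus is the image in $S$ of the constructible (by Chevalley) set $U_S \setminus f_S^{-1}(\Delta_S)$, which is nonempty at the generic point by hypothesis and hence contains a dense open of $S$. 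Since $S$ is a variety over the algebraically closed field $K$, the Nullstellensatz provides a $K$-point $s \in S(K)$ in this open, and specialization at $s$ yields a finite type connected group scheme $G_s$ over $K$, an open $U_s \subset G_s$ of codimension $\geq 2$ complement, and a morphism $f_s : U_s \to X$ that does not factor through $\Delta$, contradicting the grouplessness of $X$ modulo $\Delta$.

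The main obstacle will be checking that all four defining properties of pseudo-grouplessness (group-scheme structure, geometric connectedness of fibers, codimension $\geq 2$ on fibers, and non-factorization through $\Delta$) are simultaneously Zariski-open on the base $S$, so that they all survive to a common $K$-point. Each is a standard application of constructibility/openness results from EGA IV, but characteristic zero enters crucially through Cartier's theorem, which ensures the spread-out group scheme can be arranged to be smooth and thereby makes the fiberwise dimension-theoretic arguments robust; in positive characteristic the analogous statement may fail because of inseparable phenomena such as pullbacks along Frobenius.
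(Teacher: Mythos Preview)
The paper does not supply its own proof of this proposition; it is quoted verbatim as Proposition~3.7 of \cite{javanpeykar_xie_finiteness_properties_pseudo_hyperbolic} and used as a black box in the deduction of Corollary~\ref{cor:stability_Ueno_locus}. There is therefore nothing in the paper to compare your argument against.

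That said, your spreading-out and specialization argument is the standard route to statements of this type and is correct. The ``only if'' direction is immediate by base change along the faithfully flat $K \hookrightarrow F$. For the converse, you spread out the offending triple $(G', U', f')$ to a model over an integral $K$-variety $S$, shrink $S$ so that the relevant conditions hold fiberwise, and specialize at a $K$-point. Each of the four conditions is indeed open or constructible on $S$: smoothness of $G_S \to S$ (via Cartier on the generic fiber plus generic flatness), geometric connectedness of fibers (constructibility of the number of geometric connected components, EGA~IV~9.7.8), fiberwise codimension $\geq 2$ of the complement (upper semicontinuity of fiber dimension), and non-factorization through $\Delta$ (Chevalley applied to the image of the open set $U_S \setminus f_S^{-1}(\Delta_S)$). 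Your identification of characteristic $0$ as entering through Cartier's theorem is exactly right; this is also the approach taken in \cite{javanpeykar_xie_finiteness_properties_pseudo_hyperbolic}.
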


In the case of semiabelian varieties the resulting groupless-exceptional locus agrees with the Ueno locus.

\begin{theorem}[Theorem 5.2.5 \cite{morrow_vojta_green-griffiths-lang-vojta-conjecture}]
     Let $G$ be a semiabelian variety and let $X \subset G$ be a closed subvariety. Then, $X$ is groupless modulo $Z(X)$.
\end{theorem}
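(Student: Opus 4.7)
Fix a connected finite type group scheme $H$, a Zariski dense open $U \subset H$ with $\codim_H(H \setminus U) \geq 2$, and a morphism $f \colon U \to X \subset G$. The goal is to show $f(U) \subset Z(X)$. The strategy mirrors the classical Ueno--Kawamata--Abramovich approach: extend $f$ globally and then exploit rigidity for morphisms between group varieties.

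\textbf{Step 1: extend $f$ to all of $H$.} Because $H$ is smooth, a Weil-type extension theorem should apply to morphisms into the semiabelian variety $G$. Composing with the projection $\pi \colon G \to A$, I would invoke Weil's classical theorem: any rational map from a smooth variety to an abelian variety is defined in codimension $1$, and so $\pi \circ f$ extends across $H \setminus U$. For the torus part, pulling back a character $G \to \bbG_m$ produces an invertible regular function on $U$; normality of $H$ combined with $\codim_H(H\setminus U)\geq 2$ means this function extends by Hartogs. Gluing the two contributions yields a unique extension $\tilde f \colon H \to G$.

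\textbf{Step 2: rigidity.} Next I would invoke the rigidity lemma for morphisms from a connected group variety to a semiabelian variety: there exist a group homomorphism $\phi \colon H \to G$ and a point $a = \tilde f(e_H)\in G(K)$ with $\tilde f = t_a \circ \phi$. The standard argument considers the auxiliary morphism $(h_1,h_2) \mapsto \tilde f(h_1 h_2) - \tilde f(h_1) - \tilde f(h_2) + \tilde f(e_H)$, which vanishes on $\{e\}\times H$ and $H\times\{e\}$; rigidity of maps into a semiabelian variety then forces it to vanish identically. Setting $B := \phi(H) \subset G$, this $B$ is a connected subgroup scheme.

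\textbf{Step 3: conclude.} Since $U$ is dense in $H$ and $\tilde f|_U = f$ factors through the closed $X$, passing to Zariski closures gives $\tilde f(H) = a + B \subset X$. If $\dim B > 0$, then $a+B$ is a positive-dimensional translate of a subgroup of $G$ contained in $X$, so $f(U) \subset a+B \subset Z(X)$ by the very definition of the Ueno locus. If $\dim B = 0$ then $\phi$ is trivial, $\tilde f$ is the constant map with value $a$, and this case is either excluded by the convention that one considers positive-dimensional $H$ with non-constant $f$, or else handled trivially.

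\textbf{Main obstacle.} The crux is Step 1: proving the codimension-$\geq 2$ extension property for semiabelian targets. While Weil's theorem handles the abelian quotient cleanly, extending across the boundary for the toric factor relies on the combination of normality of $H$ with the Hartogs-type extension of invertible regular functions, and one must verify this can be done compatibly with the extension data on the abelian quotient (i.e.\ that the two local extensions glue into a single morphism $H \to G$ using the structure of $G$ as a $\bbT$-torsor over $A$). A secondary subtlety is that the rigidity lemma in Step 2, classical in the abelian setting, must be upgraded to the semiabelian case; this can be done by composing with $\pi$ to reduce to the abelian version and separately analyzing the induced map into the toric fiber, whose morphisms from a group variety are automatically of the desired form.
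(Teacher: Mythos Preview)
The paper does not prove this theorem; it is quoted verbatim as Theorem 5.2.5 of the cited reference \cite{morrow_vojta_green-griffiths-lang-vojta-conjecture} and used as a black box. There is therefore no in-paper argument to compare against, and your outline is essentially the standard one found in that literature (going back to Ueno, Kawamata, and Abramovich).

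Two points of imprecision are worth flagging, both of which you partially acknowledge. First, in Step~1 there is no character ``$G\to\bbG_m$'' when $G$ is not itself a torus; what you actually need is that characters of $\bbT$ turn the pulled-back torsor $(\pi\circ f)^*G$ over $H$ into a tuple of line bundles, and the section over $U$ extends across $H\setminus U$ by normality plus $\codim\geq 2$ (with nowhere-vanishing preserved because the zero locus would be a divisor contained in a codimension-$\geq 2$ set). Second, in Step~2 the classical rigidity lemma requires one factor to be proper, which $H$ need not be; the honest argument for ``morphism from a connected group scheme to a semiabelian variety is a translated homomorphism'' goes through the Chevalley decomposition of $H$, using that connected linear groups are rational (so map constantly to $A$) and that morphisms of such groups to $\bbG_m$ sending $e\mapsto 1$ are characters. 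Your ``main obstacle'' paragraph shows you are aware of both issues, so the plan is sound even if the sketch cuts corners.
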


\begin{corollary}\label{cor:stability_Ueno_locus}
    Let $G$ be a semiabelian variety and let $X \subset G$ be a closed subvariety. Let $K \subset F$ be an extension of algebraically closed fields. Then, the Ueno locus satisfies $Z(X_F) = Z(X)_F$.
\end{corollary}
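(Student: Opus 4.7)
The plan is to prove the two inclusions separately. The inclusion $Z(X)_F \subseteq Z(X_F)$ is immediate: a positive-dimensional connected subgroup scheme $B \subseteq G$ base-changes to the positive-dimensional connected subgroup scheme $B_F \subseteq G_F$, and a translate $x + B \subseteq X$ base-changes to a translate $x + B_F \subseteq X_F$, so every $K$-point (and then every $F$-point) witnessing membership in $Z(X)$ witnesses membership in $Z(X_F)$.

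For the reverse inclusion $Z(X_F) \subseteq Z(X)_F$, I would combine the two cited theorems. The Morrow--Vojta theorem yields that $X$ is groupless modulo $Z(X)$. Proposition 3.7 of Javanpeykar--Xie then propagates this to $X_F$: it is groupless modulo $Z(X)_F$. Now take $x \in Z(X_F)$; by definition there is a positive-dimensional connected subgroup scheme $B \subseteq G_F$ with $x + B \subseteq X_F$. The translation map $B \to X_F$, $b \mapsto x+b$, is a morphism from the whole connected group scheme $B$ (so we may take $U = B$, making $B \setminus U = \emptyset$, which trivially has codimension $\geq 2$), hence by grouplessness it factors through $Z(X)_F$. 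Evaluating at $b = 0$ gives $x \in Z(X)_F$.

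The only real subtlety is the characteristic hypothesis: the Javanpeykar--Xie proposition, as cited, is stated in characteristic zero, and this is precisely the setting in which Corollary \ref{cor:stability_Ueno_locus} will be invoked below (reduction of the Bogomolov conjecture over archimedean GVFs to the adelic-curve result of \cite{Adelic_curves_4}, which forces characteristic zero). If one ever needed the statement in positive characteristic, the argument above would have to be supplemented by a direct spreading-out argument: the locus of pairs $(x,B)$ in $X \times \mathrm{Subgrp}(G)$ with $x + B \subseteq X$ and $\dim B \geq 1$ is constructible and descends to a finitely generated subring over which the Ueno locus can be followed, but this is not needed here.
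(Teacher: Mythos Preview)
Your proof is correct and is exactly the argument the paper has in mind: the corollary is stated without proof precisely because it is the immediate combination of the Morrow--Vojta theorem (giving grouplessness modulo $Z(X)$) and the Javanpeykar--Xie proposition (propagating grouplessness to the base change), together with the trivial inclusion $Z(X)_F \subseteq Z(X_F)$. Your observation about the characteristic-zero hypothesis is also apt: the paper's statement omits it, but the cited Javanpeykar--Xie result is only stated in characteristic zero, and the corollary is only applied in the archimedean setting, which forces characteristic zero anyway.
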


\subsection{\label{sec:archimedean}Archimedean GVFs}

The Bogomolov conjecture for abelian varieties has been proven by Chen and Moriwaki in \cite{Chen_differentiability_of_arithmetic_volume} over archimedean adelic curves. A priori, only countable GVFs are represented by an adelic curve. In this section, we reduce to the case of countable GVFs and prove the Bogomolov conjecture for tori over archimedean GVFs.

\begin{lemma}
    The Bogomolov conjecture for abelian varieties over countable archimedean GVFs implies the Bogomolov conjecture over all archimedean GVFs.
\end{lemma}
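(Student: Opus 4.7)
The plan is to descend the data to a countable archimedean sub-GVF of $K$ and invoke the assumed countable case. Let $K$ be an archimedean GVF, assumed algebraically closed, let $A/K$ be an abelian variety, and let $X \subset A$ be a closed subvariety possessing a Zariski-dense set of small points. By the upper Zhang inequality (Proposition \ref{prop:zhang_inequality}), $\NTht(X) = 0$.

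The first step is to construct a countable algebraically closed sub-GVF $K_0 \subset K$ as follows. Since $K$ is archimedean, there exists a pair $(x_0,y_0) \in K^2$ with $h(x_0+y_0) > h(x_0,y_0)$. Let $K_1$ be the countable subfield of $K$ generated by this pair together with the coefficients of defining equations for $A$ and $X$, and let $K_0$ be the relative algebraic closure of $K_1$ inside $K$. Then $K_0$ is countable and algebraically closed; it inherits a GVF structure by restriction from $K$, and since the witnessing pair $(x_0,y_0)$ still lies in $K_0$, the restricted GVF is again archimedean.

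The next step is descent. Write $A = A_0 \otimes_{K_0} K$ and $X = X_0 \otimes_{K_0} K$. Because arithmetic intersection numbers are defined via models over $\Spec\ZZ$ and the GVF functional on $K$ restricted to adelic divisors defined on a model over $K_0$ recovers the GVF functional on $K_0$, we have $\NTht(X_0) = \NTht(X) = 0$. Applying the assumed Bogomolov conjecture over the countable archimedean GVF $K_0$, combined with the equivalence $\NTht(X_0) = 0 \Leftrightarrow X_0 \text{ special}$ that is part of $\mathrm{(BC)}$, we conclude that $X_0$ is special: $X_0 = V_0 + a_0$ for an abelian subvariety $V_0 \subset A_0$ and a point $a_0 \in A_0(K_0)$ with $\NTht(a_0) = 0$. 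Base-changing to $K$ gives $X = (V_0 \otimes_{K_0} K) + a_0$, exhibiting $X$ as special.

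The only subtle point is guaranteeing that the countable sub-GVF $K_0$ remains archimedean, and this is handled by explicitly including a pair witnessing failure of the strong triangle inequality in the generating set. The remainder of the argument is routine base-change bookkeeping for intersection numbers, point heights, and the notion of specialness.
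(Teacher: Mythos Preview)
Your proof is correct and takes a genuinely different route from the paper. You work with the height formulation: from a Zariski-dense set of small $K$-points you deduce $\NTht(X)=0$ via the upper Zhang inequality, descend $A$ and $X$ to a countable algebraically closed archimedean sub-GVF $K_0$, observe that the arithmetic intersection number is unchanged under this base change (so $\NTht(X_0)=0$), and then invoke the assumed BC over $K_0$ --- crucially using its ``moreover'' clause to pass from $\NTht(X_0)=0$ to $X_0$ special --- before base-changing back. The paper instead stays with the small-points formulation throughout: it reduces to the case of finite stabilizer, chooses a countable sequence of distinct small points lying in the complement $X^\circ$ of the Ueno locus, and builds the countable field $L$ so as to contain those points; the Zariski closure of the sequence over $L$ then contains a positive-dimensional small component $Z$ not contained in the Ueno locus of $X_L$, hence non-special, contradicting BC over $L$. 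Your approach is more direct and bypasses the Ueno-locus machinery of \S\ref{sec:ueno} altogether; the price is that you must use the equivalence $\NTht(X_0)=0 \Leftrightarrow \text{small $K_0$-points are Zariski dense}$ as part of the assumed countable-case conjecture, whereas the paper's contradiction argument only needs the implication from small points to specialness over $L$.
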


\begin{proof}
    By Lemma \ref{lemm:finite_stabilizer}, we are reduced to the setting where the small subvariety $X$ has finite stabilizer. In particular, the Ueno locus $X^\circ$ is not empty. By assumption, there is an infinite number of small points in $X^\circ(K)$. Take a countable small sequence of distinct points $x_1,\dots\in X^\circ(K)$. It is possible to define $A$ and $X$ over a finite type field $M$ over $\bbQ$. The field $L$ generated by $x_1,\dots\in X^\circ(K)$ over $M$ is still countable. The Zariski closure of $\{x_1,x_2,\dots\}$ in $X$ contains a positive-dimensional irreducible component $Z$ which is small. Since $Z$ is not contained in the Ueno locus it has finite stabilizer and is not special. Thus, contradicting the Bogomolov conjecture over $L$.
\end{proof}

For tori we prove the Bogomolov conjecture using equidistribution in the same way as in \cite{bilu_equidistribution}.

\begin{proposition}\label{prop:archimedean_torus}
    The Bogomolov conjecture for $\bbG_m^t$ is satisfied over an archimedean GVF $K$.
\end{proposition}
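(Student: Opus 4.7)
My plan is to adapt Bilu's equidistribution proof of the Bogomolov conjecture for tori over $\Qbar$ to the archimedean GVF setting, using Theorem \ref{thm:equidistribution} in place of Bilu's theorem.

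First, I reduce to countable $K$ by the same argument as in the preceding lemma for abelian varieties: any Zariski-dense sequence of small points on $X$ is defined over a countable subfield, which inherits an archimedean GVF structure and is therefore represented by a proper adelic curve $(\Omega, \sA, \nu)$ with a subset of archimedean places of positive $\nu$-measure. Via Lemmas \ref{lemm:finite_stabilizer}, \ref{lemm:small_under_isogeny}, \ref{lemm:special_under_isogeny} and \ref{lemm:generates_abelian_variety}, I reduce further to showing that a subvariety $X \subsetneq \bbG_m^t$ with trivial stabilizer that generates $\bbG_m^t$ cannot admit a Zariski-dense sequence of small points. Suppose for contradiction such a sequence $(x_n) \subset X(K)$ exists.

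By Zhang's inequality (Proposition \ref{prop:zhang_inequality}), $h_{\ov{L}|_X}(X) = 0$ for $\ov{L} = \arho(\Delta)$, placing us in the hypotheses of Theorem \ref{thm:equidistribution}. A standard approximation of continuous test functions by Green's functions of adelic divisors promotes the height statement to weak convergence of the empirical measures $\mu_n = \tfrac{1}{n}\sum_{i \le n}\delta_{x_i}$ to the equilibrium measure $c_1(\ov{L}|_X,\omega)^{\dim X}/\deg_L X$ on $X_\omega^{\an}$ for $\nu$-almost every $\omega$. At an archimedean place $\omega$ of positive $\nu$-measure, the canonical metric on $\arho(\Delta)$ has Monge--Amp\`ere equal to Haar measure on the unit polycircle $(S^1)^t \subset \bbG_m^t(\bbC)$, so the equilibrium measure on $X$ is the induced restriction to $X(\bbC) \cap (S^1)^t$.

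Bilu's geometric argument now closes the contradiction: the empirical measures $\mu_n$ are supported on the closed analytic set $X(\bbC) \subset \bbG_m^t(\bbC)$, so any weak limit is as well. Combining this with the classical inductive trick of iterating the sum map $X \mapsto X - X$ (which preserves smallness and strictly increases the dimension whenever $X$ is not a subtorus coset) until the sequence becomes Zariski-dense in the ambient torus, one forces $(S^1)^t \subseteq X(\bbC)$ and thus $X = \bbG_m^t$, contradicting $X \subsetneq \bbG_m^t$. The principal obstacle I expect is the measure-theoretic upgrade from the height form of Theorem \ref{thm:equidistribution}: one needs to approximate continuous test functions uniformly in the place variable by Green's functions of adelic divisors, which is handled in the same manner as in the archimedean Bogomolov conjecture for abelian varieties of \cite{Adelic_curves_4}.
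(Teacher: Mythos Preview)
Your argument has a genuine gap in the final step. Applying Theorem \ref{thm:equidistribution} to $X$ yields convergence of the empirical measures to the Monge--Amp\`ere measure $c_1(\ov{L}|_X,\omega)^{\dim X}/\deg_L X$, which is by construction supported on $X_\omega^{\an}$; there is no contradiction in that. Your claim that this measure is ``the induced restriction to $X(\bbC)\cap(S^1)^t$'' is neither justified nor, even if true, useful --- it is still a measure on $X$. The difference-set iteration $X\mapsto X-X$ does not close the argument either: once the iterated difference fills $\bbG_m^t$, the limiting measure \emph{is} Haar on $(S^1)^t$, but the ambient variety is now the full torus and nothing is contradicted. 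You have conflated two distinct techniques (Bilu's character argument for tori and Zhang's difference-set trick for abelian varieties) and neither is carried out.

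The step you are missing, and which the paper supplies, is precisely Bilu's character/Fourier argument. Since $X$ generates $\bbG_m^t$, every nontrivial character $\chi$ satisfies $\chi(X)=\bbG_m$, so $(\chi(x_i))$ is a \emph{generic} small sequence in $\bbG_m$. Applying Theorem \ref{thm:equidistribution} on $\bbP^1$ --- where the canonical equilibrium measure is explicitly the Haar measure on $S^1$ --- shows that $\chi_*\mu$ is fibrewise Haar on the unit circle for every $\chi$. Fourier analysis on the compact group $(S^1)^t$ then forces the pushforward of $\mu$ to $(\bbC^*)^t$ to be Haar on $(S^1)^t$, which cannot be supported on the analytification of a proper subvariety; that is the contradiction. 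The key point you omit is that one must apply equidistribution on a variety where the equilibrium measure is \emph{known} (namely $\bbP^1$), not on $X$ where it is the unknown quantity.
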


\begin{proof}
    By the same argument as above we are reduced to the case that $K$ is countable and $X \subset \bbG^t_m$ contains a Zariski dense set of small points with respect to the Weil height. Assume wlog.\ that $X$ generates $\bbG_m^t$. Then, for any non-trivial character $\chi:\bbG_m^t \to \bbG_m$ the image $\chi(X) = \bbG_m$. Let $(x_i) \in X(K)$ be a small generic sequence. Then, the sequence $(\chi(x_i)) \in \bbG_m(K)$ is a generic sequence of small points. Therefore, both the $x_i$ and the $\chi(x_i)$ satisfy the conditions of Theorem \ref{thm:equidistribution}.

    We represent $K$ by an adelic curve and assume that all its archimedean places are normalized. Denote the set of archimedean places by $\Omega_\infty$ and denote the restricted measure from the adelic curve structure by $\nu$. We consider the measure $\delta_{x_i}$ on the adelic space $(\bbG_m^t)^{\an}_{\Omega_\infty}$ defined by sending a function $f:(\bbG_m^t)^{\an}_{\Omega_\infty} \to \bbR$ to the integral \[
    \int_{\Omega_\infty}f_\omega(x_i)\nu(d\omega).
    \]
    Denote by $\pi$ the projection onto $(\bbC^*)^t$. We first note that by Theorem \ref{thm:equidistribution} applied to $X$ the measures $\delta_{x_i}$ equidistribute to some measure $\mu$. The measure $\chi_*\mu$ has to be the measure given by fibrewise integration over the Haar measure on the unit circle due to Theorem \ref{thm:equidistribution} applied to $\bbG_m$ or rather its compactification $\bbP^1$. By Fourier theory, it follows that for every measurable set $U \subset \Omega_\infty$ we obtain that $\pi_* \mu$ is a multiple of the Haar measure on $(S^1)^t$. Hence $\mu$ is given by the product measure of the Haar measure on $(S^1)^t$ and the measure on $\Omega_\infty$. However, then $\mu(X_{\Omega_\infty}) = 0$ contradicting the fact that $\mu$ is supported on the adelic space of $X$.
\end{proof}

\subsection{\label{sec:geometric}Geometric Bogomolov for traceless abelian varieties}

Here we prove the Bogomolov conjecture for traceless abelian varieties over globally valued fields of characteristic $0$ by using Gao's theorem on degenerate subvarieties.

\begin{lemma}\label{lemm:traceless_abelian}
    $\mathrm{(BC)}$ is true for every traceless abelian variety over a non-archimedean GVF of characteristic $0$.
\end{lemma}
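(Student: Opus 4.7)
The plan is to reduce the statement to the potential non-degeneracy theorem of Gao--Ge--K\"uhne \cite{gao_ge_kuehne_uniform_mordell_lang}, of which this lemma is essentially a GVF-theoretic reformulation. Combining Proposition \ref{prop:zhang_inequality} with Proposition \ref{prop:lower_zhang_ineq} and Remark \ref{rk:successive_minima_attained}, the existence of a Zariski-dense set of small points in some GVF extension is equivalent to $\widehat{h}(X) = 0$, so it suffices to show that $\widehat{h}(X) = 0$ forces $X$ to be special. By Lemma \ref{lemm:finite_stabilizer} one may assume $\Stab(X)$ is finite and by Lemma \ref{lemm:generates_abelian_variety} that $X - X$ generates $A$; under these reductions, tracelessness of $A$ reduces ``$X$ is special'' to $\dim X = 0$, and I will derive a contradiction from $\dim X \geq 1$.

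First I would spread $(A,X)$ out over the constant field $k$ of $K$: pick a smooth quasi-projective $k$-variety $S$ with $k(S) \subseteq K$ together with a family of abelian varieties $\pi : \sA \to S$ and a closed subvariety $\sX \subseteq \sA$ whose generic fibre recovers $(A,X)$. The assumptions that $A$ is traceless, $\Stab(X)$ is finite, $X - X$ generates $A$, and $X$ is not special should then translate, after shrinking $S$, applying Lemmas \ref{lemm:special_under_isogeny} and \ref{lemm:small_under_isogeny}, and using the base-change stability of the Ueno locus (Corollary \ref{cor:stability_Ueno_locus}), into the hypothesis that the generic fibre of $\sX \to S$ is not contained in any torsion coset of a proper abelian subscheme of $\sA$. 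This is the setting of \emph{potentially non-degenerate} subvarieties in \cite{gao_ge_kuehne_uniform_mordell_lang}.

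Their theorem then yields, after possibly replacing $S$ by a suitable modification, that the Deligne pairing $\langle \ov{\sL}, \dots, \ov{\sL} \rangle_{\sX/S}$ of the canonically metrized relatively ample symmetric line bundle $\ov{\sL}$ on $\sA$ is big as an adelic line bundle on $S$. Since $K$ has constant field exactly $k$, one may further assume $k(S) \not\subseteq k$, so the restriction to $k(S)$ of the GVF functional $\phi$ of $K$ is non-trivial. Lemma \ref{lemm:big_line_bundle_triviality_of_GVF} then forces $\phi(\langle \ov{\sL}, \dots, \ov{\sL} \rangle_{\sX/S}) > 0$, and by Definition \ref{def:arithmetic_intersection} this value equals the intersection number $(\ov{\sL}|_X)^{\dim X + 1}$; hence $\widehat{h}(X) > 0$, contradicting the hypothesis.

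The main obstacle is the first step of the third paragraph: extracting the bigness of the Deligne pairing of $\ov{\sL}$ restricted to $\sX$ from Gao--Ge--K\"uhne's potential non-degeneracy theorem requires checking that the non-speciality hypothesis on $X$ matches the precise input they need, and that the output of their theorem can be phrased as an honest bigness statement for an adelic line bundle on a compactification of $S$, rather than a mere fibrewise height bound. Once this bigness is in place, the passage from a fixed polarized structure to arbitrary non-archimedean GVF structures on $K$ is automatic via positivity of the GVF functional on the effective cone.
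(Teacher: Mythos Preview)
Your proposal is correct and takes essentially the same approach as the paper: reduce to the height formulation with finite stabilizer and $X$ generating $A$, spread out over the constant field $k$, invoke the potential non-degeneracy result of Gao--Ge--K\"uhne to show the Deligne pairing $\langle \sM,\dots,\sM\rangle_{\sX/S}$ is big, and conclude via Lemma \ref{lemm:big_line_bundle_triviality_of_GVF}. Your ``main obstacle'' dissolves once you note that, since $K$ is non-archimedean, only the \emph{geometric} Deligne pairing is in play and its bigness is precisely the definition of potential non-degeneracy (via \cite[Theorem 4.1]{yuan_uniform_bogomolov_curves} and \cite[\S 6.2.2]{yuan_zhang_adeliclinebundlesquasiprojective}); the paper then feeds the three conditions (irreducible generic fibre, generates, finite stabilizer) directly into \cite[Lemma 3.4]{gao_ge_kuehne_uniform_mordell_lang}, so your detours through Lemmas \ref{lemm:special_under_isogeny}, \ref{lemm:small_under_isogeny} and Corollary \ref{cor:stability_Ueno_locus}, as well as the ``torsion coset'' phrasing, are unnecessary.
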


\begin{proof}
    By Lemma \ref{lemm:finite_stabilizer} and Lemma \ref{lemm:generates_abelian_variety} we place ourselves in the setting that $X$ has finite stabilizer and generates $A$.
    
    We will use the height formulation of the Bogomolov conjecture. To be precise, we want to prove that if $A$ is a traceless abelian variety over a non-archimedean algebraically closed GVF $K$ of characteristic $0$ and $X$ is a subvariety satisfying $\NTht(X) = 0$, then $X$ is the translate of an abelian subvariety by a height $0$ point.

    Before proceeding with the proof we recall some basic facts on non-degeneracy of subvarieties of abelian schemes. Let $\sM$ be an ample symmetric geometric adelic line bundle on an abelian scheme $\sA \to S$ over $\bbC$. By \cite[\S 6.2.2]{yuan_zhang_adeliclinebundlesquasiprojective}, for a subscheme $\sX$ of an abelian scheme $\sA$ it is equivalent for the Betti map to generically have full rank and for $\sM|_\sX$ to be big. A variety $\sX$ satisfying these equivalent conditions is called \emph{non-degenerate}. By \cite[Theorem 4.1]{yuan_uniform_bogomolov_curves}, the line bundle $\langle \sM, \dots, \sM \rangle_{\sX/S}$ is big if and only if $\sM^{\boxtimes N}$ is a big line bundle on $\sX^{[N]} = \sX \times_S \dots \times_S \sX$ for sufficiently large $N$. A variety $\sX$ satisfying these equivalent conditions is called \emph{potentially non-degenerate}.

    Since the height is stable under base change we may assume that it suffices to prove the Bogomolov conjecture for pairs $X \subset A$ defined over a GVF $K$ of finite type over the constant field $k$ and that $K$ is of minimal transcendence degree among the fields of definition. We spread out $X \subset A$ to families $\sX \subset \sA$ over a variety $S$ with fraction field $K$. Let $\sM$ be an ample symmetric geometric adelic line bundle on $\sA$. By Lemma \ref{lemm:big_line_bundle_triviality_of_GVF}, it suffices to prove that the Deligne pairing $\langle \sM, \dots, \sM\rangle_{\sX/S}$ on $S$ is a big line bundle. We may assume that the moduli map from $S$ to the relative Hilbert scheme is generically finite and thereby reduce to the case that $S$ is a closed irreducible subvariety of the relative Hilbert scheme $\resHilbert$. Let $\ov{\eta}$ denote a geometric generic point of $S$. The assumptions made on $X\subset A$ imply that up to passing to an open subvariety of $S$, $\sX$ satisfies the conditions
    \begin{enumerate}
        \item $\sX_{\ov{\eta}}$ is an irreducible subvariety of $\sA_{\ov{\eta}}$
        \item $\sX_s$ generates $\sA_s$ for all $s \in S(\bbC)$
        \item the subvariety $\sX_{\ov{\eta}}$ has finite stabilizer.
    \end{enumerate}
    By \cite[Lemma 3.4]{gao_ge_kuehne_uniform_mordell_lang}, $\sX$ is potentially non-degenerate finishing the proof.
\end{proof}

\subsection{\label{sec:quasi-split}Bogomolov for quasi-split semiabelian varieties}

In this section, we reduce the Bogomolov conjecture for quasi-split semiabelian varieties to the case of traceless abelian varieties. Yamaki originally proved this reduction in the case of polarized function fields for abelian varieties, cf.\ \cite{yamaki_trace_of_abelian_varieties_geometric_bogomolov}. We extend this to globally valued fields and quasi-split semiabelian varieties.

We begin by proving a Bogomolov conjecture for isotrivial varieties. Let $k$ be an algebraically closed field endowed with the trivial GVF structure. Let $X$ be a variety over $k$. Let $L$ be an ample line bundle on $X$. Note that it defines a geometric adelic line bundle on $X$ and it therefore makes sense to consider the height with respect to $L$. Let $K$ be a finitely generated GVF extension of $k$ with constant field $k$. We say that $Z\subset X_K$ is small if $h_L(Z) = 0$ or equivalently $\essmin(L|_Z) = 0$.

\begin{proposition}[Isotrivial Bogomolov conjecture] \label{prop:isotrivial_bogomolov}
    Let $Z\subset X_K$ be a small subvariety. Then $Z$ can be defined over $k$.
\end{proposition}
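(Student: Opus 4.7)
My plan is to use the Zhang inequalities to pass from $h_L(Z)=0$ to a Zariski dense set of height-zero points on $Z$, to observe that such points automatically live in the constant field of a suitable GVF extension, and then to conclude by a descent argument.

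First I would apply Propositions \ref{prop:lower_zhang_ineq} and \ref{prop:zhang_inequality} to the ample line bundle $L$ on $X$ equipped with its trivial (i.e., pulled back from $k$) adelic structure; this also confirms the equivalence $h_L(Z)=0 \iff \essmin(L|_Z)=0$ asserted in the definition, since $\absmin(L|_Z)\ge 0$ (projective heights against trivial metrics are nonnegative). Remark \ref{rk:successive_minima_attained}, carried out inside the non-archimedean category (ultraproducts of non-archimedean GVFs remain non-archimedean), then produces a non-archimedean GVF extension $F/K$ and a Zariski dense subset $\{x_i\}\subset Z(F)$ with $h_L(x_i)=0$.

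Second, I would verify that any $x\in X(F)$ with $h_L(x)=0$ lies in $X(F_{\textrm{const}})$. Pick $m$ so that $L^{\otimes m}$ gives a closed immersion $\iota:X\hookrightarrow \mathbb{P}^N_k$ and write $\iota(x)=[a_0:\cdots:a_N]$. Then $h_{\mathcal{O}(1)}(\iota(x))=m\,h_L(x)=0$, and because $F$ is non-archimedean this forces every ratio $a_i/a_j$ (with $a_j\ne 0$) to have height zero in $F$, hence to lie in $F_{\textrm{const}}$. Thus $\iota(x)\in \mathbb{P}^N(F_{\textrm{const}})$ and $x\in X(F_{\textrm{const}})$.

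Third, let $Z_1\subset X_{F_{\textrm{const}}}$ denote the Zariski closure of $\{x_i\}$. The two irreducible closed subvarieties $Z_F=Z\times_K F$ and $Z_1\times_{F_{\textrm{const}}} F$ of $X_F$ both contain the dense set $\{x_i\}$; density in $Z_1\times_{F_{\textrm{const}}} F$ follows from algebraic closedness of $F_{\textrm{const}}$ (which holds automatically since the constant field of an algebraically closed non-archimedean GVF is algebraically closed in it). Hence these two subvarieties coincide, so $Z_F$ is defined over both $K$ and $F_{\textrm{const}}$. By descent it is therefore defined over $K\cap F_{\textrm{const}}$; any element of $K$ of $F$-height zero already has $K$-height zero and so lies in $K_{\textrm{const}}=k$, giving $K\cap F_{\textrm{const}}=k$. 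Descending once more from $F$ to $K$ yields $Z=Z_0\times_k K$ for some $Z_0\subset X$ defined over $k$, as desired.

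The main technical obstacle is the descent step: one must verify that a closed subvariety of $X_F$ defined over two subfields is defined over their intersection. This is standard faithfully flat descent for closed subschemes of $X_F$, ultimately reducing to the linear-algebra identity $(\mathcal{O}(X)\otimes_k K_1)\cap(\mathcal{O}(X)\otimes_k K_2)=\mathcal{O}(X)\otimes_k(K_1\cap K_2)$ inside $\mathcal{O}(X)\otimes_k F$, which holds by choosing a basis of $F$ over $K_1\cap K_2$ adapted to the subfields $K_i$. A minor point worth checking is that the density-of-$F_{\textrm{const}}$-points assertion in the third step is robust, but this follows by writing an element of the defining ideal of a proper closed subvariety of $Z_1\times_{F_{\textrm{const}}} F$ as a finite sum with $F$-linearly independent scalars and evaluating at points of $\{x_i\}$.
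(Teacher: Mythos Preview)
Your proof is correct and follows essentially the same route as the paper: obtain via the lower Zhang inequality (Proposition~\ref{prop:lower_zhang_ineq}) a GVF extension $F$ in which $Z$ has a Zariski dense set of height-zero points, observe these lie in $X(F_{\textrm{const}})$, and descend using that $Z$ is defined over both $K$ and $F_{\textrm{const}}$ with $K\cap F_{\textrm{const}}=k$. The paper compresses your second and third steps into two sentences and invokes \cite[IV$_2$ 4.8.11]{EGA} for the field-of-definition descent you spell out by hand.
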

\begin{proof}
    Let $F$ be a GVF extension of $K$ with constant field $F_{\textrm{const}}$ such that $Z$ contains a Zariski dense set of $F_{\textrm{const}}$-points. Such an $F$ exists by Proposition \ref{prop:lower_zhang_ineq}. Then, $Z$ is defined over $F_{\textrm{const}}$. Since it is also defined over $K$ it is defined over $K\cap F_{\textrm{const}} =k$ by \cite[IV$_2$ 4.8.11]{EGA}.
\end{proof}

\begin{lemma}
    Let $K$ be an algebraically closed geometric GVF. Then, there is a unique GVF structure on $K(x)$ such that $\height(x) = 0$.\footnote{This result holds over archimedean GVFs as well. It is a generalization of Bilu's equidistribution theorem due to Ben Yaacov and Hrushovski and can be found in unpublished notes.}
\end{lemma}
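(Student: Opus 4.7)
For existence, the plan is to apply Definition \ref{def:polarized_gvf} to the polarization $(\bbP^1_K, \ov{\sO(1)}^{\textrm{can}})$, where $\sO(1)$ carries its canonical metric characterized by $[n]^*\ov{\sO(1)}^{\textrm{can}} \cong \ov{\sO(n)}^{\textrm{can}}$. Because $\deg\sO(1) = 1$, no rescaling is needed and the restriction to $K$ reproduces the given GVF structure. To verify $\height(x) = 0$, I would unpack the explicit adelic curve construction following Definition \ref{def:polarized_gvf}: the height of $x$ decomposes as $h_{\ov{\sO(1)}^{\textrm{can}}}([\infty]) = 0$ (since $\infty$ is a fixed point of $x \mapsto x^n$) plus the integrals at each place of $K$ of $\log^+|x|$ against the Chambert--Loir measure of $\ov{\sO(1)}^{\textrm{can}}$, which on a non-archimedean base is the Dirac mass at the Gauss point, where $|x|=1$.

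For uniqueness, let $\phi$ be any such GVF structure and let $\phi_{\textrm{can}}$ be the one above. The identity $K(x)$-point $[x:1]\in \bbP^1(K(x))$ has $\phi$-height $0$ with respect to $\ov{\sO(1)}^{\textrm{can}}$, so $\essmin_\phi(\ov{\sO(1)}^{\textrm{can}}|_{\bbP^1_K}) = 0$. Since the Deligne self-pairing $\langle\ov{\sO(1)}^{\textrm{can}},\ov{\sO(1)}^{\textrm{can}}\rangle$ on $\Spec K$ vanishes by Tate averaging, $h_\phi(\bbP^1_K) = 0$ for every such $\phi$; combined with Proposition \ref{prop:lower_zhang_ineq} this forces $\absmin_\phi(\ov{\sO(1)}^{\textrm{can}}|_{\bbP^1_K}) = 0$ as well. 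By Remark \ref{rk:successive_minima_attained}, one then passes to a GVF extension $F/K$ admitting a Zariski-dense net $(y_i)\subset \bbP^1(F)$ with $\height_\phi(y_i) \to 0$, whose limit in $\GVFan[K]{\bbP^1_K}$ is the generic point $[x:1]$ carrying precisely the GVF structure $\phi$.

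The core step is then to apply equidistribution. For any integrable adelic line bundle $\ov{\sL}$ on $\bbP^1_K$, Theorem \ref{thm:equidistribution} applied on a representing adelic curve for $\phi|_{K'}$ over a countable subfield $K' \subset K$ (combined with Proposition \ref{prop:C-M_YZ_identity} to align intersection theories) expresses $\lim_i h_{\ov{\sL}}(y_i)$ in terms of the arithmetic intersection $\ov{\sO(1)}^{\textrm{can}} \cdot \ov{\sL}$ on $\bbP^1_K$ over $\phi_K$, independently of the extension of $\phi$ to $K(x)$. On the other hand, continuity of the height along the convergence $y_i \to [x:1]$ in $\GVFan[K]{\bbP^1_K}$ identifies this limit with $h_{\ov{\sL}}([x:1]) = \phi(\ov{\sL})$. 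Hence $\phi(\ov{\sL}) = \phi_{\textrm{can}}(\ov{\sL})$ on all integrable line bundles on $\bbP^1_K$, and by boundary-norm approximation on all of $\aDivR(K(x)/\bbZ)$.

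The hard part is the local rigidity input to equidistribution: one needs that on each Berkovich fiber $\bbP^{1,\an}_\omega$ over a place of a representing adelic curve, the Chambert--Loir measure of $\ov{\sO(1)}^{\textrm{can}}$ is uniquely characterized, so that the equidistribution limit depends only on $\ov{\sL}$ and $\phi_K$ and not on $\phi$'s extension to $K(x)$. For geometric bases this is the non-archimedean Bilu theorem for GVFs alluded to in the footnote; the expected proof is a potential-theoretic rigidity argument on each fiber, using the dynamical functional equation $[n]^*\ov{\sO(1)}^{\textrm{can}} = n\cdot\ov{\sO(1)}^{\textrm{can}}$ together with Tate's limiting construction of the canonical metric.
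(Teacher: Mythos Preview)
Your equidistribution approach is genuinely different from the paper's argument, and in outline it can be made to work, but as written there is a gap. The detour through Remark \ref{rk:successive_minima_attained} is the problem: that remark produces some GVF extension $F/K$ and a Zariski-dense net of small points $(y_i)\subset\bbP^1(F)$, but there is no reason this net converges in $\GVFan[K]{\bbP^1}$ to the particular point $([x:1],\phi)$ rather than to some other height-zero generic point. (Indeed, the whole question is whether such a point is unique.) The correct move is to skip this detour entirely: take the adelic curve to be $(K(x),\phi)$ itself and apply Theorem \ref{thm:equidistribution} to the constant net at the single scheme-theoretic generic point $[x{:}1]\in\bbP^1(K(x))$, which is tautologically generic and has $h_{\ov{\sO(1)}^{\textrm{can}}}=0$. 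The conclusion $h_{\ov{\sL}}([x{:}1]) = \ov{\sO(1)}^{\textrm{can}}\cdot\ov{\sL}/2$ then determines $\phi$ on $\aDivR(\bbP^1_K)$, hence on all of $K(x)$ since $\bbP^1_K$ is the unique normal projective model; this is exactly the mechanism of the paper's Proposition/Definition for abelian varieties. Your ``hard part'' worry is also misplaced: the right-hand side of Theorem \ref{thm:equidistribution} is an arithmetic intersection number which, by Definition \ref{def:arithmetic_intersection}, is the GVF functional of $K$ evaluated on the Deligne pairing over $\Spec K$, so it depends only on $\phi|_K$ and not on the extension to $K(x)$. No separate local rigidity statement is needed.

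The paper's own proof is much more elementary and avoids equidistribution altogether. Using the strong triangle inequality (and $\height(x)=0$) one gets $\height(x+a)=\height(a)$ for all $a\in K$; unwinding what this says for a representing adelic curve forces every non-trivial place of $K$ to extend to the Gauss norm on $K(x)$, while the measure on the remaining places (those over the trivial absolute value, identified with $\bbP^1(K)$) is then pinned down by the product formula for the elements $x-a$. This direct valuation-theoretic argument is specific to $\bbP^1$ over a non-archimedean base; your equidistribution route, once repaired, has the advantage of working uniformly for any polarized dynamical target (and in particular over archimedean $K$, as the footnote alludes to).
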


\begin{proof}
    Without loss of generality assume that $K$ is countable. Let $(K,(\Omega,\sA,\nu),\phi)$ be a representing adelic curve for $K$ with no measure on the trivial absolute value. By the strong triangle inequality $\height(x+a) = \height(a)$ for all $a \in K$. It follows that any adelic curve representing $K(x)$ is supported on absolute values such that $|x+a|= 1$ if $|a| \leq 1$. For any non-trivial non-archimedean place $v$ of $K$, there is a unique extension to an absolute value $K(x)$ with this property, the Gauss norm. Hence, we may find a representing adelic curve for $K(x)$ of the form $(K(x),(\bbP^1(K)) \sqcup \Omega,\sB,\mu),\psi)$, where on $\Omega$ the map $\psi$ maps to the Gauss point over the corresponding non-trivial valuation of $K$. The measure $\mu$ restricted to $\Omega$ is determined by $\nu$. The places extending the trivial absolute value on $K$ can be identified with $\bbP^1(K)$. The measure on each such $a \in \bbP^1(K))$ is determined by the properties that $\mu(\{\infty\}) = 0$ and the product formula for $x-a$ for all $a \in K$.
\end{proof}

\begin{lemma}\label{lemm:constant_field_canonical_GVF}
    Let $A$ be a traceless abelian variety over a geometric GVF $K$ and assume that the Bogomolov conjecture holds for $A$. Then, the algebraic closure of its function field with its canonical GVF structure $\ov{K(\eta)}$ has constant field $k$.
\end{lemma}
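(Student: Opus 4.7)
My plan is to argue by contradiction. Suppose the constant field $k'$ of the canonical GVF on $K(A)$ strictly contains $k$. Since the restriction of this GVF to $K$ is $(d+1)^{-1}h_K$, one has $k' \cap K = k$, so any $a \in k' \setminus k$ automatically lies in $K(A) \setminus K$ and corresponds to a non-constant rational function on $A$. The strategy is then to combine $h(a) = 0$ with the assumed Bogomolov conjecture for $A$ to obtain a contradiction. Note that the symmetric extension contributes nothing extra here: by Lemma \ref{lemm:constant_field_symmetric_extension}, $\ov{K(\eta)}^{\sym}$ has constant field $\ov{k'}$, and since $k$ is already algebraically closed (as $K$ is), it is enough to prove $k' = k$.

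The key observation is that for every $\lambda \in k$ the element $a - \lambda$ still lies in the field $k'$, hence $h(a - \lambda) = 0$. Writing out the polarized adelic curve representing the canonical GVF on $K(A)$ as in \S \ref{sec:polarized_GVF}, this height splits as a non-negative Berkovich integral plus a non-negative divisorial sum of the form $\sum_D \max(-v_D(a-\lambda),0)\cdot \adeg(\ov{L}^d|D)/((d+1)L^d)$. The divisorial part therefore vanishes term by term, forcing every prime divisor appearing in the pole or zero locus of $a-\lambda$ to satisfy $\adeg(\ov{L}^d|D) = 0$, i.e.\ to be $\ov{L}$-small. Applying $\mathrm{(BC)}$ for $A$ in codimension one, and using tracelessness so that no trace piece appears in the special decomposition, each such component must be a translate $V + t$ of a codimension-one abelian subvariety by a height-zero point.

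Next I would let $\lambda$ vary over the infinite algebraically closed field $k$. The fibres $a^{-1}(\lambda)$ are pairwise distinct as divisors, since the level sets of a non-constant rational function are disjoint. On a dense Zariski open $U \subset \mathbb{P}^1_K$ over which the combinatorial type of the special decomposition is constant, I record, for each codimension-one abelian subvariety $V_i$ appearing, the translation class $t_{i,\lambda} \in (A/V_i)(K)$; this yields a $K$-morphism $U \to \prod_i (A/V_i)_K$. Any such morphism from a rational variety into an abelian variety extends to $\mathbb{P}^1_K$ and is therefore constant, so each $t_{i,\lambda}$ is independent of $\lambda$. This forces $a^{-1}(\lambda)$ to be constant on $U$, contradicting distinctness of fibres.

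The main obstacle I expect is the step translating $h(a-\lambda)=0$ into the statement that every component of $a^{-1}(\lambda)$ is $\ov{L}$-small: this relies on the precise description of the measures defining the polarized GVF, in particular on the fact that the weight of a prime divisor $D$ is its arithmetic $\ov{L}^d$-intersection, which vanishes exactly when $\widehat{h}_{\ov{L}}(D)=0$. The remaining geometric point, that the decomposition type of $a^{-1}(\lambda)$ stabilizes on a dense open in $\lambda$, should follow from standard upper-semicontinuity on the relevant component of the Hilbert scheme parametrizing divisors in $|a^{-1}(0)|$.
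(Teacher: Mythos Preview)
Your opening reduction is sound and in fact matches the paper's first move: from $h(a-\lambda)=0$ in the polarized GVF on $K(A)$ you correctly deduce that every component of $\mathrm{div}(a-\lambda)$ has $\widehat h_{\ov L}=0$, and then Bogomolov plus tracelessness forces each such component to be a translate $V+t$ of a codimension-one abelian subvariety by a height-zero point. The paper does exactly this (with a single $\lambda$), then uses Poincar\'e reducibility to reduce to $A$ simple; for $\dim A>1$ simple there are no codimension-one abelian subvarieties and one is done, leaving only the elliptic curve case.

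The gap is in your endgame. The sentence ``I record, for each codimension-one abelian subvariety $V_i$, the translation class $t_{i,\lambda}\in(A/V_i)(K)$; this yields a $K$-morphism $U\to\prod_i(A/V_i)_K$'' is not justified and is in general false. A fibre $a^{-1}(\lambda)$ may contain several translates of the same $V_i$, so there is no single translation class attached to $V_i$; what one gets canonically is a point of $\mathrm{Sym}^{m_i}(A/V_i)$, and labelling the components to land in $(A/V_i)^{m_i}$ requires an \'etale cover of $U$ whose source curve has no reason to be rational. If instead you sum the translation classes to obtain a genuine morphism $U\to A/V_i$, that map is indeed constant, but constancy of the sum does not force the fibre to be constant. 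The elliptic curve case already exhibits this: for $A=E$ the only $V_i$ is $0$, so $A/V_i=E$, and any non-constant $a\colon E\to\bbP^1$ has degree $\ge 2$; for instance the Weierstrass $x$-coordinate has fibres $\{P,-P\}$ whose sum is identically $0$ while the fibre itself varies. Thus your rigidity argument collapses precisely in the case that carries all the content.

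The paper handles that remaining elliptic case by a genuinely local argument: since $E$ is traceless it is non-isotrivial, hence has places of multiplicative reduction, and there one shows directly that the Green function of any nontrivial divisor has strictly positive integral against the canonical (skeleton-supported) measure, contradicting $\phi(|\ov\sD|)=0$. Your global ``maps from $\bbP^1$ to an abelian variety'' idea does not seem to provide a substitute for this step.
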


\begin{proof}
    If $A = A_1 \times A_2$ the base change $A_1 \otimes \ov{K(A_2)}$ has trivial $\ov{K(A_2)}/\ov{k(A_2)}$-trace by \cite[Lemma A.1]{yamaki_trace_of_abelian_varieties_geometric_bogomolov}. In particular, if the Lemma is proven for $A_2$ over $K$ and $A_1 \otimes \ov{K(A_2)}$ over $\ov{K(A_2)}$ then we can conclude the the lemma for $A$. By Poincar\'e reducibility we may assume that $A$ is simple. By Lemma \ref{lemm:constant_field_symmetric_extension} it suffices to study the constant field of $K(\eta)$.

    Let $s \in K(\eta) \setminus K$. In order for $s$ to have height $0$, the vanishing locus $\textrm{div}(s)$ has to be of height $0$, hence a height $0$ translate of an abelian subvariety. We are hence reduced to the case that $A$ is an elliptic curve $E$. Observe that $E$ can be defined over $k(j(E))$ and let $v$ denote the $j(E)^{-1}$-adic valuation $v$ on $k(j(E))$. Represent $K$ by an adelic curve $(K,(\Omega,\sA,\nu),\phi)$ such that the places $\Omega_{\textrm{bad}}$ of bad reduction extend $v$. Let $\ov{\sM}$ denote the canonically metrized symmetric ample line bundle associated to the divisor $[0]$.
    
    Let $K(\eta)$ be the function field of $E$ and let $x \in K(\eta)\setminus k$ of height $0$. Denote the GVF functional of $K(\eta)$ by $\phi$. Then, $K(\eta)/K(x)$ is finite and $K(x)$ is endowed with the unique GVF structure extending $K$ satisfying $\height(x) = 0$. For every place $\omega$ of $K$ the GVF measure for $K(x)$ is supported on the Gauss point. We define an adelic divisor $\ov{D}$ on $K(x)$ by pullback from $k(j(E))(x)$ to be a trivial divisor together with a continuous non-negative function $f$ on $\bbP^1_v$ that vanishes precisely on the Gauss point. We notice that the evaluation of the GVF functional of $K(x)$ at $\ov{D}$ has to be $0$. We consider the pullback of $\ov{D}$ to $K(E)$. Denote it too by $\ov{D}$ and its Green's function at each place by $f_\omega$.
    
    We see that $\int_{E_\omega^{\an}} f_\omega c_1(\ov{\sM}_\omega) > 0$ for all $\omega \in \Omega_{\textrm{bad}}$, where by \cite[Corollary 7.3]{gubler_non-archimedean_canonical_measures_abelian_varieties} the $c_1(\ov{\sM}_\omega)$ is the Haar measure on the skeleton of $E_\omega^{\an}$. In particular, \[\phi(\ov{D}) = \int_{\Omega_{\textrm{bad}}} \int_{E_\omega^{\an}} f_\omega c_1(\ov{\sM}_\omega) \nu(d\omega) > 0.\] This contradicts the assumption that $K(\eta)$ is a GVF extension of $K(x)$.
\end{proof}

We also require an archimedean analogue of this result.

\begin{lemma}\label{lemm:archimedean_height_0_divisor}
    Let $K$ be a countable archimedean GVF represented by an adelic curve $(K,(\Omega,\sA,\nu),\phi)$. Suppose $K(X)$ is a GVF extension of $K$ to the function field of a variety $X$ with a representing adelic curve $(K(X),(\Psi,\sB,\mu),\psi)$ containing a measure subspace of the form $d\mu_\omega d_\omega$ on $X^{\an}_U\setminus \bigcup_{Z \subset X \textrm{ cld}}Z_U^{\an}$ such that $\Supp(\mu_\omega) = X_\omega^{\an}$ for a set $U\subset \Omega$ of positive measure. Denote the GVF functional for $\ov{K(X)}^{\sym}$ by $\phi$. Then, for any effective adelic divisor $\ov{\sD}$ on $\ov{K(X)}$ such that $\phi(\ov{\sD}) = 0$ it follows that $\ov{\sD}$ is geometrically trivial. In particular, the set $\{x \in \ov{K(\eta)}\ |\ \height(x) = 0\}$ of elements of height $0$ is contained in $K$.
\end{lemma}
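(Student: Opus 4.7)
The plan is to exploit two features of the hypothesis simultaneously: effectivity of $\ov{\sD}$ makes every contribution to $\phi(\ov{\sD})$ non-negative, and the distinguished measure piece $d\mu_\omega\, d\nu(\omega)$ on $X^{\an}_U\setminus\bigcup_{Z\subset X\textrm{ cld}} Z^{\an}_U$ has fibers of full topological support, which promotes $\mu$-almost-everywhere vanishing of non-negative Green's functions to pointwise vanishing. First I would realize $\ov{\sD}$ as an adelic divisor on a normal projective model $Y$ whose function field sits inside $\ov{K(X)}$; after normalization we may take $Y \to X$ to be a generically finite surjective cover. The symmetric extension to $\ov{K(X)}^{\sym}$ lifts the given measure piece on $X^{\an}_U$ to a measure on $Y^{\an}_U$ whose support is all of $Y^{\an}_\omega$ for $\omega\in U$, since Galois averaging over the finite surjection $Y^{\an}_\omega \to X^{\an}_\omega$ preserves full topological support.

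Next, decompose $\phi(\ov{\sD})$ as the archimedean integral
\[
    \int_U \int_{Y^{\an}_\omega} g_\omega \, d\mu_\omega \, d\nu(\omega)
\]
plus the remaining contributions from other places of the representing adelic curve and from prime divisors of $Y$. Effectivity of $\ov{\sD}$ means $g_\omega \geq 0$ and every remaining contribution is non-negative, so $\phi(\ov{\sD}) = 0$ forces each term to vanish. Vanishing of the archimedean term together with $g_\omega \geq 0$ yields $g_\omega = 0$ pointwise $\mu_\omega$-almost everywhere on $Y^{\an}_\omega$ for $\nu$-almost every $\omega \in U$. Since $g_\omega$ is continuous on $Y^{\an}_\omega \setminus \Supp(\sD)^{\an}_\omega$ and $\Supp(\mu_\omega) = Y^{\an}_\omega$, this improves to $g_\omega \equiv 0$ identically on the complement of the support.

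To conclude, write $g_\omega = -\log|s_\sD|_\omega$ for the canonical section $s_\sD$ of $\sO(\sD)$. The function $|s_\sD|_\omega$ is continuous on all of $Y^{\an}_\omega$, vanishes exactly along $\Supp(\sD)^{\an}_\omega$, and equals $1$ on the dense open complement by the previous step; continuity forces $|s_\sD|_\omega \equiv 1$ on $Y^{\an}_\omega$, so $s_\sD$ has no zeros and $\sD$ is trivial as an analytic divisor at $\omega$. Since this is the analytification of an algebraic divisor on $Y_K$, we get $\sD_K = 0$, i.e.\ $\ov{\sD}$ is geometrically trivial. For the ``in particular'' claim, apply this to the effective adelic divisor attached to the morphism $[x:1]\colon Y \to \bbP^1$ with Green's function $\log^+|x|_\omega$ from the Fubini-Study metric on $\sO_{\bbP^1}(1)$, whose $\phi$-value equals $\height(x)=0$; the first part then forces $x$ to have no poles, and the same argument applied to $1/x$ (which also has height $0$ by the product formula) rules out zeros. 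Hence $x$ is a nowhere vanishing regular function on $Y$, so it lies in the field of constants of $\ov{K(X)}$, which under the standing assumption that $K$ is algebraically closed coincides with $K$.

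The main obstacle will be the measure-theoretic bookkeeping in the first step: one must verify that the symmetric extension of the ad hoc measure on $X^{\an}_U$ to the adelic space of $\ov{K(X)}^{\sym}$ really produces a piece on $Y^{\an}_U$ with full topological support on each geometric fiber, and that the GVF functional $\phi(\ov{\sD})$ genuinely decomposes as a sum of non-negative contributions when $\ov{\sD}$ is effective. Once this compatibility is in place, the remainder of the argument is an elementary application of continuity and the maximum modulus principle.
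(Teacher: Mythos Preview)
Your argument is correct and follows essentially the same route as the paper: both exploit that an effective adelic divisor has non-negative Green's functions, and that full support of $\mu_\omega$ at the distinguished archimedean places forces a non-negative continuous function with zero integral to vanish identically, hence $\Supp(\sD)=\emptyset$. The paper phrases this as the contrapositive (if $\sD\neq 0$ then $g_\omega>0$ on a Euclidean neighbourhood of $\Supp(\sD)$, giving $\phi(\ov{\sD})>0$) and handles the passage to $\ov{K(X)}$ by pushing the divisor down via norm line bundles rather than lifting the measure up to $Y$; this is the dual manoeuvre to yours and neatly avoids the measure-theoretic bookkeeping you flag as the main obstacle.

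Two minor remarks. The Green's function $\log^+|x|_\omega$ you invoke comes from the sup-norm (Weil) metric on $\sO_{\bbP^1}(1)$, not the Fubini--Study metric; this is purely terminological and does not affect the argument. Your explicit caveat that the final step needs $K$ algebraically closed is well placed: the paper's ``in particular'' likewise only shows that height-zero elements are constants on a projective model, i.e.\ algebraic over $K$, and the applications (Corollary~\ref{cor:height_0_in_canonical_GVF} and Lemma~\ref{lemm:product_isotrivial_abelian}) are in contexts where $K$ is taken algebraically closed.
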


\begin{proof}
    By taking norm line bundles it suffices to prove the result in case $\ov{\sD}$ is defined on a birational modification of $X$. The in particular part of the statement follows from considering $\ov{\sD} = \adiv(s)\wedge 0$ for $s \in K(X)\setminus K$.

    Let $\ov{\sD}$ be an effective adelic divisor that is not geometrically trivial. Then, the Green's function of $\ov{\sD}$ at any archimedean place $\omega$ is positive in a euclidean neighborhood of $\textrm{Supp}(\sD)$. Since the support of $\mu_\omega$ is $X_\omega^{\an}$ it follows that 
    \[
        \phi(\ov{\sD}) \geq \int_{\Omega_\infty}\int_{X_\omega^{\an}} g_{\ov{\sD}}(x) \mu_\omega(d x) \nu(d\omega) > 0
    \]
    implying the claim.
\end{proof}

\begin{corollary}\label{cor:height_0_in_canonical_GVF}
    Let $A$ be an abelian variety over an archimedean GVF $K$. Let $\ov{K(\eta)}$ denote the function field of $A$ with its canonical GVF structure. Then, the set $\{x \in \ov{K(\eta)}\ |\ \height(x) = 0\}$ of elements of height $0$ is contained in $K$.
\end{corollary}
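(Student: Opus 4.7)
The plan is to deduce this directly from Lemma \ref{lemm:archimedean_height_0_divisor} applied to $X = A$, once we recognize the canonical GVF structure on $K(\eta)$ as a polarized GVF structure whose archimedean fibre measures have full support.

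First I would reduce to the countable case. Given $x \in \ov{K(\eta)}$ with $\height(x) = 0$, choose a countable sub-GVF $K_0 \subseteq K$ over which $A$ is defined and containing the coefficients of the minimal polynomial of $x$ over $K(\eta)$. Since $K$ is archimedean, we may arrange that $K_0$ is archimedean as well. By the uniqueness part of the definition of the canonical GVF structure, the canonical structure on $\ov{K_0(A_{K_0})}^{\sym}$ agrees with the restriction of the canonical structure on $\ov{K(\eta)}$, so it suffices to prove $x \in K_0$.

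Next I would unpack the canonical GVF structure on $K_0(A_{K_0})$ as the polarized structure $K_0(A_{K_0}, \ov{M}, \ldots, \ov{M})$ (up to a positive rescaling), where $\ov{M}$ is a canonically metrized ample symmetric line bundle on $A_{K_0}$. By the explicit description of polarized GVFs in \S\ref{sec:polarized_GVF}, after fixing a representing adelic curve $(K_0,(\Omega,\sA,\nu),\phi)$, this structure is represented by an adelic curve whose measure space decomposes as $S \sqcup \operatorname{PDiv}(A_{K_0})$ with the measure on $S$ equal to $\mu_\Omega|_S$. On the archimedean part $\Omega_\infty \subseteq \Omega$ (which has positive $\nu$-measure since $K$ is archimedean), this restricts to a measure of the form $d\mu_\omega\, d\nu(\omega)$ on $A_{\Omega_\infty}^{\an}\setminus \bigcup_{Z\subset A\text{ cld}} Z_{\Omega_\infty}^{\an}$, where $\mu_\omega$ is the fibrewise measure determined by $c_1(\ov{M}_\omega)^{\dim A}$. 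This puts us exactly in the setting of Lemma \ref{lemm:archimedean_height_0_divisor}.

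The main point to verify is the support condition $\Supp(\mu_\omega) = A_\omega^{\an}$ for every $\omega \in \Omega_\infty$. At such an archimedean place, $A_\omega(\bbC)$ is a complex torus and the canonical metric on the ample symmetric line bundle $M_\omega$ has curvature form equal to the unique translation-invariant positive $(1,1)$-representative of $c_1(M_\omega)$; its $(\dim A)$-fold wedge product is a translation-invariant, nowhere vanishing positive volume form, hence a positive multiple of Haar measure on $A_\omega(\bbC)$. In particular $\Supp(\mu_\omega) = A_\omega^{\an}$. The ``in particular'' clause of Lemma \ref{lemm:archimedean_height_0_divisor} then yields $x \in K_0 \subseteq K$, completing the argument. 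The only real obstacle is the verification of the support condition, which as noted is immediate from the translation invariance of the canonical curvature form on a complex abelian variety.
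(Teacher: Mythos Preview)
Your proposal is correct and follows essentially the same route as the paper: apply Lemma~\ref{lemm:archimedean_height_0_divisor} to $X=A$ after identifying the canonical GVF structure as polarized by $\ov{M}^{\dim A}$, and note that at each archimedean place the fibrewise measure $c_1(\ov{M}_\omega)^{\dim A}$ is (a positive multiple of) the Haar measure, hence has full support. The paper's proof is simply these two sentences; you additionally spell out the reduction to a countable sub-GVF $K_0$, which is needed because Lemma~\ref{lemm:archimedean_height_0_divisor} is stated for countable $K$ but which the paper leaves implicit.
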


\begin{proof}
    If $X=A$ is an abelian variety over $K$ then the canonical GVF structure on $K(\eta)$ satisfies the conditions of Lemma \ref{lemm:archimedean_height_0_divisor}. For every archimedean place $\omega$ one may take $\mu_\omega$ to be the Haar measure on $A_{\omega}^{\an}$.
\end{proof}

The following lemma is a generalization of \cite[Proposition 4.6]{luo2025geometricbogomolovconjecturesemiabelian} encompassing both the non-archimedean and the archimedean setting.

\begin{lemma}\label{lemm:product_isotrivial_abelian}
    Let $A$ be an abelian variety over a GVF $K$ with an ample symmetric canonically metrized line bundle $\ov{\sM}$. If $K$ is non-archimedean assume that $A$ has trivial $K/k$-trace. Suppose further that the Bogomolov conjecture is satisfied for $A$.
    \begin{enumerate}
        \item If $K$ is non-archimedean let $Y$ be a variety over $k$ and $M_Y$ be an ample line bundle on $Y$ which by pullback defines an adelic line bundle $\ov{M}_Y$ on $Y_K$. Let $X \subset Y_K \times A$ be of height $0$ wrt $\ov{M}_Y \boxtimes \ov{\sM}$. Then $X$ is of the form $W_K \times (A' + a)$, where $W\subset Y$ is a subvariety defined over $k$, $A'$ is an abelian subvariety of $A$ and $a \in A(K)$ is a point of height $0$.
        \item If $K$ is archimedean the Bogomolov conjecture is satisfied for $\bbG^t_m \times A$.
    \end{enumerate}
    In particular, if the Bogomolov conjecture is satisfied for $A$ and $G$ is an isotrivial semiabelian variety. Then, the Bogomolov conjecture is satisfied for $G \times A$.
\end{lemma}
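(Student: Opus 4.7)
The plan is to reduce both parts to applying the Bogomolov hypothesis for $A$ on the generic fibre of the projection onto the isotrivial factor, then invoke a Chow-trace rigidity argument to force the resulting translation section to descend to $K$.

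For part (1), I first set $W := \pi_Y(X) \subset Y_K$. The image of a Zariski-dense set of small points of $X$ (guaranteed in some GVF extension by Lemma \ref{lemm:semiab_intersection_small_implies_small_points}) furnishes $W$ with a Zariski-dense set of $\ov{M}_Y$-small points, so Proposition \ref{prop:isotrivial_bogomolov} identifies $W$ with $W_{0,K}$ for some $W_0 \subset Y$ defined over $k$. Equipping $K(W)$ with the natural GVF structure combining that of $K$ with the polarized structure from $(W_0, M_Y|_{W_0})$, a projection-formula computation exploiting that $\ov{M}_Y$ is pulled back from the trivial-GVF field $k$ shows the generic fibre $X_\eta \subset A_{K(W)}$ is small with respect to $\ov{\sM}_{K(W)}$. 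Applying Bogomolov for $A$ over the extension $K(W)$ (where $A_{K(W)}$ has $K(W)/K$-trace equal to $A$ itself) yields $X_\eta = X_{0,K(W)} + V_{K(W)} + b$ with $X_0, V \subset A$ and $b \in A(K(W))$ of height $0$; then $X_0 + V$ is small in $A$ over $K$, and Bogomolov for the traceless $A/K$ (Lemma \ref{lemm:traceless_abelian}) gives $X_0 + V = A' + a_0$, whence $X_\eta = A'_{K(W)} + b'$ for some $b' \in A(K(W))$ of height $0$.

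For the rigidity step, $b'$ descends to a morphism $f: W_{0,K} \to A/A'$. Via Albanese, $f$ factors through $\Alb(W_0)_K \to A/A'$; since $\Alb(W_0)_K$ descends to $k$ and $A/A'$ inherits tracelessness from $A$, the universal property of the Chow trace forces this homomorphism to vanish, so $f$ is constant with value $a \in (A/A')(K)$. Lifting $a$ to $A(K)$ and iterating with the induced section $W_{0,K} \to A'$ (which is likewise constant since $A'$ is also traceless) shows $b'$ lies in $A(K)$ with height $0$, yielding the decomposition $X = W_{0,K} \times (A' + b')$. Part (2) proceeds in parallel with $\bbG_m^t$ replacing $Y_K$: projection to $\bbG_m^t$ yields a small subvariety, by Proposition \ref{prop:archimedean_torus} a translate of a subtorus $\bbT'$ by a height-$0$ point; after translation, $X \subset \bbT' \times A$ surjects onto $\bbT'$, and since $K(\bbT')$ is archimedean the Bogomolov conjecture for $A$ applies over $K(\bbT')$ (via the countable-adelic-curve reduction used earlier), giving $X_\eta = A'_{K(\bbT')} + a$. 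Rigidity is immediate since any morphism from the affine group $\bbT'$ to the proper $A/A'$ extends to $(\bbP^1)^{t'} \to A/A'$ and factors through a trivial Albanese, so $X = \bbT' \times (A' + a)$. The "in particular" statement follows by noting that for an isotrivial semiabelian $G = G_{0,K}$ the compactification and canonical toric divisor $\arho(\Delta)$ pull back from $k$, reducing $X \subset G \times A$ to the setting of part (1).

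The main obstacle is the combined generic-fibre/projection-formula step in part (1): one must carefully set up the GVF structure on $K(W)$, verify that intersection-number cancellations transfer $h_{\ov{M}_Y \boxtimes \ov{\sM}}(X) = 0$ into $h_{\ov{\sM}_{K(W)}}(X_\eta) = 0$, and interpret the hypothesis "Bogomolov is satisfied for $A$" robustly enough to cover the extension $K(W)$ where the $K(W)/K$-trace of $A_{K(W)}$ is nontrivial, so that the subsequent reduction to the traceless $A/K$ via Lemma \ref{lemm:traceless_abelian} closes the argument.
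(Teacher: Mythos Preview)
Your approach inverts the paper's: you project onto the isotrivial/toric factor $Y$ and analyse the abelian fibre, whereas the paper projects onto $A$ and analyses the isotrivial (resp.\ toric) fibre. The Albanese/tracelessness rigidity step you use to descend the translation section is sound, and for part~(2) the route can likely be completed, since the archimedean Bogomolov conjecture for abelian varieties is an unconditional theorem (Chen--Moriwaki) and hence available over $K(\bbT')$ as well.

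The genuine gap is in part~(1). The hypothesis only gives Bogomolov for $A$ over the fixed GVF $K$; you need it for $A_{K(W)}$ over the GVF $K(W)$, which is a different and a~priori stronger statement involving a different constant field and a different Chow trace. Your intermediate decomposition $X_\eta = X_{0,K(W)} + V_{K(W)} + b$ with $X_0,V$ defined over $K$ is not what Bogomolov over $K(W)$ produces: the conjecture is formulated with the trace over the constant field of $K(W)$, not over $K$. The appeal to Lemma~\ref{lemm:traceless_abelian} does not repair this and is in any case characteristic~$0$ only, whereas the present lemma must hold in all characteristics (it is used for the elliptic-quotient application in positive characteristic). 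You correctly identify this as the main obstacle, and it does not close.

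The paper avoids the problem by projecting the other way. After reducing to $\pi_A(X)=A$ (which uses only the given Bogomolov hypothesis over $K$), the generic fibre $X_{\ov{K(\eta)}}$ lives in the isotrivial $Y$ (resp.\ in $\bbG_m^t$). The key input is that the \emph{canonical} GVF structure on $\ov{K(\eta)}$ has constant field exactly $k$ (Lemma~\ref{lemm:constant_field_canonical_GVF}) in the non-archimedean case, and that height-$0$ elements of $\ov{K(\eta)}$ already lie in $K$ (Corollary~\ref{cor:height_0_in_canonical_GVF}) in the archimedean case---both of these consume the Bogomolov hypothesis for $A$. One then applies isotrivial Bogomolov (Proposition~\ref{prop:isotrivial_bogomolov}) or toric Bogomolov (Proposition~\ref{prop:archimedean_torus}) to the fibre, and no instance of Bogomolov for $A$ over a transcendental extension of $K$ is ever required.
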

    
\begin{proof}
    In the archimedean setting denote $Y=\bbG^t_m$. Let $X$ be a small subvariety of $Y_K \times A$. By the Bogomolov conjecture for abelian varieties we may assume that the projection $\pi(X)$ of $X$ onto $A$ is $A$ itself. Let $\eta$ denote the generic point of $A$ and denote by $K(\eta)$ the function field of $A$ with the canonical GVF structure. Then, the irreducible components of $X_{\ov{K(\eta)}}$ are small subvarieties.

    By Proposition \ref{prop:isotrivial_bogomolov} and Lemma \ref{lemm:constant_field_canonical_GVF} the claim follows in the non-archimedean case as every irreducible component of $X_{\ov{K(\eta)}}$ is defined over $k$.
    
    Let us proceed to the archimedean case. Let $Z$ be an irreducible component of $X_{\ov{K(\eta)}}$. By Proposition \ref{prop:archimedean_torus}, we see that $Z$ must be the translate of a subgroup by a point of height $0$. We may hence assume $Z$ is a point. Then, it defines a tuple of height $0$ in $(\ov{K(\eta)}^\times)^t$. By Corollary \ref{cor:height_0_in_canonical_GVF}, it follows that $Z$ is defined over $K$. This proves that $X$ is a translate of $T \times A$ for a subgroup $T$ by a height $0$ point, thus proving the Bogomolov conjecture.
\end{proof}

\subsection{\label{sec:reduction_to_quasisplit}Reduction to the quasi-split case}

In this section, we reduce the Bogomolov conjecture for general semiabelian varieties to the quasi-split case by proving the following lemma.

\begin{lemma}\label{lemm:reduction_to_quasisplit}
    Let $X \subset G$ be a small subvariety of a semiabelian variety with finite stabilizer generating $G$. Then, $G$ is isogenous to a quasi-split semiabelian variety.
\end{lemma}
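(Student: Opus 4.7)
The plan is to translate the smallness of $X$ into vanishing of the N\'eron--Tate heights of the Weil--Barsotti extension classes of $G$, and then to apply the Bogomolov conjecture to the dual abelian variety $A^\vee$ to conclude that those classes are torsion plus trace, which up to isogeny forces $G$ into the quasi-split form.

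The first step is to apply the Bogomolov conjecture to $\pi(X) \subset A$, which is small because $\pi$ has bounded distortion with respect to the canonical heights, obtaining $\pi(X) = \tr(Y \otimes K) + V + a$ with $\widehat{h}(a) = 0$. Lemma~\ref{lemm:translation_preserves_smallness} reduces to the case $a = 0$, and since $X - X$ generates $G$ one obtains $\tr(A^{K/k}\otimes K) + V = A$. Poincar\'e reducibility, together with Lemmas~\ref{lemm:small_under_isogeny} and~\ref{lemm:special_under_isogeny}, allows passing to an isogeny after which $A = B \times C$ with $B = \tr(A^{K/k}\otimes K)$ isotrivial and $C$ traceless (in the archimedean setting $B$ is trivial and $C = A$).

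The second, and key, step is to write the Weil--Barsotti classes as $Q_i = (P_i, R_i) \in B^\vee(K) \times C^\vee(K)$ for $i = 1, \dots, t$ and show that $\widehat{h}_{NT}(Q_i) = 0$ for all $i$. To extract this, one restricts $X$ over the generic point $\eta$ of $\pi(X)$ equipped with its canonical GVF structure: the resulting $X_\eta$ is small in the $\mathbb{T}$-torsor $G_\eta$ above $\pi(X)_\eta$. The toric contribution $h_{\arho(\Delta)}$ of the canonical height is, fiberwise over $A$, controlled by the canonically metrized line bundles $\aTbun(\chi)$ attached to characters $\chi$ of $\mathbb{T}$; each such $\aTbun(\chi)$ is the numerically trivial line bundle classified by the corresponding integer combination of the $Q_i$, whose canonical height on $A$ is, up to normalization, the N\'eron--Tate height of that combination. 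Combined with Lemma~\ref{lemm:constant_field_canonical_GVF} in the non-archimedean case and Corollary~\ref{cor:height_0_in_canonical_GVF} in the archimedean case, which control height-zero elements in the canonical GVF extension, the smallness of $X_\eta$ forces $\widehat{h}_{NT}(Q_i) = 0$.

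Finally, applying the Bogomolov conjecture to $A^\vee$ (isogenous to $A$ via any polarization, so the hypothesis transfers) to the zero-dimensional subvariety $\{Q_i\}$ shows that each $Q_i$ is special: non-archimedean, $Q_i = \tr(Q_i') + \tau_i$ with $Q_i' \in (A^{K/k})^\vee(k)$ and $\tau_i$ torsion; archimedean, $Q_i$ is torsion. Multiplying $G$ by a common annihilator of the $\tau_i$, which is an isogeny of $G$, kills the torsion; the $R_i$-components then vanish (since $C$ is traceless) and the $P_i$-components lie in $B_0^\vee(k)$ for $B_0 = A^{K/k}$, so that the extension of $B$ descends to $k$. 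The resulting isogenous semiabelian variety is $G_0 \times C$ with $G_0$ defined over $k$ in the non-archimedean case, respectively $\mathbb{G}_m^t \times A$ in the archimedean case, which is the required quasi-split form. The main obstacle is the height computation in the second step: justifying how the smallness of $X$ over the generic fiber of $\pi$ forces the N\'eron--Tate heights of the extension classes $Q_i$ to vanish, which requires carefully disentangling the abelian and toric contributions to the canonical height $\widehat{h} = h_{\pi^*\overline{\sM}} + h_{\arho(\Delta)}$ on $G$.
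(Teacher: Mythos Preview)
Your plan has two genuine gaps, and together they miss the main content of the argument.

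\textbf{Step 4 is simply false over general GVFs.} Applying the Bogomolov conjecture to the zero-dimensional subvariety $\{Q_i\}\subset A^\vee$ is a tautology: for a single point, ``special'' means precisely ``of the form $\tr(Q_i')+a_i$ with $\NTht(a_i)=0$''. It does \emph{not} say $a_i$ is torsion. The paper explicitly gives examples (just before \S3.2) of non-torsion height-zero points on abelian varieties over GVFs --- e.g.\ the diagonal section of $A_{K(A)}$, or limits of torsion specializations in a non-isotrivial elliptic surface. So even granting $\NTht_{\mathrm{NT}}(Q_i)=0$, you cannot conclude that $Q_i$ is trace plus torsion, and hence you cannot kill the $R_i$-components by an isogeny. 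This is precisely the difficulty that the paper isolates and solves: it shows (Lemma~\ref{lemm:sections_for_traceless}, via the reduction to elliptic curves in Lemma~\ref{lemm:num_triv_height_0_elliptic_curve}) that a numerically trivial line bundle on a traceless abelian variety admitting a \emph{meromorphic multisection of height zero} must be torsion. The input is not merely $\NTht_{\mathrm{NT}}(Q_i)=0$ but the existence of a height-zero section, and the output requires a local equidistribution argument on the skeleton at places of bad reduction.

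\textbf{Step 3 does not produce such a multisection.} Restricting $X$ over the generic point of $\pi(X)$ yields a subvariety $X_\eta$ of the torus fibre of possibly positive dimension (anything up to $t-1$, since only the full fibre is excluded by the finite-stabilizer hypothesis). You have not explained how smallness of a positive-dimensional $X_\eta$ in a $\bbT$-torsor forces $\NTht_{\mathrm{NT}}(Q_i)=0$, and the lemmas you cite (\ref{lemm:constant_field_canonical_GVF}, Corollary~\ref{cor:height_0_in_canonical_GVF}) concern height-zero \emph{elements} of $\ov{K(\eta)}$, not subvarieties of a torsor. The paper's relative Faltings--Zhang map (Lemmas~\ref{lemm:faltings_zhang_birational}, \ref{lemm:small_fibre_powers}) exists exactly to manufacture a generically finite map from (a fibre power of) $X$ to a quasi-split target, thereby producing the height-zero meromorphic multisections $s_1,\dots,s_t$ that feed into Lemma~\ref{lemm:line_bundle_isotrivial_times_abelian}. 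Without this construction you have neither the multisection nor the torsion conclusion, and the argument does not close.
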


Let $X$ be a small subvariety with finite stabilizer generating $G$. By taking an isogeny we may assume that the abelian quotient of $G$ is the product $A_{0,K} \times A_1$ of its trace with a traceless abelian variety.

An important tool we use is the relative Faltings-Zhang map as introduced in \cite{luo2025geometricbogomolovconjecturesemiabelian}. We define an isomorphism of semiabelian varieties exhibiting the fact that $G$ is $\bbG_m^t$-torsor over $A$ by $\beta_n: G^n_{/A} \to (\bbG_m^t)^{n-1} \times G$ by $(g_1, \dots, g_n) \mapsto (g_1 - g_2, g_2-g_3, \dots, g_n - g_{n-1},g_1)$. We define the relative Faltings-Zhang map $\alpha_n$ by composing $\beta_n$ with $\id_{(\bbG_m^t)^{n-1}} \times \pi$. It is named as such because it recovers the Faltings-Zhang map for $\bbG^t_m$ on fibers of $\pi$.
\[\begin{tikzcd}
     &G^n_{/A}:=\underbrace{G\times_A\cdots\times_A G}_{n\textrm{ times}} \arrow[r,"\alpha_n"]\arrow[rd,"\beta_n"] & \mathbb G_m^{t(n-1)}\times A\\
     & & \mathbb G_m^{t(n-1)}\times G\arrow[u,"\mathrm{id}\times \pi"].
    \end{tikzcd}
\]

The relative Faltings-Zhang map behaves similarly to the usual Faltings-Zhang map. 

\begin{lemma}[Proposition 5.8 \cite{luo2025geometricbogomolovconjecturesemiabelian}]\label{lemm:faltings_zhang_birational}
    If $\mathrm{Stab}(X)$ is finite, then for $n\gg 0$ there exists an irreducible component $Z_n \subset X^n_{/A}$ such that $Z_n\rightarrow \alpha_n(Z_n)$ is generically finite and such that $\alpha_n(Z_n) = \alpha_n(X^n_{/A})$.
\end{lemma}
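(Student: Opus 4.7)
The plan is to work through the isomorphism $\beta_n : G^n_{/A} \xrightarrow{\sim} K^{n-1}\times G$, where $K = \ker(\pi)$, which identifies $X^n_{/A}$ with the subvariety
\[
V_n = \{(u_1,\dots,u_{n-1},g) \in K^{n-1}\times G : g\in X \text{ and } g-(u_1+\cdots+u_i)\in X \text{ for all } i\},
\]
and turns $\alpha_n$ into the projection $(\mathrm{id},\pi):K^{n-1}\times G \to K^{n-1}\times A$. The fiber of this projection over $(u_1,\dots,u_{n-1},a)$ becomes the intersection of translates
\[
X_a \cap (X_a + s_1) \cap \cdots \cap (X_a + s_{n-1}), \qquad s_i := u_1+\cdots+u_i,
\]
inside $\pi^{-1}(a)$, where $X_a := X\cap \pi^{-1}(a)$.

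First I would take $Z_n$ to be the irreducible component of $V_n$ that, after restriction to the generic point $\bar\eta$ of $\pi(X)$, contains the tuples $(s_1,\ldots,s_{n-1},g)$ with $g$ generic in $X_{\bar\eta}$ and each $s_i$ generic in $X_{\bar\eta} - X_{\bar\eta}$. Any point of $\alpha_n(X^n_{/A})$ is the image of some $n$-tuple $(g_1,\ldots,g_n)$ with entries on a common fiber $X_a$; by generically perturbing the $g_i$ inside $X_a$ while keeping the differences $s_i$ unchanged, one lands in $Z_n$ with the same image. This shows $\alpha_n(Z_n) = \alpha_n(X^n_{/A})$.

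The heart of the proof is generic finiteness of $\alpha_n|_{Z_n}$, which reduces to showing that for $n\gg 0$ and a generic choice of $(s_1,\dots,s_{n-1},a)$ in the image, the intersection $X_a \cap \bigcap_i (X_a+s_i)$ is finite. Let $d_k$ denote the generic dimension of $X_a\cap (X_a+s_1)\cap\cdots\cap(X_a+s_k)$ as $(s_1,\dots,s_k,a)$ varies over the image in $K^k \times \pi(X)$. The sequence $d_0 \geq d_1 \geq d_2 \geq \cdots \geq 0$ stabilizes, say at $d_\infty$. If $d_\infty>0$, then for $k$ large and a positive-dimensional component $Y$ of the generic $k$-fold intersection, adding one more generic translate cannot drop the dimension, forcing $Y$ to be stabilized by generic $s_{k+1}$. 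Since $X-X$ generates $G$, the admissible values of $s_{k+1}$ form a Zariski dense subset of $K$, so $\mathrm{Stab}(Y)$ would contain all of $K$; spreading $Y$ out over a dense set of $a\in \pi(X)$ then produces a positive-dimensional subgroup of $G$ stabilizing $X$, contradicting the finiteness of $\mathrm{Stab}(X)$. Hence $d_\infty = 0$, and generic finiteness follows for $n$ sufficiently large.

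The main obstacle will be the stabilizer-descent step, specifically the transfer from fiberwise stabilization of $Y\subset \pi^{-1}(a)$ by elements of $K$ for generic $a$ back to a positive-dimensional global stabilizer of $X$ inside $G$. Once one verifies this reduction (spreading out the generic fiberwise stabilizer via a relative automorphism-group argument), the dimension descent is routine, and the identification $\alpha_n(Z_n)=\alpha_n(X^n_{/A})$ follows from the perturbation remark above.
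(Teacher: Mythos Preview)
The paper does not actually prove this lemma; it imports it verbatim from Luo--Yu \cite{luo2025geometricbogomolovconjecturesemiabelian}, Proposition~5.8. So there is no in-house proof to compare against, and your proposal should be judged on its own merits.

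Your overall architecture (identify the fibres of $\alpha_n$ with iterated intersections of translates of $X_a$, run a dimension-descent, and extract a stabilizer contradiction) is the right one, but the key inference is not valid as written. From $d_k=d_{k+1}$ and $Y$ an irreducible top-dimensional component of the $k$-fold intersection, what you get for generic $s_{k+1}$ is $\dim\bigl(Y\cap(X_a+s)\bigr)=\dim Y$, hence $Y\subset X_a+s$; you do \emph{not} get $Y+s=Y$. So the sentence ``forcing $Y$ to be stabilized by generic $s_{k+1}$'' is a non sequitur, and everything downstream (density of admissible $s$ in the torus, $\mathrm{Stab}(Y)\supset \ker\pi$, the spreading-out to a global stabilizer) is built on it. You also invoke ``$X-X$ generates $G$'', which is not among the hypotheses of the lemma; and even granting it, $X_a-X_a$ need not be dense in the torus fibre. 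Finally, the perturbation argument for $\alpha_n(Z_n)=\alpha_n(X^n_{/A})$ is circular as stated: moving along a fibre of $\alpha_n$ does not change the image, so it cannot show that a different component has the same image.

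A cleaner route, and the one that underlies the cited result, is to pass to the generic point $\eta$ of $\pi(X)$. Over $\eta$ the relative map $\alpha_n$ restricts to the \emph{classical} Faltings--Zhang map for the subvariety $X_\eta$ of the torus $\pi^{-1}(\eta)\cong\mathbb{G}_{m,\kappa(\eta)}^{t}$. The classical statement (generic finiteness of $X_\eta^{\,n}\to\mathbb{G}_m^{t(n-1)}$ for $n\gg0$ when $\mathrm{Stab}(X_\eta)$ is finite) then does all the work, and $\mathrm{Stab}(X_\eta)=\mathrm{Stab}(X)\cap\mathbb{G}_m^t$ is finite by hypothesis. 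This dispenses with the fibrewise-to-global spreading step you flagged as an obstacle, and it makes the choice of $Z_n$ transparent: take the component of $X^n_{/A}$ dominating $\pi(X)$ whose generic fibre is the component of $X_\eta^{\,n}$ on which the classical map is generically finite and has full image.
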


\begin{lemma}[Proposition 5.7 \cite{luo2025geometricbogomolovconjecturesemiabelian}]\label{lemm:small_fibre_powers}
    Let $X'$ be an irreducible component of the $n$-fold fibre power $X^n_{/A}\subset G^n_{/A}$ of a small subvariety. Then $X' \subset G^n_{/A}$ is small.
\end{lemma}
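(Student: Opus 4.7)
The plan is to use the relative Faltings--Zhang map together with the Bogomolov conjecture for quasi-split semiabelian varieties (Lemma \ref{lemm:product_isotrivial_abelian}) to force the Barsotti--Weil extension data of $G$ into quasi-split form up to isogeny. First, I reduce to the case where the abelian quotient of $G$ has been isogenously split as $A \cong A_{0,K} \times A_1$, with $A_0$ the $K/k$-trace and $A_1$ traceless (in the archimedean case $A_0 = 0$). By Barsotti--Weil, $G$ is determined by extension classes $Q_i \in A^\vee(K) = A_{0,K}^\vee(K) \oplus A_1^\vee(K)$ for $i = 1, \dots, t$, which I write as $Q_i = Q_i^{(0)} + Q_i^{(1)}$. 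The task becomes to show that each $Q_i^{(1)}$ is torsion and, in the non-archimedean case, that each $Q_i^{(0)}$ lies in $A_0^\vee(k)$ up to torsion.

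The main tool is the relative Faltings--Zhang map $\alpha_n : G^n_{/A} \to \bbG_m^{t(n-1)} \times A$. For $n$ sufficiently large, Lemma \ref{lemm:faltings_zhang_birational} furnishes a component $Z_n \subset X^n_{/A}$ such that $\alpha_n|_{Z_n}$ is generically finite onto its image $Y_n := \alpha_n(X^n_{/A})$. By Lemma \ref{lemm:small_fibre_powers}, $X^n_{/A}$ is small, and since $\alpha_n$ is a homomorphism of semiabelian varieties, $Y_n$ is a small subvariety of $\bbG_m^{t(n-1)} \times A$. The target rewrites as the quasi-split $(\bbG_m^{t(n-1)} \times A_{0,K}) \times A_1$, so Lemma \ref{lemm:product_isotrivial_abelian} (invoking the Bogomolov conjecture for the traceless abelian quotient $A_1$, which is among our hypotheses) forces $Y_n = W_{n,K} \times (A_1' + a_n)$ with $W_n \subset \bbG_m^{t(n-1)} \times A_0$ defined over $k$, $A_1' \subset A_1$ an abelian subvariety, and $\widehat{h}(a_n) = 0$.

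Since $X$ generates $G$, the projection $\pi(X)$ generates $A$ and in particular surjects onto $A_1$, forcing $A_1' = A_1$. The product structure of $Y_n$ then states that, for $(b_0, b_1) \in \pi(X)$, the set of tuples $(g_1 - g_2, \dots, g_{n-1} - g_n) \in \bbG_m^{t(n-1)}$ arising from $g_i \in X_{(b_0, b_1)}$ depends only on $b_0 \in A_0$. Using the generic finiteness of $\alpha_n|_{Z_n}$ and letting $n$ grow, this $A_1$-invariance translates into the triviality up to isogeny of the pullback of the extension $0 \to \bbG_m^t \to G \to A \to 0$ along $A_1 \hookrightarrow A$, equivalently that each $Q_i^{(1)}$ is torsion. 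In the non-archimedean case, the fact that $W_n$ is defined over $k$ likewise forces the restriction of the extension to $A_{0,K}$ to be defined over $k$ up to isogeny, so each $Q_i^{(0)}$ comes from $A_0^\vee(k)$ modulo torsion. Combining these, $G$ is isogenous to $G_{0,K} \times A_1$ in the non-archimedean case and to $\bbG_m^t \times A$ in the archimedean case, each of which is quasi-split by definition.

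The principal difficulty will be the extraction step: converting the geometric fact that the fiber-difference set in $\bbG_m^{t(n-1)}$ is independent of $b_1 \in A_1$ into the algebraic triviality (up to torsion) of the Barsotti--Weil points $Q_i^{(1)}$. I expect this to proceed by choosing local sections of $X \to \pi(X)$ over $A_1$-slices through a fixed $b_0 \in \pi_0(\pi(X))$, identifying the resulting translates of $X$-fibers as subvarieties of the semiabelian subvariety $\pi^{-1}(\{0\} \times A_1)$, and exploiting the uniformity of the invariance condition across all sufficiently large $n$ to produce enough constraints on the associated line bundles on $A_1$ to rule out any non-torsion extension class.
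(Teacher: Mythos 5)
Your proposal does not prove the stated lemma. Lemma~\ref{lemm:small_fibre_powers} asserts that an irreducible component $X'$ of the fibre power $X^n_{/A} \subset G^n_{/A}$ of a small subvariety $X$ is again small, i.e.\ the relevant arithmetic intersection numbers (or equivalently the normalized height $\NTht(X')$) vanish. What you have written is instead an argument for the \emph{downstream} result, Lemma~\ref{lemm:reduction_to_quasisplit}, which says that if $X \subset G$ is small with finite stabilizer and generates $G$, then $G$ is isogenous to a quasi-split semiabelian variety. That is a different statement entirely, and worse, midway through your argument you invoke ``By Lemma~\ref{lemm:small_fibre_powers}, $X^n_{/A}$ is small'' --- you are citing the very lemma you were asked to prove as a black box. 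The reasoning is therefore circular with respect to the task.

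The actual proof of Lemma~\ref{lemm:small_fibre_powers} is short and of a completely different character. The argument of \cite[Proposition 5.7]{luo2025geometricbogomolovconjecturesemiabelian} works by expanding the height of an irreducible component of $X^n_{/A}$ via intersection numbers of the canonically metrized line bundles (the pullbacks of $\pi^*\ov{\sM}$ and $\arho(\Delta)$ along the $n$ projections $G^n_{/A} \to G$), and the only ingredient needed to transport that computation from the polarized function field setting of Luo--Yu to the GVF setting is the projection formula for arithmetic intersection numbers. The paper points out that this projection formula does hold over GVFs/adelic curves (via \cite[Theorem 4.4.9]{Adelic_curves_2}), so the Luo--Yu computation goes through verbatim. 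Your proposal contains none of this --- no intersection-theoretic bookkeeping, no appeal to the projection formula --- and instead develops machinery (relative Faltings--Zhang, Barsotti--Weil decomposition, the quasi-split Bogomolov conjecture) relevant to a later stage of the paper's argument. You need to go back and address the statement actually posed: show that smallness of $X$ propagates to the components of $X^n_{/A}$ by a direct height/intersection-number calculation using the projection formula.
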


\begin{proof}
    The proof relies merely on the projection formula which also holds for GVFs/adelic curves, c.f.\ \cite[Theorem 4.4.9]{Adelic_curves_2}.
\end{proof}

We now apply the relative Faltings-Zhang map to our setting. By Lemma \ref{lemm:faltings_zhang_birational}, we let $Z_n \subset X^n_{/A}$ be an irreducible component such that the map $Z_n \to \alpha_n(Z_n)$ is generically finite. Since taking fibre powers preserves smallness (Lemma \ref{lemm:small_fibre_powers}) and $\beta_n$ is an isomorphism, both $\alpha_n(Z_n)$ and $\beta_n(Z_n)$ are small subvarieties.

We focus now on $Z = \beta_n(Z_n)$. $Z \subset \bbG_m^N \times G$ is a small subvariety whose projection under $\pi: \bbG_m^N \times G \to \bbG_m^N \times A$ onto $\pi(Z)$ is generically finite. Our aim is to prove that $G$ up to isogeny is of the form $G = G_{0,K} \times A'$ for a traceless abelian variety $A'$.

If $K$ is non-archimedean, $\pi(Z)$ is of the form $W_K \times A_1$ where $W$ is an subvariety of $\bbG_m^N \times A_0$ defined over $k$ such that the projection onto $A_0$ generates $A_0$ by the Bogomolov conjecture for quasi-split semiabelian varieties. If $K$ is archimedean, we may similarly reduce to the case that $\pi(Z) = \bbG_m^N \times A$. Compactify $\bbG_m^N$ and let $\ov{M} = \arho(\Delta)$ be a canonically metrized line bundle on its compactification. Denote by $\ov{W}$ the closure of $W$ in this compactification. Then, the adelic line bundle $\ov{M}_{\ov{W}}$ on $\ov{W}_K$ obtained by pulling back an ample line bundle on $\ov{W}$ defined over $k$. This puts us precisely in the situation of \cite[Theorem 5.6]{luo2025geometricbogomolovconjecturesemiabelian} except that we work over GVFs.

We note that we can interpret $G$ as a $\bbG_m^t$-bundle over $A$. As such $Z$ gives rise to a \emph{meromorphic multisection} of this $\bbG_m^t$-bundle over $\ov{W}_K \times A_1$. By this we mean a meromorphic section defined over a generically finite cover $Y$. Equivalently, we obtain $t$ meromorphic multisections $s_1, \dots, s_t$ of $\bbG_m$-bundles. The height of $Z$ agrees with the height of the adelic divisor $|\textrm{div}(s_1)|+ \dots + |\textrm{div}(s_t)|$ with respect to the canonical GVF structure on $\ov{K(W_K \times A_1)}$.

\begin{proof}[Proof of Lemma \ref{lemm:reduction_to_quasisplit}]
    We prove that $G$ is quasi-split after the reductions performed previously. By Lemma \ref{lemm:line_bundle_isotrivial_times_abelian}, the line bundles defined by $G$ over $W_K \times A'$ is of the form $Q_K \boxtimes \sT$, where $Q$ is a line bundle defined over $k$ and $\sT$ is torsion. We note that $Q$ extends uniquely to a line bundle on $A_0$. The projection $W \to A_0$ factors over the Albanese variety $\Alb(W)$. Since the image generates $A_0$, the morphism $\Alb(W) \to A_0$ is surjective. Numerically trivial line bundles extend uniquely to the Albanese variety. By assumption, the line bundle is defined by an element $\Alb(W)^\vee(k)$ in the image of $A_0^\vee(K)$, but since $A_0^\vee \to \Alb(W)^\vee$ is injective it follows that the line bundle over $A_0$ can be defined over $k$.
\end{proof}

The rest of the section is devoted to proving Lemma \ref{lemm:line_bundle_isotrivial_times_abelian}. We will need the following lemma on numerically trivial line bundles. It is not original, but we can not find a reference.

\begin{lemma}\label{lemm:num_triv_on_product}
     Let $X$ and $Y$ be two proper varieties over an algebraically closed field $K$. Then, we have an isomorphism $\Pic_0(X) \times \Pic_0(Y) \cong \Pic_0(X \times Y)$. In other words, any numerically trivial line bundle on $X\times Y$ is of the form $Q_1 \boxtimes Q_2$.
\end{lemma}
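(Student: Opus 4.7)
The plan is to reduce surjectivity to a rigidified case and then apply Künneth. After fixing base points $x_0 \in X(K)$ and $y_0 \in Y(K)$, the exterior-product map $(Q_1, Q_2) \mapsto p_1^*Q_1 \otimes p_2^*Q_2$ is split by restricting to the axes $X \times \{y_0\}$ and $\{x_0\} \times Y$ (after normalizing at $(x_0, y_0)$), so injectivity is immediate. For surjectivity, given $L \in \Pic_0(X \times Y)$ I would set $L_1 := L|_{X \times \{y_0\}}$ and $L_2 := L|_{\{x_0\} \times Y}$ (both numerically trivial) and pass to $M := L \otimes p_1^* L_1^{-1} \otimes p_2^* L_2^{-1}$; this $M$ is numerically trivial on $X \times Y$ and trivial on both axes, and it suffices to show $M \cong \cO_{X \times Y}$.

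The rigidification of $M$ along $X \times \{y_0\}$, via the universal property of the Picard scheme, classifies a morphism $\phi \colon Y \to \mathbf{Pic}_{X/K}$ with $\phi(y_0) = 0$. Connectedness of $Y$ and numerical triviality of $M$ force $\phi$ to factor through the reduced connected component $\mathbf{Pic}^0_{X/K,\mathrm{red}}$, which is a smooth connected commutative group scheme; by the universal property of the Albanese, $\phi$ factors further as $Y \to \Alb(Y) \xrightarrow{f} \mathbf{Pic}^0_{X/K,\mathrm{red}}$. The symmetric construction applied to the other rigidification gives $X \to \Alb(X) \xrightarrow{g} \mathbf{Pic}^0_{Y/K,\mathrm{red}}$, and $f$ and $g$ are dual under biduality of the reduced connected Picard group schemes; the problem reduces to showing $f = 0$.

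For this I would invoke the Künneth isomorphism $H^1(\cO_{X \times Y}) \cong H^1(\cO_X) \oplus H^1(\cO_Y)$, valid for any proper $X$ and $Y$, which identifies the tangent space at the origin of $\mathbf{Pic}^0_{X \times Y,\mathrm{red}}$ with the direct sum of the tangent spaces of $\mathbf{Pic}^0_{X/K,\mathrm{red}}$ and $\mathbf{Pic}^0_{Y/K,\mathrm{red}}$. Combined with the injectivity established above, this forces the exterior-product map $\Pic_0(X) \times \Pic_0(Y) \hookrightarrow \Pic_0(X \times Y)$ to be an isomorphism of reduced connected group schemes, so the class of $M$ (which is trivial on both axes) vanishes. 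The main obstacle I anticipate is the possible non-reducedness and non-properness of $\mathbf{Pic}^0_{X/K}$ for singular $X$ (e.g.\ toric parts for nodal curves); this is circumvented by working consistently with the reduced connected component, which is a semi-abelian variety and sufficient since the statement concerns classes in $\Pic_0$.
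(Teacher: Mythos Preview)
Your approach is correct but takes a different and more elaborate route than the paper. The paper's proof is a direct application of the theorem of the cube: fixing base points $x\in X(K)$, $y\in Y(K)$, one considers the universal line bundle $L$ on $X\times Y\times\Pic_0(X\times Y)$, sets $L_x=L|_{\{x\}\times Y\times\Pic_0}$ and $L_y=L|_{X\times\{y\}\times\Pic_0}$, and checks that $L\otimes\pi_x^*L_x^{-1}\otimes\pi_y^*L_y^{-1}$ is trivial on each of the three coordinate slices, hence trivial by the cube (Stacks tag \texttt{0BF4}). This avoids any discussion of Picard schemes, reducedness, or cohomology.

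Your argument works, but two points deserve comment. First, the Albanese/duality paragraph is never actually used: once you have the split retraction and the K\"unneth computation, you are done, so the factoring through $\Alb(Y)$ and the duality of $f$ and $g$ can be deleted. Second, K\"unneth computes the tangent space of the (possibly non-reduced) Picard scheme, not of its reduction, so the sentence ``identifies the tangent space at the origin of $\mathbf{Pic}^0_{X\times Y,\mathrm{red}}$ with the direct sum'' is not literally what K\"unneth gives in positive characteristic. The fix is easy: the tangent map of your restriction-to-axes retraction on the reduced Picard schemes sits inside the K\"unneth isomorphism $H^1(\cO_{X\times Y})\xrightarrow{\sim}H^1(\cO_X)\oplus H^1(\cO_Y)$ and is therefore injective, while your splitting makes it surjective; hence it is an isomorphism on tangent spaces and thus an isomorphism of smooth connected groups. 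The paper's cube argument sidesteps this positive-characteristic subtlety entirely, which is the main advantage it buys; your approach, on the other hand, makes the group-scheme structure of the isomorphism more visible.
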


\begin{proof}
    Fix two base points $x \in X(K)$ and $y \in Y(K)$. Let $L$ be the universal line bundle on $X \times Y \times \Pic_0(X \times Y)$. Let $L_x$ be its restriction to $\{x\}\times Y \times \Pic_0(X \times Y) \cong Y \times \Pic_0(X \times Y)$ and $L_y$ be its restriction to $X \times \{y\} \times \Pic_0(X \times Y)\cong X \times \Pic_0(X \times Y)$. Denote the projections by $\pi_x$ and $\pi_y$ respectively. Then, we claim that $L \cong \pi_x^* L_x \otimes \pi_y^* L_y$. By the theorem of the cube, see \cite[0BF4]{stacks-project}, it suffices to check that this is true after restricting for $\{x\}\times Y \times \Pic_0(X \times Y)$, $X \times \{y\} \times \Pic_0(X \times Y)$ and $X \times Y \times \{e\}$. This is true by construction.
\end{proof}

\begin{lemma}\label{lemm:line_bundle_isotrivial_times_abelian}
    Let $\ov{\sQ}$ be a numerically trivial line bundle on $W_K \times A'$ with its canonical metric and $A'$ is a traceless abelian variety. Let $s$ be a meromorphic multisection of $\ov{\sQ}$ of height $0$ with respect to the canonical GVF structure on $K(W_K \times A')$. Then, the line bundle $\ov{\sQ}$ is of the form $Q_K \boxtimes \sT$, where $Q$ is a line bundle defined over $k$ and $\sT$ is a torsion line bundle.
\end{lemma}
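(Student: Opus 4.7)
The plan is to combine the see-saw principle for numerically trivial line bundles with the two ``endpoint'' Bogomolov-type results proved earlier in the paper: the Bogomolov conjecture for traceless abelian varieties (Lemma \ref{lemm:traceless_abelian}) on the $A'$-side, and the isotrivial Bogomolov conjecture (Proposition \ref{prop:isotrivial_bogomolov}) on the $W_K$-side.

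First I would invoke Lemma \ref{lemm:num_triv_on_product} to obtain a see-saw decomposition $\sQ \cong p_1^* Q_1 \otimes p_2^* Q_2$ with $Q_1 \in \Pic^0(W_K)$ and $Q_2 \in \Pic^0(A')$, and argue by the uniqueness of canonical metrics on numerically trivial line bundles that the canonical metric $\ov{\sQ}$ splits compatibly as $p_1^* \ov{Q}_1 \otimes p_2^* \ov{Q}_2$; here ``canonical'' on $W_K$ is interpreted via the Albanese morphism, so that $Q_1$ is pulled back from a numerically trivial line bundle on $\Alb(W_K)$ equipped with its canonical metric.

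Next, to show that $Q_2$ is torsion, the idea is to restrict the meromorphic multisection $s$ to a fiber $\{w\} \times A' \cong A'$ for a suitable $w \in W_K$ (after passing to a finite cover if necessary), producing a meromorphic multisection of $Q_2$ on $A'$ of height $0$ with respect to the canonical GVF on $K(A')$. Since $A'$ is traceless, Lemma \ref{lemm:constant_field_canonical_GVF} in the non-archimedean case and Corollary \ref{cor:height_0_in_canonical_GVF} in the archimedean case tell us that the height-$0$ elements of $\ov{K(A')}$ lie in $\ov{k}$, respectively in $K$. Combining this with the strict positivity of the integral $\int g_{\ov{Q}_2} \cdot c_1(\ov{\sM})^{\dim A' -1}$ against the canonical measure when $Q_2$ is non-torsion, in the spirit of the proof of Lemma \ref{lemm:constant_field_canonical_GVF}, forces $Q_2^{\otimes n} \cong \sO_{A'}$ for some $n>0$. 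After replacing $\sQ$ by $\sQ^{\otimes n}$ we have $\sQ = p_1^* Q_1^{\otimes n}$; the multisection $s$ then pulls back along the zero section $W_K \hookrightarrow W_K \times A'$ to a height-$0$ meromorphic multisection of $Q_1^{\otimes n}$ on $W_K$, and the isotrivial Bogomolov conjecture (Proposition \ref{prop:isotrivial_bogomolov}) applied to the closure of $\textrm{div}$ of this section in a compactification of $W_K$ shows that $Q_1^{\otimes n}$, and hence $Q_1$ up to torsion, is defined over $k$.

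The hardest part will be Step 2: in general, height-$0$ points on a traceless abelian variety over a GVF need not be torsion, as illustrated by the non-isotrivial elliptic surface example in Section \ref{sec:bogomolov}. The argument must therefore exploit the specific \emph{canonical} choice of GVF structure on $K(W_K \times A')$, via a Fubini-type decomposition of its functional over places, to reduce the question to an explicit computation of the integral of the Green's function of a numerically trivial adelic divisor against the canonical measure on $A'$, mimicking the proof of Lemma \ref{lemm:constant_field_canonical_GVF}. Ensuring that the restriction (or generic fiber) of the multisection preserves the height-$0$ condition, and handling carefully the choice of $w$ (possibly in a GVF extension), are the technical subtleties to navigate.
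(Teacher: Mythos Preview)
Your overall architecture---see-saw decomposition via Lemma \ref{lemm:num_triv_on_product}, then handle $Q_2$ using the traceless abelian side and $Q_1$ using the isotrivial side---matches the paper's approach. However, two points in your execution deviate from the paper and the first is a genuine gap.

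For both factors you propose restricting to a \emph{closed} fiber: $\{w\}\times A'$ for $Q_2$, and the zero section $W_K\times\{0\}$ for $Q_1$. The paper instead passes to the \emph{generic} fiber, i.e.\ base-changes to the function field: $A'$ over $K(W)$ for $Q_2$, and $W$ over $K(A')$ for $Q_1$. This is the right move because the height-$0$ hypothesis is with respect to the canonical GVF structure on $K(W_K\times A')$, and restricting to a closed fiber does not preserve height-$0$ in any obvious way (the multisection may even have poles there), whereas the canonical GVF on $K(W_K\times A')$ is literally the canonical GVF for $A'$ over $K(W)$ with its polarized structure, so the generic-fiber restriction is automatic.

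More substantially, your sketch for ``$Q_2$ is torsion'' is where the real content lies, and your outline does not capture it. The paper isolates this as a separate statement, Lemma \ref{lemm:sections_for_traceless}: a numerically trivial line bundle on a traceless abelian variety admitting a height-$0$ meromorphic (multi)section is torsion. Its proof uses Poincar\'e reducibility to reduce to simple $A'$, then the Bogomolov conjecture for $A'$ forces the divisor of the section to be a height-$0$ translate of an abelian subvariety, which for $\dim A'>1$ is impossible; this reduces to the case $A'=E$ an elliptic curve. That case (Lemma \ref{lemm:num_triv_height_0_elliptic_curve}) is then an explicit computation: the canonical Green's function on the skeleton at a place of multiplicative reduction is piecewise quadratic (via the second Bernoulli polynomial), and equidistribution of multiples of a non-torsion height-$0$ point contradicts the constraint $r_w(Q)=0$ coming from the height-$0$ condition. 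Your ``strict positivity of the integral'' gesture is in the right spirit but does not by itself rule out non-torsion height-$0$ line bundles---exactly the phenomenon you flag from Section \ref{sec:bogomolov}---and the elliptic-curve endgame is the missing ingredient.
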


\begin{proof}
    Let $Y$ denote $W_K \times A'$. It suffices to consider the case of a section as the norm line bundle of $p^*\ov{\sQ}$ is $[K(X):K(Y)] \cdot \ov{\sQ}$ for any generically finite map $p:X \to Y$.
    
    By Lemma \ref{lemm:num_triv_on_product}, the line bundle is of the form $Q_1 \boxtimes Q_2$. After pulling back to the function field of $W$ we apply Lemma \ref{lemm:sections_for_traceless} to show that $Q_2$ is torsion. On the other hand, pullback $Q_1$ to $X_{K(A)}$. Then, by isotrivial Bogomolov $s$ can only vanish on subvarieties defined over the constant field of $K(A)$, which by Lemma \ref{lemm:constant_field_canonical_GVF} is $k$. As such, $Q_1$ is defined over $k$ as required.
\end{proof}

\begin{lemma}\label{lemm:local_contribution_elliptic_curves}
    Let $E$ be an elliptic curve of multiplicative reduction over a valued field $K$. Let $D = P_1 + \dots +P_n - P'_1 - \dots - P'_n$ be a numerically trivial divisor. Suppose that the canonical Green's function $g_D$ is constant on the skeleton of $E^{\an}$ and let $r$ denote the retraction to the skeleton. Then, $r(P_1), \dots, r(P_n)$ and $r(P'_1), \dots, r(P'_n)$ agree with multiplicities. In particular, the point $Q \in E(K)$ defined by $D$ satisfies $r(Q) = 0$.
\end{lemma}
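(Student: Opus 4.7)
The plan is to use the Tate uniformization to reduce the problem to an explicit computation on a metric circle. Under this uniformization, the retraction $r$ is recognized as a group homomorphism, and the restriction of the canonical Green's function to the skeleton becomes a piecewise linear function whose distributional Laplacian encodes the retracted divisor. The multiset statement then reduces to the vanishing of this Laplacian, and the assertion about $r(Q)$ follows formally.

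First I would invoke the Tate uniformization $E^{\an} \cong \mathbb{G}_{m,K}^{\an}/q^{\mathbb{Z}}$ with $|q|<1$, which identifies the skeleton $\Sigma$ of $E^{\an}$ with $\mathbb{R}/L\mathbb{Z}$ for $L = -\log|q|$. In these coordinates $r$ is induced by $z \mapsto -\log|z|$ on $\mathbb{G}_m^{\an}$, hence is a continuous group homomorphism from $E^{\an}$ to $\Sigma$.

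Next I would describe $g_D|_\Sigma$ explicitly. For a single point $P \in E(\bar K)$ the canonical Green's function $g_{[P]}$ for the degree one divisor $[P]$ satisfies $c_1(\overline{\mathcal{O}([P])}) = \delta_P - \mu_{\mathrm{can}}$, and by the cited result of Gubler the measure $\mu_{\mathrm{can}}$ restricts to the normalized Haar measure $dx/L$ on $\Sigma$. The canonical Green's function is affine on each fiber of $r$, a standard feature of canonical metrics on abelian varieties of totally degenerate reduction; consequently $g_{[P]}|_\Sigma$ is piecewise quadratic with distributional second derivative $\delta_{r(P)} - 1/L$. Linearity in $D$ and the vanishing of $\deg D$ then yield
\[
(g_D|_\Sigma)'' = \sum_i \delta_{r(P_i)} - \sum_j \delta_{r(P'_j)},
\]
so that $g_D|_\Sigma$ is a piecewise linear function on $\Sigma$ whose slope breaks encode the retractions of the $P_i$ and $P'_j$ with appropriate multiplicities.

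With this in hand the remainder is short. The hypothesis that $g_D|_\Sigma$ is constant forces the above Laplacian to vanish, which means the atomic measures $\sum_i \delta_{r(P_i)}$ and $\sum_j \delta_{r(P'_j)}$ coincide, giving the multiset equality. The final assertion follows because $Q$ equals $\sum_i P_i - \sum_j P'_j$ under the group law on $E$, and since $r$ is a group homomorphism we get $r(Q) = \sum_i r(P_i) - \sum_j r(P'_j) = 0$. The main technical point I expect to have to verify carefully is the explicit piecewise quadratic form of $g_{[P]}|_\Sigma$ together with the identification of its distributional Laplacian; this should either be extracted directly from Gubler's tropicalization of the canonical measure, or derived from the theta series representation of the canonical Green's function, whose restriction to the skeleton is a classical tropical theta function.
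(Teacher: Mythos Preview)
Your proposal is correct and takes essentially the same approach as the paper. The paper writes $g_D$ as a signed sum of translates of $g_{[0]}$, invokes the explicit Bernoulli-polynomial formula for $g_{[0]}|_\Sigma$ from de Jong--Shokrieh (following Tate), and observes that the derivative of each summand has a jump of size $1$ exactly at the retracted point; constancy of $g_D|_\Sigma$ then forces these jumps to cancel, which is precisely your Laplacian computation $(g_D|_\Sigma)'' = \sum_i \delta_{r(P_i)} - \sum_j \delta_{r(P'_j)}$ phrased in distributional language.

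Two minor remarks. First, the paper's proof leaves the ``in particular'' clause implicit, whereas you spell it out via the observation that $r$ is a group homomorphism; this is a nice addition. Second, for the technical point you flag---the piecewise-quadratic form of $g_{[P]}|_\Sigma$---the paper simply cites the explicit formula $\tfrac{l}{2}B_2(\nu/l) - \tfrac{1}{12}l$ from the literature rather than deriving it from first principles, so you need not work harder than that.
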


\begin{proof}
    We observe by Lemma \ref{lemm:numerically_trivial_are_differences} that $g_D(x)$ up to an additive constant is of the form $g_{[0]}(x-P_1) + \dots + g_{[0]}(x-P_n) - g_{[0]}(x-P'_1) - \dots - g_{[0]}(x-P'_n)$, where $g_{[0]}$ denotes the Green's function for the divisor $[0]$ with the canonical metric. The Green's function is understood explicitly in \cite[\S 8.5]{deJong_shokrieh_canonical_local_heights} following work of Tate. 

    Identifying the skeleton of $E^{\an}$ with $\bbR / l \bbZ$ for a suitable $l \in \bbR$ the restriction $g$ of $g_{[0]}$ to the skeleton is
    \[
        \frac{l}{2} B_2(\nu/l) - \frac{1}{12} l,
    \]
    where $B_2$ denotes the second Bernoulli polynomial $B_2(T) = T^2- T + 1/6$. We note that $g$ is differentiable at every non-zero point. Its derivative is given by $\frac{2\frac{\nu}{l} - 1}{2}$. At $0$ the right side derivative is $-\frac{1}{2}$, but its left side derivative is $\frac{1}{2}$. In particular the restriction of $g_D$ to the skeleton which is
    \[
        g(x+r(P_1)) + \dots + g(x+r(P_n)) - g(x+r(P'_1)) - \dots - g(x+r(P'_n))
    \]
    can only be differentiable if $r(P_1), \dots, r(P_n)$ and $r(P'_1), \dots, r(P'_n)$ agree with multiplicities.
\end{proof}

\begin{lemma}\label{lemm:num_triv_height_0_elliptic_curve}
    Let $E$ be a non-isotrivial elliptic curve over a geometric GVF $K$. Let $\phi$ denote the GVF functional for the canonical GVF structure on $K(E)$. Let $\ov{\sD}$ be an adelic divisor such that $\sO(\ov{\sD})$ is a canonically metrized numerically trivial line bundle and $\phi(|\ov{\sD}|) = 0$. Then, $\sO(\ov{\sD})$ is a torsion line bundle.
\end{lemma}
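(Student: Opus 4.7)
The plan is to decompose $\phi(|\ov{\sD}|)$ place by place using the adelic-curve representation of the canonical GVF structure on $K(E)$ from Section \ref{sec:polarized_GVF}, and to apply Lemma \ref{lemm:local_contribution_elliptic_curves} at multiplicative reduction places. The functional splits as
\[
    \phi(|\ov{\sD}|) = \int_\Omega \int_{E^{\an}_\omega} |g_{\ov{\sD},\omega}|\, c_1(\ov{L}_\omega)\, d\nu(\omega) + \sum_{P \in \mathrm{PDiv}(E)} |\ord_P(D)|\, \adeg(\ov{L}|_P),
\]
a sum of non-negative terms, so the hypothesis forces each summand to vanish.

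Since $E$ is non-isotrivial, $j(E)\notin k$, and the product formula on $K$ forces the set $\Omega_{\mathrm{bad}}$ of places at which $E$ has multiplicative reduction to have positive $\nu$-measure. At each $\omega\in\Omega_{\mathrm{bad}}$, the canonical measure $c_1(\ov{L}_\omega)$ is the Haar measure on the skeleton of $E^{\an}_\omega$ by \cite[Corollary 7.3]{gubler_non-archimedean_canonical_measures_abelian_varieties}. The restriction of $g_{\ov{\sD},\omega}$ to the skeleton is continuous (its singularities lie at support points of $D$, which miss the skeleton), so vanishing of the integral forces $g_{\ov{\sD},\omega}\equiv 0$ on the skeleton. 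Applying Lemma \ref{lemm:local_contribution_elliptic_curves}, the retraction $r_\omega(Q)$ to the skeleton vanishes, where $Q\in E(K)$ corresponds to $\sO(\ov{\sD})$ under $\Pic^0(E)\cong E(K)$.

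Together with the automatic vanishing at good reduction places, the local N\'eron--Tate contributions of $Q$ then all vanish, giving $\NTht(Q)=0$. The additional constraint that $Q$ reduces to the identity at every place of multiplicative reduction forces $Q$ to be torsion, via the Bogomolov conjecture for points on non-isotrivial traceless elliptic curves---Lemma \ref{lemm:traceless_abelian} in characteristic $0$ and the elliptic-quotient case of Theorem \ref{thm:bcsa} in positive characteristic. Hence $\sO(\ov{\sD})$ is a torsion line bundle.

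The main obstacle is the final implication: height zero alone is not sufficient to force torsion over an arbitrary GVF (as the earlier example in Section \ref{sec:bogomolov} of a non-torsion section of height zero shows), so the identity-reduction constraint coming from the analytic-part vanishing via Lemma \ref{lemm:local_contribution_elliptic_curves} is essential to the argument.
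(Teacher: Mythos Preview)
Your setup through the first two steps is essentially the paper's: decompose $\phi(|\ov{\sD}|)$ using the polarized adelic-curve description, deduce that the Green's function vanishes on the skeleton at (almost) every bad place, and invoke Lemma~\ref{lemm:local_contribution_elliptic_curves} to get $r_\omega(Q)=0$ there. The argument breaks down after that.

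First, a smaller issue: your derivation of $\NTht(Q)=0$ is not correct as written. Local N\'eron functions at places of good reduction do \emph{not} automatically vanish, and at a bad place the canonical local height of a point with $r_\omega(Q)=0$ equals $\tfrac{l_\omega}{2}B_2(0)=\tfrac{l_\omega}{12}\neq 0$, not zero. The paper obtains $\NTht(Q)=0$ differently, from the vanishing of the prime-divisor sum $\sum_P |\ord_P(\sD)|\,\NTht(P)$: every point in the support of $\sD$ has N\'eron--Tate height $0$, and since $\NTht$ is a positive semidefinite quadratic form, the class $Q$ they generate also has height $0$.

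The real gap is the final implication. You assert that $\NTht(Q)=0$ together with $r_\omega(Q)=0$ at all bad places forces $Q$ torsion ``via the Bogomolov conjecture,'' citing Lemma~\ref{lemm:traceless_abelian} and Theorem~\ref{thm:bcsa}. Neither citation works. For a $0$-dimensional subvariety, (BC) over a GVF only says that a height-$0$ point is special, i.e.\ a height-$0$ point --- it does \emph{not} yield torsion (and the paper explicitly exhibits non-torsion height-$0$ points). Citing Theorem~\ref{thm:bcsa} is circular: that theorem depends on the present lemma through Lemma~\ref{lemm:sections_for_traceless} and Lemma~\ref{lemm:reduction_to_quasisplit}. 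You correctly flag in your last paragraph that the retraction constraint is essential, but you never actually \emph{use} it.

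The paper's missing step is an equidistribution argument: if $Q$ is non-torsion with $\NTht(Q)=0$, then $([n]Q)_{n\geq 1}$ is a generic small sequence, so by Theorem~\ref{thm:equidistribution} it equidistributes to the canonical measure, which at each bad place is the Haar measure on the skeleton. But $r_\omega([n]Q)=n\cdot r_\omega(Q)=0$ for all $n$, so the sequence stays at a single point of the skeleton, contradicting equidistribution. This is the argument you need to supply.
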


\begin{proof}
    Let $\ov{\sD}$ be an adelic divisor such that $\sO(\ov{\sD})$ is a canonically metrized numerically trivial line bundle and $\phi(|\ov{\sD}|) = 0$. Replace $K$ by a countable subfield over which $E$ and $\sD$ are defined. Let $Q \in E(K)$ be the point unique point such that $\sD$ is rationally equivalent to $[Q] - [0]$.
    
    Observe that $E$ can be defined over $k(j(E))$ and let $v$ denote the $j(E)^{-1}$-adic valuation $v$ on $k(j(E))$. Represent $K$ by an adelic curve $(K,(\Omega,\sA,\nu),\phi)$ such that the places $\Omega_{\textrm{bad}}$ of bad reduction extend $v$. Denote the Green's function of $\ov{\sD}$ by $g_{\ov{\sD}}$. Let $\ov{\sM}$ denote the canonically metrized symmetric ample line bundle associated to the divisor $[0]$. By following the explicit description of the polarized adelic curve in \S \ref{sec:polarized_GVF} we compute
    \begin{align*}
        \phi(|\ov{\sD}|) = \sum_{P \in \operatorname{PDiv}(E)} |\ord_P(\sD)| \NTht(P) + \int_{\Omega}\int_{E^{\an}_\omega}|g_{\ov{\sD}}| c_1(\ov{\sM}_\omega)\nu(d \omega). 
    \end{align*}
    Since the integrand is nonnegative it follows that $\NTht(P) = 0$ at all places where $\sD$ has a zero or pole. In particular, the point $Q \in E(K)$ defined by $\sD$ satisfies $\NTht(Q) = 0$. On the other hand, we conclude that
    \begin{align*}
        \int_{\Omega_{\textrm{bad}}}\int_{E^{\an}_\omega}|g_{\ov{\sD}}| c_1(\ov{\sM}_\omega)\nu(d \omega). 
    \end{align*}
    Since $c_1(\ov{\sM}_\omega)$ is the Haar measure on the skeleton of $E^{\an}_\omega$ by \cite[Corollary 7.3]{gubler_non-archimedean_canonical_measures_abelian_varieties} we deduce that for almost all $w \in \Omega_{\textrm{bad}}$, $|g_{\ov{\sD}}|$ is constantly $0$ on the skeleton of $E^{\an}_w$. Let $r_w$ denote the retraction to the skeleton at $w$. Then, it follows from Lemma \ref{lemm:local_contribution_elliptic_curves} that $r_w(Q) = 0$ at almost all places $w$ of bad reduction.

    Assume that $Q$ is not torsion and of height $0$. Then, the sequence of $[n]Q$ is generic of height $0$ and hence equidistributes to the Haar measure on the skeleton by Theorem \ref{thm:equidistribution}. This contradicts the fact that the retraction of $Q$ to the skeleton is $0$ at almost all places of bad reduction. It follows that $Q$ is torsion.
\end{proof}

\begin{lemma}\label{lemm:sections_for_traceless}
    Let $Q$ be a numerically trivial line bundle over an abelian variety $A$ with a meromorphic multisection $s$. Suppose that the height of the section $s$ wrt the canonical GVF structure on $\ov{K(A)}$ is $0$. If $A$ is of trivial trace, then $Q$ is torsion.
\end{lemma}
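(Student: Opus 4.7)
I plan to generalize Lemma \ref{lemm:num_triv_height_0_elliptic_curve} from elliptic curves to all traceless abelian varieties. First, by Poincar\'e reducibility applied to $A$ together with Lemma \ref{lemm:num_triv_on_product} applied to $Q$, reduce to the case $A$ simple and traceless with $Q \in \Pic^0(A)$; tracelessness passes to each simple factor since any subvariety of a factor defined over the constant field pulls back to one on $A$. As in the opening of the proof of Lemma \ref{lemm:line_bundle_isotrivial_times_abelian}, I may further replace the multisection $s$ by an honest section via the norming trick. The archimedean case is then dispatched using Corollary \ref{cor:height_0_in_canonical_GVF} and the techniques of Section \ref{sec:archimedean}, so I focus on non-archimedean $K$.

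Next, I would expand the height of $|\textrm{div}(s)|$ against the canonical polarized GVF structure on $K(A)$ using the explicit formula of Section \ref{sec:polarized_GVF}. Writing $\sD = \textrm{div}(s)$ and fixing a canonically metrized ample symmetric $\ov{\sM}$,
\[
    0 \;=\; \phi(|\ov{\sD}|) \;=\; \sum_{V \in \operatorname{PDiv}(A)} |\ord_V(\sD)|\, h_{\ov{\sM}}(V) \;+\; \int_\Omega \int_{A_\omega^{\an}} |g_{\ov{\sD}}|\, c_1(\ov{\sM}_\omega)^g\, \nu(d\omega).
\]
Both summands are non-negative, so each vanishes. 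The Bogomolov conjecture for $A$, available as a standing assumption in this subsection, then forces every prime divisor $V \subset \Supp(\sD)$ with $h_{\ov{\sM}}(V) = 0$ to be special, i.e.\ a translate of an abelian subvariety of $A$ by a height-$0$ point.

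The simple-plus-traceless hypothesis now bifurcates by dimension. If $\dim A \geq 2$, the only abelian subvarieties of $A$ are $\{0\}$ and $A$, so no $(g{-}1)$-dimensional subvariety of $A$ can be special; hence $h_{\ov{\sM}}(V) > 0$ for every prime divisor $V$, and vanishing of the algebraic summand forces $\ord_V(\sD) = 0$ for all $V$, so $\sD = 0$ and $Q \cong \sO$ is torsion of order one. If $\dim A = 1$, tracelessness amounts to non-isotriviality of the elliptic curve $A$, and the conclusion is exactly Lemma \ref{lemm:num_triv_height_0_elliptic_curve}. The main technical subtlety is the Poincar\'e bookkeeping: ensuring that the canonical polarized GVF structure on $K(A)$, the decomposition of $Q$, the section $s$, and the tracelessness property all transfer compatibly to each simple factor, particularly after the passage to the function field of $W$ in the upstream application.
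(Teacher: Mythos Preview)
Your overall strategy matches the paper's proof: norm to a genuine section, use Poincar\'e reducibility and Lemma \ref{lemm:num_triv_on_product} to reduce to the simple case, then observe that in a simple traceless $A$ of dimension $\geq 2$ there are no codimension-one special subvarieties (so $\sD=0$), while for $\dim A=1$ one invokes Lemma \ref{lemm:num_triv_height_0_elliptic_curve} (non-archimedean) or Lemma \ref{lemm:archimedean_height_0_divisor} (archimedean).

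The gap is in the Poincar\'e reduction itself, which you correctly flag as the ``main technical subtlety'' but do not resolve. Your justification, ``tracelessness passes to each simple factor since any subvariety of a factor defined over the constant field pulls back to one on $A$'', only shows that each $A_i$ is traceless \emph{over $K$}. That is not what is needed. Writing $A \sim A_1\times\dots\times A_k$ and $Q = Q_1\boxtimes\dots\boxtimes Q_k$, you must produce from $s$ a meromorphic section of $Q_1$ over $A_1$ of height $0$ in the canonical GVF on $\ov{K(A_1)}$. Restricting $s$ to a closed fibre $A_1\times\{P\}$ does not preserve the height-$0$ condition in general. The paper's fix is to restrict to the \emph{generic} fibre: view $s$ as a section over $A_1\otimes \ov{K(A_2\times\dots\times A_k)}$, noting that the canonical GVF on $\ov{K(A)}$ is exactly the canonical GVF for $A_1$ over $\ov{K(A_2\times\dots\times A_k)}$ equipped with \emph{its} canonical GVF. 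But now tracelessness of $A_1$ must hold over this larger base, and for that one needs \cite[Lemma~A.1]{yamaki_trace_of_abelian_varieties_geometric_bogomolov} rather than the elementary observation you give. Once this is supplied, your argument goes through and coincides with the paper's.
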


\begin{proof}
    By the same argument as in Lemma \ref{lemm:line_bundle_isotrivial_times_abelian} it suffices to restrict to the case of a meromorphic section. By Poincar\'e reducibility we may without loss of generality assume that $A$ is of the form $A_1 \times \dots\times A_k$. By Lemma \ref{lemm:num_triv_on_product} we have $Q = Q_1 \boxtimes \dots \boxtimes Q_k$. It suffices to prove that $Q_1$ is torsion. For this we consider the base change of $Q$ and $s$ to $A_1 \otimes \ov{K(A_2 \times \dots\times A_k)}$. This abelian variety has trivial $\ov{K(A_2 \times \dots\times A_k)}/\ov{k(A_2 \times \dots\times A_k)}$-trace by \cite[Lemma A.1]{yamaki_trace_of_abelian_varieties_geometric_bogomolov} and in particular is traceless over $\ov{K(A_2 \times \dots\times A_k)}$ with its canonical GVF structure. Since the canonical GVF structure on $\ov{K(A_1 \times A_2 \times \dots\times A_k)}$ can be obtained as the canonical GVF for $A_1 \otimes \ov{K(A_2 \times \dots\times A_k)}$, where $\ov{K(A_2 \times \dots\times A_k)}$ is endowed with its canonical GVF structure. This reduces us to prove the lemma in the case that $A$ is simple.
    
    If $Q$ is not trivial, $s$ has to vanish on a subvariety of $A$. This has to have height $0$. By the Bogomolov conjecture it is the translate of an abelian subvariety. This cannot happen if $\dim A > 1$ as $A$ was assumed simple. Hence, we are reduced to the case that $A = E$ is an elliptic curve with $j$-invariant in $K\setminus k$.
    
    If $K$ is archimedean it follows from Lemma \ref{lemm:archimedean_height_0_divisor} that $s$ has to be a constant section and $Q$ is trivial. If $K$ is non-archimedean Lemma \ref{lemm:num_triv_height_0_elliptic_curve} applies to show that $Q$ is torsion.
\end{proof}

\subsection{Proof of main result}

\bogomolovabvarcharzero*

\begin{proof}
    This follows from the traceless case in Lemma \ref{lemm:traceless_abelian} as a special case of Lemma \ref{lemm:product_isotrivial_abelian}.
\end{proof}

\bcsa*

\begin{proof}
    The latter assertions follow immediately from the first and Theorem \ref{thm:bogomolov_characteristic_0}. The characteristic $p$ part follows since the Bogomolov conjecture for elliptic curves is trivial.

    It remains to prove the first assertion. Let $X \subset G$ be a small subvariety. We need to prove that $X$ is special. We reduce to the case that $X$ has a finite stabilizer and generates $G$ by Lemma \ref{lemm:finite_stabilizer} and Lemma \ref{lemm:generates_abelian_variety}. By Lemma \ref{lemm:reduction_to_quasisplit} it suffices to consider the case that $G$ is quasi-split. The Bogomolov conjecture for quasi-split semiabelian varieties is proven in Lemma \ref{lemm:product_isotrivial_abelian}.
\end{proof}

\printbibliography

\end{document}